\setlist[enumerate]{leftmargin=1.5em}
\setlist[itemize]{leftmargin=1.5em}
\providecommand{\MR}{\relax\ifhmode\unskip\space\fi MR }
\providecommand{\href}[2]{#2}
\definecolor{green}{rgb}{0,0.8,0} 
\newcommand{\Black}[1]{\begingroup\color{black} #1\endgroup}
\newtheorem{maintheorem}{Theorem}
\newtheorem{theorem}{Theorem}[section]
\newtheorem{lemma}[theorem]{Lemma}
\newtheorem{proposition}[theorem]{Proposition}
\theoremstyle{definition}
\theoremstyle{remark}
\newtheorem{remark}[theorem]{Remark}
\numberwithin{equation}{section}
\newcommand{\nnrm}[1]{{\vert\kern-0.25ex\vert\kern-0.25ex\vert #1 
    \vert\kern-0.25ex\vert\kern-0.25ex\vert}}
\newcommand{\ud}{\mathrm{d}}
\newcommand{\rd}{\partial}
\newcommand{\nb}{\nabla}
\newcommand{\tht}{\theta}
\newcommand{\bbC}{\mathbb C}
\newcommand{\bbN}{\mathbb N}
\newcommand{\bbR}{\mathbb R}
\newcommand{\bbT}{\mathbb T}
\newcommand{\bbZ}{\mathbb Z}
\begin{document}

\title{Quantitative asymptotic stability of the quasi-linearly stratified densities in the IPM equation with the sharp decay rates}
\author{Min Jun Jo\thanks{Mathematics Department, Duke University, Durham NC 27708 USA. E-mail: minjun.jo@duke.edu} \and Junha Kim\thanks{Department of Mathematics, Ajou University, Suwon 16499, Republic of Korea. E-mail: junha02@ajou.ac.kr}}
\date{\today}

\renewcommand{\thefootnote}{\fnsymbol{footnote}}
\footnotetext{\emph{Key words: quantitative stability, inviscid damping, stratification, porous medium equation, } \\
\emph{2020 AMS Mathematics Subject Classification:} 76B70, 35Q35}

\renewcommand{\thefootnote}{\arabic{footnote}}


\maketitle


\begin{abstract}
We analyze the asymptotic stability of the quasi-linearly stratified densities in the 2D inviscid incompressible porous medium equation on $\bbR^2$ with respect to the buoyancy frequency $N$.  Our target density of stratification is the sum of the large background linear profile with its slope $N$ and the small perturbation that could be both non-linear and non-monotone. Quantification in $N$ will be performed not only on how large the initial density disturbance is allowed to be but also on how much the target densities can deviate from the purely linear density stratification without losing their stability.

 For the purely linear density stratification, our method robustly applies to the three fundamental domains $\bbR^2,$ $\bbT^2,$ and $\bbT\times[-1,1]$, improving both the previous result by Elgindi (On the asymptotic stability of stationary solutions of the inviscid incompressible porous medium equation, Archive for Rational Mechanics and Analysis, 225(2), 573-599, 2017) on $\bbR^2$ and $\bbT^2$, and the  study by Castro-Córdoba-Lear (Global existence of quasi-stratified solutions for the confined IPM equation. Archive for Rational Mechanics and Analysis, 232(1), 437-471, 2019) on $\bbT\times[-1,1]$. The obtained temporal decay rates to the stratified density on $\bbR^2$ and to the newly found asymptotic density profiles on $\bbT^2$ and $\bbT\times[-1,1]$ are all sharp, fully realizing the level of the linearized system. We require the initial disturbance to be small in $H^m$ for any integer $m\geq 4$, which we even relax to any positive number $m>3$ via a suitable anisotropic commutator estimate.

\end{abstract}

\tableofcontents

\section{Introduction}

\subsection{Hydrodynamic stability}\label{sec_stab}

Fluid instabilities are ubiquitous and they are observed on exhaustively various scales of space and time. Due to the absence of any complete theory for such turbulent nature of fluid flows, simpler problems have been investigated as starting points. For instance, it is hoped that a deeper understanding of transition from a stable state, which is possibly laminar, to an unstable state would give a more rigorous explanation for the mechanism of spontaneous instability. \emph{Hydrodynamic stability}, which is known as one of the oldest branches of fluid mechanics, regards the robustness of fluid flows against small perturbation around a steady state, serving as a measure of intrinsic affinity of the governing fluid dynamics for turbulence.

One fundamental notion of hydrodynamic stability is \emph{Lyapunov stability.} Consider, as a toy model, the typical initial value problem
\begin{equation*}
\begin{split}
    \partial_t w &= f(w) \\
    w(0)&=w_0
    \end{split}
\end{equation*}
equipped with a stationary solution $w_s$ satisfying $f(w_s)=0$. The abstract mapping $f$ could be nonlinear. We say that the \Black{stationary} state $w_s$ is Lyapunov stable (or nonlinearly stable, equivalently) if the following holds: given \Black{a Banach spaces $X$}, for every $\varepsilon>0$ there exists $\delta>0$ such that
\begin{equation}\label{def_Lyapunov}
    \|w_0-w_s\|_{X}<\delta \implies \|w-w_s\|_{\widetilde{X}}<\varepsilon,
\end{equation}
where $w$ is the corresponding solution. The space $\widetilde{X}$ usually takes the form of $L_{t}^\infty X$ due to the evolutionary nature of the problem. Lyapunov stability at least tells us that the solution will stay close to the \Black{stationary} state once it is initially placed sufficiently close to the \Black{stationary} state. 

However, such an approximate notion of Lyapunov stability does not fully explain the dynamics of $w$ as $t\to \infty.$ This gives rise to the stronger notion called \emph{asymptotic stability.} The stationary solution $w_s$ is asymptotically stable if not only $w_s$ is Lyapunov stable but also there holds
\begin{equation}\label{def_asymp}
    \lim_{t\to\infty}\|w(t)-w_s\|_{Y}=0
\end{equation}
for a spatially normed space $Y$. This gives us the information on the explicit final destination of the solution, excluding the possibility of permanent orbiting/wandering around the \Black{stationary} state.

 The necessity of an even stronger notion of stability, compared to the mere asymptotic stability, stood out in Kelvin's suggestion \cite{Kelvin} that the governing dynamics could become increasingly sensitive to perturbation as Reynolds number gets larger, while certain equilibria are still Lyapunov stable for any finite Reynolds number. Quantification of $\delta$ in the notion of Lyapunov stability, see \eqref{def_Lyapunov}, is desired to capture the information on the sensitivity of the fluid system against perturbation in terms of Reynolds number. A \emph{quantitatively aysmptotic stability} result takes the form of the following statement. Let $N$ be any physical constant (e.g. fluid viscosity) one wants to quantitatively relate to the asymptotic stability. Then a stationary solution $w_s$ is quantitatively asymptotically stable if for all $\varepsilon>0$ there exists $\delta >0$ such that
 \begin{equation}\label{def_quant}
     \|w_0-w_s\|_{X} \leq \delta g(N) \implies
     \begin{cases}
     \|(w-w_s)(t)\|_{Y} < \varepsilon \\
     \lim\limits_{t\to\infty}\|(w-w_s)(t)\|_{Y}=0
     \end{cases}
 \end{equation}
where  $g:\bbR_{+}\to \bbR_{+}$ is a continuous function. For the results on quantitative asymptotic stability with respect to Reynolds number, see \cite{Nach8, BGM,BGM2,Nach14,Nach15,Nach80,Nach82,Nach87,Nach89}. \Black{For the Boussinesq equations, see \cite{BBZD,BZD}.}

This paper concerns the notion of quantitative asymptotic stability \eqref{def_quant} for the two-dimensional inviscid  incompressible porous medium equation \eqref{eq:IPM}. Quantification will be performed in terms of the large-scale physical phenomenon called \emph{stratification} which works via buoyancy. The motivation is that stratification tends to stabilize fluid flows by quenching the vertical motions and such a tendency of stabilization is observed to increase as the intensity of stratification grows, see \cite{QG} and references therein. 
 
\subsection{The inviscid IPM equation}

 We study the 2D inviscid incompressible porous medium (IPM) equation
\begin{equation}\label{eq:IPM} \tag{IPM}
\left\{
\begin{aligned} 
\rd_t\rho + v\cdot \nb \rho = 0, \\
v= -\nabla p + (0,\rho), \\
\mathrm{div}\, v = 0,
\end{aligned}
\right.
\end{equation}
on the three different domains $\bbR^2$, $\bbT^2$, and a confined strip $\bbT\times[-1,1]$.
The first line in \eqref{eq:IPM} is the continuity equation expressing the density transport along the fluid flow, and the second part is Darcy's law introduced in \eqref{Darcy}. The condition $\operatorname{div}v=0$ is the incompressibility condition. For the confined scenario $\bbT\times [-1,1]$, we supplement \eqref{eq:IPM} with the no-penetration boundary condition $v|_{y=-1,1}\cdot n=0$ where $n$ denotes the outward normal unit vector on the boundary.

 The above IPM equation falls into the category of \emph{active scalar} equations, as the inviscid Surface Quasi-Geostrophic (SQG) equation does. The SQG equation reads
 \begin{equation}
     \label{eq:SQG} \tag{SQG}
\left\{
\begin{aligned} 
\rd_t\theta + v\cdot \nb \theta = 0, \\
v = R^{\perp} \theta.
\end{aligned}
\right.
 \end{equation}
 where $R=(R_1,R_2)$ is the vector of the 2D Riesz transforms with Fourier symbol $\mathscr{F}R(\xi)=-i\frac{\xi}{|\xi|}.$ The SQG equation is the ensuing equation on the boundary of $\bbR_{+}^{3}$ for the 3D Quasi-Geostrophic (QG) equations, and it has been introduced in \cite{SQG} as the 2D model which has various features in common with the 3D Euler equations. There have been numerous results on the SQG equation during past several decades. For the global well-posedness type results when the dissipation term (e.g. Ekman pumping) is present, see \cite{CCW} and extensively various references therein. In the opposite direction, a small scale creation result can be found in \cite{HK} and some ill-posedness results are obtained in \cite{CM,EM,JK}, for examples.
 
 The Biot-Savart laws of \eqref{eq:IPM} and \eqref{eq:SQG}, via which the active scalars determine the corresponding velocities, are both singular operators of zero order; Darcy's law in \eqref{eq:IPM} can be rewritten as $v=R_1 R^{\perp} \rho$ while $v=R^\perp \theta$ in \eqref{eq:SQG}. Due to the similarities between \eqref{eq:IPM} and \eqref{eq:SQG}, including the same order of Biot-Savart laws mentioned above, it appears to be worth trying to establish analogous results for \eqref{eq:IPM}.
 
 However, it turns out that the structure of \eqref{eq:IPM} is distinguishable; Kiselev and Yao in \cite{KY} designed a new approach for \eqref{eq:IPM} to prove small scale creation unlike the previous hyperbolic scenarios used in \cite{HK,KS,Zlatos} for the generalized SQG (gSQG) equation equipped with the extended Biot-Savart law $v=R^{\perp}(-\Delta)^{-\frac{1-\alpha}{2}}\theta $ . Here $\alpha=0$ and $\alpha=1$, respectively,  correspond to the 2D Euler equations in vorticity formulation and the SQG equation. One notable difference is the additional partial derivative $\partial_1$ in the Biot-Savart law $v=R_1 R^{\perp} \rho$ for \eqref{eq:IPM}, compared to $v=R^{\perp}\theta$ for \eqref{eq:SQG}. This enables the IPM equation to feature \emph{density stratification} as the stabilizing mechanism in the presence of gravity. See Section~\ref{sec_Darcy} for the introduction of Darcy's law, which gives the relation $v=R_1 R^{\perp}\rho$, and  Section~\ref{sec_strat} for the background of density stratification.

 \subsubsection{The goal of this study}
 The goal of this study is to quantitatively fathom the stabilizing effect of stratification on the fluid flows moving through a porous medium, governed by \eqref{eq:IPM}. Specifically, the perturbation around the stratified densities gives rise to the extra linear structure so that the resulting linearized system becomes partially dissipative. Here we mean by partial dissipation that only certain directions of motion are damped in the system. 
 Leveraging properly the anisotropic damping of the linearized system, we prove that the IPM solution that is initially disturbed around a steady state of certain stratified density converges back to the equilibrium. Due to the lack of dissipation mechanism in \eqref{eq:IPM}, we call such damping phenomenon \emph{inviscid damping.}

\subsection{Darcy's law}\label{sec_Darcy}
In the 19th century, Henry Darcy designed a pipe system that provides water in Dijon, the city in France, and that works without using any pumps. The flows in the pipe are driven by gravity only, see \cite{Darcy}, as targeted in his past experiments. Specifically, Darcy found the linear relationship between the unit flux $q$ and the hydraulic gradient $\frac{\ud h}{\ud x}$ for the fluids flowing through a porous medium. The relationship reads
\begin{equation*}
    q=-k\frac{g\rho}{\eta}\frac{\ud h}{\ud x},
\end{equation*}
which is called \emph{Darcy's law.} In the above formulation, $k$ is the intrinsic permeability of the porous medium, $g\rho$ denotes the weight of the fluid with gravity acceleration $g$ and density $\rho$, and $\eta$ is the dynamic viscosity. The function $h$ denotes the height from the lower plate to the upper plate of the water pipe with corresponding constant water fluxes. Then the pressure due to the weight of the water in the pipe can be expressed as $p=\rho g h$. This gives rise to a different form of Darcy's law $$q= -\frac{k}{\eta}\frac{\ud p}{\ud x}.$$ Note that the derivatives, $\frac{\ud h}{\ud x}$ and $\frac{\ud p}{\ud x}$, were taken only in a horizontal direction along which the pressure drop happens, due to the confinement of the original setting. Nevertheless, such a viewpoint on the pressure drop can provide the extended Darcy's law
\begin{equation*}
    q= -\frac{k}{\eta}\nabla p,
\end{equation*}
which applies to all the directions. If we assume for simplicity that the porosity of the medium is equal to one, then the fluid velocity $v$ is exactly equal to $q$. In that case, the above version of Darcy's law reduces to the relationship for the fluid velocity as
\begin{equation}\label{Darcy}
    v= -\frac{k}{\eta} \nb p.
\end{equation}
For brevity, we set $k=\eta=1$ and so $v=-\nb p$.

When one tries to describe the motion of the large-scale fluid flows more precisely, it is desirable to take further into account the direct effect of gravity on fluid particles, other than the mere gravitaional effect via the hydraulic gradient. Then Darcy's law can be rewritten as
\begin{equation}
    v = -\nb p - (0,g\rho)
\end{equation}
which is the version we will use throughout this paper.

Experimentally, it has been observed that a fluid flow is \emph{Darcian} only when the Reynolds number is sufficiently small, say smaller than $10$. In principle, the Darcian flows can be regarded as the slow, laminar, and stationary flows. If we apply such tendency to the Navier-Stokes equations, we obtain, under the Boussinesq approximation, the Stokes equations
\begin{equation*}
    -\Delta v = -\nb p - (0,g\rho).
\end{equation*}
By using a formal averaging method, Neuman (1977) in \cite{Deriv_1} and Whitaker (1986) in \cite{Deriv_2} derived from the classical Stokes equations that in a porous medium the resistive force of viscosity would be \emph{linear} in the velocity with its proper sign. In other words, the Stokes equations become
\begin{equation*}
    v= -\nb p + (0,\rho)
\end{equation*}
which is Darcy's law revisited, taking $g=-1$ for brevity.

\subsection{Density stratification as stablizing mechanism}\label{sec_strat}

A noteworthy physical phenomenon that frequently occurs in large-scale fluid
flows is \emph{density stratification}. When the fluid density varies only in the vertical direction and every
upper fluid has a smaller density than the fluids placed in the lower layers, we say that the fluid
is (stably-)\emph{stratified}. Sufficiently stratified fluid tends to be stable because the
higher density at the lower layer causes gravity to dominate buoyancy so that the structure of
the stratified fluid remains stable against small perturbation. The density stratification can be seen as the
opposite of the Rayleigh-B\'enard (or Rayleigh-Darcy) type instability. Stratification tends to stabilize the fluid flows by
suppressing the vertical component of the fluid velocity to vanish and by layering the fluid motion
in horizontal planes \cite{Gill, Pedlosky}.  Mathematically, we say that a fluid is (stably-)stratified if $\rho'(x_2)>0$ (since we are taking $g=-1$ for brevity in this paper, the direction of gravity is upward, see \eqref{Darcy}) where $\rho=\rho(x_2)$ is the density function depending only on the vertical coordinate.

For the IPM equation \eqref{eq:IPM}, density stratification can stabilize the flow via the large-scale gravitational effect, which is encoded in the term $(0,\rho)$ in \eqref{Darcy}. This can be rigorously justified in showing asymptotic stability of certain stratified densities. As we will see later in Section~\ref{sec_stationary} under reasonably mild conditions, any stationary solution $\rho$ to \eqref{eq:IPM} on a bounded domain takes the form $\rho=\rho(x_2)$, i.e., the density is stratified, either stably or unstably, if we stretch out the notion of stratification to any vertically layered density $\rho=\rho(x_2)$ without requirement on the sign of $\rho'(x_2)$. This suggests that density stratification might be the generic mechanism of stabilization for \eqref{eq:IPM}. In Section~\ref{sec_stationary}, such a statement will be made more precisely.

\subsection{Stationary solutions to the IPM equation}\label{sec_stationary}

A remarkable feature of the IPM equation is that it has a relatively simple structure of stationary solutions, compared to other active scalar equations including the SQG equation and the 2D Euler equations. In fact, if $\rho$ is a $C^1$ stationary solution to \eqref{eq:IPM} on a bounded domain, then $\rho$ is forced to be a function of $x_2$ only while $v\equiv 0$.
See Lemma 1.1 in \cite{Elgindi}. However, the argument of specifying steady states in \cite{Elgindi} relies on the a priori square integrability of $v$, which can be guaranteed on bounded domains only. On $\bbR^2$, we take a slightly different perspective, which would be presented here shortly, to justify the steady states of the form $(\rho,v)=(\rho(x_2),0)$ as the targets of our analysis.

The previously mentioned observation in \cite{Elgindi} on the steady states motivates us to 
consider the equilibrium $(\rho_s,v_s,p_s)$ with the stratified density $\rho_s = \rho_s(x_2)$. Then, we have from the $\rho$ equation in \eqref{eq:IPM} that $v_2 = 0$. Since the divergence-free condition gives $v_1(x_1) = C$ for some $C \in \bbR$, we obtain $v_s = (C,0)$ and $\nabla p_s = (-C,\rho(x_2))$. By setting up the differences from the steady state as $$\theta(t,x) = \rho(t,x) - \rho_{s}(x_2), \qquad u(t,x) = v(t,x) - v_s, \qquad P(t,x) = p(t,x) - p_s(x),$$ we obtain the following perturbed system
\begin{equation*}  \left\{
\begin{aligned} 
\rd_t\tht + (u + v_s) \cdot \nb \tht = -u_2 \rho'_{s}, \\
u= -\nabla P + (0,\tht), \\
\mathrm{div}\, u = 0.
\end{aligned}
\right.
\end{equation*}
Employing the transformation $$\tilde{\tht}(t,x) = \tht(t,x+v_st), \qquad \tilde{u}(t,x) = u(t,x+v_st), \qquad \tilde{P}(t,x) = P(t,x+v_st),$$ we can reduce the above formulation to the case of $v_s = 0$. This allows us, without loss of generality, to work only on the form
\begin{equation}\label{st_sol}
    (\rho_{s},u_{s},p_{s}) = (\rho_{s}(x_2),0,\int_{0}^{x_2} \rho_{s}(\tau)\,\ud \tau)
\end{equation} which yields the corresponding reduced system
\begin{equation}  \label{eq:SIPM_g} \tag{SIPM}
\left\{
\begin{aligned} 
\rd_t\tht + u\cdot \nb \tht = -u_2 \rho'_{s}, \\
u= -\nabla P + (0,\tht), \\
\mathrm{div}\, u = 0.
\end{aligned}
\right.
\end{equation}
The form \eqref{st_sol} is consistent with Lemma 1.1 in \cite{Elgindi}; any stationary solution to \eqref{eq:IPM} with the stratified density $\rho_s=\rho_s(x_2)$ should take the form of \eqref{st_sol}. 


\subsection{Previous results}
In this section, we gather the previous results regarding the stability for the IPM equation. To the extent of the authors' knowledge, the existing works focus on the \emph{linearly} stratified densities only, say $\rho_s(x_2)=x_2$. In this case, \eqref{eq:SIPM_g} corresponds to the system \eqref{eq:SIPM} with $\sigma =0$, so here the previous results are stated in terms of \eqref{eq:SIPM} with $\sigma=0$. The first breakthrough was made in \cite{Elgindi} for $\bbR^2$ and $\bbT^2$, which is stated in the following theorem. Note that in the case of the torus $\bbT^2$, we are still looking near $\rho_s(x)=x_2$ on $\bbR^2$ while the \emph{perturbation} $\theta$ is defined on $\bbT^2$.
\begin{theorem}[T. Elgindi, 2017]\label{thm_Elgindi} Let $\rho_s(x_2)=x_2.$ There exists $\delta_0$ such that if $\|\theta_0\|_{H^{m}\cap W^{4,1}(
\bbR^2)} \leq \delta \leq \delta_0$ with $m\geq 20$, then the solution $\theta$ to \eqref{eq:SIPM_g} on $\bbR^2$ satisfies
\begin{equation*}
    \|\theta(t)\|_{H^3} \lesssim \frac{\delta}{t^{1/4}},\quad \|u_1\|_{H^3}\lesssim \frac{\delta}{t^{3/4}},\quad \mbox{and}\quad  \|u_2\|_{H^3} \lesssim \frac{\delta}{t^{5/4}} \quad \forall t>0,
\end{equation*}
where $u=R_1 R^{\perp} \theta$. For the case of $\bbT^2$, there exists $\delta_1>0$ such that if $\|\theta_0\|_{H^m(\bbT^2)} \leq \delta \leq \delta_1$ with $m\geq 20$, then the solution $\theta$ to \eqref{eq:SIPM_g} on $\bbT^2$ satisfies
\begin{equation*}
    \|\theta(t)\|_{H^{20}} \leq 2 \delta, \quad\mbox{and}\quad \|u\|_{H^3} \lesssim \frac{\delta}{t^{5/2}} \quad \forall t>0.
\end{equation*}
\end{theorem}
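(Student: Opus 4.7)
The plan is to treat \eqref{eq:SIPM_g} with $\rho_s(x_2)=x_2$ as a semilinear evolution driven by an anisotropic damping semigroup. Solving the divergence-free condition for the pressure gives $P = \Delta^{-1}\rd_2\theta$, hence
\[
u_1 = -\rd_1\rd_2\Delta^{-1}\theta,\qquad u_2 = \rd_1^2\Delta^{-1}\theta,
\]
with Fourier symbols $-\xi_1\xi_2/|\xi|^2$ and $\xi_1^2/|\xi|^2$. Thus the equation reads $\rd_t\theta+u\cdot\nb\theta=-u_2$, whose linearization acts in Fourier as $\widehat{e^{t\mathcal{L}}\theta_0}(\xi)=e^{-t\xi_1^2/|\xi|^2}\widehat{\theta_0}(\xi)$, providing damping of every Fourier mode except those supported on $\xi_1=0$.

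\textbf{Linear decay on $\bbR^2$.} Passing to polar coordinates $\xi=r(\cos\phi,\sin\phi)$, the multiplier collapses to $e^{-2t\cos^2\phi}$, and a Laplace-type expansion near $\phi=\pm\pi/2$ yields
\[
\int_{-\pi}^{\pi} (\cos^2\phi)^k\, e^{-2t\cos^2\phi}\, d\phi \,\lesssim\, t^{-k-1/2} \qquad (t\ge 1,\; k\in\{0,1,2\}).
\]
The three exponents $k=0,1,2$ reflect the extra Fourier weights $1$, $\xi_1\xi_2/|\xi|^2$, $\xi_1^2/|\xi|^2$ carried respectively by $\theta$, $u_1$, $u_2$. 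I would then combine $\|\widehat{\theta_0}\|_{L^\infty}\lesssim \|\theta_0\|_{L^1}$ (to pass the angular integration through in the low-frequency regime) with the decay $|\widehat{\theta_0}(\xi)|\lesssim \|\theta_0\|_{W^{4,1}}(1+|\xi|)^{-4}$ (to control the high-frequency tail after a Littlewood--Paley dyadic splitting in $|\xi|$), yielding the rates $t^{-1/4}$, $t^{-3/4}$, $t^{-5/4}$ for $\|\theta\|_{H^3}$, $\|u_1\|_{H^3}$, $\|u_2\|_{H^3}$ at the linear level.

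\textbf{Bootstrap and Duhamel.} I would close a continuity argument on
\[
\|\theta(t)\|_{H^m}\le 2\delta,\quad \|\theta(t)\|_{H^3}\le 2K\delta\brk{t}^{-1/4},\quad \|u_1(t)\|_{H^3}\le 2K\delta\brk{t}^{-3/4},\quad \|u_2(t)\|_{H^3}\le 2K\delta\brk{t}^{-5/4}.
\]
For the high-regularity step, applying $\rd^\alpha$ with $|\alpha|=m$ and pairing with $\rd^\alpha\theta$ in $L^2$, the $-u_2$ contribution furnishes the nonnegative damping $\int(\xi_1^2/|\xi|^2)|\xi|^{2m}|\hat\theta|^2$, while the transport term splits into $\int u_j\rd_j(\rd^\alpha\theta)\cdot\rd^\alpha\theta + \int[\rd^\alpha, u\cdot\nb]\theta\cdot\rd^\alpha\theta$; the first piece reduces by integration by parts (using $\nb\cdot u=0$ and $\rd_1 u_1 = -\rd_2 u_2$) to $\tfrac12\int\rd_2 u_2\cdot(\rd^\alpha\theta)^2$, and a Kato--Ponce estimate handles the commutator. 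For the three decay bounds I would rewrite the nonlinearity as $u\cdot\nb\theta=\nb\cdot(u\theta)$ and use the Duhamel formula
\[
\hat\theta(t,\xi) = e^{-t\xi_1^2/|\xi|^2}\widehat{\theta_0}(\xi)\,-\,i\xi\cdot\int_0^t e^{-(t-s)\xi_1^2/|\xi|^2}\widehat{u\theta}(s,\xi)\,ds,
\]
bound $|\widehat{u\theta}(s,\xi)|\le\|u\theta\|_{L^1}\lesssim\|u(s)\|_{L^2}\|\theta(s)\|_{L^2}\lesssim K^2\delta^2\brk{s}^{-1}$ from the bootstrap in the low-frequency region, use the uniform $H^m$-bound on the high-frequency region, and re-run the Laplace angular analysis after a time split $\int_0^{t/2}+\int_{t/2}^t$.

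\textbf{Main obstacle and the torus case.} The delicate point is the top-order energy estimate, because the naive Moser bound $\frac{d}{dt}\|\theta\|_{H^m}^2\lesssim\|\nb u\|_{L^\infty}\|\theta\|_{H^m}^2$ is \emph{not} closeable: $\|\nb u\|_{L^\infty}\lesssim\|u\|_{H^3}$ inherits the non-integrable $t^{-3/4}$ rate of $u_1$. The remedy is to apply the cancellation $\rd_1 u_1 = -\rd_2 u_2$ systematically in every top-order term, so that every dangerous factor of the form $\rd_1 \rd^\beta u_1$ is converted to $\rd_2 \rd^\beta u_2$, whose contribution decays at the integrable rate $t^{-5/4}$; the large regularity $m\ge 20$ leaves room for the Riesz-transform losses incurred along the way. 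On $\bbT^2$ the angular dispersion is unavailable: for modes with $\xi_1\neq 0$ one has $|\xi_1|\ge 1$, so $\xi_1^2/|\xi|^2\ge |\xi|^{-2}$, and combining the trivial inequality $x^a e^{-tx}\lesssim_a t^{-a}$ with the $H^{20}$ control of $\theta$ and optimizing the exchange between extra $|\xi|$-weights and inverse powers of $t$ produces the rate $t^{-5/2}$ for $u$; meanwhile the zero-$\xi_1$ modes of $\theta$ are conserved by the linear flow and driven only by the nonlinearity, which explains why only a uniform $H^{20}$ bound (and not decay) is asserted for $\theta$ itself on $\bbT^2$.
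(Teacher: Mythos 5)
Note first that the paper does not prove this statement: Theorem~\ref{thm_Elgindi} is quoted verbatim as a prior result of Elgindi and is only cited (\cite{Elgindi}), so there is no in-paper proof to measure your argument against. What you have written is, in its architecture, a reconstruction of Elgindi's original route: the explicit anisotropic symbols $-\xi_1\xi_2/|\xi|^2$ and $\xi_1^2/|\xi|^2$ for $u$, the Laplace-type angular analysis of $e^{-t\xi_1^2/|\xi|^2}$ producing the hierarchy $t^{-1/4},t^{-3/4},t^{-5/4}$ (with $W^{4,1}$ feeding $\|\widehat{\theta_0}\|_{L^\infty}$ at low frequencies), a bootstrap on pointwise-in-time decay norms closed by Duhamel, the cancellation $\rd_1u_1=-\rd_2u_2$ to route all non-integrable top-order terms either into the damping $\int\|R_1\theta\|_{H^m}^2$ or into the integrable factor $\|\rd_2u_2\|_{L^\infty}\lesssim t^{-5/4}$, and on $\bbT^2$ the exchange of $|\xi|$-weights for powers of $t^{-1}$. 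This is a legitimate strategy and is essentially the cited proof. It is worth contrasting it with what the present paper actually does for its own (stronger) Theorems~\ref{thm:stb_R}--\ref{thm:stb_O}: rather than bootstrapping decay rates, the paper closes global existence by controlling the single time-integrated quantity $\int_0^T\|u_2\|_{W^{1,\infty}}\,\ud t$ through a Duhamel-plus-Young-convolution argument purely on the Fourier side (Lemma~\ref{key_lem_R}), which is what removes the $W^{4,1}$ hypothesis and lowers $m\geq 20$ to $m>3$; sharp decay is then extracted a posteriori by the balancing inequality \eqref{ineq_balancing}. Your approach buys the faster $L^2$-based rates ($t^{-1/4}$ versus the paper's $t^{-1/2}$ scaling in $Nt$) at the price of the $L^1$-type assumption and high regularity; the paper's approach buys low regularity and no $L^1$ assumption at the price of those rates.

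Two places in your sketch are asserted rather than derived and would need real work. First, in the Duhamel step the bound $\|u\theta\|_{L^1}\lesssim\brk{s}^{-1}$ makes $\int_0^{t/2}$ logarithmically divergent, so the decay must instead be harvested from the semigroup acting on the nonlinearity (and, for $u_2$, from its extra $\xi_1^2/|\xi|^2$ prefactor); this is the genuinely delicate part of Elgindi's argument and your sketch glosses over it. Second, on $\bbT^2$ the rate $t^{-5/2}$ does not follow from "optimizing the exchange of weights" alone --- the linear trade of $H^{20}$ regularity for powers of $t^{-1}$ gives an arbitrarily fast rate for the linear flow, and $5/2$ is actually dictated by how the quadratic nonlinearity feeds back through Duhamel together with the loss of derivatives in the weight-trading step; moreover the uniform $H^{20}$ bound and the $t^{-5/2}$ decay are intertwined in the torus bootstrap (the former needs $\int\|\nb u_2\|_{L^\infty}\,\ud t<\infty$, which you can only get from the latter), so the continuity argument must carry both simultaneously. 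Neither point is fatal, but as written they are the gaps between a strategy and a proof.
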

The confined strip $\Omega:=\bbT\times[-1,1]$ was dealt with in \cite{Cordoba} as a possibly more appropriate scenario in view of anisotropy stemming from gravity. The result is as follows. 
\begin{theorem}[A. Castro, D. C\'ordoba, and D. Lear, 2019]\label{thm_Cordoba} There exists $\varepsilon_0>0$ such that if $\|\theta_0\|_{X^m(\Omega)}\leq 
\varepsilon \leq  \varepsilon_0$ with $m\geq 10$, then the solution $\theta$ to \eqref{eq:SIPM_g} on $\Omega$ exists globally in time and satisfies
\begin{equation*}
    \|u\|_{H^3(\Omega)} \lesssim \frac{\varepsilon_0}{(1+t)^{5/4}}, \quad \|\bar{\theta}\|_{H^3(\Omega)} \lesssim \frac{\varepsilon_0}{(1+t)^{5/4}}, \quad \mbox{and}\quad \|\theta\|_{H^{m}(\Omega)} \leq 2\varepsilon \quad \forall t>0,
\end{equation*}
where we use the notation $\bar{f}:=f-\int_{\bbT} f \,\mathrm{d}x_1$ for any $f\in L^2(\Omega)$. The function space $X^m(\Omega)$ is defined in \eqref{def_Xm}.
\end{theorem}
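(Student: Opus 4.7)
The plan is a bootstrap argument in which the high Sobolev norm $\|\theta\|_{X^m}$ stays close to $\varepsilon$ while the lower-order decaying quantities $\|u\|_{H^3}$ and $\|\bar\theta\|_{H^3}$ are driven down at rate $(1+t)^{-5/4}$. First I would split $\theta=\langle\theta\rangle+\bar\theta$ into its $x_1$-average and its mean-free part. Solving the pressure equation $\Delta P=\partial_2\theta-\operatorname{div}(u\theta)$ on $\Omega$ with the boundary condition $\partial_2 P|_{x_2=\pm1}=\theta|_{x_2=\pm1}$ forced by no-penetration, one sees that $u_2$ annihilates any $x_1$-independent input, so the linearization acts trivially on the zero mode and decay is necessarily only for $\bar\theta$ and $u$. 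On the mean-free part, Darcy's law gives $u_2=\calT\bar\theta$ for a bounded operator $\calT$ whose $x_1$-Fourier symbol at frequency $k\ne 0$ behaves like $k^2/(k^2+\mu_n^2)$, where $\mu_n$ are the vertical spectral parameters adapted to the strip, yielding the desired partial damping.

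For the propagation of high regularity, I would differentiate $\|\theta\|_{X^m}^2$, where $X^m$ is the anisotropic, boundary-compatible space of \eqref{def_Xm}, apply Kato--Ponce-type commutator estimates to $u\cdot\nabla\theta$, and obtain
\[
\frac{d}{dt}\|\theta\|_{X^m}^2 \lesssim \|\nabla u\|_{L^\infty}\|\theta\|_{X^m}^2.
\]
Under the bootstrap $\|u(t)\|_{H^3}\lesssim \varepsilon(1+t)^{-5/4}$ and the embedding $H^3\hookrightarrow W^{1,\infty}$, the factor $\|\nabla u\|_{L^\infty}$ is integrable in $t$ with integral $\lesssim \varepsilon$, so Gr\"onwall upgrades $\|\theta_0\|_{X^m}\le\varepsilon$ to $\|\theta(t)\|_{X^m}\le 2\varepsilon$.

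For the decay itself, I would write the mean-free equation as $\partial_t\bar\theta+\calT\bar\theta=F$ with $F=-u\cdot\nabla\theta+\overline{u\cdot\nabla\theta}$, and use Duhamel with the linear semigroup $e^{-t\calT}$. The heart of the argument is the sharp estimate $\|e^{-t\calT}f\|_{H^3}\lesssim (1+t)^{-5/4}\|f\|_{X^m}$: expanding $f$ in $x_1$-Fourier modes times an eigenbasis of the vertical problem, each mode evolves as $e^{-t\lambda(k,n)}$ with $\lambda(k,n)\asymp k^2/(k^2+\mu_n^2)$, and one extracts the rate $t^{-5/4}$ by trading regularity for decay in the $(k,n)$-sum, mimicking the stationary-phase analysis used for the analogous problem on $\bbR^2$. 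The nonlinearity obeys $\|F\|_{H^3}\lesssim \|u\|_{H^3}\|\theta\|_{X^m}\lesssim \varepsilon^2(1+t)^{-5/4}$; since $\int_0^t(1+t-s)^{-5/4}(1+s)^{-5/4}\,ds\lesssim (1+t)^{-5/4}$, Duhamel self-improves to $\|\bar\theta(t)\|_{H^3}\lesssim \varepsilon(1+t)^{-5/4}$, and boundedness of the Darcy map $\bar\theta\mapsto u$ transfers this to $u$, closing the bootstrap.

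The main obstacle I expect is proving the sharp $(1+t)^{-5/4}$ semigroup bound on the confined strip. Unlike on $\bbR^2$ or $\bbT^2$, there is no Fourier transform in $x_2$, so all $k_2$-integrals appearing in the usual dispersive analysis must be replaced by sums over the discrete vertical modes $\mu_n$ of a Neumann-type two-point boundary value problem induced by the no-penetration condition. Matching the continuous dispersive rate while summing over discrete $\mu_n$, uniformly in $x_1$-frequency $k$, requires a careful comparison of the discrete and continuous spectral sums and is the technical core of the argument. A secondary obstacle is carrying normal derivatives up to the boundary without degrading the decay rate; this is precisely what forces the anisotropic choice of $X^m$ recorded in \eqref{def_Xm} and the corresponding boundary bookkeeping in the commutator estimates at $x_2=\pm 1$.
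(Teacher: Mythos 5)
First, a framing remark: Theorem~\ref{thm_Cordoba} is quoted from Castro--C\'ordoba--Lear \cite{Cordoba} and is not proved in this paper; the closest the paper comes is its own stronger result on the strip, Theorem~\ref{thm:stb_O} (proved in Sections~\ref{sec_T}--\ref{sec_O} via the bases $\mathscr{B}_{p,q},\mathscr{C}_{p,q}$, the strip analogue of $R_1$, Proposition~\ref{prop_energy_O}, and the ``balancing'' inequality \eqref{ineq_balancing}). Judged against that machinery, your proposal has the right global architecture (split off the $x_1$-average, use the symbol $k^2/(k^2+\mu_n^2)$ for $u_2$, bootstrap high norm against decaying low norms), but two of its load-bearing steps do not hold as written.

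The first gap is the energy inequality $\frac{d}{dt}\|\theta\|_{X^m}^2\lesssim\|\nabla u\|_{L^\infty}\|\theta\|_{X^m}^2$. A Kato--Ponce commutator applied to $u\cdot\nabla\theta$ produces \emph{two} terms, $\|\nabla u\|_{L^\infty}\|\theta\|_{H^m}+\|u\|_{H^m}\|\nabla\theta\|_{L^\infty}$, and the second one is fatal to your Gr\"onwall closure: on the strip $\theta$ itself does not decay (it converges to a nontrivial profile $\sigma(x_2)$), so $\|\nabla\theta\|_{L^\infty}\sim\varepsilon$ for all time and the term contributes $\varepsilon^3 T$ after integration. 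Your stated differential inequality silently drops it. To close one must exploit that $\|u\|_{H^m}\lesssim\|R_1\theta\|_{H^m}$, retain the damping term $N\int_0^T\|R_1\theta\|_{H^m}^2\,\ud t$ on the left, and absorb the quadratic-in-$R_1\theta$ contributions into it; the leftover is $\sup_t\|\theta\|_{H^m}^2\int_0^T\|\partial_2u_2\|_{L^\infty}\,\ud t$, whose time-integrability must itself be proved (the paper does this from the Duhamel formula, Lemma following Proposition~\ref{prop_energy_O}, \emph{without} assuming any pointwise decay a priori). There is also a genuine top-order issue at $\partial_2^m$, where no $\partial_1$ is available and the boundary terms at $x_2=\pm1$ must be handled by the parity structure of $X^m,Y^m$; this is exactly where the paper spends most of the proof of Proposition~\ref{prop_energy_O}, and your ``boundary bookkeeping'' remark does not engage with it.

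The second gap is the mechanism behind the semigroup bound $\|e^{-t\calT}f\|_{H^3}\lesssim(1+t)^{-5/4}\|f\|_{X^m}$. You propose to mimic the stationary-phase analysis from $\bbR^2$, but on the strip both frequencies are discrete and there is no continuum of slowly damped modes accumulating at $\xi_1=0$ to integrate over; the decay comes instead from trading regularity for decay (slow modes are those with $k^2\ll\mu_n^2$, hence high vertical frequency, hence small amplitude in $H^m$), and the natural rate is $(1+t)^{-(m-s)/2}$-type, not $5/4$. The paper implements this purely by energy methods via \eqref{ineq_balancing} with $M\sim 1+Nt$ and Lemma~\ref{cal_lem}, avoiding any discrete-versus-continuous spectral comparison; that route would also repair your step, but ``matching the continuous dispersive rate'' is the wrong target and, as you note yourself, you have no proof of it. Two smaller points: the pressure satisfies $\Delta P=\partial_2\theta$ (Darcy's law is linear; there is no $\operatorname{div}(u\theta)$ source), and your forcing $F$ should be the mean-free part $-\overline{(u\cdot\nabla)\theta}$ of the nonlinearity, since the $x_1$-average of $\theta$ is not conserved and couples back into the $\bar\theta$ equation.
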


\Black{In the very recent work \cite{Paicu}, viewing \eqref{eq:SIPM_g} on $\bbR^2$ as the limit system of the 2D damped Boussinesq equations, the global well-posedness result was obtained via the anisotropic Littlewood-Paley theoretical analysis for the finer homogeneous spaces $ \dot{H}^{1-\tau}(\bbR^2) \cap \dot{H}^{3+\tau}(\bbR^2)$ with $0<\tau<1$ as the following.

\begin{theorem}[R. Bianchini, T. Crin-Barat, M. Paicu, 2024] Let $\rho_s(x_2)=x_2$. Fix any $0<\tau<1$ and let $s\geq 3+\tau$. For any $\theta_0\in \dot{H}^{1-\tau}(\bbR^2) \cap \dot{H}^{3+\tau}(\bbR^2)$, there exists $\delta_0>0$ such that if $\|\theta_0\|_{\dot{H}^{1-\tau} \cap \dot{H}^{3+\tau}(\bbR^2)} \leq \delta_0$, then there exists a unique global solution $\theta$ to \eqref{eq:SIPM_g} on $\bbR^2$ equipped with the time integrability of $\|u_2(t)\|_{L^\infty}$.
    
\end{theorem}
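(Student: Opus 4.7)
The plan is to propagate homogeneous norms in $\dot H^{1-\tau}\cap\dot H^{3+\tau}$ by exploiting the partial dissipation visible in the linearized system. Linearizing \eqref{eq:SIPM_g} around $\rho_s(x_2)=x_2$ and passing to Fourier yields
\begin{equation*}
\partial_t\hat\tht(t,\xi)+\tfrac{\xi_1^2}{|\xi|^2}\hat\tht(t,\xi)=0,
\end{equation*}
so the linear damping kernel $e^{-t\xi_1^2/|\xi|^2}$ is scale-invariant and degenerates only along $\xi_1=0$. This matches the homogeneous scaling of the nonlinearity and suggests using an anisotropic Littlewood--Paley decomposition $\Dlt_k^h\Dlt_j^v$ in horizontal and vertical frequencies. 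The two homogeneous exponents $1-\tau$ and $3+\tau$ straddle the critical index $s=2$ at which $\dot H^s(\bbR^2)\hookrightarrow L^\infty$ barely fails; keeping both of them is what provides room below and above the $L^\infty$-embedding threshold.

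First, I would derive the differential energy identity at the two regularity levels $s\in\{1-\tau,3+\tau\}$. Testing \eqref{eq:SIPM_g} against $(-\Dlt)^s\tht$ and using the explicit form $u_2=R_1^2\tht$, the linear term produces exactly the partial-dissipation functional
\begin{equation*}
\calD_s(\tht):=\int_{\bbR^2}\frac{\xi_1^2}{|\xi|^2}|\xi|^{2s}|\hat\tht(\xi)|^2\,\ud\xi.
\end{equation*}
I would then split the nonlinear term $u\cdot\nb\tht$ via anisotropic Bony paraproducts. The crucial observation is that each component of $u=(-R_1R_2\tht,R_1^2\tht)$ carries the multiplier $|\xi_1|/|\xi|$, which must be captured so that every occurrence of $u$ pairs with a $\calD^{1/2}$-weight. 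A commutator analysis in the rectangular blocks $\{|\xi_1|\sim 2^k,\,|\xi_2|\sim 2^j\}$ with anisotropic Bernstein inequalities should yield
\begin{equation*}
\tfrac{\ud}{\ud t}E(t)+c\,\calD(t)\leq CE(t)^{1/2}\calD(t),\qquad E:=\|\tht\|_{\dot H^{1-\tau}}^2+\|\tht\|_{\dot H^{3+\tau}}^2,\quad \calD:=\calD_{1-\tau}+\calD_{3+\tau},
\end{equation*}
so that under the smallness $E(0)^{1/2}<c/(2C)$ a bootstrap argument produces a global bound on $E(t)$ together with $\calD\in L^1_t$.

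Next I would extract time integrability of $\|u_2\|_{L^\infty}$ from $\calD\in L^1_t$. Since $u_2=R_1^2\tht$ has Fourier symbol $\xi_1^2/|\xi|^2$, splitting frequencies at dyadic level $R$ and applying Cauchy--Schwarz give
\begin{equation*}
\|u_2(t)\|_{L^\infty}\leq\|\widehat{u_2}(t)\|_{L^1}\aleq R^{\tau}\calD_{1-\tau}(\tht)^{1/2}+R^{-(2+\tau)}\calD_{3+\tau}(\tht)^{1/2},
\end{equation*}
where the weighted integrals over $\{|\xi|\leq R\}$ and $\{|\xi|\geq R\}$ are finite precisely because $\tau\in(0,1)$. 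Optimizing in $R$ and integrating in $t$, the bound $\calD\in L^1_t$ yields the announced time integrability of $\|u_2\|_{L^\infty}$. Uniqueness and continuation then follow from standard Osgood-type arguments, since all components of $\nb u$ are controlled along the same bootstrap.

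The hard part is the Bony decomposition: in a naive isotropic paraproduct, the crucial factor $|\xi_1|/|\xi|$ from one occurrence of $u$ can be swallowed by a vertical frequency within a single dyadic annulus, breaking the pairing with $\calD_s^{1/2}$. The remedy is to split frequencies into rectangular blocks in $(\xi_1,\xi_2)$ and to prove anisotropic commutator and paraproduct estimates that track the horizontal derivative count separately from the vertical one. Making this uniform down to the low regularity $1-\tau$, where negative vertical regularity could otherwise appear in intermediate pieces, is the genuine technical innovation, and it is what lets the lower index be pushed below the integer range of Theorem \ref{thm_Elgindi}.
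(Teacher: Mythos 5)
First, a point of context: the paper does not prove this statement --- it is quoted from \cite{Paicu} as a previous result, so there is no in-paper proof to compare against. Judged on its own terms, your outline follows the right general philosophy (homogeneous anisotropic energy estimates exploiting the partial dissipation $\xi_1^2/|\xi|^2$), but it has a gap at exactly the step that makes this problem hard.

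The problem is the passage from $\calD\in L^1_t$ to $\int_0^\infty\|u_2(t)\|_{L^\infty}\,\ud t<\infty$. Your frequency-split Cauchy--Schwarz bound is correct as far as it goes, but after optimizing in $R$ it yields $\|u_2(t)\|_{L^\infty}\lesssim \calD_{1-\tau}^{a}\calD_{3+\tau}^{b}$ with $a+b=\tfrac12$, i.e.\ $\|u_2\|_{L^\infty}\lesssim\calD^{1/2}$. From $\int_0^\infty\calD\,\ud t<\infty$ this gives $\|u_2\|_{L^\infty}\in L^2_t$, \emph{not} $L^1_t$: the square root of an $L^1_t$ function need not be integrable on $(0,\infty)$. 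The same defect undermines the claimed closed inequality $\frac{\ud}{\ud t}E+c\calD\leq CE^{1/2}\calD$. In the transport term the piece $u_2\partial_2\theta$ at top order produces, after commutation and $\partial_2u_2=-\partial_1u_1$, a contribution of size $\|\nb u_2\|_{L^\infty}\|\theta\|_{\dot H^{3+\tau}}^2$; neither copy of $\Lambda^{3+\tau}\theta$ carries the factor $|\xi_1|/|\xi|$, so this term is $\calD^{1/2}E$, not $E^{1/2}\calD$, and it cannot be absorbed into $c\calD$ by pointwise smallness of $E$ (Young's inequality leaves an $E^2$ which is not time-integrable a priori). This is precisely the circularity --- the energy estimate needs $\int\|\nb u_2\|_{L^\infty}\ud t$, which your dissipation functional cannot supply.

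The missing idea is to control the quantity $\int_0^T\|\mathscr{F}u_2(t)\|_{L^1_\xi}\,\ud t$ \emph{directly} through the Duhamel representation of $\mathscr{F}\theta$ along the semigroup $e^{-\frac{\xi_1^2}{|\xi|^2}t}$, using the identity $\int_0^\infty\frac{\xi_1^2}{|\xi|^2}e^{-\frac{\xi_1^2}{|\xi|^2}t}\,\ud t=1$ uniformly in $\xi$; the nonlinear feedback then reappears multiplied by a small factor and is absorbed. This is the mechanism of Lemma~\ref{key_lem_R} and Proposition~\ref{prop_energy_R} in the present paper (in the inhomogeneous $H^m$ setting), and an analogous device --- a damped-mode/hypocoercivity correction rather than Cauchy--Schwarz against the dissipation --- is what \cite{Paicu} uses in the homogeneous anisotropic Besov framework. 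Without some version of this step your argument does not close.
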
}

\subsection{Summary of main results}

We summarize the main contributions of our results that will be presented in Section~\ref{sec_main}.
\begin{itemize}
    \item \textbf{Quantification of asymptotic stability in $N$}. 
    We quantify the asymptotic stability for the IPM equation in terms of the stratification on all the target domains $\bbR^2$, $\bbT^2$, and $\bbT\times[-1,1]$. This work can be seen as the first work on quantitative asymptotic stability for the IPM equation with respect to the buoyancy frequency $N$.
    
    \item \textbf{Sharpness of temporal decay rates and exact asymptotic profiles.} Every decay rate obtained in Theorem~\ref{thm:stb_R}-\ref{thm:stb_O} sharply reaches the rate of the linearized system. Note that we do not require any $L^1$ condition for initial data, compared to Theorem~\ref{thm_Elgindi} where the polynomially $1/4$-order faster decay rate was obtained on $\bbR^2$ thanks to the $W^{4,1}$ assumption on the initial disturbance. For the other domains, $\bbT^2$ and $\bbT\times[-1,1]$, while there were no available temporal decay rates in the preceding works \cite{Cordoba,Elgindi}, we newly specify the exact asymptotic density profiles and provide the corresponding temporal decay rates that are \emph{sharp}. Also, the rates get faster as we impose more regularity on the initial disturbance, inspired by \cite{Cordoba2}.
    
    \item \textbf{Robustly reduced Sobolev exponents.} We give a reduced minimal Sobolev exponent that works \emph{robustly} on all the target domains. This is roughly due to the \emph{derivative loss minimizing nature} of our Fourier analytic energy method; see Lemma~\ref{key_lem_R} as an example. Specifically, we require the initial density disturbance $\theta_0$ to be small in $H^m$ for any integer $m\geq 4$, see Theorem~\ref{thm:stb_R}-\ref{thm:stb_O}. Note that on the confined scenario, $X^m$ (see \eqref{def_Xm}) was considered instead of $H^m$ but still $m$ denotes the $m$-times differentiability. This improves the existing works \cite{Elgindi} on $\bbT^2$ and \cite{Cordoba} on $\bbT\times[-1,1]$, where $m\geq 20$ and $m\geq 10$ were required, respectively. We can further relax the condition $m\geq 4$ to any positive number $m>3$ on $\bbR^2$ and $\bbT^2$ for purely linear stratification by proving a specific commutator estimate. \Black{This roughly corresponds to the homogeneous variants $\dot{H}^{1-\tau}(\bbR^2)\cap \dot{H}^{3+\tau}(\bbR^2)$ with $0<\tau<1$ for which the global well-posedness and some time integrability are obtained in \cite{Paicu}. While such $\dot{H}^{1-\tau}\cap \dot{H}^{3+\tau}$ type initial data with $0<\tau<1$ is slightly more general than the $H^m$ data with $m>3$, the obtained $H^m$ solution is more regular than the $\dot{H}^{1-\tau}\cap \dot{H}^{3+\tau}$ solution so that we establish the stronger properties. For example, the time integrability of $\|u_2(t)\|_{L^\infty}$ is achieved in \cite{Paicu} with the homogeneous norms while we prove the explicit temporal decay $\|u_2(t)\|_{L^\infty}\lesssim (1+t)^{-5/4}$ for the $H^m$ solutions. See Remark~\ref{rmk_decay}.}
 
    \item \textbf{Non-linearly stratified densities.} On $\bbR^2$, we show the quantitative asymptotic stability of certain quasi-linearly stratified densities. This paper seems to provide the very first touch on the IPM stability of densities that are \emph{non-linear} in the vertical direction. Quantification is also performed on \emph{how much the target densities can deviate from the purely linear density-stratification without losing their stability,} with respect to the intensity of background stratification. Such a perturbative nature of our quasi-linear densities can be similarly found in showing the stability of shear flows close to the planar Couette flow in the 2D Euler equations \cite{BM}.

    \item \textbf{Non-monotone deviation.} We would like to emphasize here another aspect in our treatment of the non-linearly stratified densities. Our target densities of stratification can be decomposed into large linear background profile and small \emph{non-monotone} deviation. Non-monotonicity of density is known to largely affect the formations of fluid instabilities \cite{instability_1,instability_2,instability_3}; to get a better grasp of the stabilizing effect of stratification on fluids, we try to quantify the admissible non-monotonic deviation from the relatively well-investigated linear density profile \cite{Cordoba,Elgindi} in the view of buoyancy-driven stability. The precise statement can be found in Theorem~\ref{thm:stb_R}.


    \item \textbf{Anisotropic commutator estimate.} The standard commutator estimates deal with either the full partial derivatives $\partial^{\alpha}$ or the isotropic Fourier multipliers including $\Lambda^{\alpha}$ and $J^{\alpha}$  that are defined by $|\xi|^{\alpha}$ and $(1+|\xi|^2)^{\alpha/2}$ in Fourier variables, respectively. Here we give an elementary proof of the anisotropic commutator estimate that transfers specific derivatives $\partial_1$ or $\partial_2$ in a desirable way during the estimate. This allows us to use the energy estimates in the low regularity setting, compared to the energy estimates in the former studies \cite{Elgindi} and \cite{Cordoba}. See Lemma~\ref{lem_commu}. One may refer to \cite{Paicu} for a different type of anisotropic commutator estimate.
\end{itemize}

\section{Main results}\label{sec_main}

\subsection{Notations}
We denote by $\mathscr{S}(X)$ the space of the Schwartz functions that are defined on the space $X$. For any $f\in \mathscr{S}(\bbR^2)$, we use $\mathscr{F}f$ for the Fourier transform of $f$. For the one-dimensional function $g\in \mathscr{S}(\bbR)$, we write $\widehat{g}$ instead. The function spaces $W^{k,p}$ and $H^s$ for positive numbers $s,$ $k,$ and $p$ stand for the usual Sobolev spaces. 

\subsection{Statements}

We consider the domain $\bbR^2$ first. For any positive constant $N$ and smooth function $\sigma \in \mathscr{S}(\bbR)$, we consider a special class of stratified solutions
\begin{equation}\label{str_class}
    \mathscr{C}_{N,\sigma} := \left\{ \rho_{s} \in C^{\infty}(\bbR) ; \,\,\rho_{s} = \int_{0}^{x_2} N + \sigma(\eta) \,\ud \eta \right\}.
\end{equation}
Then, we can rewrite \eqref{eq:SIPM_g} as follows:
\begin{equation}  \label{eq:SIPM} 
\left\{
\begin{aligned} 
\rd_t\tht + u\cdot \nb \tht = -u_2 (N + \sigma(x_2)), \\
u= -\nabla P + (0,\tht), \\
\mathrm{div}\, u = 0.
\end{aligned}
\right.
\end{equation}
Our first main theorem quantifies the asymptotic stability of the stationary solutions $\rho_s \in \mathscr{C}_{N,\sigma}$ in the IPM equation, provided that $N$ is sufficiently large in some sense. 
\begin{maintheorem}[Quantitative asymptotic stability of quasi-linear densities]\label{thm:stb_R}
	Let $m \in \bbN$ with $m > 3$. Then, there exists a constant $C_0=C_0(m)>0$ such that for any triple $(N,\sigma,\theta_{0}) \in (0,\infty)\times \mathscr{S}(\bbR) \times H^{m}(\bbR^2)$ satisfying \begin{equation}\label{ass}
	N \geq C_0 \left( \| \tht_0 \|_{H^{m}} + \| (1+|\xi_2|^2)^{\frac{m+1}{2}} \widehat{\sigma}(\xi_2) \|_{L^1} \right),
	\end{equation} \eqref{eq:SIPM} possesses a unique global-in-time solution $$\theta \in C([0,\infty);H^{m}(\bbR^2)) \cap C^1((0,\infty);H^{m-1}(\bbR^2))$$
	which fulfills the sharp temporal decay rates for the velocity $u$ as
	\begin{equation}\label{tem_decay_est_sgm_2}
	     \| u(t) \|_{H^{m}} \leq \frac{C\|\theta_0\|_{H^m}}{(1+Nt)^{\frac{1}{2}}},\quad \| \nabla u_2(t) \|_{H^{m-1}} \leq \frac{C \| \tht_0 \|_{H^m}}{(1+Nt)}
	\end{equation}
for some $C=C(m)>0$. Moreover, the density $\theta$ itself features the $L^\infty$ decay in time
	\begin{equation}\label{tem_decay_est_sgm}
		\| \theta(t) \|_{L^{\infty}} \to 0 \quad \mbox{as} \quad Nt \to \infty.
	\end{equation}
\end{maintheorem}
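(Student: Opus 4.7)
My plan is to exploit the partially dissipative structure of the linearization of \eqref{eq:SIPM} via an $H^m$ energy estimate combined with a Fourier-analytic semigroup analysis. The linear operator on the right-hand side of the $\theta$-equation is $\theta \mapsto -N \mathscr{D}\theta$ where $\mathscr{D} := \rd_1^2(-\lap)^{-1}$ has symbol $\xi_1^2/|\xi|^2$, and the Biot--Savart law $u = R_1 R^{\perp}\theta$ gives the two crucial identities
\begin{equation*}
u_2 = \mathscr{D}\theta, \qquad \|u\|_{H^m}^2 = \brk{\mathscr{D} J^m\theta,\, J^m\theta}_{L^2} = \int \brk{\xi}^{2m}\frac{\xi_1^2}{|\xi|^2}|\widehat\theta(\xi)|^2\,\ud\xi,
\end{equation*}
where $J^m := (1-\lap)^{m/2}$. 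After establishing local well-posedness in $H^m$ for $m>3$ by standard arguments, I apply $J^m$ to \eqref{eq:SIPM}, pair with $J^m\theta$, and use $\mathrm{div}\,u = 0$ to kill the top-order transport piece, obtaining the master energy identity
\begin{equation*}
\tfrac{1}{2}\frac{\ud}{\ud t}\|\theta\|_{H^m}^2 + N\|u\|_{H^m}^2 = -\brk{J^m(\sigma(x_2)\mathscr{D}\theta),\, J^m\theta} - \brk{[J^m, u\cdot\nb]\theta,\, J^m\theta}.
\end{equation*}
The crux is to dominate both right-hand terms by $\tfrac{N}{2}\|u\|_{H^m}^2$ plus a remainder controllable under \eqref{ass}.

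For the $\sigma$ term, I exploit that $\sigma$ depends only on $x_2$, so on the Fourier side it acts as convolution in $\xi_2$ alone; the factor $\mathscr{D}$ contributes a zero-order $|\xi_1|/|\xi|$ that I repackage into a $\|u\|_{H^m}$ factor via Cauchy--Schwarz, while the $m+1$ derivatives needed to carry $J^m$ through $\sigma$ are precisely what is paid for by $\|(1+|\xi_2|^2)^{(m+1)/2}\widehat\sigma\|_{L^1}$, so the smallness \eqref{ass} absorbs this contribution into the damping. For the transport commutator the standard Kato--Ponce bound is insufficient below $m=4$, so I will invoke the anisotropic commutator estimate (Lemma~\ref{lem_commu}), which transfers the missing derivative into a $\rd_1 u$ factor and yields $\|[J^m, u\cdot\nb]\theta\|_{L^2} \lesssim \|u\|_{H^m}\|\theta\|_{H^m}$ for any $m>3$; combined with the bootstrap smallness $\|\theta\|_{H^m} \lesssim N/C_0$ from \eqref{ass}, this also gets absorbed. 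I expect this anisotropic commutator step to be the main technical obstacle, as it is precisely what permits the sharp Sobolev threshold $m>3$.

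Once the energy identity yields $\|\theta(t)\|_{H^m} \lesssim \|\theta_0\|_{H^m}$ globally together with $\int_0^\infty \|u(s)\|_{H^m}^2\,\ud s \lesssim N^{-1}\|\theta_0\|_{H^m}^2$, I extract the sharp decay rates from the linear semigroup $e^{-Nt\mathscr{D}}$ with Fourier symbol $e^{-Nt\xi_1^2/|\xi|^2}$ via the elementary inequality
\begin{equation*}
\Big(\tfrac{\xi_1^2}{|\xi|^2}\Big)^{k} e^{-2Nt\,\xi_1^2/|\xi|^2} \leq \frac{C_k}{(Nt)^k}, \qquad k \in \bbN,
\end{equation*}
applied with $k=1$ to get $\|u(t)\|_{H^m}\lesssim \|\theta_0\|_{H^m}/(Nt)^{1/2}$ and with $k=2$ to get $\|\nb u_2(t)\|_{H^{m-1}}\lesssim \|\theta_0\|_{H^m}/(Nt)$, since $\nb u_2$ contributes an extra $\xi_1^2/|\xi|^2$ factor in its $H^{m-1}$-norm relative to the $H^m$-norm of $\theta$. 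The nonlinear corrections inherit the same rates through a Duhamel bootstrap using the time integrability of $\|u\|_{H^m}^2$ already built into the damping. Finally, $\|\theta(t)\|_{L^\infty}\to 0$ follows from the Hausdorff--Young bound $\|\theta(t)\|_{L^\infty}\leq \|\widehat\theta(t,\cdot)\|_{L^1}$ together with pointwise a.e.\ decay of $\widehat\theta(t,\xi)$ (since $|\{\xi_1=0\}|=0$) and dominated convergence with the majorant $|\widehat\theta_0|$, which lies in $L^1(\bbR^2)$ by Cauchy--Schwarz as soon as $m>1$.
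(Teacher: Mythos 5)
Your master energy identity is set up correctly, but the absorption step at its core fails, and this is a genuine gap rather than a technicality. Both of your right-hand terms are estimated in the form $(\text{small})\cdot\|u\|_{H^m}\|\theta\|_{H^m}^2$, i.e.\ with \emph{one} power of the damped quantity $\|u\|_{H^m}=\|R_1\theta\|_{H^m}$ and \emph{two} powers of the undamped $\|\theta\|_{H^m}$. Absorbing such a term into $\tfrac{N}{2}\|u\|_{H^m}^2$ via Young's inequality leaves a remainder proportional to $\|\theta\|_{H^m}^2$ with no time-integrable prefactor, so Gr\"onwall yields exponential growth and the bootstrap does not close; the smallness $\|\theta\|_{H^m}\lesssim N/C_0$ does not help, because the leftover is $N C_0^{-2}\|\theta\|_{H^m}^2$, still not integrable on $[0,\infty)$. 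The worst offender is the pure $\partial_2^m(u_2\partial_2\theta)$ piece of the transport term (no $\partial_1$ available to convert either factor into $R_1\theta$), which after integrations by parts reduces to $\int \partial_1 u_1\,|\partial_2^m\theta|^2$ and genuinely requires $\int_0^T\|\partial_2 u_2\|_{L^\infty}\,\ud t$; this is $L^1$ in time, which the damping (an $L^2_t$ bound on $\|R_1\theta\|_{H^m}$) does not provide. The entire point of the paper's Lemma~\ref{key_lem_R} is to supply exactly this missing ingredient: a Duhamel representation of $\mathscr{F}u_2$ in which the quantity $\int_0^T\|(1+|\xi|^2)^{s/2}\mathscr{F}u_2\|_{L^1}\,\ud t$ bounds itself with a prefactor $O(N^{-1})$, using $\int_0^\infty N\tfrac{\xi_1^2}{|\xi|^2}e^{-N\xi_1^2 t/|\xi|^2}\ud t=1$ uniformly in $\xi$. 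Your proposal contains no substitute for this self-bounding argument; note also that Lemma~\ref{lem_commu} does not give $\|[J^m,u\cdot\nb]\theta\|_{L^2}\lesssim\|u\|_{H^m}\|\theta\|_{H^m}$ — its output contains the term $\||\xi|\mathscr{F}u_2\|_{L^1}\|\theta\|_{H^s}^2$, which sends you straight back to the same missing $L^1_t$ control, and in the paper that lemma is only invoked for $\sigma=0$ (for $\sigma\neq 0$ the theorem assumes $m\in\bbN$ and works with ordinary derivatives plus a weighted $L^2$ energy $\int N|\theta|^2/(N+\sigma)$).

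Two further steps would also fail as written. For the rate $\|\nb u_2(t)\|_{H^{m-1}}\lesssim(1+Nt)^{-1}$, your $k=2$ semigroup bound handles only the linear part; in the Duhamel term the kernel $\tfrac{\xi_1^4}{|\xi|^4}e^{-2N(t-\tau)\xi_1^2/|\xi|^2}\lesssim (N(t-\tau))^{-2}$ produces a non-integrable singularity at $\tau=t$, so the "bootstrap using time integrability of $\|u\|_{H^m}^2$" does not deliver the sharp rate. The paper instead proves \eqref{tem_decay_est_sgm_2} by an energy method with the time-dependent balancing weight $M=(1+\tfrac{Nt}{2})^2$ in \eqref{ineq_balancing} followed by Gr\"onwall (Propositions~\ref{prop_L2_R1}--\ref{prop_L2_R2}). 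Finally, for \eqref{tem_decay_est_sgm} your dominated-convergence argument is unjustified: $|\mathscr{F}\theta(t,\xi)|$ is not majorized by $|\mathscr{F}\theta_0(\xi)|$ once the nonlinear and $\sigma$ contributions are included, pointwise a.e.\ decay of $\mathscr{F}\theta(t,\cdot)$ is not established for the nonlinear flow, and the statement requires uniformity as $Nt\to\infty$ (including $N\to\infty$ at bounded $t$), which a fixed-$N$ DCT argument cannot see. The paper obtains this via the $L^p_t(0,\infty;L^1_\xi)$ bound of Proposition~\ref{prop_tem_sgm} ($p>4$) combined with Lipschitz continuity in time and a contradiction argument.
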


\begin{remark}
    \Black{The class of stratified solutions $\mathscr{C}_{N,\sigma}$ defined by \eqref{str_class} is not optimal and can be enlarged to a more general one. In the proof of Theorem~\ref{thm:stb_R}, any kind of decay property of $\sigma$ is not required, only the finiteness of $\| (1+|\xi_2|^2)^{\frac{m+1}{2}} \widehat{\sigma}(\xi_2) \|_{L^1}$ with \eqref{ass} is used.}
\end{remark}

\begin{remark}
    The temporal decay rates in \eqref{tem_decay_est_sgm_2} and \eqref{tem_decay_est_R} are sharp, i.e., for any given $\varepsilon>0$, we can find a triple $(N,\sigma,\tht_0) \in (0,\infty)\times \mathscr{S}(\bbR) \times H^{m}(\bbR^2)$ such that the solution to the linearized system of \eqref{eq:SIPM} satisfies $$\| u(t) \|_{H^{m}} \geq \frac{C}{(1+Nt)^{\frac{1}{2}+\varepsilon}},\quad \| \nabla u_2(t) \|_{H^{m-1}} \geq \frac{C}{(1+Nt)^{1+\varepsilon}}$$ for some $C>0$. For details, we refer to Proposition~\ref{prop_sharp}.
\end{remark}

\begin{remark}\label{rmk_decay}
    In particular, if $\sigma = 0$ is assumed, we can drop the condition $m \in \bbN$. Furthermore, $L^{\infty}$ type temporal decay estimates can be performed as
	\begin{equation}\label{tem_decay_est_R}
		(1+Nt)^{\frac{1}{4}} \| \theta(t) \|_{W^{1,\infty}} + (1+Nt)^{\frac{3}{4}} \| u_{1}(t) \|_{W^{1,\infty}} + (1+Nt)^{\frac{5}{4}} \| u_{2}(t) \|_{L^{\infty}} \leq C\| \tht_0 \|_{H^m}.
	\end{equation}
	Since the proof is standard, we only refer to \cite{KL} where the proof of temporal decay estimate for the Boussinesq equations is provided in a way that can be applied to \eqref{tem_decay_est_R}. For the sharpness of the decay rates, see Proposition~\ref{prop_sharp}. 
\end{remark}
\begin{remark}
    The solutions obtained in Theorem~\ref{thm_Elgindi} decay faster than those of Theorem~\ref{thm:stb_R} due to the extra $W^{4,1}$ condition on $\theta_0$, while both theorems provide sharp decay rates. We fully eliminate the $W^{4,1}$ condition by observing that temporally $L^\infty$ type quantities decay faster than $L^2$ type ones; compare \eqref{tem_decay_est_sgm_2}   to \eqref{tem_decay_est_R}. This is possible essentially because our approach (Lemma~\ref{key_lem_R}) does not leverage the  temporal decays a priori in proving global existence.
\end{remark}

\begin{remark}
The global solution $\theta$ obtained in the above theorem further satisfies	\begin{equation}\label{sol_bdd}
	    \sup_{t \in [0,\infty)} \| \tht(t) \|_{H^{m}}^2 + N\int_0^{\infty} \| R_1 \tht(t) \|_{H^m}^2 \,\ud t \leq 4 \| \tht_0 \|_{H^{m}}^2
	\end{equation}
	and \begin{equation}\label{sol_bdd1}
	    N\int_0^\infty \| u_2(t) \|_{W^{1,\infty}} \,\ud t \leq N\int_0^\infty \| (1+|\xi|^2)^{\frac{1}{2}} \mathscr{F} u_2(t) \|_{L^1} \,\ud t \leq C \| \tht_0 \|_{H^m}.
	\end{equation}
\end{remark}

Next, we consider the domains $\bbT^2$ and $\bbT \times [-1,1]$. The primary feature that distinguishes those domains from $\bbR^2$ is the boundedness of $x_2$. This \emph{blurs} the difference between the previously considered quasi-linear case and the exactly linear case. For this reason, we focus on the stationary state of the purely linear density profile $(\rho_{s}, u_{s}, p_{s}) = (Nx_2, 0, \frac{N}{2}x_2^2)$.

Let us consider the two-dimensional torus case first. Adopting the perturbative regime, we take $$(\rho,u,p) = (Nx_2 + \tht, u, \frac{N}{2}x_2^2 + P)$$ as a solution to \eqref{eq:IPM} in $\bbT^2$, where $\tht$, $u$, and $P$ are periodic in $x$. Then, the triple $(\tht,u,P)$ satisfies the system
\begin{equation}  \label{eq:SIPM_T} 
\left\{
\begin{aligned} 
\rd_t\tht + u\cdot \nb \tht = -N u_2, \\
u= -\nabla P + (0,\tht), \\
\mathrm{div}\, u = 0.
\end{aligned}
\right.
\end{equation}
Our second main theorem establishes that the steady state $\rho_s(x)=x_2$ is nonlinearly stable against the initial periodic perturbation $\theta_0$ defined on $\bbT^2$, and further that the solution converges asymptotically to a suitable profile defined in \eqref{def_sgm} with the sharp decay rates.
\begin{maintheorem}\label{thm:stb_T}
	Let $m > 3$. Then, there exists a constant $C_0=C_0(m)>0$ such that for any $N>0$ and $\theta_{0} \in H^{m}(\bbT^2)$ with $N > C_0 \| \theta_{0} \|_{H^{m}}$, \eqref{eq:SIPM_T} possesses a unique solution $$\theta \in C([0,\infty);H^{m}(\bbT^2)) \cap C^1([0,\infty);H^{m-1}(\bbT^2))$$ with \eqref{sol_bdd}. Moreover, there exist a constant $C>0$ and a function
	\begin{equation}\label{def_sgm}
        \sigma(x_2) := \int_{\bbT} \tht_0 \,\ud x_1 - \int_0^{\infty} \int_{\bbT} \left((u \cdot \nabla)\tht + Nu_2 \right)\,\ud x_1 \ud t
    \end{equation} such that
	\begin{equation}\label{tem_decay_est_T}
		(1+Nt)^{\frac{m-s}{2}} \| \theta(t) - \sigma(x_2) \|_{H^{s}} + (1+Nt)^{\frac{1}{2} + \frac{m-s}{2}}\| u(t) \|_{H^{s}} + (1+Nt)^{1+\frac{m-s}{2}}\| u_2(t) \|_{H^{s}} \leq C \| \tht_0 \|_{H^m}
	\end{equation}
	for all $s \in [0,m]$.
\end{maintheorem}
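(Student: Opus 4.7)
The proof plan parallels that of Theorem~\ref{thm:stb_R} with $\sigma\equiv 0$, specialized to the torus where two structural features enter: (i) horizontal Fourier frequencies are bounded away from zero, $|k_1|\geq 1$ whenever $k_1\neq 0$; (ii) the $x_1$-average $\bar\theta(t,x_2):=\int_\bbT\theta\,\ud x_1$ lies in the kernel of the damping operator and therefore persists as a nontrivial asymptotic profile. Accordingly, I would decompose $\theta=\bar\theta+\tilde\theta$; since the Biot--Savart operator $u=R_1R^\perp\theta$ annihilates $x_1$-independent inputs, one has $u=R_1R^\perp\tilde\theta$, and integrating \eqref{eq:SIPM_T} in $x_1$ yields
\[
\rd_t\bar\theta(t,x_2) \,=\, -\int_\bbT\bigl((u\cdot\nb)\theta + Nu_2\bigr)\,\ud x_1,
\]
so \eqref{def_sgm} is tantamount to $\sigma(x_2)=\lim_{t\to\infty}\bar\theta(t,x_2)$, well-defined once the right-hand side is shown to be time-integrable. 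The fluctuation $\tilde\theta$ then solves a variant of \eqref{eq:SIPM_T} whose linear operator $\rd_t+NR_1^2$ is a genuine damping on all active modes.

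Global well-posedness together with \eqref{sol_bdd} follows from the same $H^m$-energy method used for Theorem~\ref{thm:stb_R}: the identity
\[
\tfrac{d}{dt}\|\theta\|_{H^m}^2 + 2N\|R_1\theta\|_{H^m}^2 \,=\, -2\,\langle u\cdot\nb\theta,\theta\rangle_{H^m},
\]
combined with the anisotropic commutator estimate (Lemma~\ref{lem_commu}), bounds the transport term by $C\|\theta\|_{H^m}\|R_1\theta\|_{H^m}^2$, which is absorbed into the damping under the smallness assumption $\|\theta_0\|_{H^m}\ll N$. A standard bootstrap closes the argument and yields \eqref{sol_bdd}.

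The sharp rates \eqref{tem_decay_est_T} are extracted mode by mode from Duhamel's formula. Writing $\lambda(k):=Nk_1^2/|k|^2$,
\[
\wht{\tilde\theta}(t,k) \,=\, e^{-t\lambda(k)}\,\wht{\theta}_0(k) + \int_0^t e^{-(t-s)\lambda(k)}\,\wht{u\cdot\nb\theta}(s,k)\,\ud s,
\]
and the key pointwise multiplier bound, valid for $|k_1|\geq 1$, $0\leq s\leq m$, and $j\in\{0,1,2\}$, reads
\[
\Bigl(\frac{k_1^2}{|k|^2}\Bigr)^{j}\,(1+|k|^2)^{s-m}\,e^{-2t\lambda(k)} \,\lesssim\, \frac{1}{(1+Nt)^{j+(m-s)}},
\]
proved by optimizing $\alpha\mapsto\alpha^j e^{-2Nt\alpha}$ at $\alpha=k_1^2/|k|^2$ under the torus-specific constraint $\alpha\geq 1/|k|^2$. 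Applied with $j=0,1,2$ and summed against $(1+|k|^2)^m|\wht{\theta}_0(k)|^2$, this produces the linear parts of the three decay rates in \eqref{tem_decay_est_T} for $\tilde\theta$, $u=R_1R^\perp\tilde\theta$, and $u_2=R_1^2\tilde\theta$. The nonlinear Duhamel contribution is closed by a bootstrap on the three rates, using the uniform $H^m$ bound from the preceding step together with standard time-convolution estimates.

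Finally, inserting these rates into the identity $\bar\theta(t)-\sigma = -\int_t^\infty \int_\bbT((u\cdot\nb)\theta + Nu_2)\,\ud x_1\,\ud s$ simultaneously establishes the absolute convergence defining $\sigma$ and yields $\|\bar\theta(t)-\sigma\|_{H^s}\lesssim (1+Nt)^{-(m-s)/2}$, completing \eqref{tem_decay_est_T}. The principal obstacle is the three-rate bootstrap described in the previous paragraph: the target rates for $\theta-\sigma$, $u$, and $u_2$ differ in order by $1/2$ and $1$, and preserving them simultaneously under the nonlinear coupling at the reduced regularity $m>3$ requires careful balancing, in which the anisotropic commutator estimate (Lemma~\ref{lem_commu}) is decisive to avoid derivative loss.
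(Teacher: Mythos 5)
There is a genuine gap at the very first step of your decomposition. On $\bbT^2$ the Biot--Savart law does \emph{not} annihilate all $x_1$-independent inputs: computing $u$ from the pressure ($-\Delta P=-\partial_2\theta$, $u=-\nabla P+(0,\theta)$) gives
\begin{equation*}
u_2=\sum_{n\neq 0}\frac{n_1^2}{|n|^2}\mathscr{F}\theta(n)e^{2\pi i n\cdot x}+\mathscr{F}\theta(0),
\end{equation*}
so the constant mode $\mathscr{F}\theta(0)$ feeds directly into $u_2$. Your claim $u=R_1R^{\perp}\tilde\theta$ is therefore false unless $\int_{\bbT^2}\theta_0\,\ud x=0$; this is precisely the hypothesis the paper removes relative to Elgindi's stream-function construction. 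The correct treatment (Lemma~\ref{lem_avg}) is that the mean obeys $\frac{\ud}{\ud t}\int_{\bbT^2}\theta\,\ud x=-N\int_{\bbT^2}\theta\,\ud x$, hence $\int_{\bbT^2}\theta\,\ud x=e^{-Nt}\int_{\bbT^2}\theta_0\,\ud x$ and $\int_{\bbT}u_2\,\ud x_1=e^{-Nt}\int_{\bbT^2}\theta_0\,\ud x\neq 0$; this exponentially decaying contribution must be carried through every estimate (see, e.g., \eqref{u2_est}). As written, your fluctuation equation and your formula for $\partial_t\bar\theta$ omit this coupling, and your Duhamel representation for $\tilde\theta$ misses the zero-mode forcing. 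The fix is routine (the mode decays like $e^{-Nt}$, faster than any polynomial rate needed), but it must be made explicit.

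Beyond that, your route to \eqref{tem_decay_est_T} diverges from the paper's: you extract the rates from pointwise multiplier bounds on the semigroup $e^{-tNk_1^2/|k|^2}$ applied in Duhamel form, whereas the paper runs weighted energy/Gr\"onwall arguments using the ``balancing'' inequality $\frac{1}{M}\|\bar\theta\|_{L^2}^2-\|R_1\bar\theta\|_{L^2}^2\leq M^{-m}\|\partial_1\theta\|_{H^{m-1}}^2$ with $M\sim 1+Nt$ (Propositions~\ref{prop_tem_1} and Lemma~\ref{lem_tem_2}). Your multiplier bound is correct and encodes the same mechanism, but the nonlinear closure is where your sketch is thinnest: at the endpoint $s=m$ the Duhamel term $\mathscr{F}(u\cdot\nabla\theta)$ costs a derivative, and obtaining $\|u_2(t)\|_{\dot H^m}\lesssim(1+Nt)^{-1}\|\theta_0\|_{H^m}$ is exactly where the paper needs the careful integration-by-parts structure of Lemma~\ref{lem_tem_2} (using $-\Delta u_2=-\partial_1^2\theta$, the divergence-free cancellation, and the time-integrability \eqref{sol_bdd1} of $\|\nabla u_2\|_{L^\infty}$), not the commutator Lemma~\ref{lem_commu}, which is stated on $\bbR^2$ and serves only to relax $m\in\bbN$ in the global-existence energy estimate. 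Asserting that ``standard time-convolution estimates'' close the three-rate bootstrap at top regularity is not justified as it stands.
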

\begin{remark}
    The decay rates in \eqref{tem_decay_est_T} are sharp (we refer to \cite[Section 7]{JK2}).
\end{remark}
	    
\begin{remark}
    We prove Theorem~\ref{thm:stb_T} by computing directly the velocity field with the pressure term without requiring that $\int_{\bbT^2} \tht_{0}(x) \,\ud x=0$ anymore; In the torus case, the only available result \cite{Elgindi} employed the stream-function formulation to construct the velocity, necessitating $\int_{\bbT^2} \tht_{0}(x) \,\ud x=0$ as a consequence. See Lemma~\ref{lem_avg} and below.  
\end{remark}

 As the third scenario, we consider \eqref{eq:SIPM_T} on the horizontally periodic strip $\Omega = \bbT \times [-1,1]$ with no penetration boundary condition. In \cite{Cordoba}, it is shown that the boundary condition can be propagated to the higher derivatives. In that sense, the natural solution spaces in the Sobolev space $H^m(\Omega)$ are of the forms
\begin{equation}\label{def_Xm}
    X^m := \left\{ f \in H^m(\Omega) ; \partial_2^k f \big|_{\partial \Omega} = 0, \qquad k = 0,2,4, \cdots , m^* \right\},
\end{equation}
\begin{equation}\label{def_Ym}
    Y^m := \left\{ f \in H^m(\Omega) ; \partial_2^k f \big|_{\partial \Omega} = 0, \qquad k = 1,3,5, \cdots , m_* \right\},
\end{equation}
where $m^*$ and $m_*$ are the largest even and odd number with $m^* < m$, respectively. Due to the definitions of $X^m$ and $Y^m$, it seems appropriate to consider the cases $m\in \bbN$ only, while one may try to extend it to the general non-integer cases by setting up the proper analouges of $X^m$ and $Y^m$. We establish the following theorem.
\begin{maintheorem}\label{thm:stb_O}
	Let $m \in \bbN$ with $m > 3$. Then, there exists a constant $C_0=C_0(m)>0$ such that for any $N>0$ and $\theta_{0} \in X^{m}$ with $N > C_0 \| \theta_{0} \|_{H^{m}}$, \eqref{eq:SIPM_T} possesses a unique solution $$\theta \in C([0,\infty);X^{m}) \cap C^1([0,\infty);X^{m-1})$$ with \eqref{sol_bdd}. Moreover, there exist a constant $C>0$ and a function $\sigma(x_2)$ with \eqref{def_sgm} such that \eqref{tem_decay_est_T} holds for all $s \in [0,m]$.
\end{maintheorem}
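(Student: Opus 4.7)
The plan is to adapt the proof of Theorem~\ref{thm:stb_T} to the confined strip $\Omega=\bbT\times[-1,1]$, with the main new ingredient being the treatment of the no-penetration boundary. First I would replace the Biot--Savart law used on the torus by a stream-function formulation: from $u=-\nabla P+(0,\theta)$ and $\mathrm{div}\,u=0$, the scalar vorticity satisfies $\omega=\partial_1\theta$, so one defines $\psi$ by $\Delta\psi=\partial_1\theta$ with $\psi|_{\partial\Omega}=0$ and sets $u=\nabla^\perp\psi$; the Dirichlet condition on $\psi$ automatically encodes $u_2|_{\partial\Omega}=0$ together with the vanishing of the net horizontal flux. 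Since the zero $x_1$-mode of $\partial_1\theta$ vanishes, this Biot--Savart annihilates that mode of $\theta$, which is why the horizontal average $\int_\bbT\theta\,\ud x_1$ will not be damped. I would then define $R_1:=\partial_1(-\Delta)_D^{-1/2}$ via Fourier series in $x_1$ combined with Dirichlet spectral calculus in $x_2$, so that the fundamental identity $\int_\Omega\theta u_2\,\ud x=\|R_1\theta\|_{L^2}^2$ survives.

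Next I would verify the invariance of $X^m$ under \eqref{eq:SIPM_T}. Differentiating the elliptic equation $\Delta\psi=\partial_1\theta$ and using $\theta\in X^m$ inductively produces the parity $\partial_2^{2j}\psi|_{\partial\Omega}=0$ for $0\leq 2j\leq m^*+2$, so that $u_2=\partial_1\psi$ belongs to $X^{m+1}$ while $u_1=-\partial_2\psi$ belongs to $Y^{m+1}$. Applying $\partial_2^k$ to $\partial_t\theta+u\cdot\nabla\theta=-Nu_2$ for even $k\leq m^*$ and evaluating on $\partial\Omega$, each Leibniz term has at least one factor vanishing on the boundary by the parity bookkeeping, so $\partial_t\partial_2^k\theta|_{\partial\Omega}=0$, which confirms that $X^m$ is propagated.

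The main a priori bound follows from the $H^m$ energy estimate in the same spirit as Theorems~\ref{thm:stb_R} and~\ref{thm:stb_T}: for $|\alpha|\leq m$, pairing $\partial^\alpha$ of the $\theta$-equation with $\partial^\alpha\theta$ yields
\begin{equation*}
\frac{1}{2}\frac{d}{dt}\|\theta\|_{H^m}^2+N\|R_1\theta\|_{H^m}^2=-\sum_{|\alpha|\leq m}\int_\Omega\bigl([\partial^\alpha,u\cdot\nabla]\theta\bigr)\,\partial^\alpha\theta\,\ud x.
\end{equation*}
The anisotropic commutator estimate (Lemma~\ref{lem_commu}) bounds the right-hand side by $C\|\partial_1 u\|_{L^\infty}\|\theta\|_{H^m}^2$ up to terms absorbable by the dissipation, and in two dimensions with $m>3$ Sobolev embedding combined with $u=\nabla^\perp\psi$, $\Delta\psi=\partial_1\theta$ gives $\|\partial_1 u\|_{L^\infty}\lesssim\|R_1\theta\|_{H^m}$. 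Cauchy--Schwarz in time plus the smallness assumption $N\geq C_0\|\theta_0\|_{H^m}$ then closes a continuity bootstrap yielding global existence $\theta\in C([0,\infty);X^m)\cap C^1([0,\infty);X^{m-1})$ and the bound \eqref{sol_bdd}.

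Finally, to produce the asymptotic profile and the decay rate \eqref{tem_decay_est_T}, I would average the $\theta$-equation in $x_1$: since $\partial_t\int_\bbT\theta\,\ud x_1=-\int_\bbT(u\cdot\nabla\theta+Nu_2)\,\ud x_1$ is $L^1_t$ by \eqref{sol_bdd} and Sobolev estimates, the average converges pointwise in $x_2$ to $\sigma$ given by \eqref{def_sgm}. The residual $\theta-\sigma$ lives in the orthogonal complement of $\ker R_1$, where $R_1$ is coercive, so interpolating the uniform $H^m$ bound against $N\int_0^\infty\|R_1\theta\|_{H^m}^2\,\ud t\leq 4\|\theta_0\|_{H^m}^2$ produces the algebraic decay $\|\theta-\sigma\|_{H^s}\lesssim(1+Nt)^{-(m-s)/2}\|\theta_0\|_{H^m}$, with the extra $\partial_1$-factors in the Biot--Savart giving the sharper powers for $u$ and $u_2$. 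The principal obstacle is that the Dirichlet spectral calculus replaces the clean translation-invariant Fourier analysis of the $\bbT^2$ case, so each step in which the commutator estimate and Sobolev inequalities are invoked must be carefully matched to the $X^m/Y^m$ parity bookkeeping; once this matching is in place, however, the Fourier-analytic energy method of Theorem~\ref{thm:stb_T} transfers essentially verbatim.
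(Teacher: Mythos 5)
Your setup (stream function with Dirichlet condition, the $X^m/Y^m$ parity bookkeeping, the identity $\int_\Omega u_2\theta\,\ud x=\|R_1\theta\|_{L^2}^2$, and the vanishing of $\int_{\bbT}u\,\ud x_1$ on the strip) is consistent with the paper, which reaches the same velocity formulas \eqref{sf_u_O} by solving for the pressure directly in the $\mathscr{B}_{p,q},\mathscr{C}_{p,q}$ bases. However, there is a genuine gap at the heart of the global existence argument: your claim that the commutator term is bounded by $C\|\partial_1 u\|_{L^\infty}\|\theta\|_{H^m}^2$ ``up to terms absorbable by the dissipation'' is not correct. The commutator $[\partial_2^m,u\cdot\nabla]\theta$ contains the term $\partial_2u_2\,\partial_2^m\theta$, whose coefficient $\partial_2u_2$ carries no horizontal derivative and therefore cannot be controlled by $\|R_1\theta\|_{H^m}$, nor absorbed into $N\|R_1\theta\|_{H^m}^2$. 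This is precisely why the paper's energy inequality (Proposition~\ref{prop_energy_O}) isolates the term $\sup_t\|\theta\|_{H^m}^2\int_0^T\|\partial_2u_2\|_{L^\infty}\,\ud t$, and why a separate ``key lemma'' is needed to bound $\int_0^T\|\mathscr{F}_b u_2\|_{\ell^1\text{-type}}\,\ud t$ via the Duhamel formula for the anisotropic propagator $e^{-N\xi_1^2t/|\xi|^2}$, with the factor $1/N$ providing the smallness that closes the bootstrap. Your proposal never invokes this Duhamel step for the existence part; moreover, even granting your claimed bound, the resulting term $\int_0^T\|R_1\theta\|_{H^m}\|\theta\|_{H^m}^2\,\ud t$ is linear (not quadratic) in $\|R_1\theta\|_{H^m}$, so Cauchy--Schwarz in time produces a factor $T^{1/2}$ and the continuity argument does not close. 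A related technical point: Lemma~\ref{lem_commu} is a whole-plane Fourier statement; on $\Omega$ the paper instead works with integer $m$ and closes the $\partial_2^m$ case by repeated integration by parts using the boundary parities and the orthogonality of the $\mathscr{B}_{p,q}$, which is where the $X^m/Y^m$ structure actually enters.

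A second, smaller gap is in the decay step. Interpolating the uniform $H^m$ bound against $N\int_0^\infty\|R_1\theta\|_{H^m}^2\,\ud t\leq4\|\theta_0\|_{H^m}^2$ only yields decay along a sequence of times, not the pointwise algebraic rates in \eqref{tem_decay_est_T}. The paper instead derives differential inequalities for $\|\bar\theta\|_{L^2}^2$, $\|R_1\theta\|_{L^2}^2$, $\|\bar u_2\|_{\dot H^m}^2$, etc., uses the balancing estimate $\tfrac1M\|\cdot\|^2-\|R_1\cdot\|^2\leq M^{-m}\|\partial_1\theta\|_{H^{m-1}}^2$ with the time-dependent choice $M=1+\tfrac{Nt}{m}$, and closes with Gr\"onwall (Lemma~\ref{cal_lem}); the $L^1_t$ bound on $\|\nabla u_2\|_{L^\infty}$ from the key lemma is again an input here. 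If you add the Duhamel-based control of the key term and replace the interpolation claim by the balancing/Gr\"onwall scheme, the rest of your outline does transfer from the torus case as you intend.
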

Since the proof of Theorem~\ref{thm:stb_O} is analogous to the proof for the $\bbT^2$ case, we briefly prove it in Section~\ref{sec_O} with strong emphasis on the nontrivial parts only originating in the inherent differences between $X^m(\Omega)$ and $H^m(\bbT^2)$. 

\subsection{Strategy for proofs}
We prove the global well-posedness part of Theorem~\ref{thm:stb_R} by performing a continuity argument based on a specific type of $H^m$ energy estimate. Such an energy estimate, see \eqref{energy}, allows us to isolate the key term $\int_{0}^{T} \|u_2(t)\|_{W^{1,\infty}}\,\mathrm{d}t$ in which the main technical difficulty arises. To control the key term, we rely on its Duhamel formulation via Darcy's law. Then we can access the usual eigenvalue/eigenvector analysis for the corresponding anisotropic linear propagator in Fourier variables. The trick is, the key term is controlled by itself with the multiplying factor that can be made very small for sufficiently large $N>0$. The largeness of $N$ essentially represents the dominance of stratification over the entire stabilizing mechanism. By assuring that the key term gets small for large $N$, we close the energy inequality and apply a standard continuity argument to finish the proof.

The proof of the temporal decay part of Theorem~\ref{thm:stb_R} uses a similar method combined with the interpolation inequalities in Lemma~\ref{lem_intp} that are necessary to treat the anisotropy of the linear operator in a more sophisticated fashion. We obtain the temporal decay of $\|\theta(t)\|_{L^\infty}$ via the aforementioned delicate technique  with a contradiction argument, but the explicit decay rates were out of our method of Fourier analysis due to the presence of the small perturbation $\sigma\in\mathscr{S}$. The computation of the explicit temporal decay rates for the spatially $L^2$-type quantities becomes relatively straightforward once we employ the idea of \emph{balancing out} the multiples as in \eqref{ineq_balancing}, which was introduced in \cite{Elgindi}.

On the torus $\bbT^2$, it appears that the stabilizing mechanism is totally different from the one on $\bbR^2$. As noted in \cite{Elgindi}, the three crucial difficulties, which distinguish $\bbT^2$ from $\bbR^2$, are (1) the density disturbance $\theta(t)$ does not decay itself, (2) the linear propagator has infinitely many non-decaying modes, and (3) showing the decay of $u$ requires a loss of derivatives. To overcome those difficulties, we start by newly introducing a simple but powerful observation that \emph{the mean value of $\theta(t)$ on $\bbT^2$ indeed decays over time} in a stark contrast to $\theta(t)$ itself, see Lemma~\ref{lem_avg}. This broadens the applicability of our method to the more general densities having non-zero mean value, generalizing the result in \cite{Elgindi} where it is assumed that the mean value of $\theta(t)$ (or equivalently $\theta_0$) vanishes.
We further see that the horizontally averaged $u_2(t)$ over $\bbT$ also decays and, in particular, the horizontally averaged $u_1$ over $\bbT$ is zero. Those basic decays allow us, based on the Duhamel formula, to design the corresponding $H^m$ energy estimate and the control of the key term, analogously to the ones on $\bbR^2$. Finally, we heavily use the previously mentioned idea of balancing out the multiples to compute the explicit decay rates in Theorem~\ref{thm:stb_T} by effectively handling the anisotropic nature of the linear propagator.

The confined scenario $\Omega=\bbT\times[-1,1]$ is dealt with similarly to the case of $\bbT^2$. As suggested for $\Omega$ in \cite{Cordoba}, we set up the different functional setting adapted to the presence of boundary, which is based on $X^m$ and $Y^m$ defined in \eqref{def_Xm} and \eqref{def_Ym}. After ironing out the unique features coming from such a functional environment, which could be problematic mainly due to boundary, we can mimic the blueprint that was originally designed for $\bbT^2$, providing the completely analogous results. Compare the statement in Theorem~\ref{thm:stb_O} to the one in Theorem~\ref{thm:stb_T}.

\section{Preliminaries}\label{sec_prl}
Beginning this section, we provide convolution inequalities for $\bbR^2$ and $\bbT^2$ domains, which are useful to prove Lemma~\ref{key_lem_R} and \ref{key_est_T}. We refer to Lemma~\ref{lem_conv_O} for the strip domain.
\begin{lemma}\label{lem_intp}
    Let $s \geq 0$. For any $p,q,r \in [1,\infty]$ with $\frac 1q + \frac 1r = 1 + \frac 1p$, there exists a constant $C=C(s)>0$ such that \begin{equation}\label{conv_ineq_R2}
        \| (1+|\xi|^2)^{\frac{s}{2}} \mathscr{F} (fg) \|_{L^p} \leq C \| (1+|\xi|^2)^{\frac{s}{2}}  \mathscr{F} f \|_{L^q} \| (1+|\xi|^2)^{\frac{s}{2}}  \mathscr{F} g \|_{L^r}
    \end{equation}
    and
    \begin{equation}\label{conv_ineq_R2_2}
        \| (1+|\xi|^2)^{\frac{s}{2}}  \mathscr{F} (f(x)\sigma(x_2)) \|_{L^p} \leq C \| (1+|\xi|^2)^{\frac{s}{2}}  \mathscr{F} f \|_{L^q_{\xi_2}L^p_{\xi_1}} \| (1+|\xi_2|^2)^{\frac{s}{2}}  \widehat{\sigma} \|_{L^r},
    \end{equation}
    for any $f,g \in \mathscr{S}(\bbR^2)$ and $\sigma \in \mathscr{S}(\bbR)$.
\end{lemma}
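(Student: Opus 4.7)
The plan is to reduce both estimates to classical Young's convolution inequality after dispatching the inhomogeneous weight $(1+|\xi|^2)^{s/2}$ via an elementary pointwise bound. The starting observation is a Peetre-type inequality: for any $s\geq 0$ and any $\xi,\eta\in\bbR^2$, the trivial estimate $1+|\xi|^2 \leq 2(1+|\xi-\eta|^2)+2(1+|\eta|^2)$ combined with the subadditivity $(a+b)^{s/2}\leq C_s(a^{s/2}+b^{s/2})$ yields
\begin{equation*}
    (1+|\xi|^2)^{s/2} \leq C_s\left[(1+|\xi-\eta|^2)^{s/2}+(1+|\eta|^2)^{s/2}\right].
\end{equation*}

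For the first inequality I would write $\mathscr{F}(fg)=c\,\mathscr{F}f\ast\mathscr{F}g$, apply the weight bound inside the convolution integral defining $\mathscr{F}(fg)(\xi)$, and split into two terms:
\begin{equation*}
    (1+|\xi|^2)^{s/2}|\mathscr{F}(fg)(\xi)| \leq C_s\bigl(|F|\ast|\mathscr{F}g|+|\mathscr{F}f|\ast|G|\bigr)(\xi),
\end{equation*}
where $F:=(1+|\xi|^2)^{s/2}\mathscr{F}f$ and $G:=(1+|\xi|^2)^{s/2}\mathscr{F}g$. Since $(1+|\xi|^2)^{s/2}\geq 1$, we have $|\mathscr{F}f|\leq|F|$ and $|\mathscr{F}g|\leq|G|$ pointwise, so Young's convolution inequality with $\frac{1}{q}+\frac{1}{r}=1+\frac{1}{p}$ applied to each term gives \eqref{conv_ineq_R2}.

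For the second estimate, the new feature is that $\sigma$ depends only on $x_2$, so its two-dimensional Fourier transform is supported on the $\xi_2$-axis, and consequently
\begin{equation*}
    \mathscr{F}(f(x)\sigma(x_2))(\xi) = c\int_{\bbR}\mathscr{F}f(\xi_1,\xi_2-\eta_2)\widehat{\sigma}(\eta_2)\,\ud\eta_2,
\end{equation*}
which is a partial convolution in the $\xi_2$ variable alone. Using the weight splitting with $\eta=(0,\eta_2)$ reduces the task to bounding expressions of the form $\int|H(\xi_1,\xi_2-\eta_2)|\,|w(\eta_2)|\,\ud\eta_2$ in $L^p(\bbR^2)$, where either $(H,w)=(F,\widehat{\sigma})$ or $(H,w)=(\mathscr{F}f,(1+|\eta_2|^2)^{s/2}\widehat{\sigma})$. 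I would take the $L^p_{\xi_1}$ norm first and invoke Minkowski's integral inequality to pull it inside the $\eta_2$-integral; this turns the problem into a one-dimensional convolution of $\Phi(\xi_2):=\|H(\cdot,\xi_2)\|_{L^p_{\xi_1}}$ with $|w|$ on the $\xi_2$-axis. A one-dimensional Young's inequality on that axis with the exponent pair $(q,r)$ then produces the mixed-norm bound $\|H\|_{L^q_{\xi_2}L^p_{\xi_1}}\|w\|_{L^r}$, and \eqref{conv_ineq_R2_2} follows by absorbing $\|\mathscr{F}f\|_{L^q_{\xi_2}L^p_{\xi_1}}$ and $\|\widehat{\sigma}\|_{L^r}$ into the weighted counterparts using $(1+|\cdot|^2)^{s/2}\geq 1$.

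No serious obstacle is anticipated; the proof is essentially bookkeeping once the Peetre-type weight bound is in hand. The only mild care needed is to apply Minkowski before Young in the second inequality so that the mixed-norm ordering $L^q_{\xi_2}L^p_{\xi_1}$ (outer in $\xi_2$, inner in $\xi_1$) comes out in the correct direction to match the statement.
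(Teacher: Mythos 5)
Your proof is correct and follows essentially the same route as the paper: both rest on the Peetre-type bound $(1+|\xi|^2)^{s/2}\leq C_s\big[(1+|\xi-\eta|^2)^{s/2}+(1+|\eta|^2)^{s/2}\big]$ followed by Young's convolution inequality, the only cosmetic difference being that the paper further bounds the sum by the product $(1+|\xi-\eta|^2)^{s/2}(1+|\eta|^2)^{s/2}$ while you keep two terms and absorb the unweighted factors using $(1+|\cdot|^2)^{s/2}\geq 1$. Your Minkowski-then-one-dimensional-Young treatment of \eqref{conv_ineq_R2_2} correctly supplies the details the paper omits, with the mixed norm $L^q_{\xi_2}L^p_{\xi_1}$ emerging in the right order.
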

\begin{proof}
    By the property $\mathscr{F}(fg) = \mathscr{F}f * \mathscr{F}g$ and $$(1+|\xi|^2)^{\frac{s}{2}} \leq C(1+|\xi-\eta|^2)^{\frac{s}{2}} + C(1+|\eta|^2)^{\frac{s}{2}} \leq C(1+|\xi-\eta|^2)^{\frac{s}{2}} (1+|\eta|^2)^{\frac{s}{2}}$$ for $\xi,\eta \in \bbR^2$, we have 
    \begin{align*}
    (1+|\xi|^2)^{\frac{s}{2}} \mathscr{F} (fg) &\leq \int_{\bbR^2} \int_{\bbR^2} (1+|\xi|^2)^{\frac{s}{2}} |\mathscr{F}f(\xi-\eta)| |\mathscr{F}g(\eta)| \,\ud \eta \ud \xi \\
    &\leq C\int_{\bbR^2} \int_{\bbR^2} (1+|\xi-\eta|^2)^{\frac{s}{2}} |\mathscr{F}f(\xi-\eta)| (1+|\eta|^2)^{\frac{s}{2}} |\mathscr{F}g(\eta)| \,\ud \eta \ud \xi.
    \end{align*}
    Applying Young's convolution inequality to this, we obtain \eqref{conv_ineq_R2}.
    
    We can prove \eqref{conv_ineq_R2_2} similarly, noting that $\sigma$ does not depend on $x_1$. We omit the details.
\end{proof}

\begin{lemma}
    Let $s \geq 0$. Then, there exists a constant $C=C(s)>0$ such that \begin{equation}\label{conv_ineq_T2}
        \| (1+|n|^2)^{\frac{s}{2}} \mathscr{F} (fg) \|_{l^1} \leq C \| (1+|n|^2)^{\frac{s}{2}}  \mathscr{F} f \|_{l^1} \| (1+|n|^2)^{\frac{s}{2}}  \mathscr{F} g \|_{l^1}
    \end{equation}
    for any $f,g \in \mathscr{S}(\bbT^2)$.
\end{lemma}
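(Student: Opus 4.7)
The plan is to mimic the proof of the preceding lemma on $\bbR^2$ essentially verbatim, replacing the Fourier transform on $\bbR^2$ with the Fourier series on $\bbT^2$ and replacing Young's convolution inequality in $L^p(\bbR^2)$ with its discrete counterpart in $\ell^p(\bbZ^2)$. The statement is purely harmonic-analytic, and no special geometric feature of the torus beyond the convolution structure of the Fourier coefficients is needed.

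First I would write $\mathscr{F}(fg)(n) = \sum_{k \in \bbZ^2} \mathscr{F} f(n-k)\,\mathscr{F} g(k)$, i.e., the Fourier coefficients of a product form a discrete convolution. Next I would invoke the same elementary Peetre-type inequality used in the previous proof, namely
\begin{equation*}
(1+|n|^2)^{s/2} \leq C(s)\,(1+|n-k|^2)^{s/2}\,(1+|k|^2)^{s/2}, \qquad n,k \in \bbZ^2,
\end{equation*}
which holds since $|n|^2 \leq 2|n-k|^2 + 2|k|^2$. Multiplying the convolution identity by $(1+|n|^2)^{s/2}$, taking absolute values, and inserting this pointwise bound gives
\begin{equation*}
(1+|n|^2)^{s/2}\,|\mathscr{F}(fg)(n)| \leq C(s) \sum_{k \in \bbZ^2} \bigl[(1+|n-k|^2)^{s/2}|\mathscr{F} f(n-k)|\bigr]\bigl[(1+|k|^2)^{s/2}|\mathscr{F} g(k)|\bigr].
\end{equation*}

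Finally I would sum over $n \in \bbZ^2$ and apply the discrete Young convolution inequality $\|a * b\|_{\ell^1(\bbZ^2)} \leq \|a\|_{\ell^1(\bbZ^2)}\|b\|_{\ell^1(\bbZ^2)}$ with $a(n) = (1+|n|^2)^{s/2}|\mathscr{F} f(n)|$ and $b(n) = (1+|n|^2)^{s/2}|\mathscr{F} g(n)|$, which yields the desired bound. I do not anticipate any real obstacle; the only point worth checking is that the Peetre inequality is stated for real vectors and restricts harmlessly to integer lattice points, so the constant $C(s)$ is the same one as in the $\bbR^2$ version. For this reason, the proof can reasonably be left to the reader or reduced to a one-line reference back to the argument of Lemma 3.1.
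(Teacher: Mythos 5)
Your proposal is correct and is exactly the argument the paper intends: the paper's own proof of this lemma is the one-line remark that one modifies the proof of the $\bbR^2$ convolution inequality, which is precisely the discrete-convolution identity, the Peetre-type bound $(1+|n|^2)^{s/2}\leq C(1+|n-k|^2)^{s/2}(1+|k|^2)^{s/2}$, and Young's inequality in $\ell^1(\bbZ^2)$ that you spell out. No gaps.
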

\begin{proof}
    One can modify the proof of \eqref{conv_ineq_R2} to deduce \eqref{conv_ineq_T2}.
\end{proof}

\subsection{Existence of local-in-time solutions}

Here, we consider the local-in-time solutions to \eqref{eq:SIPM_g} given certain smooth function $\rho_s : \bbR \to \bbR$, including the case $\rho_s \in \mathscr{C}_{N,\sigma}$. In $\bbR^2$, whether the solution exists depends on the choice of the function $\rho_s$ with respect to $\|\rho'_s\|_{W^{m,\infty}}$ which distinguishes the purely linear case $\rho_s=Nx_2$ and the quasi-linear case $\rho_s=Nx_2+\sigma$. Such dependence can be seen in the following proposition we provide. It is concerned with $\rho_s$ satisfying $\|\rho_s'\|_{L^{\infty}} < \infty$, which does not cover the case of super-linear function, but which covers $\rho_s \in \mathscr{C}_{N,\sigma}$ for any $N \in \bbR$ and any $\sigma \in \mathscr{S}(\bbR^2)$.

Note that $\|\rho'_s\|_{W^{m,\infty}} < \infty$ is automatically satisfied in the confined domain or the periodic domain, the local-in-time existence will be similarly obtained there. More precisely, any choice of $\rho_s$ can be absorbed into the initial disturbance $\theta_0$ in $H^m$ on both $\bbT^2$ and $[-1,1]\times \bbT$. Keeping this in mind, one may refer to \cite{Cordoba} (or see Proposition~\ref{prop_energy_T}) for the confined case, as an example.


\begin{proposition}
    Let $m \in \bbN$ with $m > 2$ and $\rho_s$ be a smooth function with $\| \rho_s' \|_{W^{m,\infty}} < \infty$. Then for any $\tht_0 \in H^{m}(\bbR^2)$, there exist a maximal time of existence $T^*=T^*(m,\rho_s,\tht_0)>0$ and a unique local-in-time solution to \eqref{eq:SIPM_g},
    \begin{equation*}
        \tht \in C([0,T^*);H^m(\bbR^2)) \cap C^1([0,T^*);H^{m-1}(\bbR^2)).
    \end{equation*}
    Moreover, there exists a constant $C = C(m)>0$ such that
    \begin{equation}\label{loc_bdd}
        \| \tht(t) \|_{H^{m}} \leq \frac{\| \rho_s' \|_{W^{m,\infty}} \| \tht_0 \|_{H^m}}{(\| \rho_s' \|_{W^{m,\infty}}+\| \tht_0 \|_{H^m})e^{-C\| \rho_s' \|_{W^{m,\infty}}t} - \| \tht_0 \|_{H^m}}
    \end{equation}
    for all $t \in [0,T^*)$.
\end{proposition}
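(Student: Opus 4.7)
The plan is a standard regularize-and-compactness scheme adapted to the quasilinear active scalar structure of \eqref{eq:SIPM_g}. First, for each $\epsilon > 0$ I would mollify the initial datum and regularize the Biot-Savart map via a Friedrichs frequency cutoff $J_\epsilon$, reducing the problem to a Lipschitz ODE on $H^m(\bbR^2)$ whose local solution $\theta^\epsilon$ exists by Picard's theorem. The assumption $\|\rho_s'\|_{W^{m,\infty}} < \infty$ is exactly what makes the source $J_\epsilon(u_2^\epsilon \rho_s')$ bounded and Lipschitz on $H^m$, so no finer structure of $\rho_s$ is needed at this stage.

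Next I would derive a uniform-in-$\epsilon$ $H^m$ energy estimate. For each multi-index $|\alpha|\leq m$, pairing $\partial^\alpha$ of the equation with $\partial^\alpha \theta^\epsilon$ in $L^2$ and using divergence-freeness of $u^\epsilon = R_1 R^\perp \theta^\epsilon$ to cancel the top-order transport term yields
\begin{equation*}
\tfrac{1}{2}\tfrac{d}{dt}\|\theta^\epsilon\|_{H^m}^2 \leq \sum_{|\alpha|\leq m}\Big(\|[\partial^\alpha,u^\epsilon\cdot\nabla]\theta^\epsilon\|_{L^2} + \|\partial^\alpha(u_2^\epsilon \rho_s')\|_{L^2}\Big)\|\theta^\epsilon\|_{H^m}.
\end{equation*}
A Kato-Ponce commutator bound together with Sobolev embedding (using $m > 2$) and $\|u^\epsilon\|_{H^m}\lesssim \|\theta^\epsilon\|_{H^m}$ controls the first sum by $C\|\theta^\epsilon\|_{H^m}^2$. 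Since $\rho_s'$ depends only on $x_2$, the Leibniz expansion of the second sum collapses to contributions involving $\partial_2^k \rho_s'$ with $k\leq m$, giving a bound $\leq C\|\rho_s'\|_{W^{m,\infty}}\|\theta^\epsilon\|_{H^m}$. Setting $y(t)=\|\theta^\epsilon(t)\|_{H^m}$ and $A=\|\rho_s'\|_{W^{m,\infty}}$, the outcome is the Riccati-type differential inequality $y' \leq Cy(y+A)$.

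Comparing with the ODE $Y' = CY(Y+A)$, $Y(0)=\|\theta_0\|_{H^m}$, partial fractions integrate in closed form to
\begin{equation*}
Y(t) = \frac{A\|\theta_0\|_{H^m}}{(\|\theta_0\|_{H^m}+A)e^{-CAt}-\|\theta_0\|_{H^m}},
\end{equation*}
which is precisely the right-hand side of \eqref{loc_bdd} and stays finite on $[0,T^*)$ with $T^* := (CA)^{-1}\log\frac{\|\theta_0\|_{H^m}+A}{\|\theta_0\|_{H^m}}$. This produces a uniform-in-$\epsilon$ $H^m$ bound on every $[0,T] \subset [0,T^*)$ and defines the advertised maximal time of existence $T^*=T^*(m,\rho_s,\theta_0)$.

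Finally, standard compactness closes the argument: the uniform $H^m$ bound together with the equation gives $\partial_t \theta^\epsilon$ bounded uniformly in $L^\infty_tH^{m-1}_x$, so Aubin-Lions yields a subsequential strong limit $\theta\in L^\infty([0,T];H^m) \cap C([0,T];H^{m-1})$ that solves \eqref{eq:SIPM_g}; uniqueness follows from a Gr\"onwall estimate on the $L^2$-difference of two solutions using $\|u\|_{L^\infty}\lesssim \|\theta\|_{H^m}$; the upgrade to $\theta \in C([0,T^*);H^m)$ is a standard Bona-Smith argument; and then $\theta \in C^1([0,T^*);H^{m-1})$ is read off directly from the equation. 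The only mildly delicate point is the bookkeeping of the constants in the commutator and Leibniz estimates so that the single constant $C$ appearing in \eqref{loc_bdd} comes out correctly; everything else is routine for transport equations driven by a zero-order singular Biot-Savart operator.
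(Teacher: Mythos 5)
Your proposal is correct and follows essentially the same route as the paper: an $H^m$ energy estimate using a Kato--Ponce commutator for the transport term and the Leibniz rule for $u_2\rho_s'$, leading to the Riccati inequality $y'\leq Cy(y+A)$ with $A=\|\rho_s'\|_{W^{m,\infty}}$, whose explicit integration gives exactly \eqref{loc_bdd}; the paper likewise only records this a priori estimate and delegates the mollification/compactness construction to the standard theory. The only slight imprecision is identifying $T^*$ with the blow-up time $(CA)^{-1}\log\frac{\|\theta_0\|_{H^m}+A}{\|\theta_0\|_{H^m}}$ of the comparison ODE --- that time is merely a lower bound for the maximal existence time, not its definition.
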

\begin{proof}
    We only provide the a priori $H^{m}$ energy estimate for the solution $\tht$. Interested readers may follow a standard local well-posedness theory (cf. \cite{Majda}) to complete the proof. Let $\tht$ be a global classical solution to \eqref{eq:SIPM_g} with the initial data $\tht_0 \in H^m(\bbR^2)$. From the $\tht$-evolution equation in \eqref{eq:SIPM_g}, we can have
    \begin{equation}\label{loc_est}
        \frac{1}{2} \frac{\ud}{\ud t} \sum_{|\alpha| \leq m} \int_{\bbR^2} | \partial^{\alpha} \tht |^2 \,\ud x = - \sum_{|\alpha| \leq m} \int_{\bbR^2} \partial^{\alpha} (u \cdot \nabla) \tht \partial^{\alpha} \tht \,\ud x - \sum_{|\alpha|\leq m} \int_{\bbR^2} \partial^{\alpha} (u_2 \rho_s'(x_2)) \partial^{\alpha} \tht \,\ud x.
    \end{equation}
    By $H^m(\bbR^2) \hookrightarrow L^{\infty}(\bbR^2)$ and $\| u \|_{H^m} \leq \| \tht \|_{H^m}$, there holds 
    \begin{equation}\label{loc_est_2}
        \left| - \sum_{|\alpha| \leq m} \int_{\bbR^2} \partial^{\alpha} (u \cdot \nabla) \tht \partial^{\alpha} \tht \,\ud x \right| \leq C(\| \nabla u \|_{L^{\infty}} + \| \nabla \tht \|_{L^{\infty}}) \| \tht \|_{H^m}^2 \leq C\| \tht \|_{H^m}^3.
    \end{equation}
    By the chain rule, we have
    \begin{equation*}
        \left| - \sum_{|\alpha|\leq m} \int_{\bbR^2} \partial^{\alpha} (u_2 \rho_s'(x_2)) \partial^{\alpha} \tht \,\ud x \right| \leq C\| \rho_s' \|_{W^{m,\infty}} \| \tht \|_{H^m}^2.
    \end{equation*}
    Therefore,
    \begin{equation*}
        \frac{\ud}{\ud t} \| \tht \|_{H^m} \leq C \| \tht \|_{H^m}^2 + C\| \rho_s' \|_{W^{m,\infty}} \| \tht \|_{H^m}.
    \end{equation*}
    Letting $y(t) := \| \tht(t) \|_{H^{m}}$ and $C_{\rho} := \| \rho_s' \|_{W^{m,\infty}}$ gives
    \begin{equation*}
        \frac{\ud}{\ud t} y(t) \leq C \left(y(t)^2 + C_{\rho} y(t) \right).
    \end{equation*}
    Dividing both terms by $y^2(t)$, we have
    \begin{equation*}
        -\frac{\ud}{\ud t} \left( \frac{1}{y(t)} + \frac{1}{C_{\rho}} \right) \leq C C_{\rho} \left( \frac{1}{y(t)} + \frac{1}{C_{\rho}} \right).
    \end{equation*}
    Gr\"{o}nwall's inequality implies
    \begin{equation*}
        \left( \frac{1}{y_0} + \frac{1}{C_{\rho}} \right) e^{-CC_{\rho}t} \leq \left( \frac{1}{y(t)} + \frac{1}{C_{\rho}} \right),
    \end{equation*}
    thus,
    \begin{equation*}
        y(t) \leq \frac{C_{\rho} y_0}{(C_{\rho} + y_0)e^{-CC_{\rho}t} - y_0}.
    \end{equation*}
    This completes the proof.
\end{proof}

\section{Proof of Theorem~\ref{thm:stb_R}}\label{sec_R}
In this section, we use the Fourier transform with the following notations:
\begin{equation*}
    \widehat{\sigma}(\xi_2) := \int_{\bbR} e^{-2\pi i x_2 \xi_2} \sigma(x_2) \,\ud x_2, \qquad \mathscr{F} \tht(\xi) := \int_{\bbR^2} e^{-2\pi i x \cdot \xi} \tht(x) \,\ud x.
\end{equation*}

\subsection{Energy inequality}
\begin{proposition}\label{prop_energy}
Let $\theta$ be a global classical solution of \eqref{eq:SIPM}, where $N>0$ and $\sigma \in \mathscr{S}(\bbR)$ with $|\sigma(x_2)| \leq \frac{N}{2}$ \Black{for all $x_2 \in \bbR$.} Then for any given $m \in \bbN$ with $m > 2$, there exists a constant $C=C(m)>0$ such that
\begin{equation}\label{energy}
\begin{gathered}
    \frac{1}{3}\sup_{t \in [0,T]} \|\theta\|_{H^{m}}^2 + N \int_0^T \|R_1\theta\|_{H^{m}}^{2} \,\ud t \leq \| \tht_0 \|_{H^{m}}^2 \\
    + C\sum_{k=1}^{m} \left( N^{-1} \| \sigma \|_{W^{m+1,\infty}} \right)^k \left( \sup_{t \in [0,T]} \| \theta \|_{H^m}^2 + N\int_0^T \| R_1 \theta \|_{H^m}^2 \,\ud t \right) \\
    + C\sum_{k=0}^{m} \left( N^{-1} \| \sigma \|_{W^{m+1,\infty}} \right)^k \left( \sup_{t \in [0,T]} \| \theta \|_{H^m} \int_0^T \|R_1 \theta \|_{H^m}^2 \,\ud t + \sup_{t \in [0,T]} \| \theta \|_{H^m}^2 \int_0^T \|u_2\|_{W^{1,\infty}} \, \ud t \right).
\end{gathered}
\end{equation}
\end{proposition}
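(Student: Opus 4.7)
The strategy is to perform a carefully organized $H^m$ energy estimate on \eqref{eq:SIPM}. Applying $\rd^\alpha$ for each $|\alpha|\leq m$, multiplying by $\rd^\alpha\tht$, integrating on $\bbR^2$, and summing yields
\begin{equation*}
\frac{1}{2}\frac{\ud}{\ud t}\|\tht\|_{H^m}^2 + N\sum_{|\alpha|\leq m}\|R_1\rd^\alpha\tht\|_{L^2}^2 = -\sum_{|\alpha|\leq m}\int \rd^\alpha(u\cdot\nb\tht)\,\rd^\alpha\tht\,\ud x \;-\; \sum_{|\alpha|\leq m}\int \rd^\alpha(\sigma u_2)\,\rd^\alpha\tht\,\ud x,
\end{equation*}
where the dissipative term on the left arises from Darcy's law together with $\mathrm{div}\,u=0$: these give $\mathscr{F}u_2(\xi) = (\xi_1^2/|\xi|^2)\mathscr{F}\tht(\xi)$, so Plancherel converts the constant-coefficient contribution $-N\int \rd^\alpha u_2\,\rd^\alpha\tht\,\ud x$ into $-N\|R_1\rd^\alpha\tht\|_{L^2}^2$, which upon transport to the LHS and integration in time produces the term $N\int_0^T\|R_1\tht\|_{H^m}^2\,\ud t$ appearing on the left of \eqref{energy}.

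The transport nonlinearity is then handled via the standard commutator identity (incompressibility cancels the top-order transport of $\rd^\alpha\tht$) combined with an anisotropic Kato--Ponce-type estimate. Splitting $u\cdot\nb\tht = u_1\rd_1\tht + u_2\rd_2\tht$ and using the Fourier symbols $\mathscr{F}u_1 = -(\xi_1\xi_2/|\xi|^2)\mathscr{F}\tht$ and $\mathscr{F}u_2 = (\xi_1^2/|\xi|^2)\mathscr{F}\tht$, every horizontal factor (each of $u_1$, $u_2$, and $\rd_1$) carries a hidden $R_1$ on $\tht$. After H\"older and Young's inequality this delivers a bound of the shape $\|\tht\|_{H^m}\|R_1\tht\|_{H^m}^2$ from the $u_1\rd_1\tht$ piece and $\|u_2\|_{W^{1,\infty}}\|\tht\|_{H^m}^2$ from the $u_2\rd_2\tht$ piece; time integration supplies the $k=0$ contributions on the right of \eqref{energy}.

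The core difficulty is the $\sigma$-term $-\sum_\alpha \int \rd^\alpha(\sigma u_2)\rd^\alpha\tht\,\ud x$. Because $\sigma=\sigma(x_2)$, Leibniz reduces it to $\rd^\alpha(\sigma u_2) = \sum_{j=0}^{\alpha_2}\binom{\alpha_2}{j}(\rd_2^j\sigma)\,\rd_1^{\alpha_1}\rd_2^{\alpha_2-j}u_2$. The $j=0$ piece is dominated, by Plancherel, by $\|\sigma\|_{L^\infty}\|R_1\rd^\alpha\tht\|_{L^2}^2 = (N^{-1}\|\sigma\|_{L^\infty})\cdot N\|R_1\rd^\alpha\tht\|_{L^2}^2$, which is the $k=1$ absorption contribution. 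For $j\geq 1$, each derivative passed to $\sigma$ liberates one on $u_2$, which we reconvert into an $R_1$ factor on $\tht$ using $u_2 = -R_1^2\tht$ and $\rd_1 = R_1(-\lap)^{1/2}$, coupled with commutation through $R_1$. The higher powers $(N^{-1}\|\sigma\|_{W^{m+1,\infty}})^k$ appearing in \eqref{energy} arise by iterating the identity $(N+\sigma)u_2 = -(\rd_t\tht + u\cdot\nb\tht)$, which encodes $u_2$ as a Neumann-type expansion in $\sigma/N$; the standing assumption $|\sigma|\leq N/2$ makes the expansion convergent, and truncation at $k=m$ reflects the finite Sobolev order.

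The main obstacle will be the combinatorial bookkeeping of all Leibniz subterms in the $\sigma$-expansion: every piece must be routed into one of the two prescribed shapes on the right of \eqref{energy}---either the absorption shape $(N^{-1}\|\sigma\|_{W^{m+1,\infty}})^k(\sup_t\|\tht\|_{H^m}^2 + N\int_0^T\|R_1\tht\|_{H^m}^2\,\ud t)$, to be swallowed by the LHS when $N$ is large, or the cubic shape $(N^{-1}\|\sigma\|_{W^{m+1,\infty}})^k(\sup_t\|\tht\|_{H^m}\int_0^T\|R_1\tht\|_{H^m}^2\,\ud t + \sup_t\|\tht\|_{H^m}^2\int_0^T\|u_2\|_{W^{1,\infty}}\,\ud t)$---without leaking regularity or producing uncontrolled spatial norms of $\sigma$ or $\tht$. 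Once \eqref{energy} is established, the smallness condition \eqref{ass} will close the bootstrap argument driving the continuity method in the proof of Theorem~\ref{thm:stb_R}.
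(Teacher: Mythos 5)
Your skeleton (an $H^m$ energy identity, isolating $\int_0^T\|u_2\|_{W^{1,\infty}}\,\ud t$, and absorbing the $\sigma$-contributions through powers of $N^{-1}\|\sigma\|_{W^{m+1,\infty}}$) matches the paper's, but there is a genuine gap at the single point where the proof is actually hard: the purely vertical multi-index $\partial^\alpha=\partial_2^{\alpha_2}$ (in particular $\partial_2^m$). Your claim that the $j=0$ Leibniz piece $\int\sigma\,\partial^\alpha u_2\,\partial^\alpha\theta\,\ud x$ is "dominated, by Plancherel, by $\|\sigma\|_{L^\infty}\|R_1\partial^\alpha\theta\|_{L^2}^2$" is false when $\alpha_1=0$: since $\partial^\alpha u_2=-R_1^2\partial^\alpha\theta$ and the multiplier $\sigma(x_2)$ sits between the two factors, you cannot redistribute one of the two Riesz transforms onto the bare $\partial^\alpha\theta$ without a commutator, and the honest bound is $\|\sigma\|_{L^\infty}\|R_1\theta\|_{H^m}\|\theta\|_{H^m}$. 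That quantity is not controlled by the right-hand side of \eqref{energy}: $\int_0^T\|R_1\theta\|_{H^m}\|\theta\|_{H^m}\,\ud t$ grows in $T$ and cannot be absorbed. The same defect afflicts your $j\geq1$ pieces when $\alpha_1=0$ (the derivatives liberated onto $\sigma$ are all $\partial_2$'s, so no $\partial_1$ is available to convert), and, to a lesser degree, your treatment of $u_2\partial_2\theta$ in the transport term, where the naive Kato--Ponce cross term $\|u_2\|_{H^m}\|\partial_2\theta\|_{L^\infty}\|\theta\|_{H^m}$ carries only one Riesz factor. When $\alpha$ contains at least one $\partial_1$ your argument does go through (each factor then carries a $\xi_1$), which is exactly why the paper disposes of $\partial^\alpha\neq\partial_2^m$ in two lines and devotes the entire second half of its proof to $\partial_2^m$.

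The missing idea is the paper's use of the equation itself: after integrating by parts to $\int\partial_2^{2m}(u_2\sigma)\,\theta$ and splitting off $\partial_2u_2=-\partial_1u_1$ (which manufactures the needed horizontal derivative), the irreducible remainder $\int u_2\,\theta\,\partial_2^{2m}\sigma$ is handled by substituting $u_2=-(\partial_t\theta+u\cdot\nabla\theta)/(N+\sigma)$, turning it into an exact time derivative $-\tfrac12\tfrac{\ud}{\ud t}\int\frac{\partial_2^{2m}\sigma}{N+\sigma}|\theta|^2$ plus terms of the two admissible shapes. The powers $(N^{-1}\|\sigma\|_{W^{m+1,\infty}})^k$, $1\le k\le m$, then arise from the chain rule applied to $\partial_2^{m}\big(\tfrac{1}{N+\sigma}\big)$ after $m$ further integrations by parts --- not from a Neumann-series expansion of $(N+\sigma)^{-1}$, which is the mechanism you gesture at. (The paper also runs the $|\alpha|=0$ estimate with the weight $N/(N+\sigma)$, which is where the prefactor $\tfrac13$ comes from; this is cosmetic compared to the point above.) Without the substitution trick, or an equivalent device supplying a second Riesz transform to the purely vertical $\sigma$-terms, the estimate \eqref{energy} does not close.
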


\begin{proof}
Multiplying \eqref{eq:SIPM} by $\frac{N\theta}{N + \sigma(x_2)}$ and integrating over, we have
\begin{equation*}
    \frac{1}{2} \frac{\ud}{\ud t} \int \frac{N|\tht|^2}{N+\sigma(x_2)} \,\ud x + N\int |R_1 \theta|^2 \,\ud x \leq \left| \int (u \cdot \nabla) \theta \frac{N\theta}{N + \sigma(x_2)} \,\ud x \right|.
\end{equation*}
Integration by parts and the hypothesis \Black{$|\sigma(x_2)|\leq \frac{N}{2}$} yield
\begin{align*}
    \left| \int (u \cdot \nabla) \theta \frac{N\theta}{N + \sigma(x_2)} \,\ud x \right| &= \left| \frac{1}{2} \int u_2 |\theta|^2 \frac{N\sigma'(x_2)}{(N + \sigma(x_2))^2} \,\ud x \right| \leq N^{-1} \| \sigma' \|_{L^{\infty}} \| u_2 \|_{L^{\infty}} \int \frac{N|\tht|^2}{N+\sigma(x_2)} \,\ud x.
\end{align*}
Hence, integrating both sides over time, we can infer
\begin{equation}\label{L2_est}
\begin{gathered}
    \frac{1}{2} \sup_{t \in [0,T]} \int \frac{N|\tht|^2}{N+\sigma(x_2)} \,\ud x + N \int_0^T \|R_1 \theta\|_{L^2}^2 \,\ud t \\
    \leq \frac{1}{2} \int \frac{N|\tht_0|^2}{N+\sigma(x_2)} \,\ud x + N^{-1} \| \sigma' \|_{L^{\infty}} \sup_{t \in [0,T]} \int \frac{N|\tht|^2}{N+\sigma(x_2)} \,\ud x \int_0^T  \| u_2 \|_{L^{\infty}} \,\ud t. 
\end{gathered}    
\end{equation}
From \eqref{eq:SIPM}, we have for $1 \leq |\alpha| \leq m$ that
\begin{equation*}
	\frac 12 \frac {\ud}{\ud t} \int|\partial^\alpha \theta|^2 \,\ud x + N \int |R_1 \partial^\alpha \theta|^2 \,\ud x = - \int \partial^\alpha(u \cdot \nabla) \theta \partial^\alpha \theta \,\ud x - \int \partial^{\alpha} (u_2 \sigma(x_2)) \partial^{\alpha} \theta \,\ud x.
\end{equation*}
We only consider the case $|\alpha| = m$ because the others can be treated similarly. Integrating over time, we obtain
\begin{equation}\label{alp_energy}
    \begin{gathered}
    	\frac 12 \sup_{t \in [0,T]} \|\partial^\alpha \theta\|_{L^2}^2 + N \int_0^T \|R_1 \partial^\alpha \theta\|_{L^2}^2 \,\ud t \\
    	= \frac 12 \|\partial^\alpha \theta_0\|_{L^2}^2 - \int_0^T \int \partial^\alpha(u \cdot \nabla) \theta \partial^\alpha \theta \,\ud x \ud t - \int_0^T \int \partial^{\alpha} (u_2 \sigma(x_2)) \partial^{\alpha} \theta \,\ud x \ud t.
	\end{gathered}
\end{equation}
We estimate the second and third integrals on the right-hand side considering the cases $\partial^{\alpha} \neq \partial_2^m$ and $\partial^{\alpha} = \partial_2^m$ separately. In the former case, the divergence free condition implies
\begin{align*}
    \left| \int_0^T \int \partial^\alpha(u \cdot \nabla) \theta \partial^\alpha \theta \,\ud x \ud t \right| &= 	\left| \int_0^T \int \left(\partial^\alpha(u \cdot \nabla) \theta - (u \cdot \nabla) \partial^{\alpha} \theta \right) \partial^\alpha \theta \,\ud x \ud t \right| \\
    &\leq C\int_0^T (\| \nabla u \|_{L^\infty} \| \tht \|_{H^m} + \| u \|_{H^m} \| \nabla \tht \|_{L^{\infty}}) \| R_1 \tht \|_{H^m} \,\ud t \\
    &\leq C\sup_{t \in [0,T]} \| \theta \|_{H^m} \int_0^T \|R_1 \theta \|_{H^m}^2 \,\ud t.
\end{align*}
We used $H^{m-1}(\bbR^2) \hookrightarrow L^{\infty}(\bbR^2)$ in the last inequality. The third integral is bounded by 
\begin{equation*}
    \left| \int_0^T \int \partial^{\alpha} (u_2 \sigma(x_2)) \partial^{\alpha} \theta \,\ud x \ud t \right| \leq C \| \sigma \|_{W^{m+1,\infty}} \int_0^T \| R_1 \theta \|_{H^m}^2 \,\ud t.
\end{equation*}
Otherwise, we can see from $(u \cdot \nabla) \tht = u_1\partial_1 \tht + u_2 \partial_2 \tht$ and the divergence free condition that
\begin{align*}
    \left| \int_0^T \int \partial_2^m(u \cdot \nabla) \theta \partial_2^m \theta \,\ud x \ud t \right| &\leq \left| \int_0^T \int \partial_2^{m-1}(\partial_2u_1 \partial_1 \tht )\partial_2^m \theta \,\ud x \ud t \right| + \left| \int_0^T \int \partial_2^{m-1}(\partial_2u_2 \partial_2 \tht)\partial_2^m \theta \,\ud x \ud t \right| \\
    &\leq C\sup_{t \in [0,T]} \| \theta \|_{H^m} \int_0^T \|R_1 \theta \|_{H^m}^2 \,\ud t + \left| \int_0^T \int \partial_2^{m-1}(\partial_1u_1 \partial_2 \tht)\partial_2^m \theta \,\ud x \ud t \right|.
\end{align*}
Using the integration by parts twice, we can deduce
\begin{align*}
    \left| \int_0^T \int \partial_2^{m-1}(\partial_1u_1 \partial_2 \tht)\partial_2^m \theta \,\ud x \ud t \right| \leq C\sup_{t \in [0,T]} \| \theta \|_{H^m} \int_0^T \|R_1 \theta \|_{H^m}^2 \,\ud t + \left| \int_0^T \int \partial_1u_1 |\partial_2^m \tht|^2 \,\ud x \ud t \right|,
\end{align*} hence,
\begin{align*}
    \left| \int_0^T \int \partial_2^m(u \cdot \nabla) \theta \partial_2^m \theta \,\ud x \ud t \right| \leq C\sup_{t \in [0,T]} \| \theta \|_{H^m} \int_0^T \|R_1 \theta \|_{H^m}^2 \,\ud t + \sup_{t \in [0,T]} \| \theta \|_{H^m}^2 \int_0^T \| \partial_2 u_2\|_{L^{\infty}} \, \ud t.
\end{align*}
On the other hand, we have
\begin{align*}
    \left| \int_0^T \int \partial_2^{m} (u_2 \sigma(x_2)) \partial_2^{m} \theta \,\ud x \ud t \right|&= \left| \int_0^T \int \partial_2^{2m} (u_2 \sigma(x_2)) \theta \,\ud x \ud t \right| \\
    &\leq \left| \int_0^T \int \partial_2^{2m-1} (\partial_2u_2 \sigma(x_2)) \theta \,\ud x \ud t \right| + \left| \int_0^T \int u_2\theta \partial_2^{2m}\sigma(x_2) \,\ud x \ud t \right|.
\end{align*}
Performing the integration by parts and using the divergence free condition, we see that
\begin{align*}
   \left| \int_0^T \int \partial_2^{2m-1} (\partial_2u_2 \sigma(x_2)) \theta \,\ud x \ud t\right| &= \left| \int_0^T \int \partial_2^{2m-1} \partial_1(u_1 \sigma(x_2)) \theta \,\ud x \ud t\right| \\
   &= \left| \int_0^T \int \partial_2^{m} (u_1 \sigma(x_2)) \partial_2^{m-1} \partial_1 \theta \,\ud x \ud t \right| \\
   &\leq C \| \sigma \|_{W^{m+1,\infty}} \int_0^T \| R_1 \theta \|_{H^m}^2 \,\ud t.
\end{align*}
From our equation \eqref{eq:SIPM} and the integration by parts, we have
\begin{align*}
    \int u_2\theta \partial_2^{2m}\sigma(x_2) \,\ud x &= -\int \frac{\partial_2^{2m}\sigma(x_2)}{N + \sigma(x_2)} \theta ( \theta_t + (u \cdot \nabla) \theta) \,\ud x \\
    &= -\frac{1}{2} \frac{\ud}{\ud t} \int \frac{\partial_2^{2m}\sigma(x_2)}{N+\sigma(x_2)}|\tht|^2 \,\ud x + \frac{1}{2}\int u_2 |\tht|^2 \partial_2 \frac{\partial_2^{2m}\sigma(x_2)}{N+\sigma(x_2)} \,\ud x \\
    &= -\frac{1}{2} \frac{\ud}{\ud t} \int \frac{\partial_2^{2m}\sigma(x_2)}{N+\sigma(x_2)}|\tht|^2 \,\ud x -\frac{1}{2} \int \partial_2(u_2 |\theta|^2) \frac{\partial_2^{2m}\sigma(x_2)}{N+\sigma(x_2)} \,\ud x,
\end{align*} hence we get
\begin{gather*}
    \left| \int_0^T \int u_2\theta \partial_2^{2m}\sigma(x_2) \,\ud x \ud t \right| \leq \left| \frac{1}{2} \int \frac{\partial_2^{2m}\sigma(x_2)}{N+\sigma(x_2)}|\tht(T)|^2 \,\ud x - \frac{1}{2} \int \frac{\partial_2^{2m}\sigma(x_2)}{N+\sigma(x_2)}|\tht(0)|^2 \,\ud x \right| \\
    + \left| \frac{1}{2} \int_0^T \int \partial_2(u_2 |\theta|^2) \frac{\partial_2^{2m}\sigma(x_2)}{N+\sigma(x_2)} \,\ud x \ud t \right|.
\end{gather*}
Since
\begin{align*}
    \left| \frac{1}{2} \int \frac{\partial_2^{2m}\sigma(x_2)}{N+\sigma(x_2)}|\tht|^2 \,\ud x \right| &= \left| \frac{1}{2} \int \partial_2^m\sigma(x_2) \partial_2^{m} \frac{|\tht|^2}{N+\sigma(x_2)} \,\ud x \right| \\
    &\leq C\sum_{k=1}^{m} \left( N^{-1} \| \sigma \|_{W^{m+1,\infty}} \right)^k \sup_{t \in [0,T]} \| \theta \|_{H^{m}}^2
\end{align*}
and
\begin{gather*}
    \left| \frac{1}{2} \int_0^T \int \partial_2(u_2 |\theta|^2) \frac{\partial_2^{2m}\sigma(x_2)}{N+\sigma(x_2)} \,\ud x \ud t \right| \leq \left| \frac{1}{2} \int_0^T \int \partial_2^{m+1}\sigma(x_2) \partial_2^{m-1} \left(\frac{\partial_2(u_2 |\theta|^2)}{N+\sigma(x_2)} \right) \,\ud x \ud t \right| \\
    \leq C\sum_{k=1}^{m} \left( N^{-1} \| \sigma \|_{W^{m+1,\infty}} \right)^k \left( \sup_{t \in [0,T]} \| \theta \|_{H^m} \int_0^T \|R_1 \theta \|_{H^m}^2 \,\ud t + \sup_{t \in [0,T]} \| \theta \|_{H^m}^2 \int_0^T \|u_2\|_{L^{\infty}} \, \ud t \right),
\end{gather*}
there holds
\begin{gather*}
    \left| \int_0^T \int u_2\theta \partial_2^{2m}\sigma(x_2) \,\ud x \ud t \right| \leq C \sum_{k=1}^{m} \left( N^{-1} \| \sigma \|_{W^{m+1,\infty}} \right)^k \sup_{t \in [0,T]} \| \theta \|_{H^{m}}^2 \\
    + C\sum_{k=1}^{m} \left( N^{-1} \| \sigma \|_{W^{m+1,\infty}} \right)^k \left( \sup_{t \in [0,T]} \| \theta \|_{H^m} \int_0^T \|R_1 \theta \|_{H^m}^2 \,\ud t + \sup_{t \in [0,T]} \| \theta \|_{H^m}^2 \int_0^T \|u_2\|_{L^{\infty}} \, \ud t \right).
\end{gather*}
Inserting the above estimates into \eqref{alp_energy}, we obtain
\begin{gather*}
    \frac{1}{2} \sup_{t \in [0,T]} \| \nabla \tht \|_{H^{m-1}}^2 + N \int_0^T \| \partial_1 \tht \|_{H^{m-1}} \,\ud t \leq \frac{1}{2} \| \nabla \tht_0 \|_{H^{m-1}}^2 \\
    + C\sum_{k=0}^{m} \left( N^{-1} \| \sigma \|_{W^{m+1,\infty}} \right)^k \left( \sup_{t \in [0,T]} \| \theta \|_{H^m}^2 + N\int_0^T \| R_1 \theta \|_{H^m}^2 \,\ud t \right) \\
    + C\sum_{k=0}^{m} \left( N^{-1} \| \sigma \|_{W^{m+1,\infty}} \right)^k \left( \sup_{t \in [0,T]} \| \theta \|_{H^m} \int_0^T \|R_1 \theta \|_{H^m}^2 \,\ud t + \sup_{t \in [0,T]} \| \theta \|_{H^m}^2 \int_0^T \|u_2\|_{W^{1,\infty}} \, \ud t \right).
\end{gather*}
From \eqref{L2_est} and $|\sigma(x_2)| \leq \frac{N}{2}$, we deduce that
\begin{equation*}
\begin{gathered}
    \frac{1}{3} \sup_{t \in [0,T]} \| \tht \|_{L^2}^2 + N \int_0^T \|R_1 \theta\|_{L^2}^2 \,\ud t \leq \| \tht_0 \|_{L^2}^2 + C N^{-1} \| \sigma' \|_{L^{\infty}} \sup_{t \in [0,T]} \| \tht \|_{L^2}^2 \int_0^T  \| u_2 \|_{L^{\infty}} \,\ud t. 
\end{gathered}    
\end{equation*}
Combining the above two inequalities, we establish \eqref{energy} as desired. This completes the proof.
\end{proof}

The next proposition is for the case of $\sigma = 0$. The proof is similar to the proof of Proposition~\ref{prop_energy} with $\sigma=0$, except for using the Kato-Ponce type inequality instead of the calculus inequality. This enables us to drop the condition $m \in \bbN$. We only provide the sketch of proof in Appendix.
\begin{proposition}\label{prop_energy_R}
Let $\theta$ be a classical solution of \eqref{eq:SIPM} with $N>0$ and $\sigma = 0$. Then for any given $m \in \bbR$ with $m > 2$, there exists a constant $C=C(m)>0$ such that
\begin{equation}\label{energy_R}
\begin{gathered}
    \frac{1}{2}\sup_{t \in [0,T]} \|\theta\|_{H^{m}}^2 + N \int_0^T \|R_1\theta\|_{H^{m}}^{2} \,\ud t \\
    \leq \frac{1}{2} \| \tht_0 \|_{H^{m}}^2 + C\sup_{t \in [0,T]} \| \theta \|_{H^m} \int_0^T \|R_1 \theta \|_{H^m}^2 \,\ud t + C \sup_{t \in [0,T]} \| \theta \|_{H^m}^2 \int_0^T \int_{\bbR^2} |\xi| |\mathscr{F} u_2| \, \ud \xi \ud t.
\end{gathered}
\end{equation}
\end{proposition}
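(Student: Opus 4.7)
The plan is to mimic the proof of Proposition~\ref{prop_energy} with $\sigma=0$, but to replace the integer Leibniz rule by a Kato--Ponce-type commutator estimate so that non-integer $m>2$ becomes admissible. Applying $J^m:=(1-\Delta)^{m/2}$ to the $\theta$-equation of \eqref{eq:SIPM} with $\sigma=0$ and pairing with $J^m\theta$ in $L^2$, Darcy's law yields $\widehat{u_2}(\xi)=(\xi_1^2/|\xi|^2)\widehat{\theta}$, that is $u_2=-R_1^2\theta$, and Plancherel gives $-N\int J^m u_2\cdot J^m\theta\,\ud x=-N\|R_1\theta\|_{H^m}^2$. Moving this damping to the left produces the working identity
\[
\frac{1}{2}\frac{\ud}{\ud t}\|\theta\|_{H^m}^2 + N\|R_1\theta\|_{H^m}^2 = -\int_{\bbR^2} J^m(u\cdot\nabla\theta)\, J^m\theta\,\ud x.
\]
Using $\nabla\cdot u=0$, the right-hand side reduces to the sum of two commutator integrals $\int[J^m,u_1]\partial_1\theta\cdot J^m\theta\,\ud x + \int[J^m,u_2]\partial_2\theta\cdot J^m\theta\,\ud x$, since the non-commutator pieces collapse to $-\tfrac12\int(\partial_1 u_1+\partial_2 u_2)|J^m\theta|^2\,\ud x=0$.

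For the horizontal commutator, Darcy gives $u_1=R_1R_2\theta$, so $\|u_1\|_{H^m}\leq\|R_1\theta\|_{H^m}$ and (by Sobolev with $m>2$) $\|\nabla u_1\|_{L^\infty}\leq C\|R_1\theta\|_{H^m}$, while the identity $\partial_1=\Lambda R_1$ gives $\|\partial_1\theta\|_{H^{m-1}}+\|\partial_1\theta\|_{L^\infty}\leq C\|R_1\theta\|_{H^m}$. A Kato--Ponce commutator estimate then bounds $\|[J^m,u_1]\partial_1\theta\|_{L^2}$ by $C\|R_1\theta\|_{H^m}^2$, yielding the contribution $C\sup\|\theta\|_{H^m}\int\|R_1\theta\|_{H^m}^2\,\ud t$ after pairing with $J^m\theta$ and integrating in time---precisely the first cross term of \eqref{energy_R}.

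The vertical commutator is the main obstacle, because $\partial_2\theta$ carries no $R_1$-structure. Adapting the $\partial_2^m$-case of Proposition~\ref{prop_energy}, I would isolate the most singular contribution in the form $\int(\partial_2 u_2)|J^m\theta|^2\,\ud x$, bound it by $\|\partial_2 u_2\|_{L^\infty}\|\theta\|_{H^m}^2\leq\bigl(\int_{\bbR^2}|\xi||\mathscr{F}u_2|\,\ud\xi\bigr)\|\theta\|_{H^m}^2$ via Fourier inversion (matching the third term of \eqref{energy_R}), and then use the divergence-free identity $\partial_2u_2=-\partial_1 u_1$ to recast the residual terms so that each remaining $\partial_2\theta$ is paired with either a $\partial_1 u_1$ factor (which introduces an $R_1\theta$ through $u_1=R_1R_2\theta$) or, after one further integration by parts, with $\partial_1 J^m\theta\sim\Lambda R_1 J^m\theta$. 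A Kato--Ponce estimate would then bound these residuals by $C\|R_1\theta\|_{H^m}^2\|\theta\|_{H^m}$, contributing again to the first cross term of \eqref{energy_R}. Since $m$ is non-integer, the pointwise Leibniz manipulations of Proposition~\ref{prop_energy} must be replaced by the anisotropic commutator estimate of Lemma~\ref{lem_commu}, and this is the step I expect to be the most technically delicate. Once the two commutator bounds are assembled, integrating in time yields \eqref{energy_R}; the cleaner coefficient $\tfrac12$ (compared with $\tfrac13$ in \eqref{energy}) reflects the absence of a weighted $L^2$ identity with $N/(N+\sigma)$ when $\sigma=0$.
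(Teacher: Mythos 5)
Your proposal is correct and follows essentially the same route as the paper: an $H^m$ energy identity, a commutator decomposition in which every term carrying a $\partial_1$ is controlled by Kato--Ponce type estimates (yielding $\|R_1\theta\|_{H^m}^2\|\theta\|_{H^m}$), and Lemma~\ref{lem_commu} for the purely vertical piece, which is exactly what produces the $\| |\xi|\,\mathscr{F}u_2\|_{L^1}\|\theta\|_{H^m}^2$ term. The only adjustment is notational: since Lemma~\ref{lem_commu} is stated for $\Lambda^{s-2}\partial_2^2(u_2\partial_2\theta)-u_2\Lambda^{s-2}\partial_2^3\theta$, the paper writes the $\dot{H}^m$ pairing as $\sum_{i,j}\int\Lambda^{m-2}\partial_i\partial_j(u\cdot\nabla\theta)\,\Lambda^{m-2}\partial_i\partial_j\theta\,\ud x$ and isolates the $(i,j)=(2,2)$ block, rather than working with $[J^m,u_2]\partial_2\theta$ directly.
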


\subsection{Proof of the global existence of solutions}

\begin{lemma}\label{key_lem_R}
    Let $\theta$ be a global classical solution of \eqref{eq:SIPM} with $N>0$ and $\sigma \in \mathscr{S}(\bbR)$. Then, for any given $m > 3$ and $s \in [0,m-2)$, there exists a constant $C>0$ such that 
	\begin{equation}\label{key_est_R}
	    \begin{gathered}
    		\int_0^T \int_{\mathbb{R}^2} (1+|\xi|^2)^{\frac s2} |\mathscr{F}u_2| \, \ud \xi \ud t \leq \frac{1}{N}\int_{\mathbb{R}^2} (1+|\xi|^2)^{\frac s2} |\mathscr{F} \theta_0 | \, \ud \xi \\
    		+ \frac{C}{N}\int_0^T \| R_1 \theta \|_{H^{m}}^2 \, \ud t + \frac{C}{N} \left(\sup_{t \in [0,T]} \| \theta \|_{H^m} + \| (1+|\xi_2|^2)^{\frac{s}{2}} \widehat{\sigma}(\xi_2) \|_{L^1} \right) \int_0^T \int_{\mathbb{R}^2} (1+|\xi|^2)^{\frac s2} |\mathscr{F} u_2| \, \ud \xi \ud t
		\end{gathered}
	\end{equation}
	for all $T>0$.
\end{lemma}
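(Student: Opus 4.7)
The plan is to derive a Duhamel representation for $\widehat{u_2}$ and then integrate in time to exploit the anisotropic damping $e^{-Nt\xi_1^2/|\xi|^2}$. From Darcy's law, $\widehat{u_2}(t,\xi)=\frac{\xi_1^2}{|\xi|^2}\widehat{\theta}(t,\xi)$, and Fourier-transforming the first line of \eqref{eq:SIPM} gives $\partial_t\widehat{\theta}+N\frac{\xi_1^2}{|\xi|^2}\widehat{\theta}=-\mathscr{F}(u\cdot\nabla\theta)-\mathscr{F}(u_2\sigma)$, whose solution leads to
\begin{equation*}
\widehat{u_2}(t,\xi)=\tfrac{\xi_1^2}{|\xi|^2}e^{-Nt\xi_1^2/|\xi|^2}\mathscr{F}\theta_0(\xi)-\tfrac{\xi_1^2}{|\xi|^2}\int_0^t e^{-N(t-\tau)\xi_1^2/|\xi|^2}\bigl(\mathscr{F}(u\cdot\nabla\theta)+\mathscr{F}(u_2\sigma)\bigr)(\tau,\xi)\,\ud\tau.
\end{equation*}
Multiplying by $(1+|\xi|^2)^{s/2}$, taking absolute values, integrating over $\xi\in\bbR^2$ and $t\in[0,T]$, and using Fubini to swap the two time integrals, the elementary bound $\int_\tau^T\frac{\xi_1^2}{|\xi|^2}e^{-N(t-\tau)\xi_1^2/|\xi|^2}\,\ud t\leq\frac{1}{N}$ collapses the damping symbol and produces a uniform $\frac{1}{N}$ prefactor on every Duhamel contribution. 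The linear piece immediately yields the first term on the right of \eqref{key_est_R}.

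For the transport term I would split $u\cdot\nabla\theta=u_1\partial_1\theta+u_2\partial_2\theta$ and apply the Fourier convolution inequality \eqref{conv_ineq_R2} of Lemma~\ref{lem_intp} with $p=q=r=1$. The asymmetry of the two halves is the heart of the matter. For $u_1\partial_1\theta$ both factors are tied to $R_1\theta$, since $|\widehat{u_1}|\leq|\mathscr{F}R_1\theta|$ and $|\mathscr{F}\partial_1\theta|=|\xi|\,|\mathscr{F}R_1\theta|$; a single Cauchy--Schwarz in $\xi$ against the weight $(1+|\xi|^2)^{s+1-m}$, whose integrability over $\bbR^2$ is exactly the content of the hypothesis $s<m-2$, gives
\begin{equation*}
\bigl\|(1+|\xi|^2)^{s/2}\mathscr{F}(u_1\partial_1\theta)\bigr\|_{L^1}\leq C\|R_1\theta\|_{H^m}^2,
\end{equation*}
producing the $\frac{C}{N}\int_0^T\|R_1\theta\|_{H^m}^2\,\ud t$ summand. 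In contrast, $\partial_2\theta$ conceals no horizontal Riesz structure, so for $u_2\partial_2\theta$ I would deliberately keep $\widehat{u_2}$ intact on one side of the convolution and bound the other side by $\|(1+|\xi|^2)^{(s+1)/2}\widehat{\theta}\|_{L^1}\leq C\|\theta\|_{H^m}$ via the same Cauchy--Schwarz, producing the self-referential estimate $C\sup_{[0,T]}\|\theta\|_{H^m}\cdot\int_0^T\|(1+|\xi|^2)^{s/2}\widehat{u_2}\|_{L^1}\,\ud t$, which is the main contribution to the third term of \eqref{key_est_R}.

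The stratification term $\mathscr{F}(u_2\sigma)$ is a convolution in $\xi_2$ only since $\sigma$ depends on $x_2$ alone; the anisotropic inequality \eqref{conv_ineq_R2_2} of Lemma~\ref{lem_intp} with $p=q=r=1$ gives
\begin{equation*}
\bigl\|(1+|\xi|^2)^{s/2}\mathscr{F}(u_2\sigma)\bigr\|_{L^1}\leq C\bigl\|(1+|\xi|^2)^{s/2}\widehat{u_2}\bigr\|_{L^1}\bigl\|(1+|\xi_2|^2)^{s/2}\widehat{\sigma}\bigr\|_{L^1},
\end{equation*}
which, after the $\frac{1}{N}$-collapse of the first paragraph, furnishes the remaining $\sigma$-dependent factor in the third term of \eqref{key_est_R}. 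The principal technical hurdle is precisely the $u_2\partial_2\theta$ piece: unlike its $u_1\partial_1\theta$ companion it cannot be folded into $\|R_1\theta\|_{H^m}^2$, which is why the lemma is stated with its left-hand side reappearing on the right; the smallness of the coefficient $C\sup\|\theta\|_{H^m}/N$ will permit this self-referential term to be absorbed during the continuity argument used to prove Theorem~\ref{thm:stb_R}.
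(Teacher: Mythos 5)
Your proposal is correct and follows essentially the same route as the paper's proof: the Duhamel formula in Fourier variables, Fubini plus the elementary bound $\int_\tau^T N\frac{\xi_1^2}{|\xi|^2}e^{-N(t-\tau)\xi_1^2/|\xi|^2}\,\ud t\leq 1$ to extract the $1/N$ prefactor, the convolution inequalities \eqref{conv_ineq_R2} and \eqref{conv_ineq_R2_2}, and the same asymmetric treatment of $u_1\partial_1\theta$ (folded into $\|R_1\theta\|_{H^m}^2$ via the $s<m-2$ weight integrability) versus $u_2\partial_2\theta$ (kept self-referential). The only cosmetic difference is that the paper writes Duhamel for $\mathscr{F}\theta$ and multiplies by $\xi_1^2/|\xi|^2$ afterwards, and phrases the Cauchy--Schwarz step with an auxiliary $\varepsilon\in(0,m-s-2]$ rather than your equivalent direct weight argument.
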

\begin{proof}
	From the system \eqref{eq:SIPM}, we have $u_2 = -\partial_1^2 (-\Delta)^{-1}  \theta$, hence
	\begin{equation*}
		\int_0^T \int_{\mathbb{R}^2} (1+|\xi|^2)^{\frac s2} |\mathscr{F}u_2| \, \ud \xi \ud t = \int_0^T \int_{\mathbb{R}^2} (1+|\xi|^2)^{\frac s2} \frac {|\xi_1|^2}{|\xi|^2} |\mathscr{F}\theta| \, \ud \xi \ud t.
	\end{equation*}
	Using Duhamel's formula gives
	\begin{equation}\label{df_tht}
		\Black{\mathscr{F}\theta(\xi,t) = e^{- N\frac {|\xi_1|^2}{|\xi|^2} t} \mathscr{F} \theta_0 (\xi) - \int_0^t e^{- N\frac {|\xi_1|^2}{|\xi|^2}(t-\tau)} \mathscr{F} (u \cdot \nabla \theta) (\xi,\tau) \,\ud \tau - \int_0^t e^{- N\frac {|\xi_1|^2}{|\xi|^2}(t-\tau)} \mathscr{F} (u_2 \sigma) (\xi,\tau) \,\ud \tau .}
	\end{equation}
	Thus, it follows
	\begin{gather*}
		\int_0^T \int_{\mathbb{R}^2} (1+|\xi|^2)^{\frac s2} |\mathscr{F}u_2| \, \ud \xi \ud t \leq I_1 + I_2 + I_3,
	\end{gather*}
	where
	\begin{align*}
		I_1 &:= \frac{1}{N} \int_0^T \int_{\mathbb{R}^2} (1+|\xi|^2)^{\frac s2} N\frac {|\xi_1|^2}{|\xi|^2} e^{- N\frac {|\xi_1|^2}{|\xi|^2} t} |\mathscr{F} \theta_0 (\xi)| \, \ud \xi \ud t, \\
		I_2 &:= \frac{1}{N} \int_0^T \int_{\mathbb{R}^2} \int_0^t (1+|\xi|^2)^{\frac s2} N\frac {|\xi_1|^2}{|\xi|^2} e^{- N\frac {|\xi_1|^2}{|\xi|^2}(t-\tau)} |\mathscr{F} u \cdot \nabla \theta (\xi,\tau)| \,\ud \tau \ud \xi \ud t, \\
		I_3 &:= \frac{1}{N} \int_0^T \int_{\mathbb{R}^2} \int_0^t (1+|\xi|^2)^{\frac s2} N\frac {|\xi_1|^2}{|\xi|^2} e^{- N\frac {|\xi_1|^2}{|\xi|^2}(t-\tau)} |\mathscr{F} (u_2\sigma) (\xi,\tau)| \,\ud \tau \ud \xi \ud t.
	\end{align*}
	By Fubini's theorem, it is clear that
	\begin{equation*}
		I_1 \leq \frac{1}{N}\int_{\mathbb{R}^2} (1+|\xi|^2)^{\frac s2} |\mathscr{F} \theta_0 | \, \ud \xi.
	\end{equation*}
	Using \eqref{conv_ineq_R2_2}, we have
	\begin{align*}
		I_3 &\leq \frac{1}{N} \int_0^T \int_{\mathbb{R}^2} (1+|\xi|^2)^{\frac s2} |\mathscr{F} (u_2 \sigma)| \,\ud \xi \ud t \\
		&\leq \frac{C}{N} \| (1+|\xi_2|^2)^{\frac{s}{2}} \widehat{\sigma}(\xi_2) \|_{L^1} \int_0^T \| (1+|\xi|^2)^{\frac s2} \mathscr{F} u_2 \|_{L^1} \,\ud t. 
	\end{align*}
	From \eqref{conv_ineq_R2} and
	\begin{equation}\label{decom_est}
	    |\mathscr{F} u \cdot \nabla \theta| \leq |\mathscr{F} (u_1 \partial_1 \theta)| + |\mathscr{F}(u_2 \partial_2 \theta)|,    
	\end{equation}
	we can see
	\begin{align*}
		I_2 &\leq \frac{1}{N}\int_0^T \int_{\mathbb{R}^2} (1+|\xi|^2)^{\frac s2} |\mathscr{F} u \cdot \nabla \theta | \, \ud \xi \ud t \\
		&\leq \frac{C}{N}\int_0^T \| (1+|\xi|^2)^{\frac s2} \mathscr{F} u_1 \|_{L^1} \| (1+|\xi|^2)^{\frac s2} \mathscr{F} \partial_1 \theta \|_{L^1}\, \ud t \\
		&\hphantom{\qquad\qquad} + \frac{C}{N}\int_0^T \| (1+|\xi|^2)^{\frac s2} \mathscr{F} u_2 \|_{L^1} \| (1+|\xi|^2)^{\frac s2} \mathscr{F} \partial_2 \theta \|_{L^1}\, \ud t.
    \end{align*}
    \Black{Note for $s \in [0,m-2)$ and $\varepsilon \in (0,m-s-2]$ that
    \begin{align*}
        \| (1+|\xi|^2)^{\frac s2} \mathscr{F} u_1 \|_{L^1} &= \int_{\bbR^2} \frac{1}{(1+|\xi|^2)^{\frac{1+\varepsilon}{2}}} (1+|\xi|^2)^{\frac{s+1+\varepsilon}{2}} |\mathscr{F} u_1 (\xi)| \,\ud \xi \leq C \| u_1 \|_{H^{s+1+\varepsilon}} \leq C \| u \|_{H^{m-1}}
    \end{align*} and
    \begin{align*}
        \| (1+|\xi|^2)^{\frac s2} \mathscr{F} \partial_j \theta \|_{L^1} \leq C \| \partial_j \theta \|_{H^{s+1+\varepsilon}} \leq C \| \partial_j \tht \|_{H^{m-1}}, \qquad j = 1,\,2.
    \end{align*} Thus, it follows
    \begin{align*}
        I_2 &\leq \frac{C}{N}\int_0^T \| u \|_{H^m} \| \partial_1 \theta \|_{H^{m-1}} \, \ud t + \frac{C}{N}\int_0^T \| (1+|\xi|^2)^{\frac s2} \mathscr{F} u_2 \|_{L^1} \| \theta \|_{H^m} \,\ud t \\
		&\leq \frac{C}{N}\int_0^T \| R_1 \theta \|_{H^{m}}^2 \, \ud t + \frac{C}{N}\sup_{t \in [0,T]} \| \theta \|_{H^m} \int_0^T \| (1+|\xi|^2)^{\frac s2} \mathscr{F} u_2 \|_{L^1} \, \ud t.
	\end{align*}}
	We have used for $s \in m-2$ for the third inequality. Combining the estimates for $I_1$, $I_2$ and $I_3$, we obtain \eqref{key_est_R}. This completes the proof.
\end{proof}

Now we are ready to prove the global existence part of Theorem~\ref{thm:stb_R}. Let $m \in \bbN$ with $m > 3$ and $\theta$ be the local-in-time solution to \eqref{eq:SIPM} with $N > 0$ and $\sigma \in \mathscr{S}(\bbR)$. We set $T^* \in (0,\infty]$ be a maximal time of existence. Let $T \in (0,T^*)$ such that 
\begin{equation}\label{con}
    \sup_{t \in [0,T]} \| \tht \|_{H^{m}}^2 + N \int_0^T \| R_1 \tht \|_{H^{m}}^2 \,\ud t \leq 4 \| \tht_0 \|_{H^{m}}^2.
\end{equation}
We recall \eqref{key_est_R}. Since \eqref{ass} implies
\begin{equation*}
    \frac{C}{N} \left(\sup_{t \in [0,T]} \| \theta \|_{H^m} + \| (1+|\xi_2|^2)^{\frac{s}{2}} \widehat{\sigma}(\xi_2) \|_{L^1} \right) \leq \frac{1}{2},
\end{equation*}
together with \eqref{con} we have for $s \in [0,m-2)$ that
\begin{equation}\label{key1}
	\int_0^T \int_{\mathbb{R}^2} (1+|\xi|^2)^{\frac s2} |\mathscr{F}u_2| \, \ud \xi \ud t \leq \frac{C}{N}\int_{\mathbb{R}^2} (1+|\xi|^2)^{\frac s2} |\mathscr{F} \theta_0 | \, \ud \xi + \frac{C}{N}\int_0^T \| R_1 \theta \|_{H^{m}}^2 \, \ud t \leq \frac{C}{N} \| \tht_0 \|_{H^m}.
\end{equation}
Applying \eqref{con} and \eqref{key1} to \eqref{energy} shows that
\begin{gather*}
    \frac{1}{3}\sup_{t \in [0,T]} \|\theta\|_{H^{m}}^2 + N \int_0^T \|R_1\theta\|_{H^{m}}^{2} \,\ud t \leq \| \tht_0 \|_{H^{m}}^2 + C\sum_{k=0}^{m} \left( N^{-1} \| \sigma \|_{W^{m+1,\infty}} \right)^k \left( \| \tht_0 \|_{H^m}^2 + \frac{C}{N} \| \tht_0 \|_{H^m}^3 \right).
\end{gather*}
Therefore, \eqref{ass} with large constant $C_0>0$ gives
\begin{equation*}
     \frac{1}{3}\sup_{t \in [0,T]} \|\theta\|_{H^{m}}^2 + N \int_0^T \|R_1\theta\|_{H^{m}}^{2} \,\ud t \leq \Black{\frac{7}{6} \| \tht_0 \|_{H^{m}}^2,}
\end{equation*}
which satisfies \eqref{con}. Thus, we can take $T=T^*$ by the continuation argument, and $T^* = \infty$ follows together with \eqref{sol_bdd}. This completes the proof. \qed

\subsection{Proof of the convergence of solutions}
In this section, we prove that the classical solution $(u,\tht)$ obtained in the previous section satisfies \eqref{tem_decay_est_sgm}. We first investigate the temporal decay property of the linear stratification with the following lemma. We remark that the decay rate of \eqref{ker_est} is sharp (see Proposition~\ref{prop_sharp}).
\begin{lemma}
    Let $s>1$. Then, there exists a constant $C>0$ such that
    \begin{equation}\label{ker_est}
        \int_{\bbR^2} e^{-N\frac{\xi_1^2}{|\xi|^2}t} |f(\xi)| \,\ud \xi \leq C(1+Nt)^{-\frac{1}{4}} \| f \|_{H^{s}}
    \end{equation}
    for all $f \in \mathscr{S}(\bbR^2)$.
\end{lemma}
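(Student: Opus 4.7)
The plan is to separate the weight $e^{-N\frac{\xi_1^2}{|\xi|^2}t}$ from $f$ via Cauchy–Schwarz, reducing matters to a bound on the kernel alone. Writing
\[
\int_{\bbR^2} e^{-N\frac{\xi_1^2}{|\xi|^2}t} |f(\xi)|\,\ud\xi = \int_{\bbR^2} \frac{e^{-N\frac{\xi_1^2}{|\xi|^2}t}}{(1+|\xi|^2)^{s/2}} \cdot (1+|\xi|^2)^{s/2} |f(\xi)|\,\ud\xi,
\]
Cauchy–Schwarz yields
\[
\int_{\bbR^2} e^{-N\frac{\xi_1^2}{|\xi|^2}t} |f(\xi)|\,\ud\xi \leq \|f\|_{H^s} \left( \int_{\bbR^2} \frac{e^{-2N\frac{\xi_1^2}{|\xi|^2}t}}{(1+|\xi|^2)^{s}}\,\ud\xi \right)^{1/2},
\]
so it suffices to show that the bracketed integral is bounded by $C(1+Nt)^{-1/2}$.

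Next, I would pass to polar coordinates $\xi_1 = r\cos\theta$, $\xi_2 = r\sin\theta$, exploiting the fact that the weight depends only on the angle since $\xi_1^2/|\xi|^2 = \cos^2\theta$. This decouples the radial and angular parts:
\[
\int_{\bbR^2} \frac{e^{-2N\frac{\xi_1^2}{|\xi|^2}t}}{(1+|\xi|^2)^{s}}\,\ud\xi = \left( \int_0^{\infty} \frac{r}{(1+r^2)^s}\,\ud r \right) \left( \int_0^{2\pi} e^{-2Nt\cos^2\theta}\,\ud\theta \right).
\]
The radial factor is finite (equal to $\frac{1}{2(s-1)}$) precisely because $s>1$, which is where the hypothesis is used.

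The main work, and the only nontrivial step, is estimating the angular integral $\int_0^{2\pi} e^{-2Nt\cos^2\theta}\,\ud\theta$. The integrand concentrates near the two zeros of $\cos\theta$, namely $\theta = \pi/2$ and $\theta = 3\pi/2$. By symmetry it suffices to control $\int_0^{\pi/2} e^{-2Nt\cos^2\theta}\,\ud\theta$, and I would substitute $\phi = \pi/2 - \theta$ so that $\cos\theta = \sin\phi$; on $[0,\pi/2]$ one has $\sin\phi \geq \frac{2}{\pi}\phi$, hence
\[
\int_0^{\pi/2} e^{-2Nt\cos^2\theta}\,\ud\theta \leq \int_0^{\pi/2} e^{-\frac{8}{\pi^2} Nt\,\phi^2}\,\ud\phi \leq C(1+Nt)^{-1/2}
\]
by a standard Gaussian bound (trivial for $Nt \leq 1$, and by rescaling $\phi \mapsto \phi/\sqrt{Nt}$ for $Nt \geq 1$). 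Combining the radial and angular bounds and taking the square root produces the claimed $(1+Nt)^{-1/4}$ factor, completing the proof.
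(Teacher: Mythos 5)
Your proof is correct, and it takes a genuinely different route from the paper's. You apply Cauchy--Schwarz isotropically in all of $\xi$, pairing $|f|$ with the weight $(1+|\xi|^2)^{s/2}$, and then compute the remaining kernel integral exactly by passing to polar coordinates: since the symbol $\xi_1^2/|\xi|^2=\cos^2\theta$ is purely angular and the Sobolev weight is purely radial, the integral factorizes, the radial part converges precisely for $s>1$, and the angular part concentrates near the zeros of $\cos\theta$, giving $(1+Nt)^{-1/2}$ via Jordan's inequality $\sin\phi\geq\tfrac{2}{\pi}\phi$ and a Gaussian integral; the square root then produces the $1/4$. The paper instead splits $\bbR^2$ into the regions $\{|\xi_1|\geq|\xi_2|\}$ (where the kernel is $\leq e^{-Nt/2}$) and $\{|\xi_1|\leq|\xi_2|\}$, and on the latter applies Cauchy--Schwarz only in $\xi_1$, using $\|e^{-N\xi_1^2 t/(2\xi_2^2)}\|_{L^2_{\xi_1}}\leq C(|\xi_2|/\sqrt{Nt})^{1/2}$ and then controlling the mixed norm $\||\xi_2|^{1/2}f\|_{L^1_{\xi_2}L^2_{\xi_1}}$ by $\|f\|_{H^s}$ (again using $s>1$). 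The two arguments use the hypothesis $s>1$ in the same essential place and give the same sharp rate; the paper's version has the advantage of producing the anisotropic mixed-norm quantities $L^1_{\xi_2}L^2_{\xi_1}$ that reappear throughout Section~4 (e.g.\ in the estimates of $I_8$ and $I_9$), while your polar-coordinate computation is more self-contained and makes the origin of the exponent $1/4$ transparent as half the codimension-one Gaussian concentration rate along the degenerate set $\xi_1=0$ of the symbol.
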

\begin{proof}
    It is clear that
    \begin{equation*}
        \int_{\bbR^2} e^{-N\frac{\xi_1^2}{|\xi|^2}t} |f(\xi)| \,\ud \xi  \leq \int_{\bbR^2} |f(\xi)| \,\ud \xi \leq C \| f \|_{H^{s}}.
    \end{equation*}
    We consider $\bbR^2 = \{ |\xi_1| \geq |\xi_2| \} \cup \{ |\xi_1| \leq |\xi_2| \}$. On the first set we have
    \begin{equation*}
        \int_{|\xi_1| \geq |\xi_2|} e^{-N\frac{\xi_1^2}{|\xi|^2}t} |f(\xi)| \,\ud \xi  \leq \Black{e^{-\frac{N}{2}t} \int_{\bbR^2} |f(\xi)| \,\ud \xi.}
    \end{equation*} On the other hand, we can estimate
    \begin{equation*}
        \int_{|\xi_1| \leq |\xi_2|} e^{-N\frac{\xi_1^2}{|\xi|^2}t} |f(\xi)| \,\ud \xi \leq \int_{\bbR^2} e^{-N\frac{\xi_1^2}{2\xi_2^2}t} |f(\xi)| \,\ud \xi \leq \int_{\bbR} \Big\| e^{-N\frac{\xi_1^2}{2\xi_2^2}t} \Big\|_{L^2_{\xi_1}} \| f \|_{L^2_{\xi_1}} \,\ud \xi_2.
    \end{equation*}
    Since
    \begin{equation}\label{ker_L2}
        \Big\| e^{-N\frac{\xi_1^2}{2\xi_2^2}t} \Big\|_{L^2_{\xi_1}} \leq C \left( \frac{|\xi_2|}{\sqrt{Nt}} \right)^{\frac{1}{2}},
    \end{equation}
    it follows
    \begin{equation*}
        \int_{|\xi_1| \leq |\xi_2|} e^{-N\frac{\xi_1^2}{|\xi|^2}t} |f(\xi)| \,\ud \xi \leq C (Nt)^{-\frac{1}{4}} \Black{\| |\xi_2|^{\frac{1}{2}} f \|_{L^1_{\xi_2}L^2_{\xi_1}}.}
    \end{equation*}
    Combining the above estimates with the following inequality $$\Black{\int_{\bbR^2} |f(\xi)| \,\ud \xi + \| |\xi_2|^{\frac{1}{2}} f \|_{L^1_{\xi_2}L^2_{\xi_1}} }\leq C \| f \|_{H^s},$$ we deduce \eqref{ker_est}. This completes the proof.
\end{proof}
\begin{proposition}\label{prop_sharp}
    Let $s \geq 1$ and $j \in \bbN \cup \{0\}$. For any $\varepsilon>0$, there exist a function $f^* \in H^{s} \cap W^{s-1,\infty}(\bbR^2)$ and a constant $c>0$ such that
    \begin{equation}\label{sharp_est}
        \bigg\| \frac{|\xi_1|^j}{|\xi|^j}e^{-N\frac{\xi_1^2}{|\xi|^2}t} \mathscr{F}f^*(\xi) \bigg\|_{L^1(\bbR^2)} \geq c(1+Nt)^{-(\frac{j}{2}+\frac{1}{4}+ \varepsilon)}
    \end{equation} and
    \begin{equation}\label{sharp_est_2}
        \bigg\| \frac{|\xi_1|^j}{|\xi|^j}e^{-N\frac{\xi_1^2}{|\xi|^2}t} \mathscr{F}f^*(\xi) \bigg\|_{L^2(\bbR^2)} \geq c(1+Nt)^{-(\frac{j}{2} + \varepsilon)}
    \end{equation}
    for all $Nt \geq 1$.
\end{proposition}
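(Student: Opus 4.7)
The plan is to construct $f^*$ directly on the Fourier side as a tensor product, concentrated near the $\xi_2$-axis so that the symbol $e^{-N\xi_1^2/|\xi|^2 t}$ stays bounded below on a substantial region. Specifically, I would set $\mathscr{F}f^*(\xi) = \hat g(\xi_1)\,\hat h(\xi_2)$ with $\hat g(\xi_1)=e^{-\xi_1^2}$ (so $g\in\mathscr{S}(\bbR)$) and $\hat h(\xi_2)=(1+\xi_2^2)^{-\alpha/2}$, where $\alpha$ is chosen slightly above the $H^s$-threshold. The tensor structure plus Fubini reduces the $H^s$ and $W^{s-1,\infty}$ requirements to one-dimensional checks on $g$ and $h$; the former reduces to $\alpha > s+\tfrac{1}{2}$, while smoothness of $g$ handles the latter. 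The slow temporal decay must therefore come entirely from the algebraic tail of $\hat h$.

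For the $L^1$ bound \eqref{sharp_est}, I would restrict the integration to the set
\begin{equation*}
R_t := \bigl\{(\xi_1,\xi_2) : |\xi_2|\geq\sqrt{Nt},\ |\xi_1|\leq |\xi_2|/\sqrt{Nt}\bigr\}.
\end{equation*}
On $R_t$ we have $|\xi|^2\sim \xi_2^2$ and $N\xi_1^2 t/|\xi|^2\leq 1$, so the exponential factor is bounded below by a positive constant and
\begin{equation*}
\bigg\| \tfrac{|\xi_1|^j}{|\xi|^j}e^{-N\xi_1^2/|\xi|^2 t}\,\mathscr{F}f^*\bigg\|_{L^1}\gtrsim \iint_{R_t}\tfrac{|\xi_1|^j}{|\xi_2|^j}\,\hat g(\xi_1)\,\hat h(\xi_2)\,d\xi_1\,d\xi_2.
\end{equation*}
Evaluating the inner $\xi_1$-integral via $\int_0^{|\xi_2|/\sqrt{Nt}}\!\xi_1^j\,d\xi_1 \sim |\xi_2|^{j+1}(Nt)^{-(j+1)/2}$ and dividing by $|\xi_2|^j$ yields $|\xi_2|(Nt)^{-(j+1)/2}$ per $\xi_2$-slice. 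Combining with $\hat h(\xi_2)\sim|\xi_2|^{-\alpha}$ and integrating over $|\xi_2|\geq\sqrt{Nt}$ produces $(Nt)^{-(j+1)/2}(Nt)^{(2-\alpha)/2} = (Nt)^{-(j+\alpha-1)/2}$. Selecting $\alpha=\tfrac{3}{2}+2\varepsilon$ gives the exponent $-\bigl(\tfrac{j}{2}+\tfrac14+\varepsilon\bigr)$, as desired. The $L^2$ estimate \eqref{sharp_est_2} follows from the parallel calculation with $|\mathscr{F}f^*|^2$ in place of $|\mathscr{F}f^*|$: the inner $\xi_1$-integral of $|\xi_1|^{2j}\hat g^2$ now scales as $|\xi_2|^{2j+1}(Nt)^{-(2j+1)/2}$, and after incorporating $\hat h^2\sim |\xi_2|^{-2\alpha}$ and integrating, the resulting squared lower bound is $\sim(Nt)^{-j-2\varepsilon}$, which yields the claimed norm bound upon taking square roots.

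The key tension in the argument is the dual requirement on $\alpha$: the lower bound forces $\alpha\leq \tfrac32+2\varepsilon$, while membership in $H^s$ forces $\alpha > s+\tfrac12$. For $s$ close to $1$ the window is nonempty and the construction works verbatim. For larger $s$, one must use a slightly different profile (for instance, replace the single algebraic tail in $\hat h$ by a frequency-localized dyadic sum $\hat h = \sum_n a_n \chi_n(\xi_2)$ with carefully tuned amplitudes $a_n$ and support widths) so that the $H^s$ norm is distributed across dyadic shells while the bulk of the lower-bound region $R_t$ is still hit with comparable mass. Verifying the compatibility of these constraints, together with the elementary but mildly technical bookkeeping of the cutoff at $|\xi_2|\sim\sqrt{Nt}$, is the main obstacle; once this is carried out for each fixed $s\geq 1$, the rate $(1+Nt)^{-(j/2+1/4+\varepsilon)}$ is recovered, demonstrating that the exponent $j/2+1/4$ obtained via \eqref{ker_est} cannot be improved.
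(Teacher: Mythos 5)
Your construction has a genuine gap, and it is exactly at the point you flag as "the main obstacle." The tensor-product ansatz $\mathscr{F}f^*=\widehat g(\xi_1)\widehat h(\xi_2)$ with a Gaussian in $\xi_1$ and an algebraic tail in $\xi_2$ forces the lower bound to be carried by the region $|\xi_2|\gtrsim\sqrt{Nt}$, i.e.\ by the far tail of $\widehat h$. There the $H^s$ weight costs $\xi_2^{2s}$, so membership in $H^s$ forces $\alpha>s+\tfrac12$ while the lower bound forces $\alpha\le\tfrac32+2\varepsilon$; the window is empty for every $s>1+2\varepsilon$. The dyadic repair you sketch does not close this: if $\sqrt{Nt}\sim 2^m$, the contribution of shell $n\ge m$ to the $L^1$ lower bound is $\sim a_n2^{n(1-j)}$ while its contribution to $\|f^*\|_{H^s}^2$ is $\sim a_n^22^{n(2s+1)}$, and requiring $\sum_{n\ge m}a_n2^{n(1-j)}\gtrsim 2^{-m(j+\frac12+2\varepsilon)}$ for all $m$ forces $a_m\gtrsim 2^{-m(\frac32+2\varepsilon)}$ up to summable corrections, whence $\sum_m a_m^22^{m(2s+1)}=\infty$ for $s>1+2\varepsilon$. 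So no choice of amplitudes rescues the construction: as long as the $\xi_1$-profile is a fixed integrable bump, the slow decay must come from high $\xi_2$-frequencies, and that is incompatible with $H^s$ for large $s$. The missing idea (which is the paper's) is to put the anisotropic weight in the $\xi_1$ variable instead: the paper takes $\mathscr{F}f^*=|\xi_1|^{-\frac12+2\varepsilon}(1+|\xi|^2)^{-\frac s2-\frac14-2\varepsilon}$, whose near-singularity at $\xi_1=0$ produces, for each fixed $\xi_2$, the inner bound $\int_{-|\xi_2|}^{|\xi_2|}e^{-\frac N2\frac{\xi_1^2}{\xi_2^2}t}|\xi_1|^{j-\frac12+2\varepsilon}\,\ud\xi_1\geq c(Nt)^{-\frac j2-\frac14-\varepsilon}|\xi_2|^{j+\frac12+2\varepsilon}$. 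The slow temporal decay is then driven by frequencies with $|\xi_1|$ small but $|\xi_2|=O(1)$, where the $H^s$ weight is harmless, and the isotropic factor $(1+|\xi|^2)^{-\frac s2-\frac14-2\varepsilon}$ absorbs the Sobolev weight for every $s$.

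There is also a smaller internal inconsistency in your computation: on your region $R_t$ the upper limit of the $\xi_1$-integral is $|\xi_2|/\sqrt{Nt}\geq1$, so $\int_0^{|\xi_2|/\sqrt{Nt}}\xi_1^je^{-\xi_1^2}\,\ud\xi_1$ saturates at a constant rather than growing like $|\xi_2|^{j+1}(Nt)^{-(j+1)/2}$, and the $\xi_2$-integral you then write, $\int_{\sqrt{Nt}}^{\infty}\xi_2^{1-\alpha}\,\ud\xi_2$, diverges for your choice $\alpha=\tfrac32+2\varepsilon<2$. The corrected bookkeeping (constant inner integral, then $\int_{\sqrt{Nt}}^\infty\xi_2^{-j-\alpha}\,\ud\xi_2$) happens to land on the same exponent $(Nt)^{-(j+\alpha-1)/2}$, so this is repairable, but it does not affect the verdict above: the approach works only for $s$ in a small neighborhood of $1$ and cannot be pushed to general $s\geq1$ without importing the paper's $\xi_1$-singular weight.
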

\begin{proof}
    We let $f^*$ satisfy $$\mathscr{F}f^* = |\xi_1|^{-\frac{1}{2}+2\varepsilon}(1+|\xi|^2)^{-\frac{s}{2}-\frac{1}{4}-2\varepsilon}.$$ Then, it is not hard to verify that $f^* \in H^s \cap W^{s-1,\infty}(\bbR^2)$. We can see \begin{align*}
        \int_{\bbR^2} \frac{|\xi_1|^j}{|\xi|^j}e^{-N\frac{\xi_1^2}{|\xi|^2}t} f^*(\xi) \,\ud \xi &\geq \int_{\bbR^2} e^{-N\frac{\xi_1^2}{|\xi|^2}t} |\xi_1|^{j-\frac{1}{2}+2\varepsilon}(1+|\xi|^2)^{-\frac{s+j}{2}-\frac{1}{4}-2\varepsilon} \,\ud \xi \\
        &\geq \int_{\{ |\xi_1| \leq |\xi_2| \}} e^{-\frac{N}{2} \frac{\xi_1^2}{\xi_2^2}t} |\xi_1|^{j-\frac{1}{2}+2\varepsilon}(1+2\xi_2^2)^{-\frac{s+j}{2}-\frac{1}{4}-2\varepsilon} \,\ud \xi \\
        &= \int_{\bbR} \int_{-|\xi_2|}^{|\xi_2|} e^{-\frac{N}{2} \frac{\xi_1^2}{\xi_2^2}t} |\xi_1|^{j-\frac{1}{2}+2\varepsilon} \,\ud \xi_1 (1+2\xi_2^2)^{-\frac{s+j}{2}-\frac{1}{4}-2\varepsilon} \,\ud \xi_2.
    \end{align*}
    Since $$\int_{-|\xi_2|}^{|\xi_2|} e^{-\frac{N}{2} \frac{\xi_1^2}{\xi_2^2}t} |\xi_1|^{j-\frac{1}{2}+2\varepsilon} \,\ud \xi_1 \geq c (Nt)^{-\frac{j}{2}-\frac{1}{4}-\varepsilon} |\xi_2|^{j+\frac{1}{2}+2\varepsilon}$$ for any $Nt \geq 1$, it follows
    \begin{align*}
        \int_{\bbR^2} \frac{|\xi_1|^j}{|\xi|^j}e^{-N\frac{\xi_1^2}{|\xi|^2}t} f^*(\xi) \,\ud \xi \geq c (Nt)^{-\frac{j}{2}-\frac{1}{4}-\varepsilon} \int_{\bbR} (1+\xi_2^2)^{-\frac{s}{2}-\varepsilon} \,\ud \xi_2 = c (Nt)^{-\frac{j}{2}-\frac{1}{4}-\varepsilon}.
    \end{align*}
    
    In the same way with the above, we can obtain \eqref{sharp_est_2}. We omit the details. This completes the proof.
\end{proof}

The following lemma is to control the $\sigma$ term in the next proposition.
\begin{lemma}
    Let $\theta$ be a global classical solution of \eqref{eq:SIPM} with $N>0$ and $\sigma \in \mathscr{S}(\bbR)$. Then, for any given $m > \frac{3}{2}$ and $s \in [0,m-\frac{3}{2})$, there exists a constant $C>0$ such that
    \begin{equation}\label{L1_est_tem}
        \begin{gathered}
            \left\| \int_0^{\infty} (1+|\xi|^2)^{\frac{s}{2}} |\mathscr{F} u_2| \,\ud t \right\|_{L^{1}_{\xi_2}L^{2}_{\xi_1}} \leq \frac{1}{N} \| (1+|\xi|^2)^{\frac{s}{2}} \mathscr{F} \tht_0 \|_{L^1_{\xi_2}L^2_{\xi_1}} + \frac{C}{N}\int_0^{\infty} \| R_1 \theta \|_{H^{m}}^2 \, \ud t \\
            + \frac{C}{N}\sup_{t \in [0,T]} \| \theta \|_{H^m} \int_0^{\infty} \int_{\mathbb{R}^2} (1+|\xi|^2)^{\frac s2} |\mathscr{F} u_2| \, \ud \xi \ud t + \frac{1}{N} \| (1+|\xi_2|^2)^{\frac{s}{2}} \widehat{\sigma} \|_{L^1} \left\| \int_0^{\infty} (1+|\xi|^2)^{\frac{s}{2}} |\mathscr{F} u_2| \,\ud t \right\|_{L^1_{\xi_2}L^2_{\xi_1}}.
        \end{gathered}
    \end{equation}
\end{lemma}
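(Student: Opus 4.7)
My plan is to mirror the proof of Lemma~\ref{key_lem_R}, adapting its structure to the mixed norm $L^{1}_{\xi_2}L^{2}_{\xi_1}$. Starting from $\mathscr{F}u_2=-\tfrac{\xi_1^2}{|\xi|^2}\mathscr{F}\theta$ together with Duhamel's formula \eqref{df_tht}, I would first integrate in $t$ and apply Fubini to swap the iterated time integrals; the kernel identity $\int_0^\infty\tfrac{|\xi_1|^2}{|\xi|^2}e^{-N\frac{|\xi_1|^2}{|\xi|^2}t}\,dt=\tfrac{1}{N}$ yields the pointwise bound
\begin{equation*}
\int_0^\infty(1+|\xi|^2)^{\frac s2}|\mathscr{F}u_2(\xi,t)|\,dt \leq \tfrac{1}{N}(1+|\xi|^2)^{\frac s2}|\mathscr{F}\theta_0| + \tfrac{1}{N}\int_0^\infty(1+|\xi|^2)^{\frac s2}\bigl(|\mathscr{F}(u\cdot\nabla\theta)|+|\mathscr{F}(u_2\sigma)|\bigr)d\tau.
\end{equation*}
Taking the $L^{1}_{\xi_2}L^{2}_{\xi_1}$ norm of both sides, the linear part already yields the first summand in \eqref{L1_est_tem}, while Minkowski's integral inequality lets me move the outer norm inside the $\tau$-integrals of the two remaining terms.

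The $\sigma$-contribution is the cleanest piece: since $\sigma$ depends only on $x_2$, the 2D Fourier factorization gives $\mathscr{F}(u_2\sigma)=\mathscr{F}u_2\ast_{\xi_2}\widehat{\sigma}$, a convolution purely in the vertical frequency. After interchanging the $\tau$-integral with this convolution (Fubini) and splitting the weight via $(1+|\xi|^2)^{s/2}\lesssim(1+|\xi_1|^2+|\xi_2-\eta_2|^2)^{s/2}(1+|\eta_2|^2)^{s/2}$, Young's inequality of type $L^1_{\xi_2}\ast L^1_{\xi_2}\to L^1_{\xi_2}$ applied pointwise in $\xi_1$ produces exactly the last, self-referential term of \eqref{L1_est_tem} with the sharp coefficient $\|(1+|\xi_2|^2)^{s/2}\widehat{\sigma}\|_{L^1}/N$.

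The bilinear estimate is the main technical step. After splitting $u\cdot\nabla\theta=u_1\partial_1\theta+u_2\partial_2\theta$, I would use the anisotropic Young inequality
\begin{equation*}
\|f\ast g\|_{L^1_{\xi_2}L^2_{\xi_1}} \lesssim \|f\|_{L^1_{\xi_2}L^1_{\xi_1}}\|g\|_{L^1_{\xi_2}L^2_{\xi_1}}
\end{equation*}
together with the weight-splitting from Lemma~\ref{lem_intp}. For $u_2\partial_2\theta$ I place $\mathscr{F}u_2$ in the full $L^1$-norm (yielding the third summand in \eqref{L1_est_tem} after integrating in $t$ and pulling out $\sup_t\|\theta\|_{H^m}$) and $\mathscr{F}\partial_2\theta$ in the mixed norm. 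The key embedding $\|(1+|\xi|^2)^{s/2}|\xi_2|\mathscr{F}\theta\|_{L^1_{\xi_2}L^2_{\xi_1}}\lesssim\|\theta\|_{H^m}$ then follows from Cauchy--Schwarz in $\xi_2$ with weight $(1+|\xi_2|^2)^{-(m-s-1)}$, whose integrability condition $2(m-s-1)>1$ forces precisely $s<m-\tfrac{3}{2}$. The $u_1\partial_1\theta$ piece is handled by the same scheme, exploiting $|\mathscr{F}u_1|\leq\tfrac{|\xi_1|}{|\xi|}|\mathscr{F}\theta|$ and $|\mathscr{F}\partial_1\theta|=|\xi_1||\mathscr{F}\theta|$ so that both factors are dominated by $\|R_1\theta\|_{H^m}$, producing the $\int_0^\infty\|R_1\theta\|_{H^m}^2\,dt$ term. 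The main obstacle lies in organizing which Fourier factor lands in which anisotropic norm: placing $\mathscr{F}u_2$ (resp.\ $\mathscr{F}u_1$) in the isotropic $L^1$ while keeping the $\theta$-derivative factor in $L^1_{\xi_2}L^2_{\xi_1}$ is what simultaneously produces the self-referential $\sigma$-term with the sharp coefficient and aligns the Sobolev exponent with the advertised threshold $s<m-\tfrac{3}{2}$ rather than a worse one.
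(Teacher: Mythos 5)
Your proposal follows essentially the same route as the paper's proof: the same Duhamel decomposition into linear, bilinear, and $\sigma$ pieces, Fubini/Minkowski to reduce to time-integrated convolution estimates, the anisotropic Young inequality placing $\mathscr{F}u_1,\mathscr{F}u_2$ in $L^1$ and the $\theta$-derivative factors in $L^1_{\xi_2}L^2_{\xi_1}$, and the Cauchy--Schwarz embedding in $\xi_2$ that produces the threshold $s<m-\tfrac32$. The argument is correct and matches the paper's in all essential respects.
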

\begin{remark}
    Suppose \eqref{sol_bdd} and \eqref{ass} be satisfied. Then, there exists a constant $C>0$ such that
    \begin{equation}\label{L1_est_tem_2}
        \left\| \int_0^{\infty} (1+|\xi|^2)^{\frac{s}{2}} |\mathscr{F} u_2| \,\ud t \right\|_{L^{1}_{\xi_2}L^{2}_{\xi_1}} \leq \frac{C}{N} \| \tht_0 \|_{H^m}.
    \end{equation}
\end{remark}
\begin{proof}
    We recall $u_2 = -\partial_1^2 (-\Delta)^{-1}  \theta$ and \eqref{df_tht} to have
    \begin{equation*}
        \left\| \int_0^{\infty} (1+|\xi|^2)^{\frac{s}{2}} |\mathscr{F} u_2| \,\ud t \right\|_{L^{1}_{\xi_2}L^{2}_{\xi_1}} \leq I_4 + I_5 + I_6,
    \end{equation*}
    where
    \begin{align*}
        I_4 &:= \left\| \int_0^{\infty} (1+|\xi|^2)^{\frac{s}{2}} \frac{\xi_1^2}{|\xi|^2} e^{-N \frac{\xi_1^2}{|\xi|^2}t} |\mathscr{F} \tht_0| \,\ud t \right\|_{L^{1}_{\xi_2}L^{2}_{\xi_1}}, \\
        I_5 &:= \left\| \int_0^{\infty} (1+|\xi|^2)^{\frac{s}{2}} \frac{\xi_1^2}{|\xi|^2} \int_0^t e^{-N \frac{\xi_1^2}{|\xi|^2}(t-\tau)} |\mathscr{F} (u \cdot \nabla)\tht| \,\ud \tau \ud t \right\|_{L^{1}_{\xi_2}L^{2}_{\xi_1}}, \\
        I_6 &:= \left\| \int_0^{\infty} (1+|\xi|^2)^{\frac{s}{2}} \frac{\xi_1^2}{|\xi|^2} \int_0^t e^{-N \frac{\xi_1^2}{|\xi|^2}(t-\tau)} |\mathscr{F} (u_2 \sigma)| \,\ud \tau \ud t \right\|_{L^{1}_{\xi_2}L^{2}_{\xi_1}}.
    \end{align*}
    The fundamental theorem of calculus shows that
    \begin{equation*}
        I_4 \leq \frac{1}{N} \| (1+|\xi|^2)^{\frac{s}{2}} \mathscr{F} \tht_0 \|_{L^1_{\xi_2}L^2_{\xi_1}}.
    \end{equation*}
    By Fubini's theorem and Minkowski's integral inequality, we have
    \begin{equation*}
        I_5 \leq \frac{1}{N} \left\| \int_0^{\infty} (1+|\xi|^2)^{\frac{s}{2}} |\mathscr{F} (u \cdot \nabla)\tht| \,\ud t \right\|_{L^{1}_{\xi_2}L^{2}_{\xi_1}} \leq \frac{1}{N} \int_0^{\infty} \| (1+|\xi|^2)^{\frac{s}{2}} \mathscr{F} (u \cdot \nabla)\tht \|_{L^{1}_{\xi_2}L^{2}_{\xi_1}} \,\ud t.
    \end{equation*}
    Using \eqref{decom_est} and \eqref{conv_ineq_R2} shows
    \begin{equation*}
    \begin{aligned}
        &\int_0^{\infty} \| (1+|\xi|^2)^{\frac{s}{2}} \mathscr{F} (u \cdot \nabla)\tht \|_{L^{1}_{\xi_2}L^{2}_{\xi_1}} \,\ud t \\
        &\hphantom{\qquad\qquad}\leq \int_0^{\infty} \| (1+|\xi|^2)^{\frac{s}{2}} \mathscr{F} u_1 \|_{L^1} \| (1+|\xi|^2)^{\frac{s}{2}}  \mathscr{F} \partial_1 \tht \|_{L^{1}_{\xi_2}L^{2}_{\xi_1}} \,\ud t \\
        &\hphantom{\qquad\qquad\qquad\qquad} +\int_0^{\infty} \| (1+|\xi|^2)^{\frac{s}{2}} \mathscr{F} u_2 \|_{L^1} \| (1+|\xi|^2)^{\frac{s}{2}}  \mathscr{F} \partial_2 \tht \|_{L^{1}_{\xi_2}L^{2}_{\xi_1}} \,\ud t \\
        &\hphantom{\qquad\qquad}\leq C\int_0^{\infty} \| u \|_{H^m} \| \partial_1 \theta \|_{H^{m-1}} \, \ud t + C\int_0^{\infty} \| (1+|\xi|^2)^{\frac{s}{2}} \mathscr{F} u_2 \|_{L^1} \| \theta \|_{H^m} \,\ud t \\
		&\hphantom{\qquad\qquad}\leq C\int_0^{\infty} \| R_1 \theta \|_{H^{m}}^2 \, \ud t + C\sup_{t \in [0,T]} \| \theta \|_{H^m} \int_0^{\infty} \| (1+|\xi|^2)^{\frac{s}{2}} \mathscr{F} u_2 \|_{L^1} \, \ud t.
    \end{aligned}
    \end{equation*}
    Thus,
    \begin{equation*}
        \Black{I_5 \leq \frac{C}{N} \int_0^{\infty} \| R_1 \theta \|_{H^{m}}^2 \, \ud t + \frac{C}{N} \sup_{t \in [0,T]} \| \theta \|_{H^m} \int_0^{\infty} \| (1+|\xi|^2)^{\frac{s}{2}} \mathscr{F} u_2 \|_{L^1} \, \ud t.}
    \end{equation*}
    The Fubini's theorem implies
    \begin{align*}
        I_6 &\leq \frac{1}{N} \left\| \int_0^{\infty} (1+|\xi|^2)^{\frac{s}{2}} |\mathscr{F} (u_2 \sigma)| \,\ud t \right\|_{L^{1}_{\xi_2}L^{2}_{\xi_1}}.
    \end{align*}
    Then, applying \eqref{conv_ineq_R2_2} gives
    \begin{align*}
        I_6 \leq \frac{1}{N} \| (1+|\xi_2|^2)^{\frac{s}{2}} \widehat{\sigma} \|_{L^1} \left\| \int_0^{\infty} (1+|\xi|^2)^{\frac{s}{2}} |\mathscr{F} u_2| \,\ud t \right\|_{L^{1}_{\xi_2}L^{2}_{\xi_1}}.
    \end{align*}
    Collecting the above estimates for $I_4$, $I_5$, and $I_6$, we obtain \eqref{L1_est_tem}. This completes the proof.
\end{proof}

\begin{proposition}\label{prop_tem_sgm}
	Let $\theta$ be a global classical solution of \eqref{eq:SIPM} with $N>0$ and $\sigma \in \mathscr{S}(\bbR)$. Assume that \eqref{ass}, \eqref{sol_bdd}, and \eqref{sol_bdd1} are satisfied. Then for any given $m > \frac{5}{2}$ and $p \in (4,\infty]$, there exists a constant $C>0$ such that 
	\begin{equation}\label{sol_bdd_2}
		\left\| \int_{\bbR^2} |\mathscr{F} \tht(t,\xi)| \,\ud \xi \right\|_{L^p(0,\infty)} \leq \frac{C}{N^{\frac{1}{p}}} \| \tht_0 \|_{H^m}. 
	\end{equation}
\end{proposition}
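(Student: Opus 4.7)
The plan is to apply the Fourier-side Duhamel formula \eqref{df_tht}, take absolute values, integrate in $\xi$, and analyze the linear and nonlinear contributions separately in $L^p(0,\infty)$. For the linear piece, Lemma~\eqref{ker_est} with $s=m>\tfrac{5}{2}$ yields
\begin{equation*}
\int_{\bbR^2}|\mathscr{F}\theta^L(t)|\,\ud\xi \leq C(1+Nt)^{-1/4}\|\theta_0\|_{H^m}
\end{equation*}
pointwise in $t$, where $\theta^L$ is the linear Duhamel term. Since $\|(1+Nt)^{-1/4}\|_{L^p(0,\infty)} \simeq N^{-1/p}$ is finite exactly when $p>4$, the linear piece is immediately bounded by $CN^{-1/p}\|\theta_0\|_{H^m}$, matching the claimed scaling.

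For the nonlinear contribution, the key idea is to exploit the divergence-free structure of the transport term by writing $(u\cdot\nabla)\theta = \partial_1(u_1\theta)+\partial_2(u_2\theta)$, so that a Fourier multiplier $|\xi_j|$ appears against the anisotropic kernel. The $\partial_1(u_1\theta)$ summand admits an improved kernel estimate: repeating the proof of \eqref{ker_est} with an additional factor of $|\xi_1|$ in the integrand yields
\begin{equation*}
\int|\xi_1|\,e^{-N\xi_1^2/|\xi|^2\,t}|g|\,\ud\xi \leq C(1+Nt)^{-3/4}\|g\|_{H^{s'}}
\end{equation*}
for some $s'>2$. Because $(1+Nt)^{-3/4}\in L^{p_1}$ for every $p_1>\tfrac{4}{3}$, Young's convolution inequality in time with exponents $(p_1,p_2)=\bigl(\tfrac{2p}{p+2},2\bigr)$ is feasible for every $p>4$, and the Biot--Savart identity $\|u_1\|_{H^{s'}}\leq C\|R_1\theta\|_{H^{s'}}$, the algebra property of $H^{s'}$, and \eqref{sol_bdd} deliver $\|u_1\theta\|_{L^2_tH^{s'}}\leq CN^{-1/2}\|\theta_0\|_{H^m}^2$. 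The assembled bound $CN^{-1/p-1}\|\theta_0\|_{H^m}^2$ is then promoted to $CN^{-1/p}\|\theta_0\|_{H^m}$ via the smallness condition \eqref{ass}.

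The hard part is the $\partial_2(u_2\theta)$ summand, for which the $|\xi_2|$-weight brings no improvement in the kernel decay. This is handled by a region-based splitting of $\bbR^2$ according to $|\xi_1|\gtrless|\xi_2|$: on $\{|\xi_1|\geq|\xi_2|\}$ the kernel is bounded by $e^{-N(t-\tau)/2}$, so $L^1_t$-Young applied against the uniform and $L^1_t$-integrable Fourier norm $\|\mathscr{F}(u_2\partial_2\theta)\|_{L^1_\xi}\leq C\|\mathscr{F}u_2\|_{L^1}\|\mathscr{F}\partial_2\theta\|_{L^1}$ produces the $N^{-1/p-1}$ scaling thanks to \eqref{sol_bdd1} and the uniform $H^m$ bound on $\theta$; on $\{|\xi_1|\leq|\xi_2|\}$ one uses the anisotropic $(1+N(t-\tau))^{-1/4}$ decay together with time-$L^p$ interpolation of $\|u_2\partial_2\theta\|_{H^{s'}}$ between its $L^2_t$-bound (from $\|R_1\theta\|_{L^2_tH^m}$) and its uniform $L^\infty_t$-bound. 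Finally, the $u_2\sigma$ summand is treated by invoking \eqref{conv_ineq_R2_2} to factor out the $\sigma$-dependent Schwartz weight, after which the residual weighted Fourier norms of $u_2$ are absorbed by \eqref{sol_bdd1} and \eqref{L1_est_tem_2}. Throughout, the decisive device is the pairing of divergence-free structure with the anisotropic kernel: it trades the unavailable $L^1_tH^s$ integrability of the nonlinearity against the accessible $L^2_t$ decay of $R_1\theta$ and the accessible $L^1_tW^{1,\infty}$ control of $u_2$, which is what ultimately closes the estimate.
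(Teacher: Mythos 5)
Your skeleton (the Duhamel formula \eqref{df_tht}, a linear/nonlinear split, and Young's inequality in time against the anisotropic kernel) is the same as the paper's, and your linear piece and your treatment of the $u_1$-contribution are sound; for the latter you take a genuinely different route (divergence form plus an improved $|\xi_1|$-weighted kernel bound with rate $(1+Nt)^{-3/4}$), whereas the paper keeps $u_1\partial_1\theta$ and simply bounds $\|\mathscr{F}(u_1\partial_1\theta)\|_{L^1}\leq C\|R_1\theta\|_{H^m}^2\in L^1_t$, which is shorter. However, your handling of the $u_2$-transport contribution on $\{|\xi_1|\leq|\xi_2|\}$ fails on exponents: pairing the kernel $(1+N(t-\tau))^{-1/4}$ with Young's inequality $L^{p_1}\ast L^{p_2}\to L^p$ forces $p_1>4$, hence $\tfrac{1}{p_2}=1+\tfrac1p-\tfrac{1}{p_1}>\tfrac34$, i.e.\ $p_2<\tfrac43$; but interpolating $\|u_2\partial_2\theta\|_{H^{s'}}$ between its $L^2_t$ and $L^\infty_t$ bounds only reaches $L^q_t$ with $q\geq 2$. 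The repair is the device you already use on the complementary region (and the one the paper uses without any region splitting): bound the relevant weighted Fourier norm of $u_2\partial_2\theta$ by $\|(1+|\xi|^2)^{1/4}\mathscr{F}u_2\|_{L^1}\|\theta\|_{H^m}$, which lies in $L^1_t$ by \eqref{sol_bdd1}, and apply $L^p\ast L^1\to L^p$.

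The more serious gap is the $u_2\sigma$ term, which is the genuinely delicate part of this proposition and which you dispatch in one sentence. For frequencies near the axis $\xi_1=0$ the kernel $e^{-N\xi_1^2|\xi|^{-2}(t-\tau)}$ has no decay at all, so neither the pointwise-in-$\xi$ Young inequality (whose kernel $L^p_t$-norm is $(pN\xi_1^2/|\xi|^2)^{-1/p}$, unbounded as $\xi_1\to0$) nor the $L^1_\xi$-integrated $(1+Ns)^{-1/4}$ bound applies naively: the latter would require control of $\int_0^\infty\|(1+|\xi|^2)^{s/2}\mathscr{F}(u_2\sigma)\|_{L^1_{\xi_2}L^2_{\xi_1}}\,\ud t$, which is given by neither \eqref{sol_bdd1} (an $L^1_tL^1_\xi$ bound) nor \eqref{L1_est_tem_2} (which bounds the norm of the time-integral, the wrong side of Minkowski's inequality). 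The paper resolves this by writing the kernel as $(|\xi|^2/(N\xi_1^2))^{1/p}\cdot(N\xi_1^2/|\xi|^2)^{1/p}e^{-N\xi_1^2|\xi|^{-2}(t-\tau)}$, noting that the second factor has $L^p_t$-convolution norm $O(N^{0})$ uniformly in $\xi$, and then absorbing the singular weight $|\xi_1|^{-2/p}$ on $\{|\xi_1|\leq|\xi_2|\}$ by Cauchy--Schwarz in $\xi_1$ against the $L^2_{\xi_1}$ structure of \eqref{L1_est_tem_2}; this is precisely where $p>4$ is needed for this term (so that $|\xi_1|^{-4/p}$ is locally integrable). Without some version of this argument your proof of \eqref{sol_bdd_2} is incomplete.
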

\begin{proof}
    From \eqref{df_tht} we have
    \begin{equation*}
       \left\| \int_{\bbR^2} |\mathscr{F} \tht(t,\xi)| \,\ud \xi \right\|_{L^p(0,\infty)} \leq I_7 + I_8 + I_9,
    \end{equation*}
    where
    \begin{align*}
        I_7 &:= \Big\| e^{-N\frac{\xi_1^2}{|\xi|^2}t} \mathscr{F} \tht_0 \Big\|_{L^p(0,\infty;L^1)}, \\
        I_8 &:= \left\| \int_0^t e^{-N\frac{\xi_1^2}{|\xi|^2}(t-\tau)} \mathscr{F} (u \cdot \nabla) \tht \,\ud \tau \right\|_{L^p(0,\infty;L^1)}, \\
        I_9 &:= \left\| \int_0^t e^{-N\frac{\xi_1^2}{|\xi|^2}(t-\tau)} \mathscr{F} (u_2 \sigma) \,\ud \tau \right\|_{L^p(0,\infty;L^1)}.
    \end{align*}
    \eqref{ker_est} gives
    \begin{equation*}
        I_7 \leq C\left\| (1+Nt)^{-\frac{1}{4}} \right\|_{L^p(0,\infty)} \| \tht_0 \|_{H^m} \leq \frac{C}{N^{\frac{1}{p}}} \| \tht_0 \|_{H^m}.
    \end{equation*}
    To estimate $I_8$ we first note by \eqref{decom_est} and \eqref{conv_ineq_R2} that
    \begin{align*}
       \left\| \int_0^{t} e^{-N\frac{\xi_1^2}{|\xi|^2}(t-\tau)} |\mathscr{F} (u \cdot \nabla) \tht| \,\ud \tau \right\|_{L^1} &\leq \Black{\int_0^t \| \mathscr{F} (u \cdot \nabla) \tht \|_{L^1} \,\ud t} \\
       &\leq C \int_0^t (\| R_1 \tht \|_{H^m}^2 + \| \tht \|_{H^m} \| \mathscr{F} u_2 \|_{L^1}) \,\ud \tau.
    \end{align*}
    On the other hand, we can have from \eqref{ker_L2}
    \begin{equation*}
    \begin{aligned}
       \left\| \int_0^{t} e^{-N\frac{\xi_1^2}{|\xi|^2}(t-\tau)} |\mathscr{F} (u \cdot \nabla) \tht| \,\ud \tau \right\|_{L^1} &\leq \int_0^t \left\| \Big\| e^{-N\frac{\xi_1^2}{2|\xi|^2}(t-\tau)} \Big\|_{L^2_{\xi_1}}  \left\|\mathscr{F} (u \cdot \nabla) \tht \right \|_{L^2_{\xi_1}} \right\|_{L^1_{\xi_2}} \,\ud \tau \\
        &\leq C \int_0^t (N(t-\tau))^{-\frac{1}{4}} \left\| (1+|\xi|^2)^{\frac{1}{4}} \mathscr{F} (u \cdot \nabla) \tht \right\|_{L^1_{\xi_2}L^2_{\xi_1}} \,\ud \tau \\
        &\leq C \int_0^t (N(t-\tau))^{-\frac{1}{4}} (\| R_1 \tht \|_{H^m}^2 + \| \tht \|_{H^m} \| (1+|\xi|^2)^{\frac{1}{4}} \mathscr{F} u_2 \|_{L^1}) \,\ud \tau.
    \end{aligned}
    \end{equation*}
    By the above inequalities, it follows
    \begin{equation*}
    \begin{aligned}
       I_8 &\leq C \left\| \int_0^t (1+N(t-\tau))^{-\frac{1}{4}} (\| R_1 \tht \|_{H^m}^2 + \| \tht \|_{H^m} \| (1+|\xi|^2)^{\frac{1}{4}} \mathscr{F} u_2 \|_{L^1}) \,\ud \tau \right\|_{L^p(0,\infty)} \\
       &\leq \frac{C}{N^{\frac{1}{p}}} \left( \int_0^{\infty} \| R_1 \tht \|_{H^m}^2 \,\ud t + \sup_{t \in [0,\infty)} \| \tht \|_{H^m} \int_0^{\infty} \| (1+|\xi|^2)^{\frac{1}{4}} \mathscr{F} u_2 \|_{L^1} \,\ud t \right) \\
       &\leq \frac{C}{N^{\frac{1}{p}}} \| \tht_0 \|_{H^m}.
    \end{aligned}
    \end{equation*}
    We have used Minkowski's integral inequality with \eqref{sol_bdd}, \eqref{sol_bdd1}, and \eqref{ass}. 
    
    It remains to estimate $I_9$. By Minkowski's integral inequality, we have
    \begin{align*}
        \left\| \int_0^{t} e^{-N\frac{\xi_1^2}{|\xi|^2}(t-\tau)} |\mathscr{F} (u_2 \sigma)| \,\ud \tau \right\|_{L^p(0,\infty;L^1)} &= \left\| \left( \frac{|\xi|^2}{N\xi_1^2} \right)^{\frac{1}{p}} \int_0^{t} \left( N\frac{\xi_1^2}{|\xi|^2} \right)^{\frac{1}{p}} e^{-N\frac{\xi_1^2}{|\xi|^2}(t-\tau)} |\mathscr{F} (u_2 \sigma)| \,\ud \tau \right\|_{L^p(0,\infty;L^1)} \\
        &\leq \left\| \left( \frac{|\xi|^2}{N\xi_1^2} \right)^{\frac{1}{p}} \int_0^{\infty} |\mathscr{F} (u_2 \sigma)| \,\ud t \right\|_{L^1}.
    \end{align*}
    We split the domain $\bbR^2 = \{ |\xi_1| \geq |\xi_2| \} \cup \{ |\xi_1| \leq |\xi_2| \}$. On the first set, using \eqref{conv_ineq_R2_2}, \eqref{sol_bdd1}, and \eqref{ass} gives
    \begin{align*}
        \left\| \left( \frac{|\xi|^2}{N\xi_1^2} \right)^{\frac{1}{p}} \int_0^{\infty} |\mathscr{F} (u_2 \sigma)| \,\ud t \right\|_{L^1(\{ |\xi_1| \geq |\xi_2| \})} &\leq \frac{C}{N^{\frac{1}{p}}}\left\| \int_0^{\infty} |\mathscr{F} (u_2 \sigma)| \,\ud t \right\|_{L^1} \\
        &\leq \frac{C}{N^{\frac{1}{p}}} \| \widehat{\sigma} \|_{L^1} \int_0^{\infty} \| \mathscr{F} u_2 \|_{L^1} \, \ud t \\
        &\leq \frac{C}{N^{\frac{1}{p}}} \| \tht_0 \|_{H^m}.
    \end{align*}
    On the other set, we have for $s \in (1,m-\frac{3}{2})$ that
    \begin{align*}
        &\left\| \left( \frac{|\xi|^2}{N\xi_1^2} \right)^{\frac{1}{p}} \int_0^{\infty} |\mathscr{F} (u_2 \sigma)| \,\ud t \right\|_{L^1(\{ |\xi_1| \leq |\xi_2| \})} \\
        &\hphantom{\qquad\qquad}\leq \frac{C}{N^{\frac{1}{p}}}\left\| \frac{\xi_2^{\frac{2}{p}}}{\xi_1^{\frac{2}{p}}(1+\xi_1^2)^{\frac{s}{4}-\frac{1}{p}}} (1+\xi_1^2)^{\frac{s}{4}-\frac{1}{p}}\int_0^{\infty} |\mathscr{F} (u_2 \sigma)| \,\ud t \right\|_{L^1} \\
        &\hphantom{\qquad\qquad}\leq \frac{C}{N^{\frac{1}{p}}} \left\| \frac{1}{\xi_1^{\frac{2}{p}}(1+\xi_1^2)^{\frac{s}{4}-\frac{1}{p}}} \right\|_{L^2_{\xi_1}} \left\| \left\| \int_0^{\infty} (1+|\xi|^2)^{\frac{s}{4}} |\mathscr{F} (u_2 \sigma)| \,\ud t \right\|_{L^1_{\xi_2}} \right\|_{L^2_{\xi_1}} \\
        &\hphantom{\qquad\qquad}\leq \frac{C}{N^{\frac{1}{p}}} \left\| \left( \int_0^{\infty} (1+|\xi|^2)^{\frac{s}{4}} |\mathscr{F} u_2| \,\ud t \right) * (1+|\xi_2|^2)^{\frac{s}{4}} |\widehat{\sigma}(\xi_2)| \right\|_{L^{2}_{\xi_1}L^{1}_{\xi_2}} \\  &\hphantom{\qquad\qquad}\leq \frac{C}{N^{\frac{1}{p}}} \| (1+|\xi_2|^2)^{\frac{s}{4}} \widehat{\sigma} \|_{L^1} \left\| \int_0^{\infty} (1+|\xi|^2)^{\frac{s}{4}} |\mathscr{F} u_2| \,\ud t \right\|_{L^{2}_{\xi_1}L^{1}_{\xi_2}}.
    \end{align*}
    Applying \eqref{ass} and \eqref{L1_est_tem_2} with Minkowski's integral inequality, we obtain
    \begin{align*}
        \left\| \left( \frac{|\xi|^2}{N\xi_1^2} \right)^{\frac{1}{p}} \int_0^{\infty} |\mathscr{F} (u_2 \sigma)| \,\ud t \right\|_{L^1(\{ |\xi_1| \leq |\xi_2| \})} \leq \frac{C}{N^{\frac{1}{p}}}\| \tht_0 \|_{H^m}.
    \end{align*}
    Combining the above estimates, we obtain $$I_9 \leq \frac{C}{N^{\frac{1}{p}}}\| \tht_0 \|_{H^m}.$$ By the estimates for $I_7$, $I_8$, and $I_9$, \eqref{sol_bdd_2} follows. This completes the proof.
\end{proof}

Now, we are ready to prove \eqref{tem_decay_est_sgm}. Let all assumptions for Theorem~\ref{thm:stb_T} be satisfied. Then, we obtain $\tht$ which fulfills \eqref{eq:SIPM} and \eqref{sol_bdd}. We can see for any $t > t' > 0,$
\begin{equation*}
    \left\| \frac{\tht(t) - \tht(t')}{t-t'} \right\|_{L^{\infty}} \leq \frac{1}{t-t'} \int_{t'}^t \| \partial_t \tht \|_{L^{\infty}} \,\ud t \leq \sup_{t \in [0,\infty)} \| \partial_t \tht \|_{H^{m-1}} \leq C,
\end{equation*} thus,
\begin{equation*}
    \| \tht(t) - \tht(t')\|_{L^{\infty}} \leq C|t-t'|.
\end{equation*}
Suppose that \eqref{tem_decay_est_sgm} fails. Then, there exists a sequences $\{ N_k \}_{k \in \bbN}$ and $\{ t_k \}_{k \in \bbN}$ such that $N_k t_k \to \infty$ as $k \to \infty$ and $|\tht(t_n)| \geq \varepsilon$ for some $\varepsilon > 0$. When $t_k \to \infty$ as $k \to \infty$, we have for any $p > 4$
\begin{equation*}
    \| \tht \|_{L^p(0,\infty;L^{\infty})} = \infty,
\end{equation*}
which contradicts to \eqref{sol_bdd_2}. On the other hand, if $N_k \to \infty$ as $k \to \infty$, then
\begin{equation*}
    \| \tht \|_{L^p(0,\infty;L^{\infty})} \geq c \varepsilon, \qquad k \in \bbN
\end{equation*}
for some $c>0$. This contradicts to \eqref{sol_bdd_2}. Therefore, we obtain \eqref{tem_decay_est_sgm}. This completes the proof.

\subsection{Proof of \texorpdfstring{$L^{2}$}{} norm decay estimates}

In this section, we prove \eqref{tem_decay_est_sgm_2}. 
\begin{proposition}\label{prop_L2_R1}
	Let $\theta$ be a global classical solution of \eqref{eq:SIPM} with $N>0$ and $\sigma \in \mathscr{S}(\bbR)$. Assume that \eqref{ass} and \eqref{sol_bdd} are satisfied for any given $m \in \bbN$ with $m > 2$. Then, there exists a constant $C>0$ such that 
	\begin{equation}\label{R1_L2}
		(1+Nt) \| R_1 \tht \|_{H^m}^2 \leq C \| \tht_0 \|_{H^m}^2.
	\end{equation}
\end{proposition}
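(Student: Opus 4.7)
The plan is to upgrade the $L^2_t$ bound $N\int_0^\infty\|R_1\tht\|_{H^m}^2\,\ud t \leq C\|\tht_0\|_{H^m}^2$ from \eqref{sol_bdd} to a pointwise $(1+Nt)^{-1}$ decay by means of a Riccati-type differential inequality. The two ingredients are a weighted $H^m$ energy identity and the Cauchy--Schwarz ``balancing out'' inequality
\begin{equation*}
    \|R_1\tht\|_{H^m}^4 \leq \|R_1^2\tht\|_{H^m}^2 \, \|\tht\|_{H^m}^2,
\end{equation*}
which is immediate in Fourier variables from writing $\xi_1^2/|\xi|^2 = (\xi_1^2/|\xi|^2)^{1/2}\cdot(\xi_1^2/|\xi|^2)^{1/2}$ and applying Cauchy--Schwarz. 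Combined with $\sup_t\|\tht\|_{H^m}\leq 2\|\tht_0\|_{H^m}$ from \eqref{sol_bdd}, this yields $\|R_1^2\tht\|_{H^m}^2 \geq \|R_1\tht\|_{H^m}^4/(4\|\tht_0\|_{H^m}^2)$, so that the dissipation in the $H^m$ energy controls a quartic power of $\|R_1\tht\|_{H^m}$ and a Riccati integration produces $\|R_1\tht\|_{H^m}^2(t) \lesssim \|\tht_0\|_{H^m}^2/(1+Nt)$.

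To derive the energy identity, I would apply $\partial^\alpha$ with $|\alpha|\leq m$ to \eqref{eq:SIPM}, pair with $\partial^\alpha u_2 = -R_1^2\partial^\alpha\tht$ (using the Darcy representation of $u_2$), integrate over $\bbR^2$, and sum over $\alpha$. The self-adjointness of $-R_1^2$ converts the linear term into the dissipation $N\|R_1^2\tht\|_{H^m}^2$, producing
\begin{equation*}
    \frac{1}{2}\frac{\ud}{\ud t}\|R_1\tht\|_{H^m}^2 + N\|R_1^2\tht\|_{H^m}^2 = \mathcal{N}(t),
\end{equation*}
where $\mathcal{N}(t)$ collects the nonlinear contributions from $(u\cdot\nabla)\tht$ and $u_2\sigma$. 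Inserting the balance inequality and the nonlinear bound developed below, this reduces to a Riccati inequality of the form $f'+ c N f^2/\|\tht_0\|_{H^m}^2 \leq 0$ with $f(t) := \|R_1\tht(t)\|_{H^m}^2$ and $c>0$, which is solved explicitly to give \eqref{R1_L2}.

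The nonlinear bound on $\mathcal{N}$ would be obtained by adapting the commutator and cancellation manipulations from the proof of Proposition \ref{prop_energy}, together with the structural fact that $u = R_1 R^\perp\tht$ always carries a Riesz $R_1$ factor, so that both $\|u\|_{H^m}$ and $\|\nabla u\|_{L^\infty}$ are bounded by $C\|R_1\tht\|_{H^m}$ for $m>2$. The smallness in \eqref{ass} (so that $\|\tht\|_{H^m}+\|\sigma\|_{W^{m+1,\infty}}\ll N$) then makes every contribution either directly absorbable by the dissipation or absorbable after one Young inequality against a fraction of it, matching the Riccati structure.

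The main technical obstacle is controlling the top-order cross term $\int\partial^\alpha[(u\cdot\nabla)\tht]\cdot\partial^\alpha u_2$ without costing an extra derivative on $\tht$. Since $\partial^\alpha u_2 = -R_1^2\partial^\alpha\tht$ is not simply $\partial^\alpha\tht$, the usual divergence-free cancellation $\int(u\cdot\nabla)\partial^\alpha\tht\cdot\partial^\alpha\tht = 0$ is not directly available. Instead one has to invoke the Calder\'on-type commutator $[u\cdot\nabla, R_1^2]$, which is bounded on $L^2$ by $C\|\nabla u\|_{L^\infty}$ because $R_1^2$ is a zero-order Calder\'on--Zygmund operator; coupled with the $\partial^\alpha = \partial_2^m$ trick from the proof of Proposition \ref{prop_energy} (an extra integration by parts in $\partial_2$ exploiting $\partial_2 u_2 = -\partial_1 u_1$), this yields the bound on $\mathcal{N}$ of the required cubic form in $\|R_1\tht\|_{H^m}$ and $\|\tht\|_{H^m}$, which closes the Riccati inequality.
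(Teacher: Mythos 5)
Your proposal is correct, but the decay mechanism you use is genuinely different from the paper's. The paper derives essentially the same two energy identities (testing against $u_2$ at the $L^2$ level and against $\partial_1^2(-\Delta)^{m-1}\tht$ at the top level), absorbs every absorbable term into the dissipation, and then simply \emph{discards} what remains of the dissipation, arriving at $\frac{\ud}{\ud t}\|R_1\tht\|_{H^m}^2 \leq \frac{C}{N}\|R_1\tht\|_{H^m}^4$; the decay is then extracted by multiplying by $(1+Nt)$ and applying Gr\"onwall, where both the exponential factor $\exp\bigl(\frac{C}{N}\int_0^\infty\|R_1\tht\|_{H^m}^2\,\ud t\bigr)$ and the source term $N\int_0^\infty\|R_1\tht\|_{H^m}^2\,\ud t$ are controlled by the time-integrability in \eqref{sol_bdd}. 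You instead keep the dissipation and convert it into quartic damping via $\|R_1\tht\|_{H^m}^4 \leq \|R_1^2\tht\|_{H^m}^2\|\tht\|_{H^m}^2$ (correct, by Cauchy--Schwarz on the Fourier side) together with $\sup_t\|\tht\|_{H^m}\leq 2\|\tht_0\|_{H^m}$, and solve a Riccati inequality. This does close: under \eqref{ass} the surviving bad term $\frac{C}{N}f^2$ is dominated by the retained good term $\frac{cN}{\|\tht_0\|_{H^m}^2}f^2$ exactly when $N\gtrsim \|\tht_0\|_{H^m}$, and the explicit Riccati bound $f(t)\leq f(0)\big/\bigl(1+cNf(0)t/\|\tht_0\|_{H^m}^2\bigr)$ yields $(1+Nt)f(t)\leq C\|\tht_0\|_{H^m}^2$ uniformly in the ratio $f(0)/\|\tht_0\|_{H^m}^2\leq 1$. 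What each approach buys: yours needs only the uniform-in-time bound from \eqref{sol_bdd} rather than the dissipation integral, and is the more classical ``interpolate the quantity between the energy and the dissipation'' argument; the paper's version is uniform with the proof of Proposition~\ref{prop_L2_R2}, where the analogous interpolation is replaced by the frequency-splitting \eqref{ineq_balancing}, and it sidesteps the Calder\'on commutator $[u\cdot\nabla,R_1^2]$ you invoke for the top-order cross term --- by testing with the purely differential multiplier $\partial_1^2(-\Delta)^{m-1}$ the paper handles that term with elementary integrations by parts as in Proposition~\ref{prop_energy}, which is worth adopting since it keeps the argument self-contained.
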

\begin{proof}
\Black{Recalling the $\theta$ equation in \eqref{eq:SIPM}} and multiplying it by $u_2$ and integrate over $\bbR^2$, we have
\begin{equation*}
    \frac{1}{2} \frac{\ud}{\ud t} \|R_1 \tht\|_{L^2}^2 \,\ud x = -\int_{\bbR^2} (u \cdot \nabla)\tht u_2 \,\ud x - \int_{\bbR^2} |u_2|^2(N+\sigma(x_2)) \,\ud x.
\end{equation*}
Since we can see
\begin{align*}
    \left| -\int_{\bbR^2} (u \cdot \nabla)\tht u_2 \,\ud x \right| &\leq \| u_1 \|_{L^2} \| \partial_1 \tht \|_{L^{\infty}} \| u_2 \|_{L^2} + \| u_2 \|_{L^2}^2 \| \partial_2 \tht \|_{L^{\infty}} \\
    &\leq \frac{C}{N} \| R_1\tht \|_{L^2}^2 \| R_1 \tht \|_{H^{m}}^2 + (\frac{N}{4}+ C\| \tht \|_{H^m}) \| R_1^2 \tht \|_{L^2}^2
\end{align*}
and $$- \int_{\bbR^2} |u_2|^2(N+\sigma(x_2)) \,\ud x \leq -(N - \| \sigma \|_{L^{\infty}}) \| R_1^2 \tht \|_{L^2}^2, $$
it holds that
\begin{equation}\label{L2_est_u}
\begin{aligned}
    \frac{1}{2} \frac{\ud}{\ud t} \| R_1 \tht \|_{L^2}^2 &\leq \frac{C}{N} \| R_1 \tht \|_{L^2}^2 \| R_1 \tht \|_{H^{m}}^2 - (\frac{3}{4}N - C \| \tht \|_{H^m} - \| \sigma \|_{L^{\infty}}) \| R_1^2 \tht \|_{L^2}^2 \\
    &\leq \frac{C}{N} \| R_1 \tht \|_{L^2}^2 \| R_1 \tht \|_{H^{m}}^2 - \frac{N}{2} \| R_1^2 \tht \|_{L^2}^2
\end{aligned}
\end{equation}
On the other hand, multiplying the $\tht$ equation in \eqref{eq:SIPM_g} by $\partial_1^2(-\Delta)^{m-1} \tht$ and integrating over $\bbR^2$, we can see
\begin{equation*}
    \Black{\frac{1}{2} \frac{\ud}{\ud t} \|R_1 \tht\|_{\dot{H}^m}^2} = -\sum_{|\alpha|=m-1} \int_{\bbR^2} \partial_1\partial^{\alpha} (u \cdot \nabla) \tht \partial_1\partial^{\alpha} \tht \,\ud x - \sum_{|\alpha|=m} \int_{\bbR^2} \partial^{\alpha}u_2(N+\sigma(x_2)) \partial^{\alpha}u_2 \,\ud x.
\end{equation*}
Integration by parts yields $$- \sum_{|\alpha|=m} \int_{\bbR^2} \partial^{\alpha}u_2(N+\sigma(x_2)) \partial^{\alpha}u_2 \,\ud x = -N \| R_1^2 \tht \|_{\dot{H}^m} + \| \sigma \|_{W^{m,\infty}} \| R_1^2 \tht \|_{H^m}^2.$$ For the remainder term, we consider $\partial^\alpha \neq \partial_2^{m-1}$ case first. From the divergence-free condition, it follows $$\left| -\int_{\bbR^2} \partial_1\partial^{\alpha} (u \cdot \nabla) \tht \partial_1\partial^{\alpha} \tht \,\ud x \right| \leq C \| R_1 \tht \|_{H^m}^2 \| R_1^2 \tht \|_{H^m}.$$ In the other case, we have
\begin{align*}
    \left| -\int_{\bbR^2} \partial_1\partial_2^{m-1} (u \cdot \nabla) \tht \partial_1\partial_2^{m-1} \tht \,\ud x \right| &= \left| -\int_{\bbR^2} \partial_2^{m} (u \cdot \nabla) \tht \partial_1^2\partial_2^{m-2} \tht \,\ud x \right| \\
    &\leq C\| R_1^2\tht \|_{H^m} \| R_1 \tht \|_{H^m}^2 + C \| R_1^2 \tht \|_{H^m}^2 \| \tht \|_{H^m}.
\end{align*}
Thus, we can have by Young's inequality that
\begin{equation*}
    \frac{1}{2} \frac{\ud}{\ud t} \|R_1 \tht\|_{\dot{H}^m}^2 \,\ud x \leq \frac{C}{N} \| R_1 \tht \|_{H^{m}}^4 + (C \| \tht \|_{H^m} + \| \sigma \|_{W^{m,\infty}}) \| R_1^2 \tht \|_{H^m}^2 - N \| R_1^2 \tht \|_{\dot{H}^m}^2.
\end{equation*}
Combining with \eqref{L2_est_u} gives $$\frac{\ud}{\ud t} \|R_1 \tht\|_{H^m}^2 \leq \frac{C}{N} \| R_1 \tht \|_{H^m}^4.$$ Multiplying both terms by $(1+Nt)$, we deduce
\begin{equation*}
    \frac{\ud}{\ud t} \left( (1+Nt) \| R_1 \tht \|_{H^m}^2 \right) \leq \frac{C}{N} (1+Nt) \| R_1 \tht \|_{H^m}^2 \| R_1 \tht \|_{H^{m}}^2 + N\| R_1 \tht \|_{H^m}^2.
\end{equation*}
By Gr\"onwall's inequality and \eqref{sol_bdd}, we obtain \eqref{R1_L2}. This completes the proof.
\end{proof}

\begin{proposition}\label{prop_L2_R2}
	Let $\theta$ be a global classical solution of \eqref{eq:SIPM} with $N>0$ and $\sigma \in \mathscr{S}(\bbR)$. Assume that \eqref{ass}, \eqref{sol_bdd}, and \eqref{sol_bdd1} are satisfied for any given $m \in \bbN$ with $m > 2$. Then, there exists a constant $C>0$ such that 
	\begin{equation}\label{R2_L2}
    (1+Nt)^2 \| \partial_1 R_1 \tht \|_{H^{m-1}}^2 \leq C \| \tht_0 \|_{H^m}^2.
    \end{equation}
\end{proposition}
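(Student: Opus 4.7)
My plan is to mirror the energy–dissipation structure of Proposition~\ref{prop_L2_R1} but with a time weight of $(1+Nt)^2$ rather than $(1+Nt)$, and to close the resulting inequality by borrowing the time-integrated dissipation implicit in Proposition~\ref{prop_L2_R1}. Concretely, I would test the $\theta$-equation in \eqref{eq:SIPM} against the multiplier $-(1-\Delta)^{m-1}\partial_1^2 u_2$ (at the $L^2$ level, simply $-\partial_1^2 u_2$). By Plancherel, the $\partial_t$ contribution is exactly $\tfrac12 \frac{\ud}{\ud t}\|\partial_1 R_1\theta\|_{H^{m-1}}^2$, and integration by parts in $x_1$ on the linear $Nu_2$ term produces the dissipation $N\|\partial_1 u_2\|_{H^{m-1}}^2$. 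Collecting the transport and $\sigma$-contributions (whose integer-by-integer handling parallels Proposition~\ref{prop_energy}, which is why the hypothesis $m\in\bbN$ reappears here) yields
\begin{equation*}
	\frac{\ud}{\ud t}\|\partial_1 R_1\tht\|_{H^{m-1}}^2 + 2N\|\partial_1 u_2\|_{H^{m-1}}^2 \leq |\calN(t)| + |\calS(t)|.
\end{equation*}

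Multiplying by $(1+Nt)^2$ and integrating over $[0,t]$ gives
\begin{equation*}
\begin{aligned}
	&(1+Nt)^2\|\partial_1 R_1\tht(t)\|_{H^{m-1}}^2 + 2N\int_0^t(1+Ns)^2\|\partial_1 u_2\|_{H^{m-1}}^2\,\ud s \\
	&\qquad \leq \|\partial_1 R_1\tht_0\|_{H^{m-1}}^2 + 2N\int_0^t(1+Ns)\|\partial_1 R_1\tht\|_{H^{m-1}}^2\,\ud s + \int_0^t(1+Ns)^2(|\calN|+|\calS|)\,\ud s.
\end{aligned}
\end{equation*}
To control the penultimate term—the source of the extra factor of $(1+Nt)$—I use the elementary Fourier inequality $\|\partial_1 R_1\tht\|_{H^{m-1}}^2 \leq \|u_2\|_{H^m}^2$ (which reduces to $|\xi|^2 \leq 1+|\xi|^2$) together with the bound
\begin{equation*}
	N\int_0^\infty(1+Ns)\|u_2(s)\|_{H^m}^2\,\ud s \leq C\|\tht_0\|_{H^m}^2,
\end{equation*}
which one reads off from the $(1+Nt)$-weighted energy identity in the proof of Proposition~\ref{prop_L2_R1} by retaining the dissipation term $2N(1+Ns)\|u_2\|_{H^m}^2$ rather than dropping it.

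The hard part will be controlling the $(1+Nt)^2$-weighted nonlinearity $\int_0^t(1+Ns)^2|\calN|\,\ud s$ and its $\sigma$-analogue. My plan is to integrate by parts in $x_1$ inside $\calN$ to liberate a factor $\partial_1 u_2$, then to invoke Cauchy--Schwarz and Young's inequality to absorb half of the estimate by the LHS dissipation $N(1+Ns)^2\|\partial_1 u_2\|_{H^{m-1}}^2$. The leftover piece, of the form $N^{-1}(1+Ns)^2\|\partial_1(u\cdot\nb\tht)\|_{H^{m-1}}^2$, is then controlled by standard Sobolev product estimates together with the uniform bound $\|\tht\|_{H^m}\leq 2\|\tht_0\|_{H^m}$ from \eqref{sol_bdd}, the bootstrap $\|u_2\|_{H^m}^2 \leq \|R_1\tht\|_{H^m}^2 \leq C\|\tht_0\|_{H^m}^2/(1+Ns)$ coming from Proposition~\ref{prop_L2_R1}, and the time integrability in \eqref{sol_bdd1}; the $\calS$-term is treated analogously, exploiting the smallness $N^{-1}\|\sigma\|_{W^{m+1,\infty}}\ll 1$ built into \eqref{ass}. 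Assembling these pieces produces $(1+Nt)^2\|\partial_1 R_1\tht(t)\|_{H^{m-1}}^2 \leq C\|\tht_0\|_{H^m}^2$, which is \eqref{R2_L2}.
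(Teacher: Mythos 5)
Your overall frame---testing the $\tht$-equation against $-(1-\Delta)^{m-1}\partial_1^2u_2$, weighting by $(1+Nt)^2$, and closing by Gr\"onwall---matches the paper's, and your device for the weight-derivative term (retaining the dissipation in the $(1+Nt)$-weighted inequality of Proposition~\ref{prop_L2_R1} to extract $N\int_0^\infty(1+Ns)\|u_2\|_{H^m}^2\,\ud s\le C\|\tht_0\|_{H^m}^2$) is a legitimate alternative to the paper's mechanism, which instead uses the balancing inequality \eqref{ineq_balancing} to trade the dissipation $-N\|\partial_1R_1^2\tht\|_{H^{m-1}}^2$ for a damping term $-\frac{N}{M}\|\partial_1R_1\tht\|_{H^{m-1}}^2$ plus an error $\frac{N}{M^2}\|R_1\tht\|_{\dot H^m}^2$ with $M\sim1+Nt$.

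However, your treatment of the nonlinearity has a genuine gap. First, there is a derivative loss: the leftover $N^{-1}\|\partial_1(u\cdot\nb\tht)\|_{H^{m-1}}^2$ contains $u_2\,\partial_1\partial_2\tht$ measured in $H^{m-1}$, whose top-order part requires $\partial_1\partial_2\partial^{m-1}\tht\in L^2$, i.e.\ $m+1$ derivatives of $\tht$ with only one $\partial_1$ among them; this is controlled by neither $\|\tht\|_{H^m}$ nor $\|R_1\tht\|_{H^m}$, so "standard Sobolev product estimates" cannot close it. The transport cancellation $\int(u\cdot\nb)f\,f\,\ud x=0$ must be exploited \emph{before} Cauchy--Schwarz, which is exactly what the paper's term-by-term commutator decomposition (including the repeated integrations by parts in the $\partial^\alpha=\partial_2^{m-2}$ case) accomplishes. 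Second, even for the loss-free piece $(\partial_1u\cdot\nb)\tht$, your Young-inequality leftover produces a Gr\"onwall coefficient of size $\frac{C}{N}\|\tht\|_{H^m}^2\lesssim\frac{C}{N}\|\tht_0\|_{H^m}^2$ multiplying $(1+Ns)^2\|\partial_1R_1\tht\|_{H^{m-1}}^2$; this coefficient is constant in time, hence not integrable on $(0,\infty)$, and Gr\"onwall then yields an exponentially growing factor rather than \eqref{R2_L2}. Substituting the decay $\|R_1\tht\|_{H^m}^2\lesssim\|\tht_0\|_{H^m}^2(1+Ns)^{-1}$ from Proposition~\ref{prop_L2_R1} does not repair this: $(1+Ns)^{-1}$ is still not integrable, and using it on both velocity factors leaves a source term growing like $(1+Nt)^2/N^2$. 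The paper avoids both defects by distributing the $\partial_1$'s and Riesz transforms so that every Gr\"onwall coefficient is either $\frac{1}{N}\|R_1\tht\|_{H^m}^2$ or $\|\partial_2u_2\|_{L^\infty}$, both integrable over $(0,\infty)$ by \eqref{sol_bdd} and \eqref{sol_bdd1}.
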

\begin{proof}
Multiplying the $\tht$ equation in \eqref{eq:SIPM_g} by $-\partial_1^2 u_2$ and integrating over $\bbR^2$ gives that
\begin{equation*}
    \frac{1}{2} \frac{\ud}{\ud t} \int_{\bbR^2} | \partial_1 R_1 \tht|^2 \,\ud x = -\int_{\bbR^2} \partial_1 (u \cdot \nabla)\tht \partial_1 u_2 \,\ud x - \int_{\bbR^2} |\partial_1 u_2|^2(N+\sigma(x_2)) \,\ud x.
\end{equation*}
Since
\begin{gather*}
    \left| -\int_{\bbR^2} \partial_1 (u \cdot \nabla)\tht \partial_1 u_2 \,\ud x \right| \leq (\| \partial_1 u_1 \|_{L^2} \| \partial_1 \tht \|_{L^{\infty}} + \| u_1 \|_{L^{\infty}} \| \partial_1^2 \tht \|_{L^2}) \| \partial_1 u_2 \|_{L^2} \\
    + \| \nabla \tht \|_{L^{\infty}} \| \partial_1 u_2 \|_{L^2}^2 + \| u_2 \|_{L^{\infty}} \| \partial_1 \partial_2 \tht \|_{L^2} \| \partial_1 u_2 \|_{L^2} \\
    \leq C \| \partial_1 R_1 \tht \|_{H^{m-1}} \| R_1 \tht \|_{H^m} \| \partial_1 u_2 \|_{L^2} + C \| \tht \|_{H^m} \| \partial_1 u_2 \|_{L^2}^2, 
\end{gather*}
it follows from \eqref{ass} and Young's inequality that
\begin{equation}\label{R2_L2_est}
    \begin{gathered}
        \frac{1}{2} \frac{\ud}{\ud t} \int_{\bbR^2} | \partial_1 R_1 \tht|^2 \,\ud x \leq -(N - C(\| \sigma \|_{L^{\infty}} + \| \tht \|_{H^m})) \| \partial_1 u_2 \|_{L^2}^2 + C \| \partial_1 R_1 \tht \|_{H^{m-1}} \| R_1 \tht \|_{H^m} \| \partial_1 u_2 \|_{L^2} \\
        \leq -\frac{N}{2} \| \partial_1 u_2 \|_{L^2}^2 \Black{+ \frac{C}{N} \| \partial_1 R_1 \tht \|_{H^{m-1}}^2 \| R_1 \tht \|_{H^m}^2.}
    \end{gathered}
\end{equation}

On the other hand, multiplying the $\tht$ equation in \eqref{eq:SIPM_g} by $\partial_1^4(-\Delta)^{m-2} \tht$ and integrating over $\bbR^2$ gives that
\begin{equation*}
    \frac{1}{2} \frac{\ud}{\ud t} \| \partial_1 R_1 \tht \|_{\dot{H}^{m-1}}^2 = - \sum_{|\alpha| = m-2} \int_{\bbR^2} \partial^{\alpha} \partial_1^2 (u \cdot \nabla) \tht \partial^{\alpha} \partial_1^2 \tht \,\ud x - \sum_{|\alpha| = m-1} \int_{\bbR^2} \partial^{\alpha} \partial_1 (u_2 (N+\sigma(x_2))) \partial^{\alpha} \partial_1 u_2 \,\ud x.
\end{equation*}
It is clear
\begin{align*}
    - \sum_{|\alpha| = m-1} \int_{\bbR^2} \partial^{\alpha} \partial_1 (u_2 (N+\sigma(x_2))) \partial^{\alpha} \partial_1 u_2 \,\ud x \leq - N\| \partial_1 R_1^2 \tht \|_{\dot{H}^{m-1}}^2 + C\| \sigma \|_{W^{m-1,\infty}} \| \partial_1 R_1^2 \tht \|_{H^{m-1}}^2.
\end{align*}
Note that
\begin{gather*}
    - \sum_{|\alpha| = m-2} \int_{\bbR^2} \partial^{\alpha} \partial_1^2 (u \cdot \nabla) \tht \partial^{\alpha} \partial_1^2 \tht \,\ud x \\
    = - \sum_{|\alpha| = m-2} \int_{\bbR^2} \partial^{\alpha} \partial_1 (\partial_1 u \cdot \nabla) \tht \partial^{\alpha} \partial_1^2 \tht \,\ud x - \sum_{|\alpha| = m-2} \int_{\bbR^2} \partial^{\alpha} (u \cdot \nabla) \partial_1^2 \tht \partial^{\alpha} \partial_1^2 \tht \,\ud x .
\end{gather*}
We can estimate the first integral on the right-hand side as follows:
\begin{gather*}
    \left| - \sum_{|\alpha| = m-2} \int_{\bbR^2} \partial^{\alpha} \partial_1 (\partial_1 u \cdot \nabla) \tht \partial^{\alpha} \partial_1^2 \tht \,\ud x \right| = \left| \sum_{|\alpha| = m-2} \int_{\bbR^2} \nabla \partial^{\alpha} (\partial_1 u \cdot \nabla) \tht \cdot \nabla \partial^{\alpha} \partial_1^3 (-\Delta)^{-1}\tht \,\ud x \right| \\
    \leq C \| \partial_1 u_1 \|_{H^{m-1}} \| \partial_1 \tht \|_{H^{m-1}} \| \partial_1 R_1^2 \tht \|_{H^{m-1}} + C\| \partial_1 u_2 \|_{H^{m-1}} \| \tht \|_{H^m} \| \partial_1 R_1^2 \tht \|_{H^{m-1}} \\
    \leq C\| \partial_1 R_1 \tht \|_{H^{m-1}} \| R_1 \tht \|_{H^m} \| \partial_1 R_1^2 \tht \|_{H^{m-1}} + C \| \tht \|_{H^m} \| \partial_1 R_1^2 \tht \|_{H^{m-1}}^2.
\end{gather*}
On the other hand, if $\partial^{\alpha} \neq \partial_2^{m-2}$, we can show the remainder term to satisfy $$\left| - \int_{\bbR^2} \partial^{\alpha} (u \cdot \nabla) \partial_1^2 \tht \partial^{\alpha} \partial_1^2 \tht \,\ud x \right| \leq C \| R_1 \tht \|_{H^{m}} \| \partial_1 R_1^2 \tht \|_{H^{m-1}} \| \partial_1 R_1 \tht \|_{H^{m-1}}.$$ In the other case, $(u \cdot \nabla)\tht = u_1 \partial_1 \tht + u_2 \partial_2 \tht$ and the cancellation property give
\begin{gather*}
\left| \int_{\bbR^2} \partial_2^{m-2} (u \cdot \nabla) \partial_1^2 \tht \partial_2^{m-2} \partial_1^2 \tht \,\ud x \right| \\
\leq C \| R_1 \tht \|_{H^{m}} \| \partial_1 R_1^2 \tht \|_{H^{m-1}} \| \partial_1 R_1 \tht \|_{H^{m-1}} + \left| \int_{\bbR^2} \partial_2^{m-3} (\partial_2 u_2 \partial_2 \partial_1^2 \tht) \partial_2^{m-2} \partial_1^2 \tht \,\ud x \right|.
\end{gather*}
We note that
\begin{gather*}
\left| \int_{\bbR^2} \partial_2^{m-3} (\partial_2 u_2 \partial_2 \partial_1^2 \tht) \partial_2^{m-2} \partial_1^2 \tht \,\ud x \right| \leq \| \partial_2 u_2 \|_{L^{\infty}} \| \partial_1 R_1 \tht \|_{H^{m-1}}^2 + \left| \int_{\bbR^2} \partial_2^{m-4} (\partial_2^2 u_2 \partial_2 \partial_1^2 \tht) \partial_2^{m-2} \partial_1^2 \tht \,\ud x \right|,
\end{gather*}
where the last integral vanishes when $m=3$. Using the integration by parts twice with the divergence-free condition, we can have 
\begin{align*}
    \left| \int_{\bbR^2} \partial_2^{m-4} (\partial_2^2 u_2 \partial_2 \partial_1^2 \tht) \partial_2^{m-2} \partial_1^2 \tht \,\ud x \right| &= \left| -\int_{\bbR^2} \partial_2^{m-4} (\partial_2 \partial_1 u_1 \partial_2 \partial_1^2 \tht) \partial_2^{m-2} \partial_1^2 \tht \,\ud x \right| \\
    &\leq C \| R_1 \tht \|_{H^{m}} \| \partial_1 R_1^2 \tht \|_{H^{m-1}} \| \partial_1 R_1 \tht \|_{H^{m-1}}.
\end{align*} Thus,
\begin{gather*}
    \left| - \sum_{|\alpha| = m-2} \int_{\bbR^2} \partial^{\alpha} (u \cdot \nabla) \partial_1^2 \tht \partial^{\alpha} \partial_1^2 \tht \,\ud x \right| \\
    \leq C\| R_1 \tht \|_{H^m} \| \partial_1 R_1^2 \tht \|_{H^{m-1}} \| \partial_1 R_1 \tht \|_{H^{m-1}} + \| \partial_2 u_2 \|_{L^{\infty}} \| \partial_1 R_1 \tht \|_{H^{m-1}}^2.
\end{gather*}
Therefore, we deduce by Young's inequality that
\begin{align*}
    \frac{1}{2} \frac{\ud}{\ud t} \| \partial_1 R_1 \tht \|_{\dot{H}^{m-1}}^2 &\leq - N \| \partial_1 R_1^2 \tht \|_{\dot{H}^{m-1}}^2 + C(\| \sigma \|_{W^{m,\infty}} + \| \tht \|_{H^m}) \| \partial_1 R_1^2 \tht \|_{H^{m-1}}^2 \\
    &\hphantom{\qquad\qquad}+ C\| \partial_1 R_1 \tht \|_{H^{m-1}} \| R_1 \tht \|_{H^m} \| \partial_1 R_1^2 \tht \|_{H^{m-1}} + \| \partial_2 u_2 \|_{L^{\infty}} \| \partial_1 R_1 \tht \|_{H^{m-1}}^2.
\end{align*}
Combining with \eqref{R2_L2_est}, we have
\begin{equation*}
        \frac{\ud}{\ud t} \| \partial_1 R_1 \tht \|_{H^{m-1}}^2 \leq -N \| \partial_1 R_1^2 \tht \|_{H^{m-1}}^2 + C \| \partial_1 R_1 \tht \|_{H^{m-1}}^2 (\frac{1}{N}\| R_1 \tht \|_{H^m}^2 + \| \partial_2 u_2 \|_{L^{\infty}}).
\end{equation*}
Note that for any $M \geq 1$,
\Black{\begin{equation}\label{ineq_balancing}\begin{split}
    \frac{1}{M} \| \partial_1 R_1 \tht \|_{\dot{H}^{m-1}}^2 - \| \partial_1 R_1^2 \tht \|_{\dot{H}^{m-1}}^2 &= \int_{\bbR^2} \xi_1^4 |\xi|^{2(m-2)} \left( \frac{1}{M} - \frac{\xi_1^2}{|\xi|^2} \right) |\mathscr{F} \tht|^2 \,\ud \xi \\
    &\leq \frac{1}{M} \int_{M\xi_1^2 \leq |\xi|^2} \xi_1^4 |\xi|^{2(m-2)} |\mathscr{F} \tht|^2 \,\ud \xi \\
    &\leq \frac{1}{M^2} \int_{\bbR^2} \xi_1^2 |\xi|^{2(m-1)} |\mathscr{F} \tht|^2 \,\ud \xi \\
    &= \frac{1}{M^2} \| R_1 \tht \|_{\dot{H}^m}^2.
    \end{split}
\end{equation}}
Thus, there holds
\begin{gather*}
        \frac{\ud}{\ud t} \| \partial_1 R_1 \tht \|_{H^{m-1}}^2 + \frac{N}{M} \| \partial_1 R_1 \tht \|_{H^{m-1}}^2 \leq \frac{N}{M^2} \| R_1 \tht \|_{H^m}^2 + C \| \partial_1 R_1 \tht \|_{H^{m-1}}^2 (\frac{1}{N}\| R_1 \tht \|_{H^m}^2 + \| \partial_2 u_2 \|_{L^{\infty}}).
\end{gather*}
Now, we take $M = (1+\frac{Nt}{2})^2$ and multiplying both terms by $M^2$. Then,
\begin{align*}
    \frac{\ud}{\ud t} \left( (1+\frac{Nt}{2})^2 \| \partial_1 R_1 \tht \|_{H^{m-1}}^2 \right) \leq N \| R_1 \tht \|_{H^m}^2 + C (1+\frac{Nt}{2})^2 \| \partial_1 R_1 \tht \|_{H^{m-1}}^2 (\frac{1}{N}\| R_1 \tht \|_{H^m}^2 + \| \partial_2 u_2 \|_{L^{\infty}}).
\end{align*}
By Gr\"{o}nwall's inequality, it holds that
\begin{gather*}
    (1+\frac{Nt}{2})^2 \| \partial_1 R_1 \tht(t) \|_{H^{m-1}}^2 \\
    \leq \left(\| \tht_0 \|_{H^m}^2 + N \int_0^\infty \| R_1 \tht \|_{H^m}^2 \,\ud t \right) \exp\left( \frac{C}{N}\int_0^\infty \| R_1 \tht \|_{H^{m}}^2 \,\ud t + C\int_0^{\infty} \| \partial_2 v_2 \|_{L^{\infty}} \,\ud t \right)
\end{gather*}
for all $t>0$. Thus, we obtain \eqref{R2_L2}. This completes the proof.
\end{proof}

\section{Proof of Theorem~\ref{thm:stb_T}}\label{sec_T}
In this section, we write the Fourier coefficient as
\begin{equation*}
    \mathscr{F} f(n) = \langle f, e^{2\pi i n \cdot x} \rangle = \int_{\bbT^2} e^{-2\pi i n \cdot x} f(x) \,\ud x.
\end{equation*}
We can express our solution $(\tht,u,P)$ via the Fourier series expansion. Since $\tht$ is periodic in $x$,
\begin{equation*}
    \tht = \sum_{n \in \bbZ^2} \mathscr{F}\tht(n) e^{2\pi i n \cdot x}.
\end{equation*}
Taking divergence operator to the $u$ equations in \eqref{eq:SIPM_T}, we have $-\Delta P = -\partial_2 \tht$, thus, $$P = -\sum_{n \neq 0} \frac{2\pi i n_2}{4\pi^2|n|^2}\mathscr{F} \tht (n) e^{2\pi i n \cdot x} + \mathscr{F}P(0).$$ This formula implies
\begin{equation}\label{sf_u}
    u_1 = -\sum_{n \neq 0} \frac{n_1n_2}{|n|^2}\mathscr{F}\tht(n) e^{2\pi i n \cdot x}, \qquad  u_2 = \sum_{n \neq 0} \frac{n_1^2}{|n|^2}\mathscr{F}\tht(n) e^{2\pi i n \cdot x} + \mathscr{F}\tht(0).
\end{equation}
We introduce simple observations:
\begin{lemma}\label{lem_avg}
     Let $(\tht,u,P)$ be a classical solution to \eqref{eq:SIPM_T}. Then, we have
     \begin{equation}\label{avg_est_1}
         \int_{\bbT^2} \tht(t,x) \,\ud x = \int_{\bbT} u_2(t,x) \,\ud x_1 = e^{-Nt} \int_{\bbT^2} \tht_0 \,\ud x
     \end{equation}
     and
     \begin{equation}\label{avg_est_2}
         \int_{\bbT} u_1(t,x) \,\ud x_1 = 0.
     \end{equation}
\end{lemma}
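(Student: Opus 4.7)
The plan is to read the two horizontal-mean identities directly off the Fourier expressions in \eqref{sf_u}, and then obtain the time decay of $\int_{\bbT^2}\theta\,\ud x$ by integrating the $\theta$-equation against the constant function.

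First I would observe that in the Fourier formula \eqref{sf_u} for $u_1$ every surviving coefficient contains the factor $n_1 n_2$. Averaging in $x_1$ kills every mode with $n_1 \neq 0$, and the factor $n_1$ makes all remaining $n_1 = 0$ contributions vanish; hence $\int_\bbT u_1(t,x)\,\ud x_1 = 0$, which is \eqref{avg_est_2}. Next I would compute $\int_\bbT u_2\,\ud x_1$ from \eqref{sf_u}: averaging in $x_1$ retains only $n_1 = 0$ modes, and for those the coefficient $n_1^2/|n|^2$ vanishes, so the only surviving term is the constant $\mathscr{F}\theta(0) = \int_{\bbT^2}\theta\,\ud x$. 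This gives the middle equality in \eqref{avg_est_1}.

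Then I would derive the time evolution of $\int_{\bbT^2}\theta\,\ud x$ by integrating the first equation of \eqref{eq:SIPM_T} over $\bbT^2$. Writing $(u\cdot\nabla)\theta = \nabla\cdot(u\theta)$ using $\mathrm{div}\,u = 0$ shows that the transport term integrates to zero by periodicity. The source term contributes $-N\int_{\bbT^2}u_2\,\ud x$, and Fubini together with the $x_1$-average identity just established gives
\begin{equation*}
    \int_{\bbT^2} u_2\,\ud x = \int_{\bbT}\Big(\int_\bbT u_2\,\ud x_1\Big)\ud x_2 = \int_{\bbT^2}\theta\,\ud x,
\end{equation*}
since the inner integral is independent of $x_2$ and $|\bbT| = 1$. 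Thus $\int_{\bbT^2}\theta\,\ud x$ satisfies the ODE $\frac{\ud}{\ud t}Y = -NY$, whose solution is $Y(t) = e^{-Nt}Y(0)$; this yields the remaining equality of \eqref{avg_est_1}.

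There is no real obstacle: both statements follow from \eqref{sf_u} and a single ODE. The only minor point to be careful about is the normalization of $\bbT$, which must be taken as unit length so that the Fourier coefficient $\mathscr{F}\theta(0)$ agrees with $\int_{\bbT^2}\theta\,\ud x$ without a prefactor, consistent with the conventions used throughout this section.
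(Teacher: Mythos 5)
Your proposal is correct and follows essentially the same route as the paper: the paper reads $\int_{\bbT}u_1\,\ud x_1=0$ and $\int_{\bbT}u_2\,\ud x_1=\int_{\bbT^2}\tht\,\ud x$ off Darcy's law and the divergence-free condition in physical space, while you read them off the equivalent Fourier representation \eqref{sf_u}, and both then integrate the transport equation over $\bbT^2$ to get the ODE $\frac{\ud}{\ud t}\int_{\bbT^2}\tht\,\ud x=-N\int_{\bbT^2}\tht\,\ud x$. No gaps.
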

\begin{proof}
    From the $u_1$ equation in \eqref{eq:SIPM_T} and the periodicity of $P$, we obtain \eqref{avg_est_2}. From the $\tht$ equation in \eqref{eq:SIPM_T}, we have
    \begin{equation*}
        \frac{\ud}{\ud t} \int_{\bbT^2} \tht \,\ud x + \int_{\bbT^2} (u \cdot \nabla) \tht \,\ud x = -N \int_{\bbT^2} u_2 \,\ud x.
    \end{equation*}
    Integration by parts and divergence-free condition shows that
    \begin{equation*}
        \int_{\bbT^2} (u \cdot \nabla) \tht \,\ud x = 0.
    \end{equation*}
    Since $u_2$ equation in \eqref{eq:SIPM_T} implies
    \begin{equation*}
        \int_{\bbT} u_2 \,\ud x_2 = \int_{\bbT} \tht \,\ud x_2, \qquad \forall x_1 \in \bbT,
    \end{equation*}
    we have
    \begin{equation*}
        \Black{\frac{\ud}{\ud t} \int_{\bbT^2} \tht \,\ud t + N\int_{\bbT^2} \tht \,\ud x = 0.}
    \end{equation*}
    By Gr\"{o}nwall's inequality, it follows $$\int_{\bbT^2} \tht \,\ud x = e^{-Nt} \int_{\bbT^2} \tht_0 \,\ud x.$$
    The divergence-free condition and the periodicity of $u_1$ imply $$\partial_2 \int_{\bbT} u_2 \,\ud x_1 = 0, \qquad \forall x_2 \in \bbT,$$ hence, $$\int_{\bbT} u_2 \,\ud x_1 = \int_{\bbT^2} u_2 \,\ud x = e^{-Nt} \int_{\bbT^2} \tht_0 \,\ud x.$$ This completes the proof.
\end{proof}
For simplicity, we let $R_1 : L^2(\bbT^2) \to L^2(\bbT^2)$ be an operator defined by
\begin{equation*}
    R_1 f(x) := -\sum_{n \neq 0} i\frac{n_1}{|n|} \mathscr{F}f(n) e^{2\pi i n \cdot x}.
\end{equation*}

\subsection{Energy inequality}
We introduce an energy estimate. Since the proof of the energy estimate in the periodic domain is parallel to the confined case, \Black{we only prove the latter} (see Proposition~\ref{prop_energy_O}). We remark that the condition $m \in \bbN$ in the following proposition can be dropped via a similar estimate with \eqref{commutator} on the domain $\bbT^2$. In this paper, we do not show the details.
\begin{proposition}\label{prop_energy_T}
Let $\theta$ be a classical solution of \eqref{eq:SIPM_T} with $N \in \bbR$. Then for any given $m \in \bbN$ with $m > 2$, there exists a constant $C=C(m)>0$ such that
\begin{equation}\label{energy_T}
\begin{gathered}
    \frac{1}{2}\sup_{t \in [0,T]} \|\theta\|_{H^{m}}^2 + N \int_0^T \|R_1\theta\|_{H^{m}}^{2} \,\ud t \\
    \leq \frac{1}{2} \| \tht_0 \|_{H^{m}}^2 + C\sup_{t \in [0,T]} \| \theta \|_{H^m} \int_0^T \|R_1 \theta \|_{H^m}^2 \,\ud t + C \sup_{t \in [0,T]} \| \theta \|_{H^m}^2 \sum_{n \in \mathbb{Z}^2} \int_0^T |\mathscr{F}\nabla u_2(n)| \, \ud t.
\end{gathered}
\end{equation}
\end{proposition}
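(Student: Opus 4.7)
The plan is to mirror the proof of Proposition~\ref{prop_energy_R} (the $\bbR^2$ case with $\sigma=0$) with discrete Fourier series on $\bbT^2$ in place of the Fourier transform, paying extra attention to the constant mode $\mathscr{F}\theta(0)$ that is absent on $\bbR^2$. First, I would multiply the $\theta$-equation of \eqref{eq:SIPM_T} by $\theta$ and integrate over $\bbT^2$. The transport contribution vanishes by $\mathrm{div}\,u=0$, while Plancherel applied to the explicit representation \eqref{sf_u} yields
\begin{equation*}
\int_{\bbT^2} u_2\,\theta\,\ud x = \|R_1\theta\|_{L^2}^2 + |\mathscr{F}\theta(0)|^2,
\end{equation*}
so the zero-mode piece has the correct sign and can be discarded, leaving $\tfrac12\tfrac{\ud}{\ud t}\|\theta\|_{L^2}^2+N\|R_1\theta\|_{L^2}^2\leq 0$.

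Next, for each multi-index $\alpha$ with $1\leq|\alpha|\leq m$, apply $\partial^\alpha$, pair with $\partial^\alpha\theta$, and integrate. Since the constant mode of $u_2$ disappears under $\partial^\alpha$, Plancherel gives $\int \partial^\alpha u_2\,\partial^\alpha\theta\,\ud x=\|R_1\partial^\alpha\theta\|_{L^2}^2$, producing the dissipation $N\|R_1\partial^\alpha\theta\|_{L^2}^2$ with the correct sign. It remains to estimate the nonlinear term $-\int\partial^\alpha(u\cdot\nabla\theta)\partial^\alpha\theta\,\ud x$, for which I would split into the same two cases as in the $\bbR^2$ proof. If $\partial^\alpha\neq\partial_2^m$, then $\partial^\alpha$ carries at least one $\partial_1$, so Plancherel gives $\|\partial^\alpha\theta\|_{L^2}\leq\|R_1\theta\|_{\dot{H}^m}$; writing $\partial^\alpha(u\cdot\nabla\theta)=[\partial^\alpha,u\cdot\nabla]\theta+u\cdot\nabla\partial^\alpha\theta$ and killing the second piece via incompressibility, a standard commutator/product estimate combined with the Sobolev embedding $H^{m-1}(\bbT^2)\hookrightarrow L^\infty(\bbT^2)$ (valid since $m>2$) yields a bound $C\|\theta\|_{H^m}\|R_1\theta\|_{H^m}^2$, absorbable into the second term of \eqref{energy_T}.

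The main obstacle is the case $\partial^\alpha=\partial_2^m$, where no $\partial_1$ is available to produce an $R_1$ factor. Decomposing $(u\cdot\nabla)\theta=u_1\partial_1\theta+u_2\partial_2\theta$, the first summand retains $\partial_1$ and admits the same bound as above. For $u_2\partial_2\theta$, I would perform two integrations by parts together with the identity $\partial_2 u_2=-\partial_1 u_1$ to rewrite the contribution as a sum of $\|R_1\theta\|_{H^m}^2$-absorbable terms plus a single residual of the shape
\begin{equation*}
\left|\int_0^T\!\int_{\bbT^2}\partial_1 u_1\,|\partial_2^m\theta|^2\,\ud x\,\ud t\right|\leq \sup_{[0,T]}\|\theta\|_{H^m}^2\int_0^T\|\partial_2 u_2\|_{L^\infty}\,\ud t,
\end{equation*}
and then use Fourier inversion on $\bbT^2$ in the form $\|\partial_2 u_2\|_{L^\infty}\leq\sum_{n\in\bbZ^2}|\mathscr{F}\nabla u_2(n)|$ to produce exactly the final term of \eqref{energy_T}. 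This is the sole part of the estimate where the commutator evades the dissipation mechanism, forcing one to pay a factor in a critical $W^{1,\infty}$-type norm of $u_2$ whose time integrability will later be recovered by the Duhamel-based Lemma~\ref{key_est_T}. Finally, summing over $|\alpha|\leq m$, taking the supremum in $t$, and integrating in time yields \eqref{energy_T}.
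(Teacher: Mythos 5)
Your proposal is correct and follows essentially the same route as the paper, which proves this estimate via the parallel strip-domain argument (Proposition~\ref{prop_energy_O}) and the $\bbR^2$ case (Proposition~\ref{prop_energy}): the zero-mode bookkeeping through \eqref{sf_u}, the commutator treatment when $\partial^\alpha\neq\partial_2^m$, and the double integration by parts with $\partial_2u_2=-\partial_1u_1$ leaving the residual $\int\partial_1u_1|\partial_2^m\theta|^2$ controlled by $\|\partial_2u_2\|_{L^\infty}\leq\sum_{n}|\mathscr{F}\nabla u_2(n)|$ are all exactly the paper's steps. The only caveat is that discarding the zero-mode term $N|\mathscr{F}\theta(0)|^2$ by sign requires $N>0$ (the proposition's ``$N\in\bbR$'' notwithstanding), which is the regime in which the result is actually applied.
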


\subsection{Proof of the global existence of solutions}

To obtain the upper bound of the key integral in \eqref{energy_T}, we need the following lemma.
\begin{lemma}
     Let $\tht$ be a global classical solution of \eqref{eq:SIPM_T}. Then, for any given $m > 3$ and $s \in [0,m-2)$, there exists a constant $C>0$ such that
        \begin{equation}\label{key_est_T} \begin{gathered}
            \sum_{n \in \mathbb{Z}^2} \int_0^T (1+|n|^2)^{\frac s2} |\mathscr{F}u_2(n)| \, \ud t \leq \frac{1}{N}\sum_{n \in \bbZ^2} (1+|n|^2)^{\frac s2} |\mathscr{F} \theta_0(n) | \\
    		+ \frac{C}{N}\int_0^T \| R_1 \theta \|_{H^{m}}^2 \, \ud t + \frac{C}{N} \sup_{t \in [0,T]} \| \theta \|_{H^m} \sum_{n \in \mathbb{Z}^2} \int_0^T (1+|n|^2)^{\frac s2} |\mathscr{F}u_2(n)| \, \ud t
		\end{gathered}
	\end{equation}
	for all $T>0$.
\end{lemma}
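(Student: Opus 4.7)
The plan is to mimic the Fourier-side argument of Lemma~\ref{key_lem_R}, replacing integrals on $\mathbb{R}^2$ by sums on $\mathbb{Z}^2$ and invoking the periodic convolution inequality \eqref{conv_ineq_T2} in place of \eqref{conv_ineq_R2}. From \eqref{sf_u} we read off $\mathscr{F}u_2(n) = (n_1^2/|n|^2)\mathscr{F}\theta(n)$ for $n \neq 0$ and $\mathscr{F}u_2(0) = \mathscr{F}\theta(0)$. Taking the Fourier transform of the $\theta$-equation in \eqref{eq:SIPM_T}, for each $n \neq 0$ I obtain the Duhamel formula
$$\mathscr{F}\theta(n,t) = e^{-N\frac{n_1^2}{|n|^2}t}\mathscr{F}\theta_0(n) - \int_0^t e^{-N\frac{n_1^2}{|n|^2}(t-\tau)}\mathscr{F}((u \cdot \nabla)\theta)(n,\tau)\,\ud\tau,$$
while the zero mode is handled directly by Lemma~\ref{lem_avg}, which gives $\mathscr{F}\theta(0,t) = e^{-Nt}\mathscr{F}\theta_0(0)$. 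Modes with $n \neq 0$ and $n_1 = 0$ contribute nothing to $\mathscr{F}u_2$.

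Next, multiplying by $(n_1^2/|n|^2)(1+|n|^2)^{s/2}$, integrating in $t \in [0,T]$, and summing over $n$, Fubini's theorem together with the elementary bound $\int_0^\infty (n_1^2/|n|^2)e^{-N(n_1^2/|n|^2)t}\,\ud t \leq 1/N$ yields the linear piece $\frac{1}{N}\sum_{n}(1+|n|^2)^{s/2}|\mathscr{F}\theta_0(n)|$; the zero-mode contribution $\int_0^T e^{-Nt}|\mathscr{F}\theta_0(0)|\,\ud t \leq |\mathscr{F}\theta_0(0)|/N$ merges cleanly with this sum. What is left is a nonlinear remainder of the form $\frac{1}{N}\sum_n(1+|n|^2)^{s/2}\int_0^T|\mathscr{F}((u \cdot \nabla)\theta)(n,\tau)|\,\ud\tau$.

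For this remainder I split $(u \cdot \nabla)\theta = u_1\partial_1\theta + u_2\partial_2\theta$ and apply \eqref{conv_ineq_T2} to each product. Fix $\varepsilon > 0$ with $s+1+\varepsilon \leq m-1$, possible since $s < m-2$. A Cauchy--Schwarz argument on the weighted $l^1$ norm then gives $\|(1+|n|^2)^{s/2}\mathscr{F}u_j\|_{l^1} \leq C\|u_j\|_{H^{s+1+\varepsilon}} \leq C\|R_1\theta\|_{H^{m-1}}$ (using \eqref{sf_u}, so that each non-zero-mode part of $u_j$ is controlled by $R_1\theta$; the zero-mode contribution of $u_2$ is absorbed via Lemma~\ref{lem_avg}), and similarly $\|(1+|n|^2)^{s/2}\mathscr{F}\partial_1\theta\|_{l^1}\leq C\|R_1\theta\|_{H^m}$ together with $\|(1+|n|^2)^{s/2}\mathscr{F}\partial_2\theta\|_{l^1}\leq C\|\theta\|_{H^m}$. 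The $u_1\partial_1\theta$ piece therefore contributes $\frac{C}{N}\int_0^T\|R_1\theta\|_{H^m}^2\,\ud t$, while for the $u_2\partial_2\theta$ piece I keep $\|(1+|n|^2)^{s/2}\mathscr{F}u_2\|_{l^1}$ unfactored and pull $\|\theta\|_{H^m}$ out as $\sup_{t\in[0,T]}\|\theta\|_{H^m}$, reproducing precisely the self-absorbing final term in \eqref{key_est_T}.

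The only genuinely new obstacle compared with the $\mathbb{R}^2$ case is the zero frequency, where the anisotropic propagator $e^{-N(n_1^2/|n|^2)t}$ degenerates and provides no damping, so the $\mathbb{R}^2$ argument (based purely on that multiplier) would fail; Lemma~\ref{lem_avg} supplies the missing isotropic exponential decay $e^{-Nt}$ and integrates cleanly to $1/N$, so the zero mode fits transparently into the same linear bound.
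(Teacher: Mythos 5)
Your proposal is correct and follows essentially the same route as the paper: the Fourier--Duhamel formula with the anisotropic propagator $e^{-Nn_1^2t/|n|^2}$ for the nonzero modes, the isotropic decay $e^{-Nt}$ from Lemma~\ref{lem_avg} for the zero mode, Fubini for the linear piece, and the convolution inequality \eqref{conv_ineq_T2} applied to the split $u_1\partial_1\theta+u_2\partial_2\theta$ with the $u_2\partial_2\theta$ term kept self-absorbing. The only cosmetic difference is that you bound the $u_1\partial_1\theta$ contribution through $\|u_1\|_{H^{s+1+\varepsilon}}$ while the paper writes $\|u\|_{H^m}\|\partial_1\theta\|_{H^{m-1}}$; both land on $\|R_1\theta\|_{H^m}^2$.
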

\begin{proof}
    Recalling \eqref{sf_u} gives
	\begin{gather*}
		\sum_{n \in \mathbb{Z}^2} \int_0^T (1+|n|^2)^{\frac s2} |\mathscr{F}u_2(n)| \, \ud t \leq \Black{\int_0^T (1+|n|^2)^{\frac{s}{2}}|\mathscr{F} \tht(0)| \,\ud t} + \sum_{n \neq 0} \int_0^T (1+|n|^2)^{\frac s2} \frac{n_1^2}{|n|^2}|\mathscr{F}\tht(n)| \, \ud t.
	\end{gather*}
	We see from \eqref{avg_est_1} that $\mathscr{F}\tht(0) = e^{-Nt} \mathscr{F} \tht_0(0)$ for all $t \geq 0$, thus,
    \begin{equation*}
        \int_0^T (1+|n|^2)^{\frac{s}{2}}|\mathscr{F}\tht(0)| \,\ud t \leq \frac{1}{N} (1+|n|^2)^{\frac{s}{2}}|\mathscr{F}\tht_0(0)|.
    \end{equation*}
	We note that
	\begin{gather*}
		\sum_{n \neq 0} \int_0^T (1+|n|^2)^{\frac s2} \frac{n_1^2}{|n|^2}|\mathscr{F}\tht(n)| \, \ud t \\
		\leq \frac{1}{N} \sum_{n \neq 0} \int_0^T (1+|n|^2)^{\frac s2} N\frac {n_1^2}{|n|^2} e^{- N\frac {n_1^2}{|n|^2} t} |\mathscr{F} \theta_0 (n)| \, \ud t, \\
		+ \frac{1}{N} \sum_{n \neq 0} \int_0^T \int_0^t (1+|n|^2)^{\frac{s}{2}} N\frac {n_1^2}{|n|^2} e^{- N\frac {n_1^2}{|n|^2}(t-\tau)} |\mathscr{F} u \cdot \nabla \theta (n,\tau)| \,\ud \tau \ud t.
	\end{gather*}
	By Fubini's theorem, it is clear that
	\begin{equation*}
		\frac{1}{N} \sum_{n \neq 0} \int_0^T (1+|n|^2)^{\frac s2} N\frac {n_1^2}{|n|^2} e^{- N\frac {n_1^2}{|n|^2} t} |\mathscr{F} \theta_0 (n)| \, \ud t \leq \frac{1}{N}\sum_{n \neq 0} (1+|n|^2)^{\frac s2} |\mathscr{F} \theta_0(n) |.
	\end{equation*}
	With \eqref{conv_ineq_T2} and \eqref{decom_est}, we can have
	\begin{align*}
		&\frac{1}{N} \sum_{n \neq 0} \int_0^T \int_0^t (1+|n|^2)^{\frac{s}{2}} N\frac {n_1^2}{|n|^2} e^{- N\frac {n_1^2}{|n|^2}(t-\tau)} |\mathscr{F} u \cdot \nabla \theta (n,\tau)| \,\ud \tau \ud t \\
		&\hphantom{\qquad\qquad}\leq \frac{1}{N}\int_0^T \sum_{n \neq 0} (1+|n|^2)^{\frac s2} |\mathscr{F} u \cdot \nabla \theta | \, \ud t \\
		&\hphantom{\qquad\qquad}\leq \frac{C}{N}\int_0^T \| (1+|n|^2)^{\frac s2} \mathscr{F} u_1 \|_{l^1} \| (1+|n|^2)^{\frac s2} \mathscr{F} \partial_1 \theta \|_{l^1}\, \ud t \\
		&\hphantom{\qquad\qquad\qquad\qquad} + \frac{C}{N}\int_0^T \| (1+|n|^2)^{\frac s2} \mathscr{F} u_2 \|_{l^1} \| (1+|n|^2)^{\frac s2} \mathscr{F} \partial_2 \theta \|_{l^1}\, \ud t \\
		&\hphantom{\qquad\qquad}\leq \frac{C}{N}\int_0^T \| u \|_{H^m} \| \partial_1 \theta \|_{H^{m-1}} \, \ud t + \frac{C}{N}\int_0^T \| (1+|n|^2)^{\frac s2} \mathscr{F} u_2 \|_{l^1} \| \theta \|_{H^m} \,\ud t \\
		&\hphantom{\qquad\qquad}\leq \frac{C}{N}\int_0^T \| R_1 \theta \|_{H^{m}}^2 \, \ud t + \frac{C}{N}\sup_{t \in [0,T]} \| \theta \|_{H^m} \int_0^T \| (1+|n|^2)^{\frac s2} \mathscr{F} u_2 \|_{l^1} \, \ud t.
	\end{align*}
	\Black{Note that $s<m-2$ is used in the third inequality to obtain $$ \| (1+|n|^2)^{\frac s2} \mathscr{F} \partial_1 \theta \|_{l^1} \leq C\| \partial_1 \theta \|_{H^{m-1}}, \qquad \| (1+|n|^2)^{\frac s2} \mathscr{F} \partial_2 \theta \|_{l^1} \leq C\| \theta \|_{H^m}.$$} Combining the above estimates, we obtain \eqref{key_est_T}. This completes the proof.
\end{proof}

Now we are ready to prove the global existence part of Theorem~\ref{thm:stb_T}. Let $\theta$ be the local-in-time solution to \eqref{eq:SIPM_T} with the maximal time of existence $T^* \in (0,\infty]$. Let $T \in (0,T^*)$ with \eqref{con}. Then, from \eqref{key_est_T} and \eqref{ass}, we have
\begin{equation}\label{key1_T}
	\int_0^T \sum_{n \in \mathbb{Z}^2} (1+|n|^2)^{\frac s2} |\mathscr{F}u_2(n)| \, \ud t \leq \frac{1}{N}\sum_{n \in \bbZ^2} (1+|n|^2)^{\frac s2} |\mathscr{F} \theta_0 | + \frac{C}{N}\int_0^T \| R_1 \theta \|_{H^{m}}^2 \, \ud t \leq \frac{C}{N} \| \tht_0 \|_{H^m}.
\end{equation}
Applying \eqref{con} and this inequality to \eqref{energy_T}, we have
\begin{gather*}
    \frac{1}{2}\sup_{t \in [0,T]} \|\theta\|_{H^{m}}^2 + N \int_0^T \|R_1\theta\|_{H^{m}}^{2} \,\ud t \leq \frac{1}{2} \| \tht_0 \|_{H^{m}}^2 + \frac{C}{N} \| \tht_0 \|_{H^{m}}^3.
\end{gather*}
Taking $C_0$ in Theorem~\ref{thm:stb_T} large enough, we obtain
\begin{gather*}
    \frac{1}{2}\sup_{t \in [0,T]} \|\theta\|_{H^{m}}^2 + N \int_0^T \|R_1\theta\|_{H^{m}}^{2} \,\ud t \leq \| \tht_0 \|_{H^{m}}^2, \qquad T \in (0,T^*),
\end{gather*}
which holds \eqref{con}. Hence, we can deduce $T^*=\infty$ and \eqref{sol_bdd} follows. This completes the proof. \qed

\subsection{Proof of the decay estimates}

In this section, we prove \eqref{tem_decay_est_T}. In the case of periodic domain, $\tht$ may not decay over time. This makes a difficulty to obtain a temporal decay estimates compared to the whole domain case. To overcome this, we employ the brilliant idea in \cite{Elgindi}, which makes being able to obtain \eqref{tem_decay_est_T} with difference between optimal decay rates.  We use the notation $$\bar{f} := f - \int_{\bbT} f(x) \,\ud x_1 = \sum_{n_1 \neq 0} \mathscr{F}f(n) e^{2\pi in \cdot x}, \qquad f \in L^2(\bbT^2).$$ We show two Propositions.
\begin{proposition}\label{prop_tem_1}
    Let $\theta$ be a global classical solution of \eqref{eq:SIPM_T} with $N>0$. Suppose that \eqref{sol_bdd} is satisfied for any given $m \in \bbN$ with $m > 2$. Then, there exists a constant $C>0$ such that
    \begin{equation}\label{tem_decay_est_T_1}
        \| \bar{\tht}(t) \|_{L^2} \leq C(1+Nt)^{-\frac{m}{2}} \| \tht_0 \|_{H^m}
    \end{equation}
    and
    \begin{equation}\label{tem_decay_est_T_4}
        \| u(t) \|_{L^2} \leq C(1+Nt)^{-(\frac{1}{2}+\frac{m}{2})} \| \tht_0 \|_{H^m}
    \end{equation}  
    for all $t>0$.
\end{proposition}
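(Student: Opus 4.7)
The plan has two stages: first prove the $(1+Nt)^{-m/2}$ decay for $\|\bar\tht\|_{L^2}$, then bootstrap this into the sharper $(1+Nt)^{-(m+1)/2}$ decay for $\|u\|_{L^2}$.

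For the first stage, I would test the equation for $\bar\tht$ against itself. The transport cancellation $\int_{\bbT^2}\bar\tht\,u\cdot\nabla\bar\tht\,\mathrm{d}x=0$ (from $\nabla\cdot u=0$) together with the orthogonality of $\bar\tht$ against any $x_2$-only function reduces the nonlinearity to $2\int \bar\tht\,\bar u_2\,\partial_2(\tht-\bar\tht)\,\mathrm{d}x$, where $\bar u_2=-R_1^{2}\tht$ by the Biot--Savart formula \eqref{sf_u} combined with Lemma~\ref{lem_avg}. Since $\tht-\bar\tht$ depends only on $(t,x_2)$, the one-dimensional Sobolev embedding together with \eqref{sol_bdd} gives $\|\partial_2(\tht-\bar\tht)\|_{L^\infty(\bbT)}\le C\|\tht\|_{H^2(\bbT^2)}\le C\|\tht_0\|_{H^m}$, and the energy identity reads
\begin{equation*}
\tfrac{\mathrm{d}}{\mathrm{d}t}\|\bar\tht\|_{L^2}^{2}+2N\|R_1\tht\|_{L^2}^{2}\le C\|\bar\tht\|_{L^2}\|R_1\tht\|_{L^2}\|\tht_0\|_{H^m}.
\end{equation*}
Splitting the Fourier series $\sum_{n_1\neq 0}|\mathscr{F}\tht(n)|^{2}$ at $|n|\simeq M$ and optimising in $M$ (the same balancing idea as \eqref{ineq_balancing}, now applied at the $L^{2}$ level) yields the Gagliardo--Nirenberg-type interpolation $\|\bar\tht\|_{L^2}^{2(m+1)/m}\le C\|R_1\tht\|_{L^2}^{2}\|\tht\|_{\dot H^m}^{2/m}$. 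A Young absorption of the nonlinearity using \eqref{ass} and the uniform bound $\|\tht\|_{\dot H^m}\le 2\|\tht_0\|_{H^m}$ from \eqref{sol_bdd} then reduces everything to the Bernoulli-type ODE $y'\le -cN y^{(m+1)/m}/\|\tht_0\|_{H^m}^{2/m}$ for $y=\|\bar\tht\|_{L^2}^{2}$, whose direct integration delivers \eqref{tem_decay_est_T_1}.

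For the second stage, Parseval combined with \eqref{sf_u} and Lemma~\ref{lem_avg} gives the exact identity $\|u(t)\|_{L^2}^{2}=\|R_1\tht(t)\|_{L^2}^{2}+e^{-2Nt}|\mathscr{F}\tht_0(0)|^{2}$, so it suffices to bound $\|R_1\tht\|_{L^2}^{2}$. Applying $R_1$ (which is skew-adjoint) to the $\tht$-equation and testing against $R_1\tht$ yields
\begin{equation*}
\tfrac{\mathrm{d}}{\mathrm{d}t}\|R_1\tht\|_{L^2}^{2}+2N\|R_1^{2}\tht\|_{L^2}^{2}=2\langle u\cdot\nabla\tht,\,R_1^{2}\tht\rangle.
\end{equation*}
Decomposing $u_2=-R_1^{2}\tht+e^{-Nt}\mathscr{F}\tht_0(0)$, the constant-mean part contributes only vanishing inner products (e.g.\ via the symmetry $\int \partial_2\tht\,R_1^{2}\tht\,\mathrm{d}x=0$, which follows from $R_1^{2}$ being self-adjoint and $\partial_2$ skew-adjoint) and exponentially small remainders. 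The remaining nonlinearity involves only products of $\bar u$, $\bar\tht$, and $\partial_2(\tht-\bar\tht)$, each of which inherits the Stage~1 decay via the Gagliardo--Nirenberg interpolations $\|R_1\tht\|_{H^s}\le C\|R_1\tht\|_{L^2}^{(m-s)/m}\|\tht_0\|_{H^m}^{s/m}$ and its $\bar\tht$-analogue. Coupled with the parallel balancing interpolation $\|R_1\tht\|_{L^2}^{2(m+1)/m}\le C\|R_1^{2}\tht\|_{L^2}^{2}\|\tht\|_{\dot H^m}^{2/m}$, this produces a second Bernoulli-type ODE for $\|R_1\tht\|_{L^2}^{2}$, whose solution gives \eqref{tem_decay_est_T_4}.

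The main obstacle is the sharpness of Stage~2: one must propagate the rate $(1+Nt)^{-(m+1)}$ for $\|R_1\tht\|_{L^2}^{2}$ rather than the weaker $(1+Nt)^{-m}$ that a naive Bernoulli argument would produce. The borderline contribution is $\int \bar u_2\,\partial_2(\tht-\bar\tht)\,R_1^{2}\tht\,\mathrm{d}x$, which is of the form $C\|\tht_0\|_{H^m}\|R_1^{2}\tht\|_{L^2}^{2}$ and can be absorbed into the dissipation $N\|R_1^{2}\tht\|_{L^2}^{2}$ \emph{only} by virtue of the smallness hypothesis \eqref{ass}; the remaining bilinear terms then gain decay from Stage~1 through the low-regularity Gagliardo--Nirenberg interpolations, and the hypothesis $m>3$ is used precisely to ensure that these interpolations dominate the time-growth that a cruder Young bound would introduce.
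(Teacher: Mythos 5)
There is a genuine gap in Stage 1, and it is exactly at the step you gloss over with ``A Young absorption of the nonlinearity.'' Your bound on the nonlinear term $\int_{\bbT^2}\bar u_2\,\partial_2(\tht-\bar\tht)\,\bar\tht\,\ud x$ is of the form $C\|R_1\tht\|_{L^2}\,\|\bar\tht\|_{L^2}\,\|\tht_0\|_{H^m}$, i.e.\ it is only \emph{linear} in the dissipation norm $\|R_1\tht\|_{L^2}$. Young's inequality against $N\|R_1\tht\|_{L^2}^2$ then leaves the remainder $\tfrac{C}{N}\|\tht_0\|_{H^m}^2\,\|\bar\tht\|_{L^2}^2$, so the ODE you actually obtain is $y'\le -cNy^{1+1/m}A^{-2/m}+\tfrac{CA^2}{N}\,y$ with $y=\|\bar\tht\|_{L^2}^2$, $A=\|\tht_0\|_{H^m}$. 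The linear term is not integrable in time and dominates the Bernoulli term as soon as $y^{1/m}\lesssim A^{2+2/m}/N^2$, so the inequality ceases to force decay below a fixed threshold $\sim (A^2/N^2)^mA^2$; no choice of $C_0$ in the smallness hypothesis rescues the rate $(1+Nt)^{-m}$. The paper avoids this by substituting $\bar\tht=\bar u_2-\partial_2^2(-\Delta)^{-1}\bar\tht$ into the nonlinearity, which (after an integration by parts and the anisotropic bound $\|\Lambda^{-1}R_1\bar\tht\|_{L^2_{x_1}L^\infty_{x_2}}\le C\|R_1\bar\tht\|_{L^2}$) yields a bound \emph{quadratic} in $R_1$-weighted quantities, $CN^{-1}\|\tht\|_{H^m}\cdot N\|R_1\bar\tht\|_{L^2}^2$, which is wholly absorbed by the smallness hypothesis. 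You need this (or an equivalent) extra structural step; the crude $L^2\times L^\infty\times L^2$ bound does not close.

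Stage 2 has a second, independent problem: the interpolation you quote, $\|R_1\tht\|_{L^2}^{2(m+1)/m}\le C\|R_1^2\tht\|_{L^2}^2\|\tht\|_{\dot H^m}^{2/m}$, is true but too weak — the resulting Bernoulli ODE $z'\le-cNz^{1+1/m}A^{-2/m}$ only gives $z=\|R_1\tht\|_{L^2}^2\lesssim A^2(1+Nt)^{-m}$, not the required $(1+Nt)^{-(m+1)}$. The correct balancing is $\|R_1\tht\|_{L^2}^2\le M\|R_1^2\tht\|_{L^2}^2+M^{-(m+1)}\|\tht\|_{\dot H^m}^2$ (the extra power of $M$ comes from the factor $n_1^2/|n|^2\le 1/M$ already present in $|\mathscr{F}R_1\tht|^2$ on the bad set), i.e.\ the exponent should be $(m+2)/(m+1)$, not $(m+1)/m$. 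Note also that the paper does not run a pure Bernoulli argument at all: it keeps the balancing error $NM^{-(m+1)}\|R_1\tht\|_{H^m}^2$ and the exponentially small source from $\mathscr{F}\tht(0)$ as inhomogeneous terms and applies the weighted Gr\"onwall Lemma~\ref{cal_lem}, exploiting the time integrability $N\int_0^\infty\|R_1\tht\|_{H^m}^2\,\ud t\le 4\|\tht_0\|_{H^m}^2$ from \eqref{sol_bdd}. Your ODE scheme would have to accommodate these source terms as well, which a clean Bernoulli comparison does not.
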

\begin{remark}
    From \eqref{tem_decay_est_T_1} and \eqref{sol_bdd}, we obtain
    \begin{equation}\label{tem_est_tht}
        \| \bar{\tht}(t) \|_{H^s} \leq C(1+Nt)^{-\frac{m-s}{2}} \| \tht_0 \|_{H^m}.
    \end{equation}
\end{remark}
\begin{proof}
Multiplying $\tht$ equation in \eqref{eq:SIPM_T} by $\bar{\tht}$ and integrating over $\bbT^2$ gives
\begin{equation*}
    \frac{1}{2} \frac{\ud}{\ud t} \int_{\bbT^2} |\bar{\tht}|^2 \,\ud x = -\int_{\bbT^2} (u \cdot \nabla) \tht \bar{\tht} \,\ud x -N \int_{\bbT^2} u_2 \bar{\tht} \,\ud x.
\end{equation*}
Integration by parts and the divergence free condition yield $$-\int_{\bbT^2} (u \cdot \nabla) \tht \bar{\tht} \,\ud x = -\int_{\bbT^2} u_2 \left( \int_{\bbT} \partial_2 \tht(x) \,\ud x_1 \right) \bar{\tht} \,\ud x = -\int_{\bbT^2} \bar{u}_2 \left( \int_{\bbT} \partial_2 \tht(x) \,\ud x_1 \right) \bar{\tht} \,\ud x.$$ We used \eqref{sf_u} in the last inequality. With $$-N \int_{\bbT^2} u_2 \bar{\tht} \,\ud x = -N \int_{\bbT^2} |R_1 \bar{\tht}|^2 \,\ud x,$$ we have
\begin{equation*}
    \frac{1}{2} \frac{\ud}{\ud t} \int_{\bbT^2} |\bar{\tht}|^2 \,\ud x = -N \int_{\bbT^2} |R_1 \bar{\tht}|^2 \,\ud x - \int_{\bbT^2} \bar{u}_2 \left( \int_{\bbT} \partial_2 \tht(x) \,\ud x_1 \right) \bar{\tht} \,\ud x.
\end{equation*}
Using that $\bar{\tht} = -\partial_1^2(-\Delta)^{-1} \bar{\tht} - \partial_2^2(-\Delta)^{-1} \bar{\tht}$, we can write
\begin{gather*}
    - \int_{\bbT^2} \bar{u}_2 \left( \int_{\bbT} \partial_2 \tht(x) \,\ud x_1 \right) \bar{\tht} \,\ud x \\
    = -\int_{\bbT^2} |\partial_1^2 (-\Delta)^{-1} \bar{\tht}|^2 \left( \int_{\bbT} \partial_2 \tht(x) \,\ud x_1 \right) \,\ud x - \int_{\bbT^2} \partial_1^2 (-\Delta)^{-1} \tht \left( \int_{\bbT} \partial_2 \tht(x) \,\ud x_1 \right) \partial_2^2 (-\Delta)^{-1} \bar{\tht} \,\ud x \\
    \leq -\int_{\bbT^2} |R_1 \tht|^2 \left( \int_{\bbT} \partial_2 \tht(x) \,\ud x_1 \right) \,\ud x + \int_{\bbT^2} |\Lambda^{-1} R_1 \tht| |R_1 \tht| \left( \int_{\bbT} |\partial_2 \tht(x)| \,\ud x_1 + \int_{\bbT} |\partial_2^2 \tht(x)| \,\ud x_1 \right) \,\ud x. 
\end{gather*}
By $$\|\Lambda^{-1}R_1 \bar{\tht}\|_{L^2_{x_1}L^{\infty}_{x_2}} \leq \| \frac{1}{|(1,n_2)|} \mathscr{F} R_1 \bar{\tht}(n) \|_{l^2_{n_1}l^1_{n_2}} \leq C \| R_1 \bar{\tht} \|_{L^2}$$ and \begin{equation*}
    \left\| \int_{\bbT} |\partial_2 \tht(x)| \,\ud x_1 \right\|_{L^2_{x_2}} \leq \| \tht \|_{H^1}, \qquad \left\| \int_{\bbT} |\partial_2 ^2\tht(x)| \,\ud x_1 \right\|_{L^2_{x_2}} \leq \| \tht \|_{H^2},
\end{equation*} we have
\begin{gather*}
    \frac{1}{2} \frac{\ud}{\ud t} \int_{\bbT^2} |\bar{\tht}|^2 \,\ud x \leq -N(1-CN^{-1}\| \tht \|_{H^{m}}) \|R_1 \bar{\tht}\|_{L^2}^2 \leq -\frac{N}{2} \|R_1 \bar{\tht}\|_{L^2}^2. 
\end{gather*}
As estimating \eqref{ineq_balancing}, we can see 
\begin{equation*}\begin{split}
    \frac{1}{M} \| \bar{\tht} \|_{L^2}^2 - \| R_1 \bar{\tht} \|_{L^2}^2 &= \sum_{n_1 \neq 0} \left( \frac{1}{M} - \frac{n_1^2}{|n|^2} \right) |\mathscr{F} \tht(n)|^2 \\
    &\leq \frac{1}{M} \sum_{\substack{Mn_1^2 \leq |n|^2 \\ n_1 \neq 0}} |\mathscr{F} \tht(n)|^2 \\
    &\leq \frac{1}{M^m} \sum_{\substack{Mn_1^2 \leq |n|^2 \\ n_1 \neq 0}} \left(\frac{|n|^{2}}{n_1^{2}}\right)^{m-1} |\mathscr{F} \tht(n)|^2 \\
    &\leq \frac{1}{M^m} \| \partial_1 \tht \|_{H^{m-1}}^2.
    \end{split}
\end{equation*}
This implies
\begin{equation*}
    \frac{\ud}{\ud t} \| \bar{\tht} \|_{L^2}^2 \leq - \frac{N}{M} \| \bar{\tht} \|_{L^2}^2 + \frac{CN}{M^m} \| \partial_1 \tht \|_{H^{m-1}}^2.
\end{equation*}
We take $M>0$ with $M = 1+\frac{Nt}{m}$. Then, by Gr\"onwall's inequality, we obtain \eqref{tem_decay_est_T_1}.

Next, we show \eqref{tem_decay_est_T_4}. Multiplying the $\tht$ equation in \eqref{eq:SIPM_T} by $R_1^2\tht$ and integrating over $\bbT^2$. Then, it follows
\begin{align*}
    \frac{1}{2} \frac{\ud}{\ud t} \int_{\bbT^2} |R_1\tht|^2 \,\ud x &=- \int_{\bbT^2} (u \cdot \nabla)\tht R_1^2 \tht \,\ud x -N \| R_1^2 \tht \|_{L^2}^2 \\
    &\leq \| u_1 \|_{L^2} \| \partial_1 \tht \|_{L^{\infty}} \| R_1^2 \tht \|_{L^2} + \| u_2 \|_{L^2} \| \partial_2 \tht \|_{L^{\infty}} \| R_1^2 \tht \|_{L^2} -N \| R_1^2 \tht \|_{L^2}^2.
\end{align*}
Since \eqref{avg_est_1} gives 
\begin{equation}\label{u2_est}
\| u_2 \|_{L^2}^2 = \| \bar{u}_2 \|_{L^2}^2 + e^{-2Nt} \left| \int_{\bbT^2} \tht_0 \,\ud x \right|^2 \leq \| R_1^2 \tht \|_{L^2}^2 + e^{-2Nt} \| \tht_0 \|_{L^2}^2,
\end{equation}
we can see with Young's inequality that
\begin{align*}
    \frac{1}{2} \frac{\ud}{\ud t} \int_{\bbT^2} |R_1\tht|^2 \,\ud x &\leq \frac{C}{N} \| R_1 \tht \|_{L^2}^2 \| R_1 \tht \|_{H^m}^2 + \frac{C}{N} e^{-2Nt} \| \tht_0 \|_{L^2}^2 \| \tht \|_{H^m}^2 -(\frac{3}{4}N - C\| \tht \|_{H^m}) \| R_1^2 \tht \|_{L^2}^2 \\
    &\leq \frac{C}{N} \| R_1 \tht \|_{L^2}^2 \| R_1 \tht \|_{H^m}^2 + \frac{C}{N} e^{-2Nt} \| \tht_0 \|_{L^2}^2 \| \tht \|_{H^m}^2 - \frac{N}{2} \| R_1^2 \tht \|_{L^2}^2.
\end{align*}
We note for $M \geq 1$ that
\begin{align*}
    \frac{1}{M} \| R_1 \tht \|_{L^2}^2 - \|R_1^2 \tht\|_{L^2}^2 &= \sum_{n_1 \neq 0} \left( \frac{1}{M} - \frac{n_1^2}{|n|^2} \right)|\mathscr{F} R_1 \tht(n)|^2 \\
    &\leq \frac{1}{M}\sum_{\substack{Mn_1^2 \leq |n|^2 \\ n_1 \neq 0}} |\mathscr{F} R_1 \tht(n)|^2 \leq \frac{1}{M^{m+1}} \| R_1 \tht \|_{H^m}^2.
\end{align*}
Thus, we have $$\frac{\ud}{\ud t} \| R_1\tht \|_{L^2}^2  \leq \frac{C}{N} \| R_1 \tht \|_{L^2}^2 \| R_1 \tht \|_{H^m}^2 + \frac{C}{N} e^{-2Nt} \| \tht_0 \|_{L^2}^2 \| \tht \|_{H^m}^2 - \frac{N}{M} \| R_1 \tht \|_{L^2}^2 + \frac{CN}{M^{m+1}} \| R_1 \tht \|_{H^m}^2.$$ Taking $M = 1+\frac{Nt}{m+1}$ and multiplying both sides by $M^{m+1}$ gives that
\begin{gather*}
    \frac{\ud}{\ud t} \left( (1+\frac{Nt}{m+1})^{m+1} \| R_1 \tht \|_{L^2}^2 \right) \\
    \leq \frac{C}{N} (1+\frac{Nt}{m+1})^{m+1} \| R_1 \tht \|_{L^2}^2 \| R_1 \tht \|_{H^m}^2 + \frac{C}{N} e^{-Nt} \| \tht_0 \|_{L^2}^2 \| \tht \|_{H^m}^2 + CN \| R_1 \tht \|_{H^m}^2.
\end{gather*}
Since $$\frac{C}{N} e^{-Nt} \| \tht_0 \|_{L^2}^2 \| \tht \|_{H^m}^2 \leq C e^{-Nt} \| \tht_0 \|_{L^2}^2,$$ applying Lemma~\ref{cal_lem}, we obtain \eqref{tem_decay_est_T_4}.
\end{proof}

\begin{lemma}\label{lem_tem_2}
    Let $\theta$ be a global classical solution of \eqref{eq:SIPM_T} with $N>0$. Suppose that \eqref{sol_bdd} and \eqref{sol_bdd1} are satisfied for any given $m \in \bbN$ with $m > 3$. Then, there exists a constant $C>0$ such that
    \begin{equation}\label{tem_decay_est_T_2}
        \| u_2(t) \|_{L^2} \leq C(1+Nt)^{-(1+\frac{m}{2})} \| \tht_0 \|_{H^m}
    \end{equation}
    and
    \begin{equation}\label{tem_decay_est_T_3}
        \| u_2(t) \|_{\dot{H}^m} \leq C(1+Nt)^{-1} \| \tht_0 \|_{H^m}
    \end{equation}
    for all $t>0$.
\end{lemma}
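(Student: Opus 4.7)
Since the Fourier representation \eqref{sf_u} and Lemma~\ref{lem_avg} give $u_2 = -R_1^2\theta + e^{-Nt}\mathscr{F}\theta_0(0)$, the zero-mode contribution decays exponentially and it suffices to control $\|R_1^2\theta\|_{\dot{H}^m}$ and $\|R_1^2\theta\|_{L^2}$. I would prove \eqref{tem_decay_est_T_3} first, mimicking the end of Proposition~\ref{prop_tem_1} with one extra factor of $R_1$. Multiplying \eqref{eq:SIPM_T} by $R_1^4\theta$ weighted by $(1+|n|^2)^m$ in Fourier yields
\[
\tfrac{1}{2}\partial_t\|R_1^2\theta\|_{H^m}^2 + N\|R_1^3\theta\|_{H^m}^2 = -\langle u\cdot\nabla\theta,\,R_1^4\theta\rangle_{H^m}.
\]
The balance $\|R_1^2\theta\|_{H^m}^2 \leq M\|R_1^3\theta\|_{H^m}^2 + M^{-1}\|R_1\theta\|_{H^m}^2$, obtained by restricting to $\{Mn_1^2\leq|n|^2\}$ and using $n_1^4/|n|^4\leq n_1^2/(M|n|^2)$, together with the choice $M=1+Nt$ and multiplication by $(1+Nt)^2$, makes the growth of the weight exactly cancel the dissipation and produces
\[
\tfrac{d}{dt}\bigl[(1+Nt)^2\|R_1^2\theta\|_{H^m}^2\bigr] \leq 2N\|R_1\theta\|_{H^m}^2 + 2(1+Nt)^2\bigl|\langle u\cdot\nabla\theta,\,R_1^4\theta\rangle_{H^m}\bigr|.
\]
After integrating, the linear term is bounded by $C\|\theta_0\|_{H^m}^2$ via \eqref{sol_bdd}, and \eqref{tem_decay_est_T_3} follows once the nonlinear term is absorbed.

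For \eqref{tem_decay_est_T_2}, the same multiplier $R_1^4\theta$ without the $H^m$ weight gives $\partial_t\|R_1^2\theta\|_{L^2}^2 + 2N\|R_1^3\theta\|_{L^2}^2 = -2\langle u\cdot\nabla\theta,\,R_1^4\theta\rangle$. The refined balance
\[
\|R_1^2\theta\|_{L^2}^2 \leq M\|R_1^3\theta\|_{L^2}^2 + C\,M^{-(m+1)}\|R_1\theta\|_{H^m}^2
\]
exploits that on $\mathbb{T}^2$ the nontrivial modes satisfy $|n|\geq 1$, so in the tail $\{Mn_1^2\leq|n|^2\}$ one has $|n|^2\geq M$. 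Choosing $M=2(1+Nt)/(m+2)$ so that $(1+Nt)^{m+2}\cdot 2N/M$ matches $\frac{d}{dt}(1+Nt)^{m+2}$ and multiplying through gives
\[
(1+Nt)^{m+2}\|R_1^2\theta(t)\|_{L^2}^2 \leq \|\theta_0\|_{H^m}^2 + C\!\int_0^t\! N\|R_1\theta\|_{H^m}^2\,d\tau + C\!\int_0^t\!(1+N\tau)^{m+2}\bigl|\langle u\cdot\nabla\theta,\,R_1^4\theta\rangle\bigr|\,d\tau,
\]
where the first two terms are handled by \eqref{sol_bdd}.

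The nonlinear term is the main obstacle in both cases. My plan is to integrate by parts using the divergence-free condition, $\langle u\cdot\nabla\theta,R_1^4\theta\rangle = -\langle\theta,u\cdot\nabla R_1^4\theta\rangle$, and then combine $\|\theta\|_{L^\infty}\leq C\|\theta_0\|_{H^m}$, the Fourier bound $\|\nabla R_1^4\theta\|_{L^2}\leq C\|u_2\|_{\dot{H}^1}$ that follows from $n_1^8/|n|^6\leq n_1^4/|n|^2$, and the decay $\|u\|_{L^2}\leq C(1+Nt)^{-(m+1)/2}\|\theta_0\|_{H^m}$ established in Proposition~\ref{prop_tem_1}. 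For \eqref{tem_decay_est_T_2} the estimate \eqref{tem_decay_est_T_3} must be proved first, so that $\|u_2\|_{\dot{H}^1}\leq\|u_2\|_{\dot{H}^m}\leq C(1+Nt)^{-1}\|\theta_0\|_{H^m}$ is at hand. The delicate point is that under the sole hypothesis $m>3$, absorbing the nonlinear contribution without degrading the sharp rate requires Cauchy--Schwarz in time against the a priori bound $N\int_0^\infty\|R_1\theta\|_{H^m}^2\,d\tau\leq C\|\theta_0\|_{H^m}^2$ from \eqref{sol_bdd}, rather than pointwise Sobolev-algebra bounds, which would be too crude.
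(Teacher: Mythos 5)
Your linear skeleton is exactly the paper's: the same energy identities (testing with $R_1^4\theta$, with or without the $H^m$ weight, is the paper's multiplication by $-(-\Delta)^{m-1}\partial_1^2u_2$ and by $R_1^2u_2$), the same two balancing inequalities restricted to $\{Mn_1^2\le|n|^2\}$ with the same exponents $M^{-2}$ and $M^{-(m+2)}$, the same choices of $M$ up to constants, the same ordering ($\dot H^m$ first, then $L^2$), and the same Gr\"onwall closure against \eqref{sol_bdd}. The gap is entirely in the nonlinear term, which is the actual substance of the proof, and your concrete plan for it does not close.

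For \eqref{tem_decay_est_T_2}, writing $\langle u\cdot\nabla\theta,R_1^4\theta\rangle=-\langle\theta,u\cdot\nabla R_1^4\theta\rangle$ and bounding by $\|\theta\|_{L^\infty}\|u\|_{L^2}\|\nabla R_1^4\theta\|_{L^2}\lesssim\|\theta_0\|_{H^m}\cdot(1+Nt)^{-\frac{m+1}{2}}\cdot(1+Nt)^{-1}$ gives a source of order $(1+Nt)^{-\frac{m+3}{2}}$, whereas closing the Gr\"onwall argument at weight $(1+Nt)^{m+2}$ requires the non-absorbed source to decay essentially like $(1+Nt)^{-(m+3)}$. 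After multiplying by $(1+Nt)^{m+2}$ and integrating you get $(1+Nt)^{\frac{m+3}{2}}/N$, hence only $\|\bar u_2\|_{L^2}\lesssim(1+Nt)^{-\frac{m+1}{4}}$ --- far from the claimed rate (for $m=4$: $-5/4$ versus $-3$). Cauchy--Schwarz in time against $N\int\|R_1\theta\|_{H^m}^2\,\ud t\le C\|\theta_0\|_{H^m}^2$ does not repair this, because the bottleneck is the \emph{non-decaying} factor $\|\theta\|_{L^\infty}$ paired with $\|u_1\|_{L^2}$, whose decay is capped at $(1+Nt)^{-\frac{m+1}{2}}$. The full integration by parts is precisely the wrong move: it destroys the one structural feature the paper exploits, namely that every term in the nonlinear estimate retains a factor of $\|R_1u_2\|_{L^2}$ (resp. $\|R_1u_2\|_{H^m}$) measured in the \emph{dissipation} norm, which is then absorbed into $-N\|R_1u_2\|_{L^2}^2$ by Young's inequality; what survives is either a Gr\"onwall coefficient $\frac{C}{N}\|R_1\theta\|_{H^m}^2$ multiplying the quantity being estimated, or an integrable source. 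The genuinely hard term $\int u_1\,\partial_1^2\theta\,\partial_1(-\Delta)^{-1}\bar u_2$ is handled in the paper via $\partial_1^2\theta=-\Delta u_2$ and the interpolation $\|\Delta u_2\|_{L^\infty}\le\|\bar u_2\|_{L^2}^{1-3/m}\|\bar u_2\|_{\dot H^m}^{3/m}$ --- this is where the hypothesis $m>3$ actually enters --- followed by a Young inequality in the weighted quantity $(1+Nt)^{m+2}\|\bar u_2\|_{L^2}^2$ itself. None of this machinery appears in your outline. The same criticism applies to \eqref{tem_decay_est_T_3}, where you say the nonlinear term is "absorbed" without addressing the $m$-derivative commutator structure (the paper's $K_1$/$K_2$ splitting, the divergence-free substitution $\partial_2u_2=-\partial_1u_1$ in the $\partial_2^{m-2}$ blocks, and the appearance of $\|\nabla u_2\|_{L^\infty}$ as a Gr\"onwall coefficient, which is why \eqref{sol_bdd1} is a hypothesis of the lemma).
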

\begin{proof}
To prove \eqref{tem_decay_est_T_2} and \eqref{tem_decay_est_T_3}, it suffices to estimate $\bar{u}_2$ instead of $u_2$ thanks to \eqref{sf_u} and \eqref{avg_est_1}. We show \eqref{tem_decay_est_T_3} first. Multiplying $\tht$ equation in \eqref{eq:SIPM_T} by $-(-\Delta)^{m-1} \partial_1^2 u_2$ and integrating over $\bbT^2$, we have
\begin{equation*}
    \frac{1}{2} \frac{\ud}{\ud t} \|\bar{u}_2\|_{\dot{H}^m}^2 = \sum_{|\alpha| = m-1} \int_{\bbT^2} \partial^{\alpha} \partial_1 (u \cdot \nabla)\tht \partial^{\alpha} \partial_1 u_2 \,\ud x -N \| R_1 u_2 \|_{\dot{H}^m}^2.
\end{equation*}
The first term on the right-hand side is bounded by $K_1+K_2$, where
\begin{align*}
    K_1 &:= \sum_{|\alpha| = m-1} \int_{\bbT^2} \partial^{\alpha} (\partial_1 u \cdot \nabla)\tht \partial^{\alpha} \partial_1 u_2 \,\ud x, \\
    K_2 &:= \sum_{|\alpha| = m-1} \int_{\bbT^2} \partial^{\alpha} (u \cdot \nabla)\partial_1 \tht \partial^{\alpha} \partial_1 u_2 \,\ud x.
\end{align*}
It is clear by $(u \cdot \nabla)\tht = u_1 \partial_1 \tht + u_2 \partial_2 \tht$ that 
\begin{align*}
    |K_1| &\leq C\| \partial_1 u_1 \|_{H^{m-1}} \| R_1 \tht \|_{H^m} \| \partial_1 u_2 \|_{H^{m-1}} + C\| \partial_1 u_2 \|_{H^{m-1}}^2 \| \tht \|_{H^m} \\
    &\leq C\| \bar{u}_2 \|_{H^{m}} \| R_1 \tht \|_{H^m} \| R_1 u_2 \|_{H^{m}} + C\| R_1 u_2 \|_{H^{m}}^2 \| \tht \|_{H^m}.
\end{align*}
We note by $-\Delta u_2 = -\partial_1^2 \tht$ that $$K_2 = \sum_{|\alpha| = m-2} \int_{\bbT^2} \partial^{\alpha} (\partial_1 u \cdot \nabla)\partial_1 \tht \partial^{\alpha} \partial_1^2 \tht \,\ud x + \sum_{|\alpha| = m-2} \int_{\bbT^2} \partial^{\alpha} (u \cdot \nabla)\partial_1^2 \tht \partial^{\alpha} \partial_1^2 \tht \,\ud x.$$
By interpolation inequalities and $(u \cdot \nabla)\tht = u_1 \partial_1 \tht + u_2 \partial_2 \tht$ again, we can infer 
\begin{gather*}
    \left| \sum_{|\alpha| = m-2} \int_{\bbT^2} \partial^{\alpha} (\partial_1 u \cdot \nabla)\partial_1 \tht \partial^{\alpha} \partial_1^2 \tht \,\ud x \right| \leq C\| \nabla u_2 \|_{L^{\infty}} \| \bar{u}_2 \|_{H^{m}}^2 + C \| R_1 u_2 \|_{H^{m}} \| R_1 \tht \|_{H^{m}} \| \bar{u}_2 \|_{H^{m}}.
\end{gather*}
Using the divergence-free condition, we can similarly have
\begin{align*}
    \left| \sum_{|\alpha| = m-2} \int_{\bbT^2} \partial^{\alpha} (u \cdot \nabla)\partial_1^2 \tht \partial^{\alpha} \partial_1^2 \tht \,\ud x \right| \leq C \| R_1 u_2 \|_{H^{m}} \| R_1 \tht \|_{H^{m}} \| \bar{u}_2 \|_{H^{m}} + C\| \nabla u_2 \|_{L^{\infty}} \| \bar{u}_2 \|_{H^{m}}^2.
\end{align*}
Combining the estimates for $K_1$ and $K_2$ gives
\begin{align*}
    \frac{1}{2} \frac{\ud}{\ud t} \|\bar{u}_2\|_{\dot{H}^m}^2 &\leq \frac{C}{N} \| R_1 \tht \|_{H^m}^2 \| \bar{u}_2 \|_{H^m}^2 + C\| \nabla u_2 \|_{L^{\infty}} \| \bar{u}_2 \|_{H^m}^2 -(\frac{3}{4}N - C \| \tht \|_{H^m} )\| R_1 u_2 \|_{\dot{H}^m}^2 \\
    &\leq \frac{C}{N} \| R_1 \tht \|_{H^m}^2 \| \bar{u}_2 \|_{H^m}^2 + C\| \nabla u_2 \|_{L^{\infty}} \| \bar{u}_2 \|_{H^m}^2 - \frac{N}{2}\| R_1 u_2 \|_{\dot{H}^m}^2.
\end{align*}
We note for $M>0$ that
\begin{align*}
    \frac{1}{M} \|\bar{u}_2\|_{\dot{H}^m}^2 -\|R_1 u_2\|_{\dot{H}^m}^2 &= - \sum_{n_1 \neq 0} \left(\frac{n_1^2}{|n|^2} - \frac{1}{M} \right)|n|^{2m}|\mathscr{F} u_2(n)|^2 \\
    &\leq \frac{1}{M}\sum_{\substack{Mn_1^2 \leq |n|^2 \\ n_1 \neq 0}} |n|^{2m} |\mathscr{F} u_2(n)|^2 \leq \frac{1}{M^2} \| R_1 \tht \|_{H^m}^2.
\end{align*}
Then \eqref{sol_bdd}, \eqref{ass}, and $\| \bar{u}_2 \|_{H^m} \leq C\| \bar{u}_2 \|_{\dot{H}^m}$ show
\begin{gather*}
    \frac{\ud}{\ud t} \|\bar{u}_2\|_{\dot{H}^m}^2 \leq -\frac{N}{M} \|\bar{u}_2\|_{\dot{H}^m}^2 + \frac{C}{N} \| R_1 \tht \|_{H^m}^2 \| \bar{u}_2 \|_{\dot{H}^m}^2 + C\| \nabla u_2 \|_{L^{\infty}} \| \bar{u}_2 \|_{\dot{H}^m}^2  + \frac{CN}{M^2} \| R_1 \tht \|_{H^m}^2.
\end{gather*} We take $M = 1+\frac{Nt}{2}$ multyplying both sides by $M^2$. Then Lemma~\ref{cal_lem} with \eqref{sol_bdd1} implies \eqref{tem_decay_est_T_3}.

It remains to show \eqref{tem_decay_est_T_2}. Multiplying the $\tht$ equation in \eqref{eq:SIPM_T} by $R_1^2 u_2$ and integrating over $\bbT^2$, we have
\begin{equation*}
    \frac{1}{2} \frac{\ud}{\ud t} \int_{\bbT^2} |\bar{u}_2|^2 \,\ud x =- \int_{\bbT^2} (u \cdot \nabla)\tht R_1^2 u_2 \,\ud x -N \| R_1 u_2 \|_{L^2}^2.
\end{equation*}
Due to $R_1^2 u_2 = -\partial_1^2 (-\Delta)^{-1} \bar{u}_2$, there holds
\begin{align*}
    \left| - \int_{\bbT^2} (u \cdot \nabla)\tht R_1^2 u_2 \,\ud x \right| \leq \left| \int_{\bbT^2} (\partial_1u \cdot \nabla)\tht \partial_1 (-\Delta)^{-1} \bar{u}_2 \,\ud x \right| + \left| \int_{\bbT^2} (u \cdot \nabla)\partial_1 \tht \partial_1 (-\Delta)^{-1} \bar{u}_2 \,\ud x \right|.
\end{align*}
We can estimate the first term on the right-hand side as follows
\begin{gather*}
    \left| \int_{\bbT^2} (\partial_1u \cdot \nabla)\tht \partial_1 (-\Delta)^{-1} \bar{u}_2 \,\ud x \right| \\
    \leq \left| \int_{\bbT^2} \nabla (-\Delta)^{-1} \partial_1 u_1  \cdot \nabla (\partial_1 \tht \partial_1 (-\Delta)^{-1} \bar{u}_2) \,\ud x \right| + \left| \int_{\bbT^2} \nabla (-\Delta)^{-1} \partial_1 u_2  \cdot \nabla (\partial_2 \tht \partial_1 (-\Delta)^{-1} \bar{u}_2) \,\ud x \right| \\
    \leq C \| u_2 \|_{L^2} \| R_1 \tht \|_{H^m} \| R_1 u_2 \|_{L^2} + C \| \tht \|_{H^m} \| R_1 u_2 \|_{L^2}^2.
\end{gather*}
We have used $$\|\partial_1 (-\Delta)^{-1} \bar{u}_2 \|_{L^2} \leq \| R_1 u_2 \|_{L^2}$$ in the last inequality. On the other hand, the second term is bounded by
\begin{gather*}
    \left| \int_{\bbT^2} u_1 \partial_1^2 \tht \partial_1 (-\Delta)^{-1} \bar{u}_2 \,\ud x \right| + \left| \int_{\bbT^2} u_2 \partial_1 \partial_2 \tht \partial_1 (-\Delta)^{-1} \bar{u}_2 \,\ud x \right| \\
    \leq \| u_1 \|_{L^2} \| \Delta u_2 \|_{L^{\infty}} \| R_1 u_2 \|_{L^2} + \| u_2 \|_{L^2} \| \partial_1 \partial_2 \tht \|_{L^{\infty}} \| R_1 u_2 \|_{L^2} \\
    \leq C \| u_1 \|_{L^2} \| \bar{u}_2 \|_{L^2}^{1 - \frac{3}{m}} \| u_2 \|_{\dot{H}^m}^{\frac{3}{m}} \| R_1 u_2 \|_{L^2} + C (\| \bar{u}_2 \|_{L^2} + e^{-Nt} \| \tht_0 \|_{L^2}) \| R_1 \tht \|_{H^m} \| R_1 u_2 \|_{L^2}.
\end{gather*}
We have used \eqref{u2_est} and $$\| \Delta u_2 \|_{L^{\infty}} \leq \| \bar{u}_2 \|_{L^2}^{1- \frac{3}{m}} \| \bar{u}_2 \|_{\dot{H}^m}^{\frac{3}{m}}$$ which is obtained by $m>3$ and the interpolation inequality. Combining the above estimates, we arrive at
\begin{gather*}
    \frac{1}{2}\frac{\ud}{\ud t} \| \bar{u}_2 \|_{L^2}^2 \leq - (\frac{3}{4}N - \| \tht \|_{H^m}) \| R_1 u_2 \|_{L^2}^2 + \frac{C}{N} \| \bar{u}_2 \|_{L^2}^2 \| R_1 \tht \|_{H^m}^2 + \frac{C}{N} e^{-2Nt} \| \tht_0 \|_{L^2}^2 \| R_1 \tht \|_{H^m}^2 \\
    + \frac{C}{N} \| u_1 \|_{L^2}^2 \| \bar{u}_2 \|_{L^2}^{2 - \frac{6}{m}} \| u_2 \|_{\dot{H}^m}^{\frac{6}{m}}  \\
    \leq -\frac{N}{2} \| R_1 u_2 \|_{L^2}^2 + \frac{C}{N} \| \bar{u}_2 \|_{L^2}^2 \| R_1 \tht \|_{H^m}^2 + \frac{C}{N} e^{-2Nt} \| \tht_0 \|_{L^2}^2 \| R_1 \tht \|_{H^m}^2 + \frac{C}{N} \| u_1 \|_{L^2}^2 \| \bar{u}_2 \|_{L^2}^{2 - \frac{6}{m}} \| u_2 \|_{\dot{H}^m}^{\frac{6}{m}} .
\end{gather*}
We note for $M \geq 1$ that
\begin{align*}
    \frac{1}{M} \| \bar{u}_2 \|_{L^2}^2 - \|R_1 u_2\|_{L^2}^2 &= - \sum_{n_1 \neq 0} \left( \frac{1}{M} - \frac{n_1^2}{|n|^2} \right)|\mathscr{F} u_2(n)|^2 \\
    &\leq \frac{1}{M}\sum_{\substack{Mn_1^2 \leq |n|^2 \\ n_1 \neq 0}} |\mathscr{F} R_1^2 \tht(n)|^2 \leq \frac{1}{M^{m+2}} \| R_1 \tht \|_{H^m}^2.
\end{align*}
Thus, there holds
\begin{gather*}
    \frac{\ud}{\ud t} \| \bar{u}_2 \|_{L^2}^2 \leq - \frac{N}{M} \| \bar{u}_2 \|_{L^2}^2 + \frac{CN}{M^{m+2}} \| R_1 \tht \|_{H^m}^2 + \frac{C}{N} \| \bar{u}_2 \|_{L^2}^2 \| R_1 \tht \|_{H^m}^2 \\
    + \frac{C}{N} e^{-2Nt} \| \tht_0 \|_{L^2}^2 \| R_1 \tht \|_{H^m}^2 + \frac{C}{N} \| u_1 \|_{L^2}^2 \| \bar{u}_2 \|_{L^2}^{2 - \frac{6}{m}} \| u_2 \|_{\dot{H}^m}^{\frac{6}{m}} .
\end{gather*}
We take $M = 1+\frac{Nt}{m+2}$ and multiplying both sides by $M^{m+2}$. Then, it follows \begin{gather*}
    \frac{\ud}{\ud t} \left( (1+\frac{Nt}{m+2})^{m+2} \| \bar{u}_2 \|_{L^2}^2 \right) \leq CN \| R_1 \tht \|_{H^m}^2 + \frac{C}{N} (1+\frac{Nt}{m+2})^{m+2} \| \bar{u}_2 \|_{L^2}^2 \| R_1 \tht \|_{H^m}^2 \\
    \frac{C}{N} \| \tht_0 \|_{L^2}^2 \| R_1 \tht \|_{H^m}^2 + \frac{C}{N} (1+\frac{Nt}{m+2})^{m+2} \| u_1 \|_{L^2}^2 \| \bar{u}_2 \|_{L^2}^{2 - \frac{6}{m}} \| u_2 \|_{\dot{H}^m}^{\frac{6}{m}}.
\end{gather*}
Note from \eqref{tem_decay_est_T_4} and \eqref{tem_decay_est_T_3} that \begin{align*}
    &\frac{C}{N} (1+\frac{Nt}{m+2})^{m+2} \| u_1 \|_{L^2}^2 \| \bar{u}_2 \|_{L^2}^{2 - \frac{6}{m}} \| u_2 \|_{\dot{H}^m}^{\frac{6}{m}} \\ &\hphantom{\qquad\qquad}\leq \frac{C}{N} (1+\frac{Nt}{m+2})^{2-m} \left( (1+\frac{Nt}{m+2})^{m+2} \| \bar{u}_2 \|_{L^2}^2 \right)^{1 - \frac{3}{m}} \\
    &\hphantom{\qquad\qquad}\leq \frac{C}{N} (1+\frac{Nt}{m+2})^{2-m} + \frac{C}{N} (1+\frac{Nt}{m+2})^{2-m}\left( (1+\frac{Nt}{m+2})^{m+2} \| \bar{u}_2 \|_{L^2}^2 \right).
\end{align*}
Therefore, by Lemma~\ref{cal_lem}, we obtain \eqref{tem_decay_est_T_3}. This completes the proof.
\end{proof}

Now, we are ready to show \eqref{tem_decay_est_T}.     \eqref{tem_decay_est_T_2} and \eqref{tem_decay_est_T_3} implies $$(1+Nt)^{1+\frac{m-s}{2}}\| u_2(t) \|_{H^{s}} \leq C \| \tht_0 \|_{H^m}, \qquad s \in [0,m].$$ Interpolating this with \eqref{tem_decay_est_T_1}, we obtain $$(1+Nt)^{\frac{1}{2}+\frac{m-s}{2}}\| u(t) \|_{H^{s}} = (1+Nt)^{\frac{1}{2}+\frac{m-s}{2}}\| R_1 \tht(t) \|_{H^{s}} \leq C \| \tht_0 \|_{H^m}, \qquad s \in [0,m].$$
We recall \eqref{def_sgm}
\begin{equation*}
    \sigma(x_2) = \int_{\bbT} \tht_0 \,\ud x_1 - \int_0^{\infty} \int_{\bbT} \left((u \cdot \nabla)\tht + Nu_2 \right)\,\ud x_1 \ud t.
\end{equation*}
We note from \eqref{avg_est_1} that
\begin{align*}
    \int_{\bbT} \tht(t,x) \,\ud x_1 - \sigma(x_2) &= \left(\int_{\bbT} \tht_0 \,\ud x_1 - \int_0^t \int_{\bbT} \left((u \cdot \nabla) \tht + N u_2 \right) \,\ud x_1 \ud \tau\right) - \sigma(x_2) \\
    &= - \int_t^{\infty} \int_{\bbT} (u \cdot \nabla) \tht \,\ud x_1 \ud \tau - \int_t^{\infty} \int_{\bbT} N u_2 \,\ud x_1 \ud \tau.
\end{align*}
Since
\begin{align*}
    \left\| - \int_t^{\infty} \int_{\bbT} N u_2 \,\ud x_1 \ud \tau \right\|_{L^2} \leq N\int_t^{\infty} \| u_2 \|_{L^2}\,\ud \tau &\leq CN \| \tht_0 \|_{H^m} \int_t^{\infty} (1+N\tau)^{-\frac{m+2}{2}} \,\ud \tau \\
    &\leq C(1+Nt)^{-\frac{m}{2}} \| \tht_0 \|_{H^m}
\end{align*}
and
\begin{align*}
    \left\| - \int_t^{\infty} \int_{\bbT} (u \cdot \nabla) \tht \,\ud x_1 \ud \tau \right\|_{L^2} &\leq \int_t^{\infty} (\| u_1 \|_{L^2} \| R_1\tht \|_{H^{m}} + \| u_2 \|_{L^2} \| \tht \|_{H^m}) \,\ud \tau \\
    &\leq C \| \tht_0 \|_{H^m}^2 \int_t^{\infty} (1+N\tau)^{-\frac{m+2}{2}} \,\ud \tau \\
    &\leq \frac{C}{N} (1+Nt)^{-\frac{m}{2}} \| \tht_0 \|_{H^m}^2,
\end{align*}
we obtain $$\| \theta(t) - \sigma \|_{L^2} \leq C(1+Nt)^{-\frac{m}{2}} \| \tht_0 \|_{H^m}.$$ It remains to show that $$\| \tht(t) - \sigma \|_{H^m} \leq C\| \tht_0 \|_{H^m}.$$ It is clear that $\| \tht(t) - \sigma \|_{H^m} \leq C\| \tht_0 \|_{H^m} + \| \sigma \|_{H^m}$. To show that
$$\| \sigma \|_{H^m}^2 = \sum_{n_1 = 0} (1+|n|^{2})^{m} \left| \lim_{t \to \infty} \mathscr{F}\tht(t,n) \right|^2 \leq C \| \tht_0 \|_{H^m}^2,$$ we define a sequence $\{ \sigma_k \}_{k \in \bbN} \subset H^m(\bbT^2)$ by $$\sigma_k := \sum_{\substack{n_1 = 0 \\ |n_2| \leq k}} \left( \lim_{t \to \infty} \mathscr{F}\tht(t,n) \right) e^{2\pi i n \cdot x}.$$
For each $k \in \bbN$, there exists a constant $C>0$ not depending on $k$ such that  
\begin{align*}
    \| \sigma_k \|_{H^m}^2 &\leq \left\| \sigma_k - \int_{\bbT} \tht(t,x) \,\ud x_1 \right\|_{H^m}^2 + \sup_{t \in [0,\infty)} \left\| \int_{\bbT} \tht(t,x) \,\ud x_1 \right\|_{H^m}^2 \\
    &\leq (1+k^2)^m \sum_{\substack{n_1 = 0 \\ |n_2| \leq k}} \left| \mathscr{F} \tht(t,n) - \lim_{t \to \infty} \mathscr{F}\tht(t,n) \right|^2 + C\| \tht_0 \|_{H^m}^2 \\
    &\leq C \| \tht_0 \|_{H^m}^2
\end{align*} by taking $t$ large enough. Since $\| \sigma \|_{H^m}^2 = \sup_{k \in \bbN} \| \sigma_k \|_{H^m}^2$, we obtain the claim. This completes the proof.

\section{Sketch of proof of Theorem~\ref{thm:stb_O}}\label{sec_O}
Here we sketch the proof of Theorem~\ref{thm:stb_O} omitting the details that can be proved analogously to the ones for the $\bbT^2$ case. The key is to mimic our method of heavily using Fourier transforms even in the confined strip $\Omega=\bbT\times[-1,1]$ by introducing suitable analogues. 

The horizontally periodic strip  was considered in \cite{Cordoba} as a physically more meaningful scenario for the IPM equation \eqref{eq:IPM}. The targeted linearly stratified density $\rho_s(x_2)=x_2$ is \emph{not} bounded in $\bbR^2$ and it is \emph{not} periodic in $\bbT^2$. Motivated by the fact that gravity causes the IPM equation to be anisotropic in its nature, certain finiteness was imposed on the vertical depth of the target domain in \cite{Cordoba}. But this immediately gives rise to nontrivial difficulties due to the presence of \emph{boundary.} The lack of the boundary conditions for the higher order derivatives forces us to tweak the standard Sobolev spaces - the new function spaces $X^m$ and $Y^m$ are introduced in \cite{Cordoba}, whose definitions can be found in \eqref{def_Xm}-\eqref{def_Ym}.

Our goal is to develop the $\Omega$-analogues that will allow us to access the Fourier-wise blueprint we adopted in the previous sections. The rest of this section is as follows. In Section~\ref{sec_Xm}, we gather some basic properties of $X^m$ and $Y^m$ and obtain the analogues of Fourier transform and the Fourier inversion formula. This produces the analogous representations of the pressure and the velocity field via the density. The next section is devoted to the analogous energy inequality. The proof of Theorem~\ref{thm:stb_O} is only sketched in the last section where we ommitted the details that can be verified in a completely analogous way to the way the torus case is proved.

\subsection{Properties of $X^m$ and $Y^m$}\label{sec_Xm}

We recall our solution spaces $X^m$ and $Y^m$. Let $\{ b_q \}_{q \in \bbN}$ and $\{ c_q \}_{q \in \bbN \cup \{0\}}$ be the sets of the functions, which are pairwisely orthonormal in $L^2$, defined by
\begin{align*}
	b_{q} (x_2) = 
	\begin{cases}
		\displaystyle \sin(\frac {\pi}{2} q x_2)  &\quad  q : \mbox{ even,}  \vspace{2mm}\\
		\displaystyle \cos(\frac {\pi}{2} q x_2)  &\quad  q : \mbox{ odd,} 
	\end{cases} \qquad c_{q} (x_2) = 
	\begin{cases}
		\displaystyle -\sin(\frac {\pi}{2} q x_2)  &\quad  q : \mbox{ odd,} \vspace{2mm}\\
		\displaystyle \cos(\frac {\pi}{2} q x_2)  &\quad  q : \mbox{ even.}
	\end{cases}
\end{align*}
It is not hard to verify that $\{\mathscr{B}_{p,q}\}_{(p,q) \in \bbZ \times \bbN}$ and $\{\mathscr{B}_{p,q}\}_{(p,q) \in \bbZ \times (\bbN \cup \{0\})}$ are orthonormal bases for $X^m$ and $Y^m$ respectively, where $$\mathscr{B}_{p,q}(x) := e^{2\pi i p x_1} b_{q}(x_2), \qquad \mathscr{C}_{p,q}(x) := e^{2 \pi i p x_1} c_{q}(x_2).$$ For details, we refer to \cite{Cordoba}. Employing the below notations $$\mathscr{F}_b f(p,q) = \langle f,\mathscr{B}_{p,q} \rangle = \int_{\Omega} \overline{\mathscr{B}_{p,q}(x)}f(x) \,\ud x, \qquad \mathscr{F}_c f(p,q) = \langle f,\mathscr{C}_{p,q} \rangle = \int_{\Omega} \overline{\mathscr{C}_{p,q}(x)}f(x) \,\ud x,$$
we immediately see that the following inversion formula holds, $$f(x) = \sum_{(p,q) \in \bbZ \times \bbN} \mathscr{F}_b f(p,q) \mathscr{B}_{p,q}(x), \qquad g(x) = \sum_{(p,q) \in \bbZ \times (\bbN \cup \{0\})} \mathscr{F}_c f(p,q) \mathscr{C}_{p,q}(x),$$ for any $f \in X^m$ and $g \in Y^m$. \\

From now on, we fix $m \geq 2$. We collect the basic properties of $X^m$ and $Y^m$: derivatives transform into polynomial multipliers on the frequency side, the products of two functions from $X^m$ or $Y^m$ belong to either $X^m$ or $Y^m$, and the solution can be expressed with the analogues of Riesz transforms. For the exact definition of the $\Omega$-version of the Risez transform, see \eqref{def_Riesz_X}-\eqref{def_Riesz_Y}.   \\

\textbf{Partial derivatives}: Note that $\partial_1 \mathscr{B}_{p,q}(x) = 2\pi i p \mathscr{B}_{p,q}(x)$, $\partial_2 \mathscr{B}_{p,q}(x) = \Black{\frac{\pi}{2} q \mathscr{B}_{p,q}(x)}$, $\partial_1 \mathscr{C}_{p,q}(x) = 2 \pi i p \mathscr{C}_{p,q}(x)$, and $\partial_2 \mathscr{C}_{p,q}(x) = -\frac{\pi}{2} q \mathscr{B}_{p,q}(x)$. Using them, we can see that for $f \in X^m$ and $g \in Y^m$, we have $\partial_1 f \in X^{m-1}$, $\partial_2 f \in Y^{m-1}$, $\partial_1 g \in Y^{m-1}$, and $\partial_2 g \in X^{m-1}.$  We also have $\mathscr{F}_b\partial_1 f = 2\pi i p \mathscr{F}_b f$, $\mathscr{F}_c \partial_2 f = - \frac{\pi}{2} q \mathscr{F}_b f$, $\mathscr{F}_c\partial_1 g = 2\pi i p \mathscr{F}_c g$, and $\mathscr{F}_b \partial_2 g = \frac{\pi}{2} q \mathscr{F}_c g$. \\

\textbf{Products}: The product $fg$ of the two functions  $f\in X^m$ and $g\in Y^m$ belongs to $X^m$. Meanwhile, the product of the two functions in the same space must be in $Y^m$. For the proofs, we refer to Lemma~\ref{lem_basis1} and Lemma~\ref{lem_basis2}.\\

\textbf{Solution formulas}: We express a local-in-time solution $(\tht,u,P)$ given in \cite[Theorem 4.1]{Cordoba} by the use of \Black{$\mathscr{B}_{p,q}$ and $\mathscr{C}_{p,q}$.} Since $\tht \in X^m$ for $m \geq 3$, there holds
\begin{equation*}
    \tht = \sum_{(p,q) \in \bbZ \times \bbN} \mathscr{F}_b \tht(p,q) \mathscr{B}_{p,q}(x).
\end{equation*}
Due to the relationship $-\Delta P = -\partial_2 \tht$, we have $$P(x) = \sum_{\substack{(p,q) \in \bbZ \times (\bbN \cup \{0\}) \\ (p,q) \neq 0}} \frac{-\frac{\pi}{2}q}{(2\pi p)^2 + (\frac{\pi}{2}q)^2}\mathscr{F}_b \tht (p,q) \mathscr{C}_{p,q}(x) + \mathscr{F}_bP(0,0).$$ Thus, we obtain
\begin{equation}\label{sf_u_O}
    \begin{aligned}
        u_1 &= \sum_{\substack{(p,q) \in \bbZ \times (\bbN \cup \{0\}) \\ (p,q) \neq 0}} \frac{-i\pi^2 pq}{(2\pi p)^2 + (\frac{\pi}{2}q)^2}\mathscr{F}_b \tht (p,q) \mathscr{C}_{p,q}(x), \\
        u_2 &= \sum_{\substack{(p,q) \in \bbZ \times \bbN \\ (p,q) \neq 0}} \frac{(2\pi p)^2}{(2\pi p)^2 + (\frac{\pi}{2}q)^2}\mathscr{F}_b \tht (p,q) \mathscr{B}_{p,q}(x),
    \end{aligned}
\end{equation}
which are consistent with the velocity field formula by means of the stream function (see \cite{Cordoba}).

The above representations of the solutions lead to the analogue of Lemma~\ref{lem_avg}; we see that the mean values of solutions in $\bbT^2$ and solutions in $\Omega$ are fundamentally distinguishable. One may compare the below to Lemma~\ref{lem_avg}. 
\begin{lemma}\label{lem_avg_O}
    Let $(\tht,u,P)$ be a classical solution to \eqref{eq:SIPM_T}. Then, there hold
    \begin{equation}\label{avg_est_1_O}
        \int_{\Omega} \tht(t,x) \,\ud x = \int_{\Omega} \tht_0(x) \,\ud x
    \end{equation}
    and
    \begin{equation}\label{avg_est_2_O}
        \int_{\bbT} u(t,x) \,\ud x_1 = 0.
    \end{equation}
\end{lemma}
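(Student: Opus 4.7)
The plan is to establish \eqref{avg_est_2_O} first, then deduce \eqref{avg_est_1_O} by integrating the density equation over $\Omega$. For \eqref{avg_est_2_O}, I would read off both identities directly from the explicit representation \eqref{sf_u_O}: upon integrating $u_1$ or $u_2$ in $x_1$ over $\bbT$, only the $p=0$ Fourier modes survive (since $\int_{\bbT} e^{2\pi i p x_1}\,\ud x_1 = 0$ for $p\neq 0$), but in both sums the prefactors $\frac{-i\pi^2 pq}{(2\pi p)^2+(\frac{\pi}{2}q)^2}$ and $\frac{(2\pi p)^2}{(2\pi p)^2+(\frac{\pi}{2}q)^2}$ vanish when $p=0$, so the horizontal averages are identically zero. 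As a sanity check independent of \eqref{sf_u_O}, one can also argue PDE-wise: $u_1 = -\partial_1 P$ together with $x_1$-periodicity yields $\int_{\bbT} u_1\,\ud x_1 = 0$, while the incompressibility condition gives $\partial_2 \int_{\bbT} u_2\,\ud x_1 = -\int_{\bbT} \partial_1 u_1 \,\ud x_1 = 0$, so this average is independent of $x_2$ and must vanish in view of the no-penetration condition $u_2|_{x_2=\pm 1}=0$.

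For \eqref{avg_est_1_O}, the plan is to integrate the density equation in \eqref{eq:SIPM_T} over $\Omega$ to obtain
\begin{equation*}
\frac{\ud}{\ud t}\int_{\Omega}\tht\,\ud x = -\int_{\Omega} (u\cdot\nabla)\tht\,\ud x - N\int_{\Omega} u_2\,\ud x.
\end{equation*}
The transport integral vanishes after writing $(u\cdot\nabla)\tht = \nabla\cdot(u\tht)$, applying the divergence theorem, and using $x_1$-periodicity together with $u_2|_{\partial\Omega}=0$. The forcing integral vanishes because Fubini combined with \eqref{avg_est_2_O} gives $\int_{\Omega} u_2\,\ud x = \int_{-1}^{1}\bigl(\int_{\bbT} u_2\,\ud x_1\bigr)\ud x_2 = 0$. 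Hence $\int_{\Omega}\tht\,\ud x$ is time-independent and equal to its initial value.

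No real obstacle is expected here; the lemma is essentially a bookkeeping fact, but it is worth highlighting the structural contrast with Lemma~\ref{lem_avg}. On $\bbT^2$, the average of $u_2$ equals the average of $\tht$ (via $u_2 = \tht - \partial_2 P$ plus periodicity), which couples the two averages and produces the exponential decay $e^{-Nt}$. On $\Omega$, by contrast, the boundary condition $u_2|_{x_2=\pm1}=0$ forces $\int_{\Omega} u_2\,\ud x = 0$ regardless of $\int_{\Omega}\tht\,\ud x$, decoupling the two averages and yielding pure conservation of the density mean. This is the only qualitatively new point, and it motivates the slightly different definition of the asymptotic profile $\sigma(x_2)$ later in the argument.
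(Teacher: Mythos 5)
Your proposal is correct and follows essentially the same route as the paper: read \eqref{avg_est_2_O} off the representation \eqref{sf_u_O} (the $p=0$ prefactors vanish), then integrate the density equation over $\Omega$, killing the transport term by the divergence theorem with periodicity and the no-penetration condition, and the $N\int_\Omega u_2\,\ud x$ term by \eqref{avg_est_2_O}. The extra PDE-level derivation of \eqref{avg_est_2_O} and the comparison with the $\bbT^2$ case are sound but not needed.
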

\begin{proof}
    From the representation formula \eqref{sf_u_O}, we immediately obtain \eqref{avg_est_2_O}. Using the $\tht$ equation in \eqref{eq:SIPM_T} with \eqref{avg_est_2_O}, we have $$\frac{\ud}{\ud t} \int_{\Omega} \tht \,\ud x = - \int_{\Omega} (u \cdot \nabla) \tht \,\ud x.$$ Integration by parts further implies $$- \int_{\Omega} (u \cdot \nabla) \tht \,\ud x = \int_{\partial \Omega} u_2 \tht \,\ud x_1 = 0.$$ This proves \eqref{avg_est_1_O}. The proof is finished.
\end{proof}

\subsection{Energy inequality}
In this section, we improve Theorem 5.2 in \cite{Cordoba}. To this end, we need to show the energy estimate of the types in Proposition~\ref{prop_energy_R} and Proposition~\ref{prop_energy_T}. This necessitates the analogous notions of Riesz transforms, which are defined as follows. We define an automorphism $R_1$ on $X^m$ and $Y^m$ by
\begin{equation}\label{def_Riesz_X}
    R_1 f(x) := \sum_{(p,q) \in \bbZ \times \bbN} \frac{-2\pi i p}{\sqrt{(2\pi p)^2 + (\frac{\pi}{2}q)^2}} \mathscr{F}_b f(p,q) \mathscr{B}_{p,q}(x), \qquad f \in X^m
\end{equation}
and
\begin{equation}\label{def_Riesz_Y}
    R_1 g(x) := \sum_{(p,q) \in \bbZ \times (\bbN \cup \{0\})} \frac{-2\pi i p}{\sqrt{(2\pi p)^2 + (\frac{\pi}{2}q)^2}} \mathscr{F}_c g(p,q) \mathscr{C}_{p,q}(x), \qquad g \in Y^m.
\end{equation}
\begin{proposition}\label{prop_energy_O}
Let $\theta$ be a classical solution of \eqref{eq:SIPM_T}. Then for any given $m \in \bbN$ with $m \geq 3$, there exists a constant $C=C(m)>0$ such that
\begin{equation}\label{energy_O}
\begin{gathered}
    \frac{1}{2}\sup_{t \in [0,T]} \|\theta\|_{H^{m}}^2 + N \int_0^T \|R_1\theta\|_{H^{m}}^{2} \,\ud t \\
    \leq \frac{1}{2} \| \tht_0 \|_{H^{m}}^2 + C\sup_{t \in [0,T]} \| \theta \|_{H^m} \int_0^T \|R_1 \theta \|_{H^m}^2 \,\ud t + C \sup_{t \in [0,T]} \| \theta \|_{H^m}^2 \int_0^T \| \partial_2 u_2 \|_{L^{\infty}} \, \ud t.
\end{gathered}
\end{equation}
\end{proposition}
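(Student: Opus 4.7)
The plan is to mimic the $\bbR^2$ proof of Proposition~\ref{prop_energy_R} on the strip, exploiting the $X^m/Y^m$ functional structure to ensure that every integration by parts in $x_2$ has vanishing boundary contribution. I would proceed in three stages: an $L^2$ identity, an $H^m$ identity via Leibniz, and a careful treatment of the critical $\partial_2^m$ direction.

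First I would derive the $L^2$ identity. Multiplying the $\theta$-equation in \eqref{eq:SIPM_T} by $\theta$ and integrating over $\Omega$, the transport term $\int(u\cdot\nabla)\theta\,\theta\,\ud x$ vanishes by the divergence-free condition together with $u_2|_{\partial\Omega}=0$ (since $u_2\in X^m$). Using the basis expansion \eqref{sf_u_O} and the definition \eqref{def_Riesz_X} of $R_1$ on $X^m$, the linear term gives $N\!\int_\Omega u_2\theta\,\ud x=N\|R_1\theta\|_{L^2}^2$, so that
\begin{equation*}
    \tfrac12\tfrac{\ud}{\ud t}\|\theta\|_{L^2}^2 + N\|R_1\theta\|_{L^2}^2 = 0.
\end{equation*}

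Next, for each multi-index $1\leq|\alpha|\leq m$, I would apply $\partial^\alpha$ to the $\theta$-equation and pair with $\partial^\alpha\theta$. The crucial observation is that, by the product lemmas from Section~\ref{sec_Xm} applied to $\theta,u_2\in X^m$ and $u_1\in Y^m$, all intermediate products inherit a parity in $x_2$ matching $X^m$ or $Y^m$, so every integration by parts in $x_2$ kills its boundary contribution. The linear part contributes $N\|R_1\partial^\alpha\theta\|_{L^2}^2$ to the dissipation and summing yields exactly $N\|R_1\theta\|_{H^m}^2$. For the nonlinear term in the subcase $\partial^\alpha\neq\partial_2^m$, the multi-index contains at least one $\partial_1$, so after a standard Leibniz expansion every term carries either a $\partial_1$ on $\theta$ or a $\partial_1 u$ factor; combined with $H^{m-1}(\Omega)\hookrightarrow L^\infty(\Omega)$ and $\|u\|_{H^m}\lesssim\|R_1\theta\|_{H^m}$ coming from \eqref{sf_u_O}, this produces the contribution $C\sup_{[0,T]}\|\theta\|_{H^m}\!\int_0^T\|R_1\theta\|_{H^m}^2\,\ud t$.

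The decisive case is $\partial^\alpha=\partial_2^m$. Splitting $(u\cdot\nabla)\theta=u_1\partial_1\theta+u_2\partial_2\theta$, the $u_1\partial_1\theta$ piece once again gains an $R_1$. In the $u_2\partial_2\theta$ piece, I would distribute $\partial_2^m$ via Leibniz, apply the divergence-free identity $\partial_2u_2=-\partial_1u_1$ wherever $\partial_2$ falls on $u_2$, and integrate by parts to move derivatives off $\partial_2^m\theta$. All but one contribution can be converted into a factor containing a horizontal derivative, hence absorbed into the $\|R_1\theta\|_{H^m}$ bound as above. The sole residual contribution is the top-order one
\begin{equation*}
    \int_\Omega (\partial_2 u_2)\,|\partial_2^m\theta|^2\,\ud x,
\end{equation*}
which cannot be further improved and is bounded by $\|\partial_2u_2\|_{L^\infty}\|\theta\|_{H^m}^2$; this is precisely the last term in \eqref{energy_O}. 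Integrating in time on $[0,T]$ and summing the $L^2$ and $H^m$ identities finishes the proof.

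The main obstacle I expect is this $\partial_2^m$ case: one must track every application of $\partial_2 u_2=-\partial_1 u_1$ and every integration by parts so that (i) no boundary term survives, using the $X^m/Y^m$ parity, and (ii) the only contribution that escapes the $R_1\theta$ bound is the single $\|\partial_2u_2\|_{L^\infty}\|\theta\|_{H^m}^2$ term displayed above. A secondary technical point is that, because one must work at the integer level $m\in\bbN$ dictated by \eqref{def_Xm}, the simple Leibniz rule suffices in place of the Kato-Ponce commutator estimate used in Proposition~\ref{prop_energy_R}.
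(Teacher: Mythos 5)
Your proposal follows essentially the same route as the paper's proof: the $L^2$ identity via the representation \eqref{sf_u_O}, the commutator treatment for $\partial^\alpha\neq\partial_2^m$ using $\|u\|_{H^m}\lesssim\|R_1\theta\|_{H^m}$, and for $\partial^\alpha=\partial_2^m$ the divergence-free substitution $\partial_2 u_2=-\partial_1 u_1$ leaving only the residual term $\int_\Omega(\partial_2 u_2)|\partial_2^m\theta|^2\,\ud x$. The one step your sketch leaves implicit is the remainder carrying $\partial_2^2 u_2$: for even $m$ the function $\partial_2^{m-1}\theta$ does \emph{not} vanish on $\partial\Omega$, so the naive integration by parts in $x_2$ fails there, and the paper instead moves the $\partial_1$ across via a Fourier-side orthogonality identity in the $\mathscr{B}_{p,q}$, $\mathscr{C}_{p,q}$ bases — this is exactly the $X^m/Y^m$ parity mechanism you invoke, but it must be implemented as that specific identity rather than as a boundary-term cancellation.
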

\begin{proof}
    From the boundary condition and $\tht$ equation in \eqref{eq:SIPM_T}, we can have $$\frac{1}{2} \frac{\ud}{\ud t} \| \tht \|_{L^2}^2 + N \int_{\Omega} u_2 \tht \,\ud x = 0.$$ Since \eqref{sf_u_O} gives $$\int_{\Omega} u_2 \tht \,\ud x = \sum_{(p,q) \in \bbZ \times \bbN} \mathscr{F}_b u_2(p,q) \overline{\mathscr{F}_b \tht(p,q)} = \| R_1 \tht \|_{L^2}^2,$$ there holds $$\frac{1}{2} \frac{\ud}{\ud t} \| \tht \|_{L^2}^2 + N \| R_1 \tht \|_{L^2}^2 = 0.$$ Next, let $\alpha$ be a multi-index with $|\alpha| = m$. Taking $\partial^{\alpha}$ to the $\tht$ equation and multiplying $\partial^{\alpha}$, and integrating over $\Omega$ yields $$\frac{1}{2} \frac{\ud}{\ud t} \| \tht \|_{\dot{H}^m}^2 + N \| R_1 \tht \|_{\dot{H}^m}^2 = -\sum_{|\alpha| = m} \int_{\Omega} \partial^{\alpha} (u \cdot \nabla) \tht \partial^{\alpha} \tht \,\ud x.$$ We first consider the case $\partial^{\alpha} \neq \partial_2^m$. Due to the boundary condition, we can see
    \begin{align*}
        \left| -\sum_{|\alpha| = m} \int_{\Omega} \partial^{\alpha} (u \cdot \nabla) \tht \partial^{\alpha} \tht \,\ud x \right| &= \left| -\sum_{|\alpha| = m} 
        \left( \int_{\Omega} \partial^{\alpha} (u \cdot \nabla) \tht \partial^{\alpha} \tht \,\ud x - \int_{\Omega} (u \cdot \nabla) \partial^{\alpha} \tht \partial^{\alpha} \tht \,\ud x \right) \right| \\
        &\leq \sum_{|\alpha| = m} \| \partial^{\alpha} (u \cdot \nabla) \tht - (u \cdot \nabla) \partial^{\alpha} \tht \|_{L^2} \| \partial^{\alpha} \tht \|_{L^2}.
    \end{align*}
    Employing \cite[Lemma 4.2]{Cordoba}, we clearly have 
    \begin{align*}
        \left| -\sum_{|\alpha| = m} \int_{\Omega} \partial^{\alpha} (u \cdot \nabla) \tht \partial^{\alpha} \tht \,\ud x \right| &\leq C (\| \nabla u \|_{L^{\infty}} \| \tht \|_{H^m} + \| u \|_{H^m} \| \tht \|_{L^{\infty}}) \| R_1 \tht \|_{H^m} \\
        &\leq C \| R_1 \tht \|_{H^m}^2 \| \tht \|_{H^m}.
    \end{align*}
    We have used $m > 2$ and $\| u \|_{H^m} \leq \| R_1 \tht \|_{H^m}$ in the last inequality. For $\partial^{\alpha} = \partial_2^m$, we can have \Black{by the divergence-free condition and $\partial^m(fg) = \sum_{k=1}^{m} \partial^{m-k} (\partial f \partial^{k-1}g) + f\partial^m g$ that
    \begin{gather*}
        \int_{\Omega} \partial_2^m (u \cdot \nabla) \tht \partial_2^m \tht \,\ud x = \int_{\Omega} \left( \partial_2^m (u \cdot \nabla) \tht - (u \cdot \nabla)\partial_2^m \tht \right) \partial_2^m \tht \,\ud x\\
        = \sum_{k=1}^{m} \int_{\Omega} \partial_2^{m-k} (\partial_2 u_1 \partial_1 \partial_2^{k-1} \tht) \partial_2^m \tht \,\ud x + \sum_{k=1}^{m} \int_{\Omega} \partial_2^{m-k} (\partial_2 u_2 \partial_2 \partial_2^{k-1} \tht) \partial_2^m \tht \,\ud x \\
        = \sum_{k=1}^{m} \int_{\Omega} \partial_2^{m-k} (\partial_2 u_1 \partial_1 \partial_2^{k-1} \tht) \partial_2^m \tht \,\ud x + m \int_{\Omega} \partial_2u_2 |\partial_2^m \tht|^2 \,\ud x \\
        + \sum_{k=1}^{m} \sum_{\ell=1}^{m-k} \int_{\Omega} \partial_2^{m-k-\ell} (\partial_2^2 u_2 \partial_2 \partial_2^{k-1} \partial_2^{\ell-1} \tht) \partial_2^m \tht \,\ud x.
    \end{gather*}}
    It is not hard to show that the first two integrals on the right-hand side are bounded by $$C \| R_1 \tht \|_{H^m}^2 \| \tht \|_{H^m} + C \| \partial_2 u_2 \|_{L^{\infty}} \| \tht \|_{H^m}^2.$$ We estimate the remainder integral. \Black{We only provide the estimate for $k=\ell=1$ case since the others can be inferred from it. Let us consider odd $m$ first. Since $\partial_2^{m-1} \tht$ vanishes on the boundary, it follows
    \begin{align*}
        \int_{\Omega} \partial_2^{m-2} (\partial_2^2 u_2 \partial_2 \tht) \partial_2^m \tht \,\ud x &= \int_{\Omega} \partial_2^{m-1} (\partial_1\partial_2 u_1 \partial_2 \tht) \partial_2^{m-1} \tht \,\ud x \\
        &= -\int_{\Omega} \partial_2^{m-1} (\partial_2 u_1 \partial_1\partial_2 \tht) \partial_2^{m-1} \tht \,\ud x - \int_{\Omega} \partial_2^{m-1} (\partial_2 u_1 \partial_2 \tht) \partial_1\partial_2^{m-1} \tht \,\ud x \\
        &= \int_{\Omega} \partial_2^{m-2} (\partial_2 u_1 \partial_1\partial_2 \tht) \partial_2^{m} \tht \,\ud x - \int_{\Omega} \partial_2^{m-1} (\partial_2 u_1 \partial_2 \tht) \partial_1\partial_2^{m-1} \tht \,\ud x.
    \end{align*}}
    By \cite[Lemma 4.2]{Cordoba}, we have \begin{equation}\label{bdry_est}
        \left| \int_{\Omega} \partial_2^{m-2} (\partial_2^2 u_2 \partial_2 \tht) \partial_2^m \tht \,\ud x \right| \leq C \| R_1 \tht \|_{H^m}^2 \| \tht \|_{H^m}.
    \end{equation} On the other hand, for even $m$,
    \Black{\begin{align*}
        \int_{\Omega} \partial_2^{m-2} (\partial_2^2 u_2 \partial_2 \tht) \partial_2^m \tht \,\ud x &= \int_{\Omega} \partial_2^{m-2} (\partial_2 u_1 \partial_1\partial_2 \tht) \partial_2^{m} \tht \,\ud x - \int_{\Omega} \partial_2^{m-2} \partial_1 (\partial_2 u_1 \partial_2 \tht) \partial_2^{m} \tht \,\ud x \\
        &= \int_{\Omega} \partial_2^{m-2} (\partial_2 u_1 \partial_1\partial_2 \tht) \partial_2^{m} \tht \,\ud x - \int_{\Omega} \partial_2^{m-1} (\partial_2 u_1 \partial_2 \tht) \partial_1\partial_2^{m-1} \tht \,\ud x.
    \end{align*}}
    We have used the orthogonality of the basis of $X^m$ so that
    \begin{align*} \int_{\Omega} \partial_2^{m-2} \partial_1 (\partial_2 u_1 \partial_2 \tht) \partial_2^{m} \tht \,\ud x &= \sum_{(p,q) \in \bbZ \times \bbN} 2\pi i p \mathscr{F}_b \partial_2^{m-2} (\partial_2 u_1 \partial_2 \tht) \overline{\mathscr{F}_b \partial_2^{m} \tht} \\
    &= \sum_{(p,q) \in \bbZ \times \bbN} \mathscr{F}_c \partial_2^{m-1} (\partial_2 u_1 \partial_2 \tht) \overline{\mathscr{F}_c \partial_1\partial_2^{m-1} \tht} \\
    &= \int_{\Omega} \partial_2^{m-1} (\partial_2 u_1 \partial_2 \tht) \partial_1\partial_2^{m-1} \tht \,\ud x.
    \end{align*}
    Hence, \eqref{bdry_est} follows. Combining the above estimates gives $$\frac{1}{2} \frac{\ud}{\ud t} \| \tht \|_{\dot{H}^m}^2 + N \| R_1 \tht \|_{\dot{H}^m}^2 \leq C \| R_1 \tht \|_{H^m}^2 \| \tht \|_{H^m} + C \| \partial_2 u_2 \|_{L^{\infty}} \| \tht \|_{H^m}^2.$$
    
    Repeating the above procedure with the multi-index $\alpha$ with $|\alpha| = 1,2, \cdots, m-1$, we deduce that $$\frac{1}{2} \frac{\ud}{\ud t} \| \tht \|_{H^m}^2 + N \| R_1 \tht \|_{H^m}^2 \leq C \| R_1 \tht \|_{H^m}^2 \| \tht \|_{H^m} + C \| \partial_2 u_2 \|_{L^{\infty}} \| \tht \|_{H^m}^2.$$ Thus, \eqref{energy_O} is obtained. This finishes the proof.
\end{proof}

\subsection{Proof of the global existence of solutions}
Employing the convolution inequalities in Lemma~\ref{lem_conv_O}, which is a counterpart of Lemma~\ref{conv_ineq_T2}, we can estimate the key integral in Proposition~\ref{prop_energy_O}. Since the proof is almost \Black{the same} with the 2D torus case, we leave it to the interested readers. Now Theorem~\ref{thm:stb_O} follows from Proposition~\ref{prop_energy_O} and the following lemma.
\begin{lemma}
     Let $\tht$ be a global classical solution of \eqref{eq:SIPM_T}. Then, for any given $m > 3$ and $s \in [0,m-2)$, there exists a constant $C>0$ such that
        \Black{\begin{equation*} \begin{gathered}
            \sum_{(p,q) \in \mathbb{Z} \times \bbN} \int_0^T \left(1+(2\pi p)^2 + (\frac{\pi}{2}q)^2 \right)^{\frac s2} |\mathscr{F}_b u_2(p,q)| \, \ud t \\
            \leq \frac{1}{N} \sum_{(p,q) \in \mathbb{Z} \times \bbN} \int_0^T \left(1+(2\pi p)^2 + (\frac{\pi}{2}q)^2 \right)^{\frac s2} |\mathscr{F}_b \theta_0(p,q) |\, \ud t\\
    		+ \frac{C}{N}\int_0^T \| R_1 \theta \|_{H^{m}}^2 \, \ud t + \frac{C}{N} \sup_{t \in [0,T]} \| \theta \|_{H^m} \sum_{(p,q) \in \mathbb{Z} \times \bbN} \int_0^T \left(1+(2\pi p)^2 + (\frac{\pi}{2}q)^2 \right)^{\frac s2} |\mathscr{F}_b u_2(p,q)| \, \ud t
		\end{gathered}
	\end{equation*}}
	for all $T>0$.
\end{lemma}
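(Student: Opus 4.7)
The plan is to replicate the $\bbT^2$-argument leading to \eqref{key_est_T} within the mixed-basis calculus developed in Section~\ref{sec_Xm}. Applying $\mathscr{F}_b$ to the $\theta$-equation in \eqref{eq:SIPM_T} and invoking \eqref{sf_u_O}, I get, for each mode $(p,q)\in \bbZ\times\bbN$, the ODE
\begin{equation*}
    \partial_t \mathscr{F}_b\theta(p,q) + N\lambda_{p,q}\mathscr{F}_b\theta(p,q) \;=\; -\mathscr{F}_b(u\cdot\nabla\theta)(p,q), \qquad \lambda_{p,q}:=\frac{(2\pi p)^2}{(2\pi p)^2+(\tfrac{\pi}{2}q)^2},
\end{equation*}
together with the identity $\mathscr{F}_b u_2(p,q)=\lambda_{p,q}\mathscr{F}_b\theta(p,q)$. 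Duhamel's formula then decomposes $\mathscr{F}_b u_2$ into a free-evolution piece plus a nonlinear forced piece. Multiplying by $(1+(2\pi p)^2+(\tfrac{\pi}{2}q)^2)^{s/2}$, summing over $(p,q)$, integrating in $t\in [0,T]$, and applying Fubini together with the elementary bound $\int_0^T \lambda_{p,q}e^{-N\lambda_{p,q}(t-\tau)}\,\mathrm{d}t\le \frac{1}{N}$ (valid for $p\neq 0$; the $p=0$ modes contribute nothing since $\lambda_{0,q}=0$) immediately yields the free-evolution term $\frac{1}{N}\sum(1+\cdots)^{s/2}|\mathscr{F}_b\theta_0(p,q)|$.

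For the nonlinear piece I would split $u\cdot\nabla\theta=u_1\partial_1\theta+u_2\partial_2\theta$. The basis-product rules of Section~\ref{sec_Xm} give $u_1\partial_1\theta,\,u_2\partial_2\theta\in X^{m-1}$, so $\mathscr{F}_b$ is the right transform to use. Applying the $\Omega$-convolution inequality of Lemma~\ref{lem_conv_O} in $\ell^1$ with weight $(1+(2\pi p)^2+(\tfrac{\pi}{2}q)^2)^{s/2}$, together with the condition $s<m-2$ to absorb one extra derivative in the first term via the embeddings $\|(1+\cdots)^{s/2}\mathscr{F}_c u_1\|_{\ell^1}\le C\|u\|_{H^{m-1}}\le C\|R_1\theta\|_{H^m}$ and $\|(1+\cdots)^{s/2}\mathscr{F}_b\partial_1\theta\|_{\ell^1}\le C\|R_1\theta\|_{H^m}$, bounds the $u_1\partial_1\theta$ contribution by $\frac{C}{N}\int_0^T \|R_1\theta\|_{H^m}^2\,\mathrm{d}t$. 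The $u_2\partial_2\theta$ term is estimated instead by keeping $u_2$ with its weight in $\ell^1$ and spending the available regularity on $\partial_2\theta$, producing the self-referential contribution $\frac{C}{N}\sup_{[0,T]}\|\theta\|_{H^m}\sum\int_0^T(1+\cdots)^{s/2}|\mathscr{F}_b u_2|\,\mathrm{d}t$. Collecting the three pieces gives the claimed inequality.

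The main technical obstacle is not the Duhamel argument itself, which is essentially identical to the $\bbT^2$ case, but rather having Lemma~\ref{lem_conv_O} in a form compatible with the mixed $\mathscr{B}_{p,q}/\mathscr{C}_{p,q}$ expansions: one has to verify that products of basis functions from the various combinations of $\{b_q\}$ and $\{c_q\}$ expand in boundedly many modes with bounded coefficients, so that the usual Young-type inequality goes through in the $(p,q)$ variable and correctly tracks which product lands in $X$ versus $Y$. Once this is in hand, the remainder of the argument is a line-by-line transcription of the $\bbT^2$ proof of \eqref{key_est_T}, with $|n|^2$ replaced by $(2\pi p)^2+(\tfrac{\pi}{2}q)^2$ throughout.
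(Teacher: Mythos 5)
Your proposal is correct and is precisely the argument the paper intends: the paper itself only remarks that the proof is ``almost the same'' as the $\bbT^2$ case of \eqref{key_est_T}, and your Duhamel-plus-Fubini argument with the mode relation $\mathscr{F}_b u_2(p,q)=\lambda_{p,q}\mathscr{F}_b\theta(p,q)$, the vanishing of the $p=0$ modes (so that, unlike on $\bbT^2$, no separate zero-mode term appears), and the weighted form of Lemma~\ref{lem_conv_O} applied to $u_1\partial_1\theta\in X^{m-1}$ and $u_2\partial_2\theta\in X^{m-1}$ is exactly that transcription. Note that your free-evolution bound $\frac1N\sum_{(p,q)}\bigl(1+(2\pi p)^2+(\tfrac{\pi}{2}q)^2\bigr)^{s/2}|\mathscr{F}_b\theta_0(p,q)|$ is what the argument actually yields and what is used downstream; the extra $\int_0^T\cdots\,\ud t$ wrapped around the $\theta_0$ term in the displayed statement is evidently a typo.
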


\subsection{Proof of the decay estimates}

We use the notation $$\bar{f} := f - \int_{\bbT} f(x) \,\ud x_1 = \sum_{p \neq 0} \mathscr{F}_b f(p,q) \mathscr{B}_{p,q}(x), \qquad f \in X^m.$$ The following two propositions can be proved analogously to the ones for the $\bbT^2$ setting, replacing the usual Fourier basis $e^{2\pi i n \cdot x}$ by $\mathscr{B}_{p,q}$ and $\mathscr{C}_{p,q}$ that are adapted to the horizontally periodic strip.  
\begin{proposition}\label{prop_tem_3}
    Let $\theta$ be a global classical solution of \eqref{eq:SIPM_T} with $N>0$. Suppose that \eqref{sol_bdd} is satisfied for any given $m \in \bbN$ with $m > 2$. Then, there exists a constant $C>0$ such that
    \begin{equation}\label{tem_decay_est_O_1}
        \| \bar{\tht}(t) \|_{L^2} \leq C(1+Nt)^{-\frac{m}{2}} \| \tht_0 \|_{H^m}
    \end{equation}
    and
    \begin{equation}\label{tem_decay_est_O_4}
        \| u(t) \|_{L^2} \leq C(1+Nt)^{-(\frac{1}{2}+\frac{m}{2})} \| \tht_0 \|_{H^m}
    \end{equation}  
    for all $t>0$.
\end{proposition}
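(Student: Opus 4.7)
The plan is to follow the blueprint of the proof of Proposition~\ref{prop_tem_1} essentially line by line, simply substituting the torus plane-wave basis $e^{2\pi i n\cdot x}$ by the strip basis $\{\mathscr{B}_{p,q}\}$ on $X^m$ (and $\{\mathscr{C}_{p,q}\}$ on $Y^m$). Three ingredients transfer cleanly: (a) Parseval in the $\mathscr{B}_{p,q}$-basis together with \eqref{sf_u_O} yields $\int_\Omega u_2\bar\theta\,\ud x = \|R_1\bar\theta\|_{L^2}^2$ as well as the convenient identity $\|u\|_{L^2}=\|R_1\theta\|_{L^2}$; (b) Lemma~\ref{lem_avg_O} gives $\int_\bbT u\,\ud x_1 = 0$ so that $u_j=\bar u_j$ for $j=1,2$; (c) since $p\in\bbZ\setminus\{0\}$ obeys $|p|\geq 1$, the multiplier arguments underlying the \emph{balancing} trick \eqref{ineq_balancing} have exact analogues.

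For \eqref{tem_decay_est_O_1}, I would test the $\theta$-equation against $\bar\theta\in X^m$, integrate by parts using $\mathrm{div}\,u=0$ and the no-penetration condition $u_2|_{\partial\Omega}=0$, and arrive at
\[ \tfrac{1}{2}\tfrac{\ud}{\ud t}\|\bar\theta\|_{L^2}^2 + N\|R_1\bar\theta\|_{L^2}^2 = -\int_\Omega \bar u_2\Bigl(\int_\bbT\partial_2\theta\,\ud x_1\Bigr)\bar\theta\,\ud x. \]
Writing $\bar\theta = -R_1^2\bar\theta + \partial_2^2(-\Delta)^{-1}\bar\theta$ (well-defined on the subspace $\{p\neq 0\}$ via the $\mathscr{B}_{p,q}$-expansion), the right-hand side is controlled using the $L^2_{x_1}L^\infty_{x_2}$-bound $\|\Lambda^{-1}R_1\bar\theta\|_{L^2_{x_1}L^\infty_{x_2}}\leq C\|R_1\bar\theta\|_{L^2}$, obtained by Cauchy-Schwarz on the $q$-sum combined with $\sum_q((2\pi p)^2+(\pi q/2)^2)^{-1}\leq C$ for $|p|\geq 1$. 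This produces $\tfrac{\ud}{\ud t}\|\bar\theta\|_{L^2}^2\leq -\tfrac{N}{2}\|R_1\bar\theta\|_{L^2}^2$ under the smallness hypothesis. The balancing inequality
\[ \tfrac{1}{M}\|\bar\theta\|_{L^2}^2 - \|R_1\bar\theta\|_{L^2}^2 \leq \tfrac{1}{M^m}\|\partial_1\theta\|_{H^{m-1}}^2,\qquad M\geq 1, \]
follows by splitting the $(p,q)$-sum at $M(2\pi p)^2\leq (2\pi p)^2+(\pi q/2)^2$ and using $1/(2\pi p)^{2(m-1)}\leq (2\pi p)^2$ for $|p|\geq 1, m\geq 2$. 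Choosing $M=1+Nt/m$ and applying Grönwall yields \eqref{tem_decay_est_O_1}.

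For \eqref{tem_decay_est_O_4}, thanks to $\|u\|_{L^2}=\|R_1\theta\|_{L^2}$ it suffices to estimate $\|R_1\theta\|_{L^2}$. Testing against $R_1^2\theta$ and exploiting $u_2=-R_1^2\theta$ exactly (with \emph{no} additional $e^{-Nt}$ constant-in-$x$ correction, a simplification over the torus case afforded by no-penetration) gives
\[ \tfrac{1}{2}\tfrac{\ud}{\ud t}\|R_1\theta\|_{L^2}^2 + N\|R_1^2\theta\|_{L^2}^2 = -\int_\Omega (u\cdot\nabla)\theta\, R_1^2\theta\,\ud x. \]
The nonlinear term is bounded by $\tfrac{C}{N}\|R_1\theta\|_{L^2}^2\|R_1\theta\|_{H^m}^2 + \tfrac{N}{2}\|R_1^2\theta\|_{L^2}^2$ via Sobolev embedding ($m>3$) and Young's inequality. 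Combining with the second balancing inequality $\tfrac{1}{M}\|R_1\theta\|_{L^2}^2-\|R_1^2\theta\|_{L^2}^2 \leq \tfrac{1}{M^{m+1}}\|R_1\theta\|_{H^m}^2$, the choice $M=1+Nt/(m+1)$, and Lemma~\ref{cal_lem} together with \eqref{sol_bdd} delivers \eqref{tem_decay_est_O_4}.

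The main obstacle will be book-keeping the boundary data encoded in $X^m$ and $Y^m$ when integrating by parts. Since functions in $X^m$ vanish on $\partial\Omega$ while normal derivatives of $Y^m$-functions vanish there, and $\theta,u_2\in X^m$, $u_1\in Y^m$ are preserved by the IPM dynamics, all the $\partial_2$-type manipulations are clean (as systematically formalised in Proposition~\ref{prop_energy_O}). One must also interpret $(-\Delta)^{-1}$ and $\Lambda^{-1}$ through the $\mathscr{B}_{p,q}$-expansion on the zero-horizontal-mean subspace, rather than through a free-space or periodic Fourier transform; this is routine but needs to be explicitly tracked.
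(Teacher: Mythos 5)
Your proposal is correct and coincides with the paper's own (only sketched) proof: the paper states that Propositions~\ref{prop_tem_3} and \ref{lem_tem_4} are proved exactly by transplanting the argument of Proposition~\ref{prop_tem_1} to the strip, replacing $e^{2\pi i n\cdot x}$ by $\mathscr{B}_{p,q}$, $\mathscr{C}_{p,q}$, which is precisely what you do. Your additional observations — that Lemma~\ref{lem_avg_O} kills the $e^{-Nt}$ zero-mode correction present in \eqref{u2_est}, and that the balancing inequalities survive because $|p|\geq 1$ on the zero-horizontal-mean subspace — are exactly the points where the two settings differ, and you handle them correctly.
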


\begin{lemma}\label{lem_tem_4}
    Let $\theta$ be a global classical solution of \eqref{eq:SIPM_T} with $N>0$. Suppose that \eqref{sol_bdd} and \eqref{sol_bdd1} are satisfied for any given $m \in \bbN$ with $m > 3$. Then, there exists a constant $C>0$ such that
    \begin{equation}\label{tem_decay_est_O_2}
        \| u_2(t) \|_{L^2} \leq C(1+Nt)^{-(1+\frac{m}{2})} \| \tht_0 \|_{H^m}
    \end{equation}
    and
    \begin{equation}\label{tem_decay_est_O_3}
        \| u_2(t) \|_{\dot{H}^m} \leq C(1+Nt)^{-1} \| \tht_0 \|_{H^m}
    \end{equation}
    for all $t>0$.
\end{lemma}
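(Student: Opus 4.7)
The plan is to closely mirror the proof of Lemma~\ref{lem_tem_2} for the torus case, substituting the Fourier basis $\{e^{2\pi i n\cdot x}\}$ with $\{\mathscr{B}_{p,q},\mathscr{C}_{p,q}\}$ adapted to the strip, and exploiting the fact that the function spaces $X^m$, $Y^m$ are built precisely so that the integrations by parts needed in the energy estimates have vanishing boundary terms. From \eqref{sf_u_O} combined with \eqref{avg_est_2_O}, the horizontal mean of $u_2$ vanishes, so $u_2=\bar u_2$, and one verifies directly from \eqref{sf_u_O} that $\tht\in X^m$ forces $u_2\in X^m$ and $u_1\in Y^m$.

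I first prove \eqref{tem_decay_est_O_3}. Multiply the $\theta$ equation in \eqref{eq:SIPM_T} by $-(-\Delta)^{m-1}\partial_1^2 u_2$ and integrate over $\Omega$. Thanks to the parity conditions defining $X^m,Y^m$, every boundary contribution produced by integration by parts cancels, and the Plancherel identity for $\{\mathscr{B}_{p,q}\}$ converts the linear term into $-N\|R_1 u_2\|_{\dot H^m}^2$. The nonlinear term $\sum_{|\alpha|=m-1}\int_\Omega \partial^\alpha\partial_1(u\cdot\nabla)\theta\,\partial^\alpha\partial_1 u_2\,dx$ is treated exactly as in Lemma~\ref{lem_tem_2}: splitting $(u\cdot\nabla)\theta = u_1\partial_1\theta + u_2\partial_2\theta$, invoking the calculus inequalities of \cite[Lemma 4.2]{Cordoba} adapted to $X^m,Y^m$, and exploiting the divergence-free condition on the worst piece carrying $\partial_2 u_2$, produces a bound of the form $C\|\tht\|_{H^m}\|R_1\tht\|_{H^m}\|R_1 u_2\|_{\dot H^m} + C\|\nabla u_2\|_{L^\infty}\|u_2\|_{\dot H^m}^2$. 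Applying the balancing-out inequality
\[
\frac{1}{M}\|u_2\|_{\dot H^m}^2 - \|R_1 u_2\|_{\dot H^m}^2 \le \frac{1}{M^2}\|R_1\theta\|_{H^m}^2,
\]
which is verified on the Fourier side with $|n|^2$ replaced by $(2\pi p)^2+(\pi q/2)^2$, choosing $M=1+Nt/2$, multiplying by $M^2$, and Grönwall-integrating with the aid of \eqref{sol_bdd} and \eqref{sol_bdd1} yields \eqref{tem_decay_est_O_3}.

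Second, for \eqref{tem_decay_est_O_2}, I multiply the $\theta$ equation by $R_1^2 u_2$, whose Fourier multiplier keeps $R_1^2 u_2\in X^m$. The linear term contributes $-N\|R_1 u_2\|_{L^2}^2$. For the nonlinear term I rewrite $R_1^2 u_2 = -\partial_1^2(-\Delta)^{-1}\bar u_2$ and integrate by parts against $(u\cdot\nabla)\theta$; the critical piece containing $\|\Delta u_2\|_{L^\infty}$ is handled via the interpolation $\|\Delta u_2\|_{L^\infty}\le C\|u_2\|_{L^2}^{1-3/m}\|u_2\|_{\dot H^m}^{3/m}$, valid for $m>3$. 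The balancing inequality now reads
\[
\frac{1}{M}\|u_2\|_{L^2}^2 - \|R_1 u_2\|_{L^2}^2 \le \frac{1}{M^{m+2}}\|R_1\theta\|_{H^m}^2,
\]
so taking $M=1+Nt/(m+2)$, multiplying by $M^{m+2}$, and Grönwall-integrating, with \eqref{tem_decay_est_O_3} and \eqref{tem_decay_est_O_4} plugged into the interpolation term, closes the estimate and delivers \eqref{tem_decay_est_O_2}.

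The principal technical obstacle is in the first step: one must verify that \emph{every} integration by parts against a higher-order derivative produces no boundary contribution. Unlike on $\bbT^2$, where the basis is made of honest complex exponentials, here $u_1$, $u_2$, $\tht$ live in different spaces with different parity conditions on $\partial\Omega$, so a spurious mismatch at any stage would corrupt the linear damping term and prevent the balancing-out step from closing. The parity bookkeeping performed in the proof of Proposition~\ref{prop_energy_O}, together with the product algebra of $\mathscr{B}_{p,q},\mathscr{C}_{p,q}$, is precisely what must be rerun here; once granted, the remaining estimates are formally identical to their $\bbT^2$ analogues in Lemma~\ref{lem_tem_2}.
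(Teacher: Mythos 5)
Your proposal is correct and follows exactly the route the paper intends: the paper itself gives no separate proof of this lemma, stating only that it "can be proved analogously to the ones for the $\bbT^2$ setting" by replacing $e^{2\pi i n\cdot x}$ with $\mathscr{B}_{p,q}$, $\mathscr{C}_{p,q}$, which is precisely the transcription of Lemma~\ref{lem_tem_2} you carry out (including the correct observation that $u_2=\bar u_2$ on the strip, which removes the $e^{-2Nt}$ correction terms present in the torus case). You also rightly identify the parity/boundary bookkeeping from Proposition~\ref{prop_energy_O} as the only genuinely new ingredient to verify.
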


\newpage

\section*{Competing Interests}
 The authors have no competing interests to declare that are relevant to the content of this article.
\section*{Data Availability}
Data sharing not applicable to this article as no datasets were generated or analysed during the current study.

\section*{Acknowledgment}

\noindent Junha Kim was supported by a KIAS Individual Grant (MG086501) at Korea Institute for Advanced Study. We sincerely thank the anonymous referees for very careful reading of the manuscript and providing several suggestions, which have been reflected in the paper. 
\\

\appendix

\bibliographystyle{amsplain}


\newpage

\section{Appendix}
\subsection{A calculus lemma}
\begin{lemma}\label{cal_lem}
    Let $f$ be a positive $C^1$ function. Suppose that
    \begin{equation*}
        \partial_t((1+\frac{Nt}{1+s})^{1+s}f(t)) \leq A(t) + B(t) (1+\frac{Nt}{1+s})^{1+s}f(t)
    \end{equation*}
    for any $N>0$, $t>0$, and $s \geq 0$. Then, \begin{equation*}
        f(t) \leq \frac{f(0) + \int_0^\infty A(t)\,\ud t}{(1+\frac{Nt}{1+s})^{1+s}}\exp\left(\int_0^{\infty}B(t) \,\ud t\right).
    \end{equation*}
\end{lemma}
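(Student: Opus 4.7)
\textbf{Proof proposal for Lemma~\ref{cal_lem}.}

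The plan is to reduce the inequality to a standard scalar linear Grönwall inequality by introducing the weighted quantity
\[
    g(t) := \Bigl(1+\tfrac{Nt}{1+s}\Bigr)^{1+s}f(t),
\]
which is well-defined and $C^1$ since $f$ is positive $C^1$. The hypothesis is exactly $g'(t) \leq A(t) + B(t) g(t)$, so the weight has been absorbed into the unknown and no further trace of $N$ or $s$ remains on the differential side.

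Next, I would apply the standard integrating factor $\mu(t) := \exp\bigl(-\int_0^t B(\tau)\,\ud\tau\bigr)$ (implicitly assuming, as is customary and is needed for the stated conclusion, that $A,B\geq 0$ with $\int_0^\infty A, \int_0^\infty B <\infty$; otherwise the right-hand side of the claimed estimate is vacuous). Multiplying $g'\leq A+Bg$ by $\mu$ gives
\[
    \frac{\ud}{\ud t}\bigl(\mu(t) g(t)\bigr) \leq \mu(t) A(t).
\]
Integrating from $0$ to $t$ and using $g(0) = f(0)$ yields
\[
    \mu(t) g(t) \leq f(0) + \int_0^t \mu(\tau) A(\tau)\,\ud \tau.
\]

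Finally, since $B\geq 0$ implies $\mu(\tau)\leq 1$ for all $\tau\geq 0$, we bound $\int_0^t \mu(\tau) A(\tau)\,\ud\tau \leq \int_0^\infty A(\tau)\,\ud\tau$, and $\mu(t)^{-1}=\exp\bigl(\int_0^t B\bigr)\leq \exp\bigl(\int_0^\infty B\bigr)$. Dividing through by $\mu(t)\bigl(1+\tfrac{Nt}{1+s}\bigr)^{1+s}$ and unfolding the definition of $g$ recovers exactly
\[
    f(t) \leq \frac{f(0) + \int_0^\infty A(t)\,\ud t}{\bigl(1+\tfrac{Nt}{1+s}\bigr)^{1+s}}\exp\!\left(\int_0^\infty B(t)\,\ud t\right).
\]

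There is no real obstacle here; the entire content is that once the weighted quantity $g$ is introduced, the inequality is a textbook linear Grönwall inequality. The only subtlety is flagging the sign/integrability assumptions on $A$ and $B$ under which the claimed bound actually holds, since the statement as written does not record them but the application in the main theorem (to time integrals of $\|R_1\theta\|_{H^m}^2$ and $\|\partial_2 u_2\|_{L^\infty}$, which are nonnegative and $L^1$ in $t$ by \eqref{sol_bdd}--\eqref{sol_bdd1}) makes them automatic.
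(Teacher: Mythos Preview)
Your proof is correct and follows exactly the same approach as the paper, which simply invokes Gr\"onwall's inequality for the weighted quantity $g(t)=(1+\tfrac{Nt}{1+s})^{1+s}f(t)$. Your version is in fact more detailed, and your remark about the implicit sign and integrability assumptions on $A$ and $B$ is a useful clarification that the paper omits.
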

\begin{proof}
    By Gr\"onwall's inequality, we get
    \begin{equation*}
        (1+\frac{Nt}{1+s})^{1+s}f(t) \leq \left( \int_0^{\infty} A(t)\,\ud t + f(0) \right) \exp\left(\int_0^{\infty}B(t) \,\ud t\right).
    \end{equation*}
This completes the proof.
\end{proof}

\subsection{Proof of Proposition~\ref{prop_energy_R}}
Here, we consider \eqref{eq:SIPM} with $\sigma = 0$. We introduce a commutator estimate which is crucial to prove \Black{Proposition~\ref{prop_energy_R}.}
\begin{lemma}\label{lem_commu}
    Let $s >2$. Then, there exists a constant $C=C(s)>0$ such that
    \begin{equation}\label{commutator}
    \left|\int_{\bbR^2} (\Lambda^{s-2}\partial_2^2(u_2 \partial_2 \tht) - u_2 \Lambda^{s-2}\partial_2^3 \tht) \Lambda^{s-2} \partial_2^2 \tht \,\ud x \right| \leq C\| |\xi| \mathscr{F} u_2 \|_{L^1} \| \tht \|_{H^s}^2 + C\| R_1 \tht \|_{H^s}^2 \| \tht \|_{H^s}.
    \end{equation}
    for any $\tht \in H^s(\bbR^2)$ and \Black{$u_2 = -\partial_1^2 (-\Delta)^{-1} \tht$}
\end{lemma}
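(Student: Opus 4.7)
The plan is to prove the commutator estimate purely by Fourier analysis, exploiting the explicit structural identity $\mathscr{F}u_2(\zeta) = \frac{\zeta_1^2}{|\zeta|^2}\mathscr{F}\tht(\zeta)$ that comes from $u_2 = -\partial_1^2(-\Delta)^{-1}\tht$. First I would apply Plancherel to rewrite the LHS as a bilinear integral in $(\xi,\eta)$ whose integrand is controlled by $|\mathscr{F}u_2(\xi-\eta)|\cdot|\eta_2|\cdot\bigl||\xi|^{s-2}\xi_2^2-|\eta|^{s-2}\eta_2^2\bigr|\cdot|\mathscr{F}\tht(\eta)|\cdot|\xi|^{s-2}\xi_2^2\cdot|\mathscr{F}\tht(\xi)|$. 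Then I would split the integration into two regimes by comparing $|\xi-\eta|$ to $|\eta|$: Region $A=\{2|\xi-\eta|\leq|\eta|\}$ (where $|\xi|\sim|\eta|$, i.e.\ low frequency of $u_2$) and Region $B=\{2|\xi-\eta|>|\eta|\}$ (where both $|\xi|,|\eta|\leq 3|\xi-\eta|$, i.e.\ high frequency of $u_2$).

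In Region $A$, the mean value theorem applied to $\Phi(v):=|v|^{s-2}v_2^2$ shows that the symbol difference is bounded by $C|\xi-\eta||\eta|^{s-1}$, because $|\nabla\Phi(v)|\lesssim|v|^{s-1}$ and the segment joining $\xi$ and $\eta$ stays in the annulus $|v|\sim|\eta|$, away from the origin. After absorbing $|\eta_2|\leq|\eta|$ and $|\xi|^{s-2}\xi_2^2\leq|\xi|^s$, a routine application of Young's convolution inequality with the convolution kernel $|\zeta||\mathscr{F}u_2(\zeta)|\in L^1$ and the two $L^2$ factors $|\eta|^s|\mathscr{F}\tht(\eta)|$ and $|\xi|^s|\mathscr{F}\tht(\xi)|$ delivers the first RHS term $\||\xi|\mathscr{F}u_2\|_{L^1}\|\tht\|_{H^s}^2$.

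The core of the argument is Region $B$, where the symbol difference is only crudely bounded by $|\xi|^s+|\eta|^s\lesssim|\xi-\eta|^s$. Here I would use the key rewriting $|\mathscr{F}u_2(\zeta)|=\frac{|\zeta_1|}{|\zeta|}|\mathscr{F}(R_1\tht)(\zeta)|$, which absorbs one Riesz factor into $\mathscr{F}(R_1\tht)(\xi-\eta)$. The second Riesz factor is extracted from the residual $\frac{|\zeta_1|}{|\zeta|}$ via the triangle-type inequality $\frac{|\zeta_1|}{|\zeta|}\leq C\bigl(\frac{|\xi_1|}{|\xi|}+\frac{|\eta_1|}{|\eta|}\bigr)$, valid in $B$ thanks to $|\zeta_1|\leq|\xi_1|+|\eta_1|$ together with $|\xi|,|\eta|\leq 3|\zeta|$. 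Splitting the integrand by this inequality produces two sub-pieces, in which $\frac{|\xi_1|}{|\xi|}|\mathscr{F}\tht(\xi)|$ or $\frac{|\eta_1|}{|\eta|}|\mathscr{F}\tht(\eta)|$ reassembles into $|\mathscr{F}(R_1\tht)|$ on the corresponding variable. Young's inequality on each sub-piece, combined with the auxiliary bounds $\||\eta|\mathscr{F}\tht\|_{L^1}\lesssim\|\tht\|_{H^s}$ and $\||\eta|\mathscr{F}(R_1\tht)\|_{L^1}\lesssim\|R_1\tht\|_{H^s}$ (which hold for $s>2$ in $\bbR^2$ by Cauchy--Schwarz against the weight $(1+|\eta|^2)^{-s/2}|\eta|\in L^2$), then closes each sub-piece at $C\|R_1\tht\|_{H^s}^2\|\tht\|_{H^s}$, matching the second RHS term.

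The hard part is Region $B$: naive isotropic methods (direct bound $\|u_2\|_{\dot H^s}\leq\|R_1\tht\|_{\dot H^s}$, or standard Kato--Ponce and Coifman--Meyer commutator estimates) only yield the weaker $\|R_1\tht\|_{H^s}\|\tht\|_{H^s}^2$, which does not close the energy inequality of Proposition~\ref{prop_energy_R} on an infinite time horizon. The two-$R_1$ structure is indispensable and is recovered only through the anisotropic splitting of $|\zeta_1|/|\zeta|$ in the high-frequency regime of $u_2$ described above, which is precisely the ``derivative-transfer'' mechanism between $\partial_1$ and $\partial_2$ advertised in the introduction.
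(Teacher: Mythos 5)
Your proof is correct and follows essentially the same route as the paper: Plancherel, a two-region frequency decomposition according to whether the frequency of $u_2$ is small or comparable relative to the others, the mean value theorem applied to the symbol $v\mapsto |v|^{s-2}v_2^2$ in the low-frequency region, and a redistribution of the $\partial_1$-structure of $u_2$ onto the two $\tht$ factors in the high-frequency region. The only (cosmetic) difference is in that last step: the paper uses the exact algebraic identity $\mathscr{F}u_2(\eta)=(i(\xi_1-\eta_1)-i\xi_1)F(\eta)$ with $F(\eta)=i\eta_1|\eta|^{-2}\mathscr{F}\tht(\eta)$ to move a full derivative onto one of the other factors, whereas you split the degree-zero symbol pointwise via $\tfrac{|\zeta_1|}{|\zeta|}\leq 3\bigl(\tfrac{|\xi_1|}{|\xi|}+\tfrac{|\eta_1|}{|\eta|}\bigr)$, which is valid in that region and yields the same $\|R_1\tht\|_{H^s}^2\|\tht\|_{H^s}$ bound.
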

\begin{proof}
By Plancherel theorem, we have \begin{gather*}
    \int_{\bbR^2} (\Lambda^{s-2}\partial_2^2(u_2 \partial_2 \tht) - u_2 \Lambda^{s-2}\partial_2^3 \tht) \Lambda^{s-2} \partial_2^2 \tht \,\ud x \\
    = \int_{\bbR^2} \left( \int_{\bbR^2} (|\xi|^{s-2}(i\xi_2)^2-|\xi-\eta|^{s-2}(i(\xi_2-\eta_2))^2) \mathscr{F}u_2(\eta)\mathscr{F}\partial_2\tht(\xi-\eta)\,\ud\eta \right) \overline{|\xi|^{s-2}(i\xi_2)^2 \mathscr{F}\tht(\xi)} \,\ud \xi.
\end{gather*}
We split the integral into the two regions $\Omega_{1}:=\{\eta\in\bbR^2:|\eta|<|\xi|/4\}$ and $\Omega_{2}:=\{\eta\in\bbR^2:|\eta|\geq|\xi|/4\}$. In the first region $\Omega_1$, we claim that
\begin{equation*}
    \big||\xi|^{s-2}(i\xi_2)^2-|\xi-\eta|^{s-2}(i(\xi_2-\eta_2))^2\big|\leq C|\xi-\eta|^{s-1}|\eta|
\end{equation*}
for some constant $C>0$ independent of the choice of $\xi \in \bbR^2$. For this purpose, we define a function $h:[0,1]\to\bbR$ by $h(t)=-|\xi-t\eta|^{s-2}|\xi_2-t\eta_2|^2$. Since $|\eta|<|\xi|/4$ in the region $\Omega_1$, $h$ is smooth on $[0,1]$ for each given $\xi \in \bbR^2$. Observing that
  \begin{equation*}
      h'(t)=(s-2)(\xi-t\eta)\cdot\eta |\xi-t\eta|^{-(4-s)}|\xi_2-t\eta_2|^2 + 2\eta_2|\xi-t\eta|^{s-2}(\xi_2-t\eta_2),
  \end{equation*}
we apply the mean value theorem to $h$ as
\begin{equation*}
    \big||\xi|^{s-2}(i\xi_2)^2-|\xi-\eta|^{s-2}(i(\xi_2-\eta_2))^2\big|\leq C|\xi-t\eta|^{s-1}|\eta|, \qquad t \in (0,1).
\end{equation*} However, it is clear that $$\big| |\xi-t\eta| - |\xi-\eta| \big| \leq (1-t)|\eta| \leq \frac{1-t}{3} |\xi-\eta| \leq \frac{1}{3}|\xi-\eta|.$$ Thus, we deduce the claim. Using it, we have
\begin{gather*}
    \int_{\bbR^2} \left( \int_{\Omega_1} (|\xi|^{s-2}(i\xi_2)^2-|\xi-\eta|^{s-2}(i(\xi_2-\eta_2))^2) \mathscr{F}u_2(\eta)\mathscr{F}\partial_2\tht(\xi-\eta)\,\ud\eta \right) \overline{|\xi|^{s-2}(i\xi_2)^2 \mathscr{F}\tht(\xi)} \,\ud \xi \\
    \leq \left\| \int_{\bbR^2} |\xi-\eta|^{s-1}|\eta| |\mathscr{F}u_2(\eta)||\mathscr{F}\partial_2\tht(\xi-\eta)| \,\ud\eta \right \|_{L^2} \big\| |\xi|^{s-2}(i\xi_2)^2 \mathscr{F}\tht(\xi) \big\|_{L^2} \\
    \leq C\| |\xi| \mathscr{F} u_2 \|_{L^1} \| \tht \|_{H^s}^2.
\end{gather*}

In the second region $\Omega_2,$ we note $|\xi|\leq 4|\eta|$ and $|\xi-\eta|\leq 5|\eta|$, which implies
\begin{equation}\label{eta_est}
    \big||\xi|^{s-2}(i\xi_2)^2-|\xi-\eta|^{s-2}(i(\xi_2-\eta_2))^2 \big| \leq C|\eta|^s
\end{equation}On the other hand, there holds $$\mathscr{F}u_2(\eta) = (i(\xi_1-\eta_1)-i\xi_1) \left(\frac{i\eta_1}{|\eta|^2} \mathscr{F}u_2(\eta) - \frac{i\eta_2}{|\eta|^2} \mathscr{F}u_1(\eta) \right).$$ For brevity, let us define the quantity $F:\bbC\to\bbC$ by
$$F(\eta)=\frac{i\eta_1}{|\eta|^2} \mathscr{F}u_2(\eta) - \frac{i\eta_2}{|\eta|^2} \mathscr{F}u_1(\eta).$$
The above observations lead to
\begin{gather*}
    \int_{\bbR^2} \left( \int_{\Omega_2} (|\xi|^{s-2}(i\xi_2)^2-|\xi-\eta|^{s-2}(i(\xi_2-\eta_2))^2) \mathscr{F}u_2(\eta)\mathscr{F}\partial_2\tht(\xi-\eta)\,\ud\eta \right) \overline{|\xi|^{s-2}(i\xi_2)^2 \mathscr{F}\tht(\xi)} \,\ud \xi \\
    = \int_{\bbR^2} \left( \int_{\Omega_2} (|\xi|^{s-2}(i\xi_2)^2-|\xi-\eta|^{s-2}(i(\xi_2-\eta_2))^2) F(\eta) \mathscr{F} \partial_1\partial_2\tht(\xi-\eta)\,\ud\eta \right) \overline{|\xi|^{s-2}(i\xi_2)^2 \mathscr{F}\tht(\xi)} \,\ud \xi \\
    +\int_{\bbR^2} \left( \int_{\Omega_2} (|\xi|^{s-2}(i\xi_2)^2-|\xi-\eta|^{s-2}(i(\xi_2-\eta_2))^2) F(\eta)\mathscr{F}\partial_2\tht(\xi-\eta)\,\ud\eta \right) \overline{|\xi|^{s-2}(i\xi_2)^2 \mathscr{F}\partial_1\tht(\xi)} \,\ud \xi.
\end{gather*}
By \eqref{eta_est}, the first integral on the \Black{right-hand side is bounded by}
\begin{gather*}
    C\left| \int_{\bbR^2} \int_{\Omega_2} |\mathscr{F}\Lambda^{s-1} u(\eta)||\mathscr{F} \partial_1\partial_2\tht(\xi-\eta)| \,\ud\eta \bigg| \overline{|\xi|^{s-2}(i\xi_2)^2 \mathscr{F}\tht(\xi)} \bigg| \,\ud \xi \right| \leq C\| R_1 \tht \|_{H^s}^2 \| \tht \|_{H^s}.
\end{gather*}
For the second one, using \eqref{eta_est} and $$\overline{|\xi|^{s-2}(i\xi_2)^2 \mathscr{F}\partial_1\tht(\xi)} = -i((\xi_2-\eta_2) + \eta_2) \overline{|\xi|^{s-2}(i\xi_2) \mathscr{F}\partial_1\tht(\xi)},$$ we obtain the upper bound
\begin{gather*}
C\left| \int_{\bbR^2} \int_{\Omega_2} |\mathscr{F}\Lambda^{s} u(\eta)||\mathscr{F}\partial_2\tht(\xi-\eta)| \,\ud\eta \bigg| \overline{|\xi|^{s-2}(i\xi_2) \mathscr{F}\partial_1\tht(\xi)} \bigg| \,\ud \xi \right| \\
+C\left| \int_{\bbR^2} \int_{\Omega_2} |\mathscr{F}\Lambda^{s-1} u(\eta)||\mathscr{F}\partial_2^2\tht(\xi-\eta)| \,\ud\eta \bigg| \overline{|\xi|^{s-2}(i\xi_2) \mathscr{F}\partial_1\tht(\xi)} \bigg|\,\ud \xi \right| \\
\leq C\| R_1 \tht \|_{H^s}^2 \| \tht \|_{H^s}.
\end{gather*}
This completes the proof.
\end{proof}

\begin{proof}[Proof of Proposition~\ref{prop_energy_R}]
We are ready to prove Proposition~\ref{prop_energy_R}. 
From \eqref{eq:SIPM}, we have
\begin{equation*}
	\frac 12 \frac {\ud}{\ud t} \int|\Lambda^s \theta|^2 \,\ud x + \int |R_1 \Lambda^s \theta|^2 \,\ud x = - \int \Lambda^s(u \cdot \nabla) \theta \Lambda^s \theta \,\ud x = - \int \Lambda^{s-2}(u \cdot \nabla) \theta \Lambda^{s-2}(-\Delta)^2 \theta \,\ud x.
\end{equation*}
From the definition $\Lambda^2 = -\Delta$, the right-hand side can be rewritten as
\begin{equation*}
	\int \Lambda^{s-2}(u \cdot \nabla) \theta \Lambda^{s-2}(-\Delta)^2 \theta \,\ud x = \sum_{i,j=1,2} \int \Lambda^{s-2} \partial_i\partial_j(u \cdot \nabla) \theta \Lambda^{s-2} \partial_i\partial_j\theta \,\ud x .
\end{equation*}
We estimate the cases where $j=1$ first and the nontrivial case \Black{$(i,j,k)=(2,2,2)$} in a separate way.

For $j=1$, we estimate as
\begin{equation*}
    \begin{split}
        &\int\Lambda^{s-2}\partial_i\partial_1(u\cdot\nb)\theta \Lambda^{s-2}\partial_i\partial_1\theta \,\ud x\\
         &\hphantom{\qquad\qquad} =\int\Lambda^{s-2}\partial_i^2 (u\cdot\nb)\theta \Lambda^{s-2}\partial_1^2 \theta \,\ud x \\
         &\hphantom{\qquad\qquad} \leq \Big(\|\Lambda^{s-2}[(u\cdot\nb)\partial_i^2\theta]-(u\cdot\nb)\Lambda^{s-2}\partial_i^2\theta\|_{L^2}+ \| \Lambda^{s-2} \partial_i(\partial_i u \cdot \nabla) \tht \|_{L^2} \Big) \|R_1^2\theta\|_{H^s} \\
        &\hphantom{\qquad\qquad}\leq C\|R_1\theta\|_{H^s}^2\|\theta\|_{H^s},
    \end{split}
\end{equation*}
where we used the higher-order commutator estimate \cite[Theorem 1.2]{FMRR} for Riesz potential, $H^{s-2}$ algebra with $s-2>1$, and the simple fact $\|u\|_{H^s}=\|R_1\theta\|_{H^s}.$ It remains to estimate the case $(i,j,k)=(2,2,2),$ namely,
\begin{equation*}
    \int_{\bbR^2} \Lambda^{s-2}\partial_2^2(u \cdot \nabla)\tht \Lambda^{s-2} \partial_2^2 \tht \,\ud x.
\end{equation*} Using the cancellation property, we separate the integral into $$\int_{\bbR^2} (\Lambda^{s-2}\partial_2^2(u_1 \partial_1 \tht) - u_1 \Lambda^{s-2}\partial_1 \partial_2^2 \tht) \Lambda^{s-2} \partial_2^2 \tht \,\ud x + \int_{\bbR^2} (\Lambda^{s-2}\partial_2^2(u_2 \partial_2 \tht) - u_2 \Lambda^{s-2} \partial_2^3 \tht) \Lambda^{s-2} \partial_2^2 \tht \,\ud x.$$
The first integral can be bounded similarly by $C\|R_1\theta\|_{H^s}^2\|\theta\|_{H^s}$. Recalling \eqref{commutator} gives the upper bound of the second integral by $$C\| |\xi| \mathscr{F} u_2 \|_{L^1} \| \tht \|_{H^s}^2 + C\| R_1 \tht \|_{H^s}^2 \| \tht \|_{H^s}.$$ Thus, we deduce \eqref{energy_R}.
\end{proof}

\subsection{Additional properties of $X^m$ and $Y^m$}
Here we gather the lemmas that can be easily proved. Still, we refer to the recent preprint \cite{JK2}, where the proofs will be presented, for the sake of completeness. The following two lemmas are about to which function space the product $fg$ belongs, where $f$ and $g$ are in either $X^m$ or $Y^m$.
\begin{lemma}[\cite{JK2}]\label{lem_basis1}
	Let $q_e$ and $q_o$ be even and odd number respectively. Then, we have
	\begin{equation*}
		\begin{aligned}
			-\sin(\frac {\pi}{2}q_e x_d) \sin(\frac {\pi}{2} q_o x_d) &= \frac 12 \left( \cos(\frac {\pi}{2}(q_e + q_o)x_d) - \cos(\frac {\pi}{2}(q_e - q_o)x_d) \right), \\
			\sin(\frac {\pi}{2}q_e x_d) \cos(\frac {\pi}{2}q'_e x_d) &= \frac 12 \left( \sin(\frac {\pi}{2}(q_e+q_e')x_d) + \sin(\frac {\pi}{2}(q_e-q_e')x_d) \right), \\
			-\cos(\frac {\pi}{2} q_o x_d) \sin(\frac {\pi}{2} q_o' x_d) &= -\frac 12 \left( \sin(\frac {\pi}{2}(q'_o+q_o)x_d) + \sin(\frac {\pi}{2}(q_o'-q_o)x_d) \right), \\
			\cos(\frac {\pi}{2}q_o x_d) \cos(\frac {\pi}{2} q_e x_d) &= \frac 12 \left( \cos(\frac {\pi}{2}(q_o+q_e)x_d) + \cos(\frac {\pi}{2}(q_o - q_e)x_d) \right).
		\end{aligned}
	\end{equation*}
\end{lemma}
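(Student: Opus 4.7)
The four claimed identities are, at their core, the standard product-to-sum trigonometric identities applied to arguments of the form $\tfrac{\pi}{2}q x_d$. The parity hypotheses on $q_e$ (even) and $q_o$ (odd) are not needed for the identities to hold as numerical equalities; they only serve to label which basis functions from the families $\{b_q\}$ and $\{c_q\}$ are being multiplied, so that the lemma can be invoked cleanly in the subsequent Lemma~\ref{lem_basis2}. Thus the plan is simply to derive each identity from the angle-addition formulas
\begin{equation*}
\cos(A \pm B) = \cos A \cos B \mp \sin A \sin B, \qquad \sin(A \pm B) = \sin A \cos B \pm \cos A \sin B,
\end{equation*}
by adding and subtracting the two versions with signs $\pm$ to isolate the product on one side.

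Concretely, I would proceed in four short steps, one per line of the lemma, each obtained by setting $A = \tfrac{\pi}{2}q_e x_d$ (or $\tfrac{\pi}{2}q_o x_d$) and $B$ analogously. For the first line, subtracting $\cos(A+B) = \cos A\cos B - \sin A\sin B$ from $\cos(A-B) = \cos A\cos B + \sin A\sin B$ and dividing by $2$ yields $\sin A\sin B = \tfrac{1}{2}(\cos(A-B) - \cos(A+B))$; negating both sides gives the first equality. For the second line, adding $\sin(A+B)$ and $\sin(A-B)$ and dividing by $2$ produces $\sin A\cos B = \tfrac12(\sin(A+B)+\sin(A-B))$. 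The third line is the same identity as the second but with the roles of $A$ and $B$ swapped and an overall sign, using $\cos A \sin B = \tfrac12(\sin(B+A) + \sin(B-A))$. The fourth follows from adding the two forms of $\cos(A \pm B)$.

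Since each identity is a direct consequence of the classical addition formulas, there is no real obstacle and no subtlety in the argument beyond keeping track of signs and of which argument plays the role of $A$ versus $B$. The one small bookkeeping remark worth including is that in the third identity the author writes the second argument as $q_o' - q_o$ rather than $q_o - q_o'$, which is consistent with taking $A = \tfrac{\pi}{2}q_o x_d$ and $B = \tfrac{\pi}{2}q_o' x_d$ in $\cos A \sin B = \tfrac12(\sin(B+A) + \sin(B-A))$. I would present the proof as four two-line verifications and note at the end that the parity hypotheses play no role in the equalities themselves, but rather determine which basis functions of $X^m$ or $Y^m$ the left-hand and right-hand sides represent, which is why the lemma is stated in this form.
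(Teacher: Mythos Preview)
Your proposal is correct: the four equalities are precisely the classical product-to-sum identities, and your derivation from the angle-addition formulas is complete and accurate, including the sign and argument-ordering check in the third line. The paper itself does not supply a proof of this lemma but defers to \cite{JK2}, so there is no alternative approach to compare against; your elementary verification is exactly the expected one.
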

\begin{lemma}[\cite{JK2}]\label{lem_basis2}
	Let $q-q'$ and $q-q''$ be odd and even number respectively. Then, we have
	\begin{equation*}
		\begin{aligned}
			-\sin(\frac {\pi}{2}q x_d) \sin(\frac {\pi}{2} q'' x_d) &= \frac 12 \left( \cos(\frac {\pi}{2}(q + q'')x_d) - \cos(\frac {\pi}{2}(q - q'')x_d) \right), \\
			\sin(\frac {\pi}{2}q x_d) \cos(\frac {\pi}{2}q' x_d) &= \frac 12 \left( \sin(\frac {\pi}{2}(q+q')x_d) + \sin(\frac {\pi}{2}(q-q')x_d) \right), \\
			-\cos(\frac {\pi}{2} q x_d) \sin(\frac {\pi}{2} q' x_d) &=- \frac 12 \left( \sin(\frac {\pi}{2}(q'+q)x_d) + \sin(\frac {\pi}{2}(q'-q)x_d) \right), \\
			\cos(\frac {\pi}{2}q x_d) \cos(\frac {\pi}{2} q'' x_d) &= \frac 12 \left( \cos(\frac {\pi}{2}(q+q'')x_d) + \cos(\frac {\pi}{2}(q - q'')x_d) \right).
		\end{aligned}
	\end{equation*}
\end{lemma}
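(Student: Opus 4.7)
The plan is to recognize that these four equalities are nothing more than the classical product-to-sum formulas, specialized to arguments of the form $A = \frac{\pi}{2} q x_d$ and $B = \frac{\pi}{2} q'' x_d$ (for the $\sin\sin$ and $\cos\cos$ identities) or $B = \frac{\pi}{2} q' x_d$ (for the two mixed identities). The proof therefore reduces to a direct substitution into the universal trigonometric identities
\begin{equation*}
\begin{aligned}
-\sin A \sin B &= \tfrac{1}{2}\bigl(\cos(A+B) - \cos(A-B)\bigr),\\
\cos A \cos B &= \tfrac{1}{2}\bigl(\cos(A+B) + \cos(A-B)\bigr),\\
\sin A \cos B &= \tfrac{1}{2}\bigl(\sin(A+B) + \sin(A-B)\bigr),\\
\cos A \sin B &= \tfrac{1}{2}\bigl(\sin(A+B) - \sin(A-B)\bigr),
\end{aligned}
\end{equation*}
each of which follows in one line from the sum-and-difference formulas $\cos(A\pm B) = \cos A\cos B \mp \sin A\sin B$ and $\sin(A\pm B) = \sin A\cos B \pm \cos A\sin B$ by addition or subtraction of the two versions.

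Carrying out the substitution, the first and fourth displayed equations of the lemma come straight from setting $A = \frac{\pi}{2}q x_d$ and $B = \frac{\pi}{2}q'' x_d$ in the $\sin\sin$ and $\cos\cos$ identities, while the second comes from $A = \frac{\pi}{2}q x_d$ and $B = \frac{\pi}{2}q' x_d$ in the $\sin\cos$ identity. The only small bit of bookkeeping occurs in the third equality: starting from $-\cos A\sin B = \tfrac12(\sin(A-B) - \sin(A+B))$ and using the oddness of $\sin$ to write $\sin(\frac{\pi}{2}(q - q') x_d) = -\sin(\frac{\pi}{2}(q' - q)x_d)$, I would recast the right-hand side as $-\tfrac{1}{2}\bigl(\sin(\frac{\pi}{2}(q'+q)x_d) + \sin(\frac{\pi}{2}(q'-q)x_d)\bigr)$, which is precisely the form stated in the lemma.

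The parity hypotheses that $q - q'$ is odd and $q - q''$ is even play no role in establishing the identities themselves, since the product-to-sum formulas hold for arbitrary real $q, q', q''$. Their purpose is external to this particular lemma: they guarantee that the output frequencies $q \pm q'$ and $q \pm q''$ carry the parities required for the right-hand sides to land in the correct basis among $\{b_q\}$ and $\{c_q\}$ when the lemma is invoked in Section~\ref{sec_Xm} to analyze products of functions from $X^m$ and $Y^m$. Consequently there is no genuine obstacle in the proof; the entire argument is a mechanical one-line substitution, and in principle the statement could be cited verbatim from \cite{JK2}, where the same formulas are recorded.
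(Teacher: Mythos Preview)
Your proposal is correct and is exactly the natural approach: these four lines are immediate specializations of the standard product-to-sum formulas, and your observation that the parity hypotheses on $q-q'$ and $q-q''$ are not needed for the identities themselves but only for the subsequent application to the bases $\{b_q\}$, $\{c_q\}$ is spot on. The paper in fact gives no proof of this lemma at all, stating only that it ``can be easily proved'' and deferring to \cite{JK2}; your argument is precisely the one-line verification one would expect there.
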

The last lemma is the $\Omega$-version of the special case of Young's convolution inequality. 
\begin{lemma}[\cite{JK2}]\label{lem_conv_O}
    Let $f,f' \in X^m$ and $g,g' \in Y^m$. Then, we have
\begin{equation*}
    \begin{aligned}
    	\sum_{(p,q) \in \bbZ \times \bbN} |\mathscr{F}_b(fg)| &\leq \left( \sum_{(p,q) \in \bbZ \times \bbN} |\mathscr{F}_b f| \right) \left( \sum_{(p,q) \in \bbZ \times \bbN \cup \{ 0 \}} |\mathscr{F}_c g| \right), \\
	   	\sum_{(p,q) \in \bbZ \times (\bbN \cup \{ 0 \})} |\mathscr{F}_c(ff')| &\leq \left( \sum_{(p,q) \in \bbZ \times \bbN} |\mathscr{F}_b f| \right) \left( \sum_{(p,q) \in \bbZ \times \bbN} |\mathscr{F}_b f'| \right), \\
	    \sum_{(p,q) \in \bbZ \times (\bbN \cup \{0\})} |\mathscr{F}_c(gg')| &\leq \left( \sum_{(p,q) \in \bbZ \times (\bbN \cup \{ 0 \})} |\mathscr{F}_c g| \right) \left( \sum_{(p,q) \in \bbZ \times (\bbN \cup \{ 0 \})} |\mathscr{F}_c g'| \right).
	\end{aligned}
\end{equation*}
\end{lemma}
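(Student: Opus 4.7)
The plan is to prove all three Young-type inequalities by a unified strategy: expand each of the two factors in its natural orthonormal basis (either $\{\mathscr{B}_{p,q}\}$ or $\{\mathscr{C}_{p,q}\}$), use the trigonometric product-to-sum identities supplied by Lemmas~\ref{lem_basis1} and~\ref{lem_basis2} to rewrite each product of two basis functions as a linear combination of at most two basis functions belonging to the correct target space, and then close the estimate with the triangle inequality. All three inequalities reduce to the same combinatorial counting argument.

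I will carry out the first inequality in detail as the prototype. Writing $f = \sum_{(p_1,q_1) \in \bbZ \times \bbN} \mathscr{F}_b f(p_1,q_1)\, \mathscr{B}_{p_1,q_1}$ and $g = \sum_{(p_2,q_2) \in \bbZ \times (\bbN \cup \{0\})} \mathscr{F}_c g(p_2,q_2)\, \mathscr{C}_{p_2,q_2}$, the product is
\begin{equation*}
fg = \sum_{(p_1,q_1),(p_2,q_2)} \mathscr{F}_b f(p_1,q_1) \mathscr{F}_c g(p_2,q_2)\, e^{2\pi i(p_1+p_2)x_1}\, b_{q_1}(x_2) c_{q_2}(x_2).
\end{equation*}
The four parity combinations of $(q_1,q_2)$ land in exactly the four identities of Lemma~\ref{lem_basis1}, and a direct parity check shows that in every case
\begin{equation*}
b_{q_1}(x_2) c_{q_2}(x_2) = \tfrac{\epsilon_+}{2}\, b_{q_1+q_2}(x_2) + \tfrac{\epsilon_-}{2}\, b_{|q_1-q_2|}(x_2)
\end{equation*}
for some signs $\epsilon_{\pm} \in \{-1,0,1\}$, with the convention $b_0 \equiv \sin(0) = 0$ automatically absorbing the degenerate case $q_1 = q_2$. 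Consequently $\mathscr{B}_{p_1,q_1}\mathscr{C}_{p_2,q_2}$ is a linear combination of at most two elements of $\{\mathscr{B}_{p,q}\}_{(p,q)\in\bbZ\times\bbN}$ with coefficients of magnitude $\leq \tfrac{1}{2}$. Reading off the $\mathscr{F}_b$-coefficient at $(p,q)$ and applying the triangle inequality yields
\begin{equation*}
|\mathscr{F}_b(fg)(p,q)| \leq \tfrac{1}{2}\!\!\sum_{\substack{p_1+p_2=p\\ q_1+q_2=q}} |\mathscr{F}_b f(p_1,q_1)||\mathscr{F}_c g(p_2,q_2)| + \tfrac{1}{2}\!\!\sum_{\substack{p_1+p_2=p\\ |q_1-q_2|=q}} |\mathscr{F}_b f(p_1,q_1)||\mathscr{F}_c g(p_2,q_2)|.
\end{equation*}
Summing over $(p,q) \in \bbZ \times \bbN$ and swapping the order of summation, each quadruple $(p_1,q_1,p_2,q_2)$ with $q_1 \neq q_2$ contributes to exactly two output values of $q$ (namely $q_1+q_2$ and $|q_1-q_2|$), while the case $q_1 = q_2$ contributes only once because $|q_1-q_2| = 0 \notin \bbN$. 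Combined with the factor $\tfrac{1}{2}$, each term $|\mathscr{F}_b f(p_1,q_1)||\mathscr{F}_c g(p_2,q_2)|$ is counted with weight at most $1$, giving the first claimed inequality.

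The second and third inequalities are obtained by the identical template, but now using the product-to-sum identities of Lemma~\ref{lem_basis2} (and of Lemma~\ref{lem_basis1} in the mixed-parity cases). For the second one needs $b_{q_1}b_{q_2}$ to decompose into a sum of $c$-type functions; when $q_1,q_2$ share parity this follows from the $\sin\sin$ and $\cos\cos$ formulas (producing $\cos$ terms at even frequencies $q_1 \pm q_2$, matching $c_{\mathrm{even}} = \cos$), while when they differ in parity the $\sin\cos$ formula produces $\sin$ terms at odd frequencies $q_1 \pm q_2$, matching $-c_{\mathrm{odd}} = \sin$. For the third, $c_{q_1}c_{q_2}$ is handled analogously after unfolding the signs in the definition of $c_q$. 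The identical counting argument as above then closes both estimates. The main thing to be careful about is the parity bookkeeping and the degenerate indices $q_1 = q_2$ or $q_2 = 0$; but in each case either the relevant basis function is declared to vanish ($b_0 \equiv 0$) or the output index falls outside the summation range, so no over-counting occurs.
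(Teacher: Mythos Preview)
Your proof is correct. The paper itself does not supply a proof of this lemma but defers to the companion preprint \cite{JK2}; nevertheless, the product-to-sum identities of Lemmas~\ref{lem_basis1} and~\ref{lem_basis2} are set up precisely so that your argument goes through, and your approach is the intended one. One minor comment on wording: when $q_2=0$ (which is allowed for $g\in Y^m$ in the first inequality), the two output frequencies $q_1+q_2$ and $|q_1-q_2|$ coincide rather than being distinct, but your counting still gives total weight $\tfrac{1}{2}+\tfrac{1}{2}=1$ because you are bounding two separate sums, so the conclusion is unaffected.
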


\end{document}